\colorlet{darkgreen}{green!50!black}
\newtheorem{thm}{Theorem}
\newtheorem{prop}[thm]{Proposition}
\newtheorem{deff}[thm]{Definition}
\newtheorem{lem}[thm]{Lemma}
\renewcommand{\geq}{\geqslant}
\renewcommand{\leq}{\leqslant}
\newcommand{\s}{\mathbf{S}}
\newcommand{\R}{\mathcal{R}}
\let\phi=\varphi
\newcommand{\cal}{\mathcal}
\title[Asymptotics of the stationary distribution for SRBM in \texorpdfstring{$\mathbb{R}_+^2$}{the quarter plane}]{Asymptotic expansion of stationary distribution for
 reflected Brownian motion 
 in the quarter plane
  \\ via analytic approach}
\author{S.\ Franceschi} \address{Laboratoire de Probabilit\'es et
        Mod\`eles Al\'eatoires, Universit\'e Pierre et Marie Curie,
        4 Place Jussieu, 75252 Paris Cedex 05, France
        \& Laboratoire de Math\'ematiques et Physique Th\'eorique, Universit\'e de Tours, Parc de Grandmont, 37200 Tours, France} \email{sandro.franceschi@upmc.fr}
 \author{I.\ Kurkova} \address{Laboratoire de Probabilit\'es et
        Mod\`eles Al\'eatoires, Universit\'e Pierre et Marie Curie,
        4 Place Jussieu, 75252 Paris Cedex 05, France} \email{Irina.Kourkova@upmc.fr}
\keywords{Reflected Brownian motion in the quarter plane; Stationary distribution; Laplace transform; Asymptotic analysis; Saddle-point method; Riemann surface}
\begin{document}

\begin{abstract}
Brownian motion in ${\bf R}_+^2$
  with covariance matrix $\Sigma$ and drift $\mu$ in the interior and reflection matrix $R$ from the axes is considered.
  The  asymptotic expansion of the stationary distribution density along all paths in ${\bf R}_+^2$  is found
  and its main term is identified depending on parameters $(\Sigma, \mu, R)$.
   For this purpose the analytic approach of  Fayolle, Iasnogorodski and Malyshev in \cite{fayolle_random_1999} and \cite{malyshev_asymptotic_1973},
   restricted essentially up to now to discrete random walks in ${\bf Z}_+^2$ with jumps 
  to the nearest-neighbors in the interior is developed in this article for diffusion processes on ${\bf R}_+^2$ with 
  reflections on the axes.
\end{abstract}

\maketitle
\date{\today}

\footnote{Version of \today}


\section{Introduction and main results}

\subsection{Context}

 Two-dimensional semimartingale  reflecting Brownian motion (SRBM) in the quarter plane received a lot of attention from the mathematical community.
  Problems such as SRBM existence \cite{reiman_boundary_1988,taylor_existence_1993}, stationary distribution conditions \cite{harrison_diffusion_1978,harrison_brownian_1987}, explicit forms of stationary distribution in special cases \cite{dai_stationary_2013,
  dieker_reflected_2009,harrison_diffusion_1978,harrison_multidimensional_1987,latouche_product-form_2013},
 large deviations \cite{avram_explicit_2001,dai_stationary_2013,majewski_large_1996,majewski_large_1998}
construction of Lyapunov functions
\cite{dupuis_lyapunov_1994},
and queueing networks approximations
\cite{harrison_diffusion_1978,harrison_brownian_1993,
lieshout_tandem_2007,lieshout_asymptotic_2008,williams_approximation_1996}
have been intensively studied in the literature.
References cited above are non-exhaustive, see also \cite{williams_semimartingale_1995} for a survey of some of these topics.
Many results on two-dimensional SRBM have been fully or partially generalized to higher dimensions.

\medskip
In this article we  consider stationary SRBMs in the quarter plane  and focus on the asymptotics  of their stationary distribution along any
  path in ${\bf R}_+^2$.  Let  $Z(\infty)=(Z_1(\infty), Z_2(\infty))$ be a random vector  that has the stationary distribution
 of the SRBM. In \cite{dai_reflecting_2011},  Dai et Myazawa obtain the following asymptotic result:
   for a given directional vector $c \in {\bf R}_+^2$ they  find the function $f_c(x)$ such that
   $$\lim_{ x \to \infty} \frac{\mathbf{P}( \langle c  \mid Z(\infty) \rangle \geq x)}{f_c(x)}=1$$
  where $\langle \cdot \mid \cdot \rangle$ is the inner product. In \cite{dai_stationary_2013} they  compute the exact asymptotics of two boundary stationary measures on the axes  associated with $Z(\infty)$. In this article  we solve a harder problem arisen in \cite[\S 8 p.196]{dai_reflecting_2011},  the one to compute the asymptotics of 
  $$\mathbf{P}(Z(\infty) \in  x c+B), \text{ as } x\to \infty,$$
  where $c \in {\bf R}_+^2$ is any directional vector and  $B \subset {\bf R}_+^2$ is a compact subset. Furthermore, our objective is to find {\it the full asymptotic expansion  of the density $\pi(x_1,x_2)$ of $Z(\infty)$  as $x_1,x_2 \to \infty$  and
   $x_2/x_1 \to  {\rm \tan } (\alpha)$ for any given angle  $\alpha \in ]0,\pi/2[$.}

\medskip
Our main tool is the analytic method developed by V.~Malyshev in  \cite{malyshev_asymptotic_1973} to compute the asymptotics of stationary
  probabilities for discrete random walks in ${\bf Z}_+^2$ with jumps to the nearest-neighbors in the interior and reflections
  on the axes.  This method proved to be  fruitful for the analysis of Green functions  and Martin boundary \cite{kourkova_random_2011,kurkova_martin_1998},  and also useful  for studying some joining the shortest queue
  models \cite{kurkova_malyshevs_2003}.
The  article  \cite{malyshev_asymptotic_1973}
has been a part of Malyshev's global analytic approach   
 to study discrete-time random walks in
 ${\bf Z}_+^2$  with four domains of spatial homogeneity  (the interior of ${\bf Z}_+^2$, the axes and the origin).
 Namely, in  the book \citep{malyshev_sluchainye_1970} he
 made explicit their stationary probability generating functions as solutions of boundary problems on the universal covering of the
  associated Riemann surface and studied the nature of these functions depending on parameters.
 G.~ Fayolle and R.~Iasnogorodski \cite{fayolle_two_1979} determined these generating functions
  as solutions of boundary problems of Riemann-Hilbert-Carleman type on the complex plane. Fayolle, Iansogorodski and Malyshev
   merged together and deepened  their methods in the book \cite{fayolle_random_1999}. The latter is entirely devoted to
  the explicit form of stationary probabilities generating functions for discrete random walks  in ${\bf Z}_+^2$ with nearest-neighbor jumps
  in the interior.  The analytic approach of this book  
has been further applied to the analysis of random walks absorbed on the axes in \cite{kourkova_random_2011}. It has been also especially efficient in combinatorics, where it allowed to study all models of walks in ${\bf Z}_+^2$ with small steps
 by making explicit the generating functions of the numbers of paths and clarifying their nature, see  \cite{raschel_counting_2012} and \cite{kourkova_functions_2012}.

\medskip
However, the methods of \cite{fayolle_random_1999} and \cite{malyshev_asymptotic_1973} seem to be essentially 
restricted to discrete-time models of  walks in the quarter plane
  with jumps in the interior only to the nearest-neighbors. They can hardly be extended to discrete models with bigger jumps, even at distance 2,  nevertheless some  attempts in this direction
  have been done in \cite{fayolle_about_2015}. 
In fact, while for jumps at distance 1 the Riemann surface associated with the random walk is the torus,
  bigger jumps lead to Riemann surfaces of higher genus, where the analytic procedures of \cite{fayolle_random_1999}  seem much more difficult to carry out.
       Up to now, as far as we know, neither the analytic approach of \cite{fayolle_random_1999}, nor the asymptotic results
   \cite{malyshev_asymptotic_1973} have been translated  to the continuous analogs of random walks in
${\bf Z}_+^2$, such as SRBMs in ${\bf R}_+^2$, except for some special cases in \cite{baccelli_analysis_1987} and in \cite{foddy_analysis_1984}.
     This article is the first one in this direction.
   Namely, the asymptotic expansion of the stationary distribution density
 for SRBMs  is  obtained  by methods strongly inspired by \cite{malyshev_asymptotic_1973}. 
The aim of this work goes beyond the solution of this particular problem. It provides the basis for the development  
of the analytic approach of \cite{fayolle_random_1999} for diffusion processes in  cones of ${\bf R}_+^2$  which is continued 
  in the next articles \cite{franceschi_tuttes_2016} and \cite{franceschi_explicit_2017}. 
     In \cite{franceschi_explicit_2017}  the first author  and K. Raschel  make explicit Laplace transform
of the invariant measure for SRBMs  in the quarter plane with general 
 parameters of the drift, covariance and reflection matrices.
      Following \cite{fayolle_random_1999},  they express it  in an integral form 
  as a solution of a  boundary value problem and then discuss possible simplifications
   of this integral formula for some particular sets of  parameters.
   The special case of orthogonal reflections from the axes is the subject of \cite{franceschi_tuttes_2016}.
    Let us note that the analytic approach   for SRBMs in  ${\bf R}_+^2$  which is 
  developed in the present paper and continued by the next ones \cite{franceschi_tuttes_2016} and \cite{franceschi_explicit_2017}, looks
  more transparent than the one for discrete models and deprived of many second order details.
  Last but not the least, contrary to random walks in  ${\bf Z}_+^2$ with jumps at distance 1, it can be easily 
  extended  to diffusions in any cones of ${\bf R}^2$ via linear transformations, as we observe in the concluding remarks, see  Section~\ref{subsec:concludingremarks}.

\subsection{Reflected Brownian motion in the quarter plane}

  We now define properly the two-dimensional SRBM  and present our results.  Let
\begin{align*}
  \begin{cases}
    \Sigma = \left(  \begin{array}{cc} \sigma_{11} & \sigma_{12} \\ \sigma_{12} & \sigma_{22} \end{array} \right) \in {\bf R}^{2 \times 2}
     \text{ be a non-singular covariance matrix,}
    \\
    \mu= \left(  \begin{array}{c} \mu_1 \\  \mu_2  \end{array} \right) \in {\bf R}^2
     \text{ be a drift,}
    \\
    R= (R^1,R^2)=\left(  \begin{array}{cc} r_{11} & r_{12} \\ r_{21} & r_{22} \end{array} \right) \in {\bf R}^{2 \times 2}
     \text{ be a reflection matrix.}
  \end{cases}
\end{align*}

\begin{deff}
The stochastic process $Z(t)=(Z^1(t), Z^2(t))$ is said to be
a reflected Brownian motion with drift in the quarter plane ${\bf R}_+^2$ associated with data $(\Sigma, \mu, R)$ if
$$ Z(t)=Z_0 + W(t) + \mu t + R L(t) \ \in {\bf R}_+^2,$$
where
\begin{itemize}
\item[(i)] $(W(t))_{t \in \mathbb{R^+}} $ is an unconstrained planar Brownian motion with covariance matrix $\Sigma$, starting from $0$;
\item[(ii)] $L(t)=(L^1(t), L^2(t))$;
 for $i=1,2$, $L^i(t)$  is a continuous and non-decreasing process that increases only at time $t$ such as $Z^i(t)=0$,
namely $\int_{0}^t 1_{\{Z^i(s) \ne 0 \}} \mathrm{d} L^i(s)=0$ $\forall t\geq 0$;
\item[(iii)] $Z(t)\in {\bf R}_+^2$  $\forall t\geq 0$.
\end{itemize}
\end{deff}

Process  $Z(t)$ exists if and only if $r_{11} > 0$, $r_{22} > 0$ and either  $r_{12},r_{21}>0$ or $r_{11} r_{22} - r_{12} r_{21} > 0$ (see \cite{taylor_existence_1993} and \cite{reiman_boundary_1988} which obtain an existence criterion in any dimension).
  In this case the process is unique in distribution for each given initial distribution of $Z_0$.

Columns $R^1$ and $R^2$ represent the directions where the Brownian motion is pushed when it reaches the axes, see Figure \ref{rebond}.

\begin{figure}[hbtp] 
\centering
\includegraphics[scale=1]{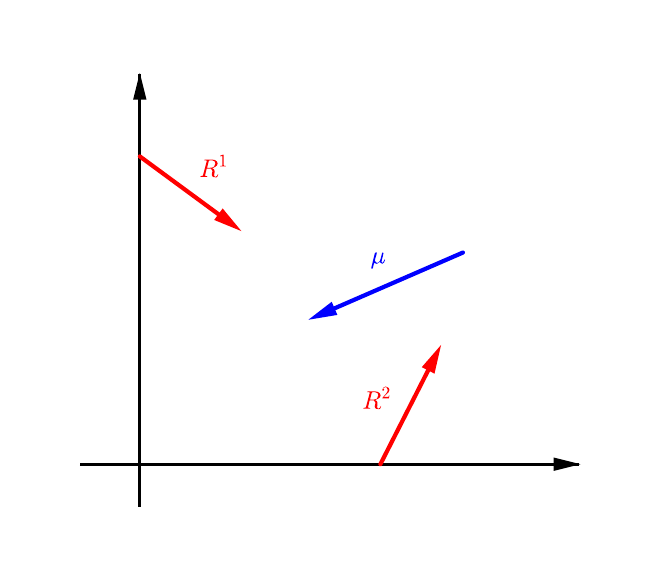}
\caption{Drift $\mu$ and reflection vectors $R^1$ and $R^2$}
\label{rebond}
\end{figure}

\begin{prop} The reflected Brownian motion $Z(t)$ associated with $(\Sigma, \mu, R)$ is well defined, and its stationary distribution $\Pi$ exists
and is unique if and only if the data satisfy the following conditions:
\begin{equation}
r_{11} > 0, \ \  r_{22} > 0, \ \ r_{11} r_{22} - r_{12} r_{21} > 0, \label{u}
\end{equation}
\begin{equation}
r_{22} \mu_1 - r_{12}  \mu_2 < 0, \ \ r_{11} \mu_2 - r_{21}  \mu_1 < 0 . \label{v}
\end{equation}
\end{prop}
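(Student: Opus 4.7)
The plan is to reduce the proposition to two classical results of the SRBM literature: one on existence of the process itself, the other on positive recurrence, which together give existence and uniqueness of the invariant measure.

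First, for the existence/well-posedness of $Z(t)$, I would invoke the Reiman--Williams / Taylor--Williams criterion from \cite{reiman_boundary_1988} and \cite{taylor_existence_1993}. In dimension two, this criterion requires $r_{11}>0$, $r_{22}>0$ and that the reflection matrix $R$ be completely-$\mathcal{S}$, that is, either $r_{12},r_{21}>0$ or $r_{11}r_{22}-r_{12}r_{21}>0$. Condition (u) falls into the second case, so the SRBM is well-defined and unique in law for any initial distribution, as the paper recalls just before the proposition statement.

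Second, for the existence and uniqueness of $\Pi$, I would rewrite the drift condition (v) in a more suggestive algebraic form. Since $\det R = r_{11}r_{22}-r_{12}r_{21}>0$ by (u), $R$ is invertible and
\begin{equation*}
R^{-1}\mu \;=\; \frac{1}{\det R}\begin{pmatrix} r_{22}\mu_1 - r_{12}\mu_2 \\ r_{11}\mu_2 - r_{21}\mu_1 \end{pmatrix}.
\end{equation*}
Hence (v) is precisely the statement that both coordinates of $R^{-1}\mu$ are strictly negative. This is the classical Harrison--Reiman stability condition: geometrically, the drift can be decomposed along the two reflection directions $R^{1},R^{2}$ with strictly negative coefficients, so that a Lyapunov argument closes. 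Existence and uniqueness of $\Pi$ then follow from the positive-recurrence theorem of \cite{harrison_brownian_1987}, or alternatively from the explicit Lyapunov function built in \cite{dupuis_lyapunov_1994}.

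For the converse, I would argue contrapositively: if $r_{ii}\leq 0$ for some $i$, or $\det R \leq 0$, the Reiman--Williams criterion fails and the process is not properly defined; if (v) fails (even with equality), then $R^{-1}\mu$ has a nonnegative component and Lyapunov/stability arguments as in \cite{harrison_brownian_1987,dupuis_lyapunov_1994} show that no stationary distribution can exist. The main obstacle, more bookkeeping than mathematics, is to handle the borderline/degenerate cases (equality in (v), or $\det R=0$ with $r_{12},r_{21}>0$) carefully, since here the process may be well-defined but null-recurrent or transient, and the non-existence of $\Pi$ is less immediate than in the strictly violated cases.
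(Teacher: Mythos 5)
Your proposal is correct and matches what the paper does: the paper gives no proof of this proposition, simply referring to \cite{hobson_recurrence_1993,williams_recurrence_1985,harrison_reflected_2009} for existence of the process and for the recurrence criterion, which in those references is stated exactly in your form $R^{-1}\mu<0$ componentwise (equivalent to (\ref{v}) once $\det R>0$). Your additional remarks on the borderline cases are the content of those cited works rather than something the paper reproves.
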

 
The proof and some more detailed statements can be found in \cite{hobson_recurrence_1993,williams_recurrence_1985,harrison_reflected_2009}.
From now on we assume that conditions (\ref{u}) and  (\ref{v}) are satisfied.
The stationary
distribution $\Pi$  is absolutely continuous with respect to Lebesgue measure as it is shown  in
\cite{harrison_brownian_1987} and \cite{dai_steady-state_1990}.
We  denote its  density by $\pi(x_1,x_2)$.

\subsection{Functional equation for the stationary distribution}

Let $A$ be the generator of $(W_t +\mu t)_{t\geq 0}$. For each $f \in \mathcal{C}^2_b ({\bf R}_+^2)$ (the set of twice continuously differentiable functions $f$ on ${\bf R}_+^2$ such that $f$ and its first and second order derivatives are bounded) one has
$$Af(z)= \frac{1}{2} \sum_{i,j=1}^{2}  \sigma_{i,j} \frac{\partial^2 f}{\partial z_i \partial z_j}(z) + \sum_{i=1}^2 \mu_i \frac{\partial f}{\partial z_i} (z).$$
Let us define for $i=1,2$,
$$D_i f(x)= \langle R^i | \nabla f \rangle$$
that may be interpreted as generators on the axes.
We  define now  $\nu_1$ and $\nu_2$ two finite boundary measures with  their support on the axes:
 for any Borel set $B\subset {\bf R}_+^2$,
$$\nu_i (B) = \mathbb{E}_{\Pi} [ \int_0^1 1_{\{Z(u) \in B\}} \mathrm{d}L^i (u)].$$
By definition of  stationary distribution,
for all $t\geqslant 0$, $\mathbb{E}_{\Pi} [f(Z(t))]= \int_{{\bf R}_+^2} f(z) \Pi (\mathrm{d} z)$. A similar formula holds true for $\nu_i$:
$\mathbb{E}_{\Pi} [\int_0^t f(Z(u)) \mathrm{d}L^i(u)  ]= t \int_{{\bf R}_+^2} f(x) \nu_i (\mathrm{d} x)$.
Therefore $\nu_1$ and $\nu_2$ may be viewed as a kind of boundary invariant measures.
The basic adjoint relationship takes the following form:
for each $f \in \mathcal{C}^2_b ({\bf R}_+^2)$,
\begin{equation} \label{da}
 \int_{{\bf R}_+^2} Af(z) \Pi (\mathrm{d} z) + \sum_{i=1,2} \int_{{\bf R}_+^2} D_i f(z) \nu_i (\mathrm{d} z) =0 .
\end{equation}
The proof can be found in \cite{harrison_brownian_1987} in some particular cases and then has been extended to a general case, for example in \cite{dai_reflected_1992}.
We now define $\varphi(\theta)$  the two-dimensional Laplace transform of $\Pi$ also called its moment generating function. Let
$$
\varphi (\theta) = \mathbb{E}_{\Pi} [\exp (\langle \theta | Z\rangle)] = \iint_{{\bf R}_+^2} \exp (\langle \theta | z \rangle) \Pi (\mathrm{d} z)
$$
for all $\theta=(\theta_1,\theta_2) \in \mathbb{C}^2$ such that the integral converges.
 It does of course for any $\theta$ with ${\Re}\,\theta_1\leq 0, {\Re}\,\theta_2\leq 0$.  We have set
$\langle  \theta | Z \rangle=\theta_1 Z^1 + \theta_2 Z^2$.
Likewise we define the moment generating functions for $\nu_1(\theta_2)$ and  $\nu_2(\theta_1)$ on ${\bf C}$:
$$
\varphi_2 (\theta_1) =
\mathbb{E}_{\Pi} [ \int_0^1 e^{\theta_1 Z_t^1}  \mathrm{d} L^2(t)]
=\int_{{\bf R}_+^2} e^{\theta_1 z} \nu_2 (\mathrm{d} z),
\ 
\varphi_1 (\theta_2) =
\mathbb{E}_{\Pi} [ \int_0^1 e^{\theta_2 Z_t^2}  \mathrm{d} L^1(t)]
=\int_{{\bf R}_+^2} e^{\theta_2 z} \nu_1 (\mathrm{d} z).
$$
  Function $\phi_2(\theta_1)$ exists a priori for any $\theta_1$ with  ${\Re}\,\theta_1\leq 0$. It is  proved in \cite{dai_reflecting_2011}
 that it  also does for $\theta_1$  with ${\Re}\, \theta_1 \in [0,\epsilon_1]$,   up to its  first singularity $\epsilon_1>0$,
 the same is true for $\phi_1(\theta_2)$.
The following key functional equation (proven in \cite{dai_reflecting_2011}) results from the basic adjoint relationship (\ref{da}).
\begin{thm}
For any $\theta\in{\bf R}_+^2$ such $\varphi (\theta)<\infty$, $\varphi_2 (\theta_1)<\infty$ and  $\varphi_1 (\theta_2)<\infty$
we have the following fundamental functional equation:
\begin{equation}  \label{maineq}
\gamma (\theta) \varphi (\theta) =\gamma_1 (\theta) \varphi_1 (\theta_2) + \gamma_2 (\theta) \varphi_2 (\theta_1),
\end{equation}
where
\begin{align}
  \begin{cases}
     \gamma (\theta)=- \frac{1}{2} \langle \theta | \Sigma \theta \rangle- \langle \theta | \mu \rangle =-\frac{1}{2}(\sigma_{11} \theta_1^2 + \sigma_{22} \theta_2^2 +2\sigma_{12}\theta_1\theta_2)
-
(\mu_1\theta_1+\mu_2\theta_2),  \\
     \gamma_1 (\theta)= \langle R^1 | \theta \rangle=r_{11} \theta_1 + r_{21} \theta_2,  \\
     \gamma_2 (\theta)=\langle R^2 | \theta \rangle=r_{12} \theta_1 + r_{22} \theta_2.
  \end{cases}
\end{align}
\end{thm}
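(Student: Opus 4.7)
The plan is to obtain the functional equation by plugging exponential test functions into the basic adjoint relationship (\ref{da}). Concretely, I would choose $f(z) = e^{\langle \theta\,|\,z\rangle}$ and compute both generators in closed form. One has $\partial f / \partial z_i = \theta_i\,f$ and $\partial^2 f/\partial z_i\partial z_j = \theta_i\theta_j\,f$, so
\begin{equation*}
Af(z) = \Bigl[\tfrac12\langle\theta\,|\,\Sigma\theta\rangle + \langle\theta\,|\,\mu\rangle\Bigr] e^{\langle\theta\,|\,z\rangle} = -\gamma(\theta)\,e^{\langle\theta\,|\,z\rangle},
\end{equation*}
and for $i=1,2$,
\begin{equation*}
D_i f(z) = \langle R^i\,|\,\nabla f(z)\rangle = \langle R^i\,|\,\theta\rangle\,e^{\langle\theta\,|\,z\rangle} = \gamma_i(\theta)\,e^{\langle\theta\,|\,z\rangle}.
\end{equation*}

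Next, I would integrate these identities against $\Pi$ and $\nu_i$. The integral of $Af$ against $\Pi$ yields $-\gamma(\theta)\varphi(\theta)$. The crucial observation is that $\nu_i$ is supported on the axis $\{z^i=0\}$, so when integrating $e^{\langle\theta\,|\,z\rangle}$ against $\nu_1$ only the $\theta_2 z_2$ contribution survives, producing $\varphi_1(\theta_2)$; similarly, integration against $\nu_2$ produces $\varphi_2(\theta_1)$. Substituting into the basic adjoint relationship (\ref{da}) and rearranging gives exactly (\ref{maineq}).

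The main obstacle is that $f(z)=e^{\langle\theta\,|\,z\rangle}$ is \emph{not} in $\mathcal{C}_b^2({\bf R}_+^2)$ unless $\operatorname{Re}\theta_1,\operatorname{Re}\theta_2<0$, whereas the statement is to hold for $\theta\in{\bf R}_+^2$ in the domain of convergence of $\varphi$, $\varphi_1$, $\varphi_2$. I would first carry out the computation above in the safe region $\operatorname{Re}\theta_1,\operatorname{Re}\theta_2<0$, where $f$, $\nabla f$ and $\nabla^2 f$ are uniformly bounded on ${\bf R}_+^2$. To extend to the larger region, I would introduce a $\mathcal{C}^2_b$ cutoff $\chi_n$ with $\chi_n \uparrow 1$ on ${\bf R}_+^2$ and apply (\ref{da}) to $f_n = \chi_n f$; the boundary terms $\chi_n', \chi_n''$ vanish in the limit by dominated convergence once $\theta$ lies in the convergence region of all three Laplace transforms, which is guaranteed by the existence result from \cite{dai_reflecting_2011} cited just before the theorem.

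Finally, once the identity holds on this real region it can be extended to all complex $\theta=(\theta_1,\theta_2)$ with $\operatorname{Re}\theta_1,\operatorname{Re}\theta_2$ in the relevant strip by analytic continuation, since each side of (\ref{maineq}) is holomorphic there. I would expect the cutoff/truncation argument to be the only technical point; the algebra of matching $\gamma$, $\gamma_1$, $\gamma_2$ to the generators is a direct calculation.
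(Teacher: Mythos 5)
Your derivation is correct and is essentially the standard argument: the paper does not prove this theorem itself but quotes it from Dai and Miyazawa \cite{dai_reflecting_2011}, where the proof proceeds exactly as you describe, by inserting $f(z)=e^{\langle\theta\,|\,z\rangle}$ into the basic adjoint relationship, using that $\nu_i$ is carried by the face $\{z_i=0\}$, and justifying the use of the unbounded test function by a truncation/limiting argument under the stated finiteness hypotheses. Nothing further is needed.
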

   This equation holds true a priori for any $\theta=(\theta_1, \theta_2)$ with ${\Re}\,\theta_1\leq 0, {\Re}\, \theta_2\leq 0$.
  It plays a crucial role in the analysis of the stationary distribution.

\subsection{Results}

    Our aim is to obtain  the asymptotic expansion of  the stationary distribution density $\pi(x)=\pi(x_1,x_2)$
as $x_1,x_2 \to \infty$ and $x_2/x_1 \to {\rm tan}\,(\alpha_0)$ for any given angle $\alpha_0 \in [0, \pi/2]$.

\medskip 

\noindent{\bf Notation.} We write the asymptotic expansion $f(x) \sim \sum_{k=0}^n g_k (x) $ as $x \to x_0$ if
   $g_k(x)=o(g_{k-1}(x))$  as $x \to x_0$ for all $k=1,\ldots, n$ and $f(x)-\sum_{k=0}^n g_k(x)=o(g_n(x))$ as 
  $x \to x_0$.

\medskip 

It will be more convenient to expand $\pi(r \cos \alpha, r \sin \alpha)$ 
  as $r \to \infty$ and $\alpha \to \alpha_0$. We give our final results in Section \ref{sec:asymptexpansion},
 Theorems \ref{thmresults1}--\ref{thmresults4}:  
  we find the  expansion of $\pi(r \cos \alpha, r \sin \alpha)$  as $r \to \infty$ and prove it uniform for $\alpha$ fixed in a small 
   neighborhood $\mathcal {O}(\alpha_0) \subset ]0, \pi/2[$ of $\alpha_0 \in ]0, \pi/2[$. 
   
In this section,  Theorem \ref{thmmain} below announces the main term of the expansion depending on parameters 
  $(\mu, \Sigma, R)$ and 
a  given direction $\alpha_0$.   
  Next, in Section \ref{subsec:sketch} we  sketch our analytic approach following the main lines of this paper  in order 
  to get the full asymptotic expansion of $\pi$.
 We  present at the same time the organization of the article. 
  
Now  we need to introduce some notations.
  The quadratic form $\gamma(\theta)$ is defined in (\ref{maineq})  via the covariance matrix $\Sigma$ and the drift $\mu$ of the process in the interior  of ${\bf R}_+^2$.
   Let us restrict ourselves on $\theta \in {\bf R}^2$.
    The equation $\gamma(\theta)=0$ determines an ellipse $\mathcal{ E}$ on ${\bf R}^2$ passing through the origin, its tangent in it is  orthogonal
  to vector $\mu$, see Figure \ref{ellipse}.
    Stability conditions (\ref{u}) and (\ref{v})  imply the negativity of at least one of coordinates of  $\mu$, see \cite[Lemma 2.1]{dai_reflecting_2011}.
     In this article, in order to shorten the number of pictures and cases of parameters to consider, we restrict ourselves
to the case
\begin{equation}
\label{mumu}
 \mu_1<0 \text{ and }  \mu_2<0,
\end{equation}
although our methods can be applied without any difficulty to other cases,
 we briefly sketch  some different details at the end of Section 2.4.    

\begin{figure}[hbtp]
\centering 
\includegraphics[scale=0.9]{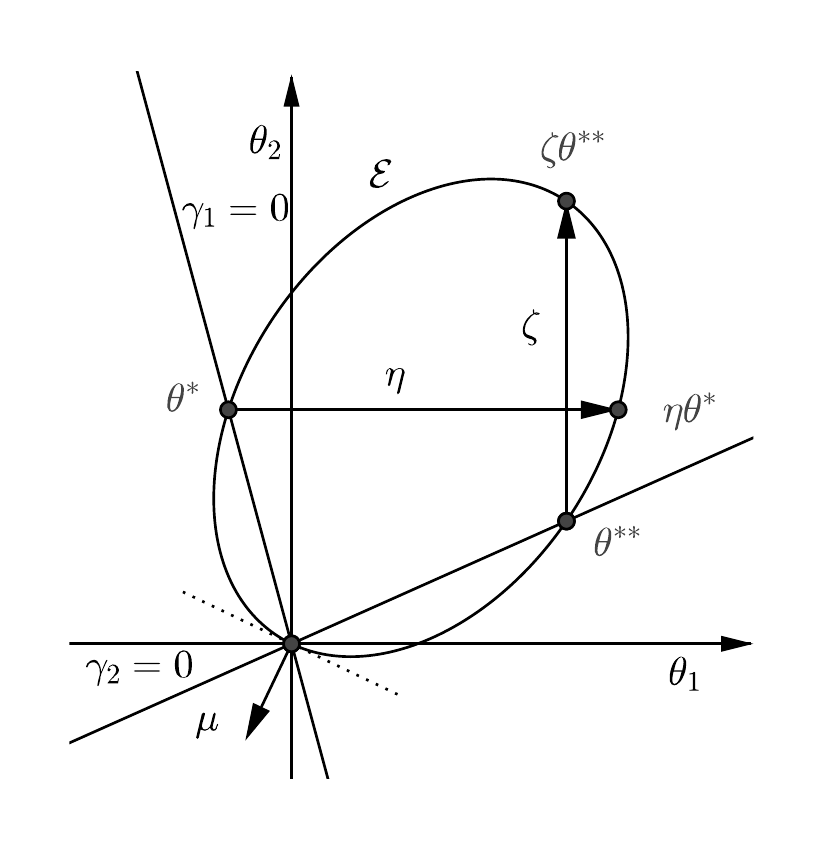} 
\caption{Ellipse $\mathcal{E}$,  straight lines $\gamma_1(\theta)=0$, $\gamma_2(\theta)= 0$, points 
$\theta^{*}, \theta^{**}, \eta \theta^*, \zeta \theta ^{**}$}
 \label{ellipse}
\end{figure}
Let us call $s_1^+=(\theta_1(s_1^+), \theta_2(s_1^+))\in {\cal E}$ the point of the ellipse with the maximal
first coordinate: $\theta_1(s_1^+)=\sup\{\theta_1 : \gamma(\theta_1,\theta_2)=0\}$.
  Let us call $s_2^+$ the point of the ellipse with the maximal second coordinate.
     Let ${\cal A}$ be the arc of the ellipse with endpoints  $s_1^+$,  $s_2^+$
  {\it not} passing through the origin, see Figure \ref{pointpolgeom}.
   For a given angle $\alpha \in [0,\pi/2]$  let us define the point $\theta(\alpha)$ on the arc ${\cal A}$ as
\begin{equation}
\label{thetaalpha}
\theta(\alpha)={\rm argmax}_{\theta \in {\cal A}}\langle \theta \mid e_\alpha  \rangle  \ \ \hbox{where }e_\alpha=(\cos \alpha, \sin \alpha).
\end{equation}
    Note that $\theta(0)=s_1^+$, $\theta(\pi/2)=s_2^+$, and $\theta(\alpha)$ is an isomorphism
  between $[0, \pi/2]$ and ${\cal A}$.
  Coordinates of $\theta(\alpha)$ are given explicitly in \eqref{cdpt}.
   One can also construct $\theta(\alpha)$ geometrically: first draw a ray $r(\alpha)$  on ${\bf R}_+^2$ that forms the angle $\alpha$ with $\theta_1$-axis,
 and then the straight line
$l(\alpha)$ orthogonal to this ray and tangent to the ellipse. Then $\theta(\alpha)$ is the point where $l(\alpha)$ is tangent to the ellipse, see Figure \ref{pointpolgeom}.

\begin{figure}[hbtp]
\centering
\includegraphics[scale=0.9]{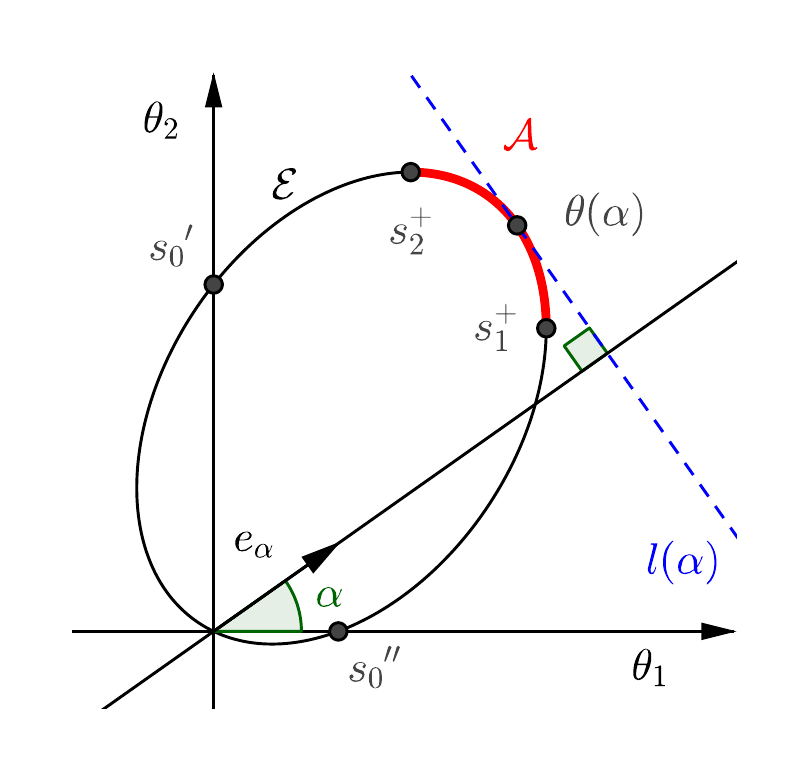}  
\caption{Arc $\cal{A}$ and  point $\theta(\alpha)$ on $\cal{E}$}
 \label{pointpolgeom}
\end{figure}

 Secondly, consider the straight lines $\gamma_1(\theta)=0$, $\gamma_2(\theta)=0$ defined in (\ref{maineq})  via the reflection matrix $R$.
  They cross  the ellipse ${\cal E}$ in the origin. Furthermore, due to stability conditions (\ref{u}) and  (\ref{v})  the  line $\gamma_1(\theta)=0$ [resp. $\gamma_2(\theta)=0$]  intersects the ellipse at the second point $\theta^{*}=(\theta_1^*, \theta_2^*)$
   (resp. $\theta^{**}=(\theta_1^{**}, \theta_2^{**})$)  where $\theta_2^*>0$ (resp. $\theta_1^{**}>0$).  Stability conditions also imply that 
  the ray $\gamma_1(\theta)=0$ is always "above" the ray $\gamma_2(\theta)=0$, see \cite[Lemma 2.2]{dai_reflecting_2011}.
   To present our results, we need to define the images of these points via the so-called Galois automorphisms $\zeta$ and $ \eta$ of ${\cal E}$.
      Namely, for  point $\theta^*=(\theta_1^*,\theta_2^*) \in {\cal E}$ there exists
 a unique point  $\eta \theta^*=(\eta \theta_1^*,   \theta_2^*) \in {\cal E}$  that has the same second coordinate.
 Clearly, $\theta_1^*$ and $ \eta \theta_1^*$
 are two roots of the second degree equation $\gamma(\cdot, \theta_2^{*})=0.$
   In the same way for point $\theta^{**}=(\theta_1^{**},\theta_2^{**}) \in {\cal E}$ there exists
 a unique point  $\zeta \theta^{**}=( \theta_1^{**},  \zeta \theta_2^{**}) \in {\cal E}$  with the same first coordinate.
 Points $\theta_2^{**}$  and $ \zeta \theta_2^{**}$
 are two roots of the second degree equation $\gamma(\theta_1^{**}, \cdot)=0.$
   Points  $\theta^*$, $\theta^{**}$,  $\eta \theta^*$ and $\zeta \theta^{**}$ are pictured  on Figure \ref{ellipse}.
  Their coordinates are made explicit in (\ref{zerostheta}) and (\ref{zerosthetaauto}).

   Finally  let $s'_0=(0,-2\frac{\mu_{22}}{\sigma_{22}}
   )$ be the point of intersection of the ellipse $\mathcal{E}$ with $\theta_2$-axis 
      and let $s''_0=(-2\frac{\mu_{11}}{\sigma_{11}},0)$ be the point of intersection of the ellipse with $\theta_1$-axis, see
Figure \ref{pointpolgeom}.
   The following theorem provides the main asymptotic term of $\pi(r\cos \alpha, r \sin \alpha)$.

\begin{thm}
\label{thmmain}
 Let $\alpha_0 \in ]0, \pi/2[$.
 Let $\theta(\alpha)$ be defined in  (\ref{thetaalpha}).
    Let $\red \{ \theta(\alpha_0), s'_0 \}$ {\rm (}resp. $\blue \{s''_0, \theta(\alpha_0)\}${\rm )} be the arc of the ellipse ${\cal E}$ with end points
     $s'_0$ and  $\theta(\alpha_0)$ {\rm  (}resp.  $s''_0$ and  $\theta(\alpha_0)${\rm )} \underline{not}  passing through the origin.
 We have the following results.
{\begin{itemize}
\item[(1)] If $\zeta\theta^{**}\notin \{ \theta(\alpha_0), s'_0 \}$ and
$\eta\theta^{*}\notin \{s''_0, \theta(\alpha_0) \}$,
then there exists a constant $c(\alpha_0)$ such that 
\begin{equation}
\label{aspointcol}
\pi(r \cos \alpha , r \sin \alpha)\sim \frac{c(\alpha_0)}{\sqrt{r} }
\exp\Big(- r \langle e_\alpha \mid \theta(\alpha)  \rangle \Big) \  \  \ r\to \infty, \alpha \to \alpha_0.
\end{equation}
  The function $c(\alpha)$ varies continuously on $[0, \pi/2]$, $\lim_{\alpha \to 0} c(\alpha) =\lim_{\alpha \to \pi/2} c(\alpha)=0$.
  \label{1}
\item[(2)] \label{2} If $\zeta\theta^{**}\in \}\theta(\alpha_0), s'_0 \}$ and
$\eta\theta^{*}\notin \{ s''_0,\theta(\alpha_0) \{$, then with some constant $c_1>0$ 
\begin{equation}
\label{aspol1}
\pi(r \cos \alpha , r \sin \alpha )\sim c_1
\exp\Big(-r \langle e_\alpha \mid \zeta \theta^{**} \rangle \Big) \  \  r\to \infty, \alpha \to \alpha_0.
\end{equation}
\item[(3)] \label{3} If $\zeta\theta^{**}\notin \} \theta(\alpha_0),  s'_0\}$ and $\eta\theta^{*}\in  \{ s''_0, \theta(\alpha_0) \{ $,
then  with some constant $c_2>0$
\begin{equation}
\label{aspol2}
\pi(r \cos \alpha , r \sin \alpha)\sim c_2
\exp\Big(- r \langle e_\alpha
 \mid  \eta \theta^{*} \rangle \Big) \ \  r\to \infty, \alpha \to \alpha_0.
\end{equation}
\item[(4)] \label{4} Let $\zeta\theta^{**}\in \} \theta(\alpha_0), s'_0 \}$ and
$\eta\theta^{*}\in \{ s''_0,\theta(\alpha_0) \{$.
   If $\langle \zeta \theta^{**} \mid e_{\alpha_0} \rangle < \langle \eta \theta^{*} \mid e_{\alpha_0} \rangle$, 
  then the asymptotics  (\ref{aspol1}) is valid with some constant $c_1>0$.
     If  $\langle \zeta \theta^{**} \mid e_{\alpha_0} \rangle > \langle \eta \theta^{*} \mid e_{\alpha_0} \rangle$, 
  then the asymptotics  (\ref{aspol2}) is valid with some constant $c_2>0$.
    If  $\langle \zeta \theta^{**} \mid e_{\alpha_0} \rangle = \langle \eta \theta^{*} \mid e_{\alpha_0} \rangle$, then 
  then with some constants $c_1>0$ and $c_2>0$
\begin{equation}
\label{aspol12}
\pi(r \cos \alpha , r \sin \alpha)\sim  c_1
\exp\Big(-r \langle e_\alpha \mid \zeta \theta^{**} \rangle \Big) +
c_2 \exp\Big(-r \langle e_\alpha  \mid \eta \theta^{*} \rangle \Big) \  \  \  r\to \infty, \alpha \to \alpha_0.
\end{equation}
\end{itemize}
}
\end{thm}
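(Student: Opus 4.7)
The plan is to recover the density $\pi(x_1,x_2)$ from its Laplace transform $\varphi(\theta)$ via a two-dimensional inverse Laplace formula and then apply the saddle-point method on contours that have been carefully deformed to pass through $\theta(\alpha)$. Along the ray $x = r e_\alpha$ one obtains an integral representation of the form
\[
\pi(r\cos\alpha,\, r\sin\alpha) \;=\; \frac{1}{(2\pi i)^2}\int_{\Gamma_1}\!\int_{\Gamma_2} e^{-r\langle e_\alpha \mid \theta\rangle}\, \varphi(\theta)\, d\theta_1\, d\theta_2,
\]
with contours $\Gamma_1,\Gamma_2$ initially located in the half-planes $\{\operatorname{Re}\theta_i<0\}$ where $\varphi$ is a priori known to be holomorphic. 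The natural saddle-point is then determined by maximising $\operatorname{Re}\langle e_\alpha\mid\theta\rangle$ subject to $\gamma(\theta)=0$; by the very definition \eqref{thetaalpha} this maximiser is $\theta(\alpha)$, and $\theta(\alpha_0)$ will govern the analytic contribution.

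The technical heart of the proof is the meromorphic continuation of $\varphi$. Using the functional equation \eqref{maineq} in the form $\varphi(\theta) = [\gamma_1(\theta)\varphi_1(\theta_2)+\gamma_2(\theta)\varphi_2(\theta_1)]/\gamma(\theta)$, and applying the Galois automorphisms $\eta$ and $\zeta$ of $\gamma$ (which exchange the two roots of $\gamma(\cdot,\theta_2)=0$ and $\gamma(\theta_1,\cdot)=0$ respectively) one continues $\varphi_1$ and $\varphi_2$ step by step beyond their initial strip. I expect the first singularity of $\varphi_2(\theta_1)$ on the positive axis to be at $\theta_1=\theta_1^{**}$, and the continuation via $\eta$ to produce a pole of $\varphi$ at the point $\eta\theta^*$ of $\mathcal{E}$; symmetrically $\varphi_1$ produces a pole of $\varphi$ at $\zeta\theta^{**}$. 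These are exactly the four distinguished points in the statement.

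Next I would deform $\Gamma_1,\Gamma_2$ so that the product contour passes through the saddle $\theta(\alpha)$, parametrising one direction along the ellipse and the other transversely. The deformation sweeps through the region enclosed between the initial contour and $\mathcal{E}$, and picks up residues at $\eta\theta^*$ and $\zeta\theta^{**}$ exactly when those points lie between $\theta(\alpha_0)$ and the corresponding axis intercept $s''_0$ or $s'_0$; this is the geometric content of the conditions in the four cases. Each crossed pole contributes an exponential term of the form $c_j \exp(-r\langle e_\alpha \mid \cdot \rangle)$ after integrating out the transverse variable, while the residual integral on the deformed contour, analysed by the standard saddle-point lemma for the quadratic form $\gamma$, yields the $r^{-1/2}\exp(-r\langle e_\alpha\mid\theta(\alpha)\rangle)$ term with explicit continuous prefactor $c(\alpha)$; the factor $c(\alpha)$ vanishes at $\alpha\in\{0,\pi/2\}$ because there $\theta(\alpha)$ degenerates to $s_1^+$ or $s_2^+$ where $e_\alpha$ is tangent to $\mathcal{E}$ and the saddle merges with the endpoints of the arc.

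The four cases of the theorem then follow by bookkeeping: if no pole is crossed the saddle term alone survives (Case 1); if one is crossed, since any point on the arc strictly between $\theta(\alpha_0)$ and an axis intercept gives a strictly smaller value of $\langle e_\alpha\mid\cdot\rangle$ than $\theta(\alpha_0)$ does, the residue exponentially dominates the saddle contribution (Cases 2 and 3); if both poles are crossed, the one with the smaller inner product wins, and in the degenerate equality case their residues add to give \eqref{aspol12} (Case 4). The main obstacle I foresee is step (ii): justifying that $\varphi_1,\varphi_2$ extend meromorphically up to and past $\eta\theta^*$, $\zeta\theta^{**}$ with sufficient decay along the imaginary directions so that the two-dimensional contour can actually be pushed across $\mathcal{E}$ and the residue calculus is rigorous; this is where the Riemann-surface machinery inspired by \cite{malyshev_asymptotic_1973} becomes essential. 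A secondary subtlety is the uniformity of the saddle-point expansion as $\alpha\to\alpha_0$, which requires controlling the dependence of the Hessian and the integration contours on $\alpha$ in a neighbourhood of $\alpha_0$.
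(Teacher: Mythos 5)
Your overall strategy --- inverse Laplace transform, meromorphic continuation of $\varphi_1,\varphi_2$ via the Galois automorphisms, deformation of the integration contour through the saddle point $\theta(\alpha)$ constrained to $\gamma=0$, and comparison of residue versus saddle contributions --- is exactly the one the paper follows. The paper first collapses the double integral into two single integrals on the Riemann surface by taking the residue of $1/\gamma$ in one variable (Lemma \ref{propI}); your ``integrating out the transverse variable'' is the same step, just less explicit.

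Two genuine gaps remain, beyond the continuation issue you already flag. First, you assume that the continuation actually produces \emph{poles} at $\zeta\theta^{**}$ and $\eta\theta^*$, i.e.\ that the residues there do not vanish; this is precisely what makes $c_1,c_2>0$ in cases (2)--(4), and the paper devotes Lemma \ref{lem3} to it --- one must show that the numerator of the continuation formula, which contains a value of $\varphi_2$ at another point of $\mathcal{E}$, is non-zero, and this in turn requires proving that $\varphi_2$ has no zeros on a whole arc. Second, the iterated continuation formulas generate further candidate poles at $\eta\zeta\theta^{**}$, $\zeta\eta\theta^{*}$, etc., which may also lie on the arcs swept by the deformation; your bookkeeping compares only $\zeta\theta^{**}$, $\eta\theta^*$ and the saddle, whereas one must show that every other pole in $\mathcal{P}'_\alpha\cup\mathcal{P}''_\alpha$ contributes a strictly smaller exponential and that the dominant poles are simple (Lemmas \ref{lem1}, \ref{lem2}, \ref{lem4} and Theorem \ref{thmpoles}); without this the claimed main terms are not justified. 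A smaller slip: the attribution is reversed --- it is $\varphi_2(\theta_1)$ whose relevant pole is $\zeta\theta^{**}$ (on the arc near $s'_0$) and $\varphi_1(\theta_2)$ whose relevant pole is $\eta\theta^*$ (near $s''_0$), not the other way round.
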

See Figure \ref{figthmprin} for the different cases.
(The arcs $\}a,  b\}$ or $\{a, b\{$ of $\mathcal{E}$ are those not passing through the origin where 
   the left  or the right end respectively  is excluded). 
\begin{figure}[hbtp]
\centering
\includegraphics[scale=0.8]{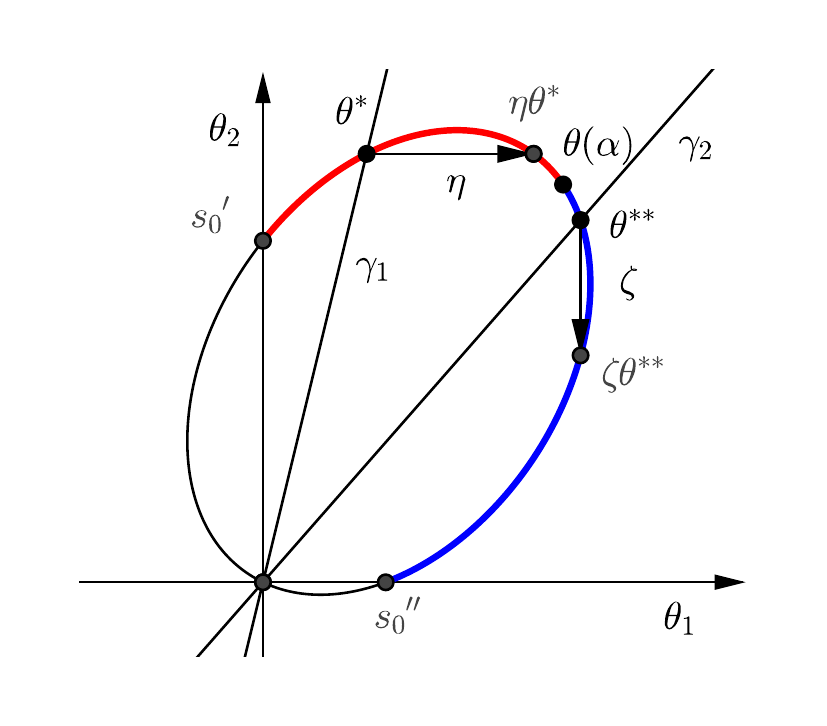}
\hfill
\includegraphics[scale=0.8]{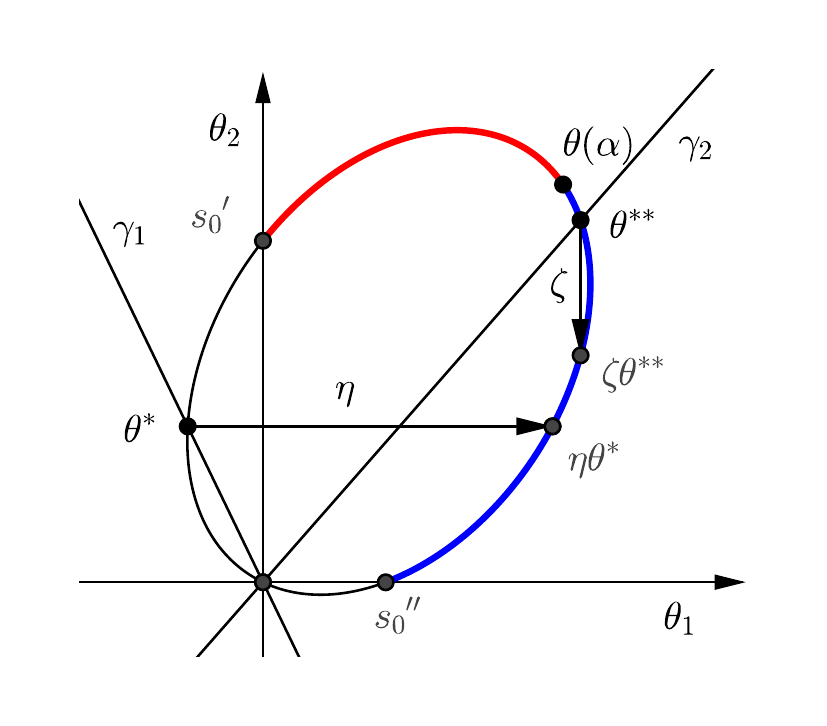}
\hfill
\includegraphics[scale=0.8]{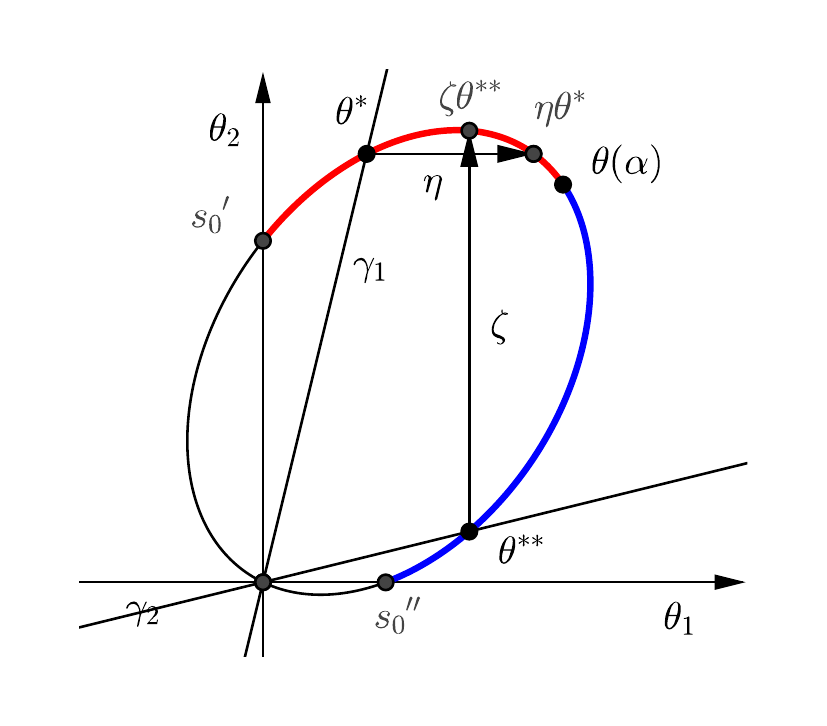}
\hfill
\includegraphics[scale=0.8]{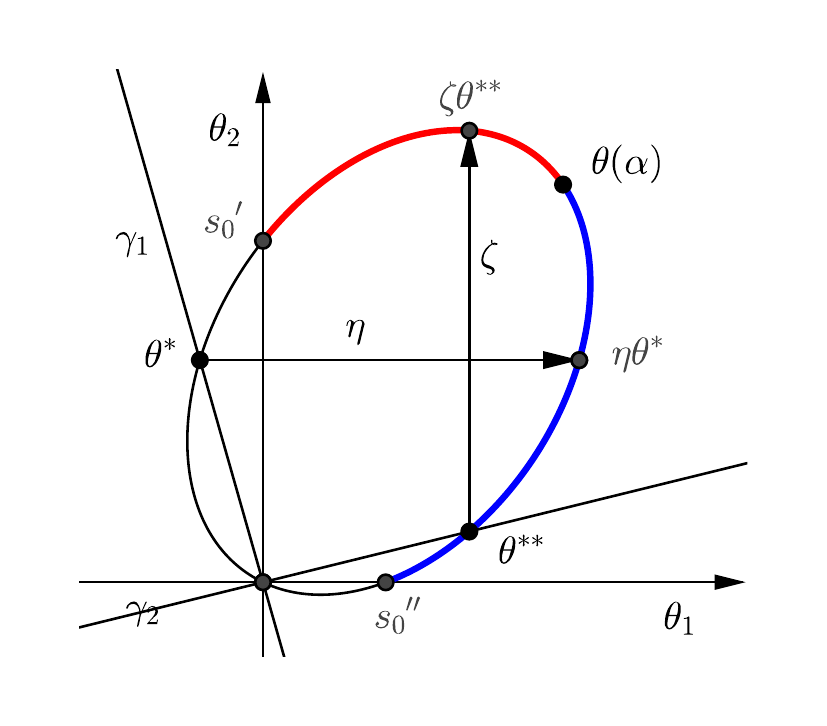}
\caption{Cases (1),(2),(3),(4)}
\label{figthmprin}
\end{figure}

  Let us  note  that the exponents in Theorem~\ref{thmmain} are the same as in the large deviation rate function found in \citep[Thm 3.2]{dai_stationary_2013}.
   The same phenomenon is observed for discrete random walks, cf. \cite{malyshev_asymptotic_1973} and \cite{ignatyuk_influence_1994}.

\subsection{Sketch of the analytic approach. Organization of the paper}
\label{subsec:sketch}

        The starting point of our approach is the main functional equation (\ref{maineq}) valid for any $\theta= (\theta_1, \theta_2) \in {\bf C}^2$ with
    ${\Re}\, \theta_1\leq 0$, ${\Re}\, \theta_2\leq 0$.
  The function $\gamma(\theta_1, \theta_2)$ in the left-hand side  is a polynomial of the second order of $\theta_1$ and $\theta_2$.
   The algebraic function $\Theta_1(\theta_2)$ defined by $\gamma(\Theta_1(\theta_2), \theta_2)\equiv 0$ is $2$-valued
  and its Riemann surface  ${\bf S}_{\theta_2}$  is of genus $0$. The same is true about  the $2$-valued algebraic function
  $\Theta_2(\theta_1)$ defined by $\gamma(\theta_1, \Theta_2(\theta_1))=0$ and its Riemann surface ${\bf S}_{\theta_1}$. 
   The surfaces ${\bf S}_{\theta_1}$ and ${\bf S}_{\theta_2}$ being  equivalent, we will consider just  one surface ${\bf S}$ 
   defined by the equation $\gamma(\theta_1, \theta_2)=0$
  with two different coverings.  Each point $s \in \s$ has two ``coordinates'' $(\theta_1(s), \theta_2(s))$, both of them are complex or 
  infinite and satisfy $\gamma(\theta_1(s), \theta_2(s))=0$.
    For any point $s=(\theta_1,\theta_2) \in \s$, there exits a unique point $s'=(\theta_1, \theta'_2)\in \s$ with the same first 
  coordinate and there exists a unique point $s''=(\theta''_1, \theta_2)\in \s$ with the same second coordinate.
  We say that $s'=\zeta s$, i.e. $s'$ and $s$ are related by  Galois automorphism $\zeta$ of ${\bf S}$
 that leaves untouched  the first coordinate, and that  $s''=\eta s$, i.e. $s''$ and $s$ are related by  Galois automorphism $\eta$ 
of ${\bf S}$ that leaves untouched  the second coordinate. Clearly $\zeta^2=Id$, $\eta^2=Id$ and the branch points of $\Theta_1(\theta_2)$
 and of $\Theta_2(\theta_1)$ are fixed points of $\zeta$ and $\eta$ respectively.
The ellipse $\mathcal{ E}$ is the set of points of ${\bf S}$ where both ``coordinates'' are real.
  The construction of ${\bf S}$ and definition of Galois automorphisms are carried out in Section~\ref{riemannsurfaceS}.
   
    Next, unknown functions $\varphi_1(\theta_2)$ and $\varphi_2(\theta_1)$ are lifted in the domains of $\s$ where 
   $\{s \in \s : {\Re}\, \theta_2(s)\leq 0\}$   and   $\{s \in \s : {\Re}\, \theta_1(s)\leq 0\}$ respectively.
   The intersection of these domains  on ${\bf S}$  is non-empty, both $\varphi_2$ and $\varphi_1$ are well defined in it. 
   Since for any $s=(\theta_1(s),\theta_2(s))\in \s$ we have $\gamma(\theta_1(s),\theta_2(s))=0$,
       the main functional equation  (\ref{maineq}) implies: 
  $$ \gamma_1(\theta_1(s), \theta_2(s)  ) \phi_1(\theta_2(s))+\gamma_2(\theta_1(s), \theta_2(s)  )\phi_2(\theta_1(s))=0     \  \  \forall 
s \in \s,{\Re}\,\theta_1(s)\leq 0, {\Re}\,\theta_2(s)\leq 0.$$
    Using this relation,  Galois automorphisms and the facts that  $\varphi_1$ and $\varphi_2$ depend just on one  ''coordinate''
  ($\phi_1$ depends on $\theta_2$ and $\phi_2$ on $\theta_1$ only),
  we continue $\varphi_1$ and $\varphi_2$ explicitly as meromorphic on the whole of $\s$.
  This meromorphic continuation procedure is the crucial step of our approach, it  is the subject of Section~\ref{meromcontinuation}.    
    It allows to recover $\varphi_1$ and $\varphi_2$ on the complex plane 
  as multivalued functions and determines all poles of all its branches. 
   Namely, it shows that  poles of $\phi_1$ and $\phi_2$ may be only at images of zeros of $\gamma_1$ and $\gamma_2$
  by automorphisms $\eta$ and $\zeta$ applied several times.
 We are in particular interested in the poles of their first (main) branch, 
  and more precisely in the most ''important'' pole (from the asymptotic point of view, to be explained below), that turns out to be  
 at one of points $\zeta \theta^{**}$ or $\eta \theta^*$  defined above.  The detailed analysis of the "main" poles of $\phi_1$ and $\phi_2$ 
  is furnished in Section~\ref{mainpole}.
             
   Let us now turn to the asymptotic expansion of the density $\pi(x_1,x_2)$.
  Its Laplace transform comes from the right-hand side of the main equation (\ref{maineq}) divided by the kernel $\gamma(\theta_1,\theta_2)$.
   By inversion formula  the density $\pi(x_1,x_2)$ is then represented as a double integral on $\{\theta : {\Re}\, \theta_1={\Re}\, \theta_2=0\}$.
   In Section~\ref{invlaplacetransf},  using the residues of the function $\frac{1}{\gamma(\theta_1, \cdot)}$ or $\frac{1}{\gamma(\cdot, \theta_2)}$
      we transform this double integral  into a sum of two single integrals along 
 two cycles on $\s$, those where ${\Re}\, \theta_1(s)=0$ or ${\Re}\, \theta_2(s)=0$. 
   Putting 
 $(x_1,x_2)=r e_\alpha$  we get the representation of the density as a sum of two 
  single integrals along some contours on ${\bf S}$:
\begin{equation}
\label{121}
\pi(r e_\alpha)=
\frac{1}{2\pi \sqrt{\det \Sigma}} 
\big(
\int_{\mathcal{I}_{\theta_1}^+}
\frac{\varphi_2(s) \gamma_2(\theta(s))}{s}  e^{- r \langle \theta(s) \mid e_\alpha \rangle }
{\mathrm{d}s} +
\int_{\mathcal{I}_{\theta_2}^+}
\frac{\varphi_1(s) \gamma_1(\theta(s))}{s} e^{- r \langle \theta(s) \mid e_\alpha \rangle }
\mathrm{d}s \big) .
\end{equation}
   We would like to compute their asymptotic expansion as $r \to \infty$ and prove it 
  to be uniform for $\alpha$ fixed  in a small neighborhood $\mathcal{O}(\alpha_0)$,  $\alpha_0\in ]0, \pi/2[$.

  These two integrals are typical to apply the saddle-point method, see \cite{fedoryuk_asymptotic_1989,pemantle_analytic_2013}.
   The point $\theta(\alpha) \in \mathcal{E}$ defined above is  the saddle-point for both of them, this is the subject of Section \ref{subsec:saddle}. 
 The integration contours on ${\bf S}$ are then shifted to  new ones 
  $\Gamma_{\theta_1, \alpha}$ and $\Gamma_{\theta_2,\alpha}$ which are 
       constructed in such a way that they pass  through the saddle-point  $\theta(\alpha)$,
   follow the steepest-descent curve in its neighborhood  $\mathcal{O}(\theta(\alpha))$
  and are ``higher'' than the saddle-point  w.r.t. the level curves of the function $\langle \theta(s) \mid e_\alpha  \rangle$ outside 
 $\mathcal{O}(\theta(\alpha))$, see  Section \ref{subsec:shiftingcontours}.  Applying Cauchy Theorem,  the density is finally represented 
  as a  sum of integrals along these new contours 
  and the sum of residues at poles of the integrands we encounter deforming the initial  ones: 
\begin{multline}
 \label{122}  
 \pi(r e_\alpha )= \sum_{p \in {\cal P'}_\alpha} {\rm res}_p \phi_2(\theta_1(s))
\frac{\gamma_2(p)}{\sqrt{d(\theta_1(p))}} e^{-r \langle \theta(p) \mid e_\alpha \rangle } +
     \sum_{p \in {\cal P''}_\alpha} {\rm res}_p \phi_1(\theta_2(s))
\frac{\gamma_1(p)}{\sqrt{\tilde d(\theta_2(p))}} e^{-r \langle \theta(p) \mid e_\alpha \rangle }  
\\
\frac{1}{2\pi \sqrt{\det \Sigma}} 
\left( 
\int_{\Gamma_{\theta_1, \alpha} }
\frac{\varphi_2(s) \gamma_2(\theta(s))}{s}  e^{- r \langle \theta(s) \mid e_\alpha \rangle }
{\mathrm{d}s} +
\int_{ \Gamma_{\theta_2, \alpha}   }
\frac{\varphi_1(s) \gamma_1(\theta(s))}{s} e^{- r \langle \theta(s) \mid e_\alpha \rangle }
\mathrm{d}s \right).
\end{multline}
   Here ${\cal P}'_\alpha$ (resp. ${\cal P''}_\alpha$) is  the set of  poles of the first order of $\phi_1$ (resp. $\phi_2$)
that are found  when  shifting  the initial contour $\mathcal{I}_{\theta_1}^+$ to the new one 
   $\Gamma_{\theta_1,\alpha}$ (resp. $\mathcal{I}_{\theta_2}^+   $  to  $ 
\Gamma_{\theta_2, \alpha} $), all of them are on the arc $\{s'_0, \theta(\alpha)\}$ (resp. $\{\theta(\alpha), s''_0\}$) 
    of ellipse $\mathcal{E}$.

   The asymptotic expansion of integrals along $\Gamma_{\theta_1,\alpha }$  and 
  $\Gamma_{\theta_2,\alpha}$ is made explicit by the standard saddle-point  method 
  in Section \ref{subsec:asymptintalongshift}.
   The set of poles ${\cal P}'_\alpha \cup {\cal P''}_\alpha$ is analyzed in Section \ref{subsec:contributionpole} .  
       In Case (1) of Theorem \ref{thmmain}  this set is empty, thus the asymptotic expansion of the density 
  is determined by the saddle-point, its first term is given in Theorem \ref{thmmain}.
     In Cases (2), (3) and (4) this set is not empty.  The  residues at  poles 
   over  ${\cal P}'_\alpha \cup {\cal P''}_\alpha $  
   in (\ref{122})  bring  all more important contribution to the asymptotic expansion of $\pi(r e_\alpha)$
    than the integrals  along  $\Gamma_{\theta_1,\alpha}$  and 
   $\Gamma_{\theta_2,\alpha}$.  
       Taking into  account the monotonicity of function $\langle \theta  \mid  e_\alpha  \rangle$  
on the arcs  $\{s''_0, \theta(\alpha)\}$ and on $\{\theta(\alpha), s'_0\}$, they can be ranked in order of their importance: clearly, 
    the term associated with a pole $p'$  is more important than the one with  $p''$ if 
  $\langle p' \mid e_\alpha  \rangle <   \langle  p'' \mid  e_\alpha  \rangle$.
  In  Case (2) (resp. (3)) the most important pole  is   $\zeta \theta^{**}   $ (resp. $\eta \theta^* $), as announced in Theorem \ref{thmmain}.
    In  Case (4) the most important of them is among  $\zeta \theta^{**}   $  and  $ \eta \theta^{*} $, as stated in Theorem \ref{thmmain} as well.
       The expansion of integrals in (\ref{122})  along $\Gamma_{\theta_1,\alpha}$  and $\Gamma_{\theta_2,\alpha} $ 
  via the saddle-point method provides all smaller asymptotic terms than those coming from the  poles.
   Section \ref{sec:asymptexpansion} is devoted to the results: they are stated from two points of view in Sections \ref{subsec:givenangle} and \ref{subsec:givenparameters} respectively. 
  First,  given an angle $\alpha_0$, we find  
  the  uniform asymptotic expansion of the density $\pi(r \cos (\alpha), r \sin (\alpha))$
   as $r \to \infty$ and $\alpha \in \mathcal{O}(\alpha_0)$ 
  depending on parameters $(\Sigma, \mu, R)$: Theorems \ref{thmresults1} --\ref{thmresults4} of Section \ref{subsec:givenangle}
  state it in all cases of parameters (1)--(4).
  Second, in Section \ref{subsec:givenparameters}, given a set parameters $(\Sigma, \mu, R)$, we compute the asymptotics of the density for all angles $\alpha_0 \in ]0, \pi/2[$,  see Theorems \ref{thmresultsnew1}-\ref{thmresultsnew3}.

\smallskip 

\noindent{\bf Remark.} 
     The constants mentioned  in Theorem \ref{thmmain} and all those in asymptotic expansions of Theorems \ref{thmresults1}--\ref{thmresultsnew3}
  are  specified in terms of functions $\varphi_1$ and $\varphi_2$.   In the  present paper we leave unknown these functions
  in their initial domains of definition although we carry out explicitly  their meromorphic continuation procedure and  find all their poles.
   In  \cite{franceschi_explicit_2017} the first author and K.~Raschel make explicit  these functions solving some boundary value problems.
  This determines the constants in asymptotic expansions in Theorems \ref{thmmain}, \ref{thmresults1}--\ref{thmresultsnew3} .

\medskip   

\noindent{\bf Future works.}
    The case of parameters such that $\zeta\theta^{**}=\theta(\alpha)$ and
$\eta \theta^{*} \not \in \{s'_0, \theta(\alpha)\{$ or the case  such that
   $\eta \theta^*=\theta(\alpha)$   and
$\zeta \theta^{**} \not\in \{s''_0, \theta(\alpha)\{$
     are not treated in  Theorem \ref{thmmain}.
Theorem \ref{thmresults4} gives a partial result but not at all as satisfactory as in all other cases.
 In fact, in these cases the saddle-point $\theta(\alpha)$ coincides 
 with  the  ``main'' pole of $\phi_1$ or $\phi_2$.  
 The analysis  is then reduced to a technical problem of computing the asymptotics of an integral whenever the saddle-point coincides with a pole
  of the integrand or approaches to it. We leave it for the future work. 
    
 In the cases $\alpha=0$ and $\alpha=\pi/2$,
   the tail asymptotics of the boundary measures $\nu_1$ and $\nu_2$ has been found in \cite{dai_stationary_2013}
  and the constants have been specified in \cite{franceschi_explicit_2017}.
  It would be also possible to find the asymptotics of $\pi(r \cos \alpha, r \sin \alpha)$ where $r \to \infty$ and $\alpha \to 0$ or $\alpha \to \pi/2$.      
   This problem is reduced  to obtaining the asymptotics of an integral when the saddle-point $\theta(0)$ or $\theta(\pi/2)$ coincides 
  with a branch point of the integrand $\phi_1$ or $\phi_2$. It can be solved by the same methods as in \cite{kourkova_random_2011} for discrete random walks.

\section{Riemann surface \texorpdfstring{$\mathbf{S}$}{S}}
\label{riemannsurfaceS}

\subsection{Kernel \texorpdfstring{$\gamma(\theta_1,\theta_2)$}{gamma}}

     The kernel of the main functional equation  
$$
\gamma(\theta_1,\theta_2)=
\frac{1}{2}(\sigma_{11} \theta_1^2 + \sigma_{22} \theta_2^2 +2\sigma_{12}\theta_1\theta_2)
+
\mu_1\theta_1+\mu_2\theta_2
$$
  can be written as
$$
\gamma(\theta_1, \theta_2)= \tilde{a}(\theta_2)\theta_1^2+\tilde{b}(\theta_2)\theta_1+\tilde{c}(\theta_2)
=a(\theta_1)\theta_2^2+b(\theta_1)\theta_2+c(\theta_1)
$$
where
\begin{center}
\begin{tabular}{ccc}
$\tilde{a}(\theta_2)=\frac{1}{2}\sigma_{11}$, & $\tilde{b}(\theta_2)=\sigma_{12}\theta_2+\mu_1$, & $\tilde{c}(\theta_2)=\frac{1}{2}\sigma_{22}\theta_2^2+\mu_2\theta_2$, \\ 
$a(\theta_1)=\frac{1}{2}\sigma_{22}$, & $b(\theta_1)=\sigma_{12}\theta_1+\mu_2$, & $c(\theta_1)=\frac{1}{2}\sigma_{11}\theta_1^2+\mu_1\theta_1$. \\ 
\end{tabular} 
\end{center}
The equation $\gamma(\theta_1, \theta_2)\equiv 0$ defines a two-valued algebraic function $\Theta_1(\theta_2)$  such that 
 $\gamma(\Theta_1(\theta_2),\theta_2)\equiv 0$
 and a two-valued algebraic function 
 $\Theta_2(\theta_1)$ such that $\gamma(\theta_1, \Theta_2(\theta_1)\equiv 0$.
    These functions have two branches:
\begin{center}
\begin{tabular}{cc}
$\Theta_1^+(\theta_2)=\frac{-\tilde{b}(\theta_2)+\sqrt{\tilde{d}(\theta_2)}}{2\tilde{a}(\theta_2)}$,
&
$\Theta_1^-(\theta_2)=\frac{-\tilde{b}(\theta_2)-\sqrt{\tilde{d}(\theta_2)}}{2\tilde{a}(\theta_2)}$,
\end{tabular}
\end{center}
and
\begin{center}
\begin{tabular}{cc}
$\Theta_2^+(\theta_1)=\frac{-b(\theta_1)+\sqrt{d(\theta_1)}}{2a(\theta_1)}$,
&
$\Theta_2^-(\theta_1)=\frac{-b(\theta_1)-\sqrt{d(\theta_1)}}{2a(\theta_1)}$.
\end{tabular}
\end{center}
   where 
\begin{center}
\begin{tabular}{c}
$\tilde{d}(\theta_2)=\theta_2^2(\sigma_{12}^2-\sigma_{11}\sigma_{22})+2\theta_2(\mu_1\sigma_{12}-\mu_2\sigma_{11})+\mu_1^2$,
 \\ $d(\theta_1)=\theta_1^2(\sigma_{12}^2-\sigma_{11}\sigma_{22})+2\theta_1(\mu_2\sigma_{12}-\mu_1\sigma_{22})+\mu_2^2$. \\ 
\end{tabular} 
\end{center}
 The  discriminant ${d}(\theta_1)$  (resp. $\tilde{d}(\theta_2)$) has two zeros $\theta_1^+$,  $\theta_1^{-}$ 
  (resp. $\theta_2^{+}$ and $\theta_2^{-}$)  that  are 
   both real and of opposite signs:
\begin{center}
\begin{tabular}{cc}
$\theta_1^-= \frac{(\mu_2\sigma_{12}-\mu_1\sigma_{22}) - \sqrt{D_1}}{\det{\Sigma}}<0$,
&
$\theta_1^+= \frac{(\mu_2\sigma_{12}-\mu_1\sigma_{22}) + \sqrt{D_1}}{\det{\Sigma}}>0$,
\\
\end{tabular}
\end{center}
\begin{center}
\begin{tabular}{cc}
$\theta_2^- = \frac{(\mu_1\sigma_{12}-\mu_2\sigma_{11}) - \sqrt{D_2}}{\det{\Sigma}} <0$,
&
$\theta_2^+ = \frac{(\mu_1\sigma_{12}-\mu_2\sigma_{11}) + \sqrt{D_2}}{\det{\Sigma}} >0$,
\\
\end{tabular}
\end{center}
 with notations 
 $D_1 =(\mu_2\sigma_{12}-\mu_1\sigma_{22})^2 +\mu_2^2 \det{\Sigma}$
  and
 $D_2 =(\mu_1\sigma_{12}-\mu_2\sigma_{11})^2 +\mu_1^2 \det{\Sigma}$.
  Then  $\Theta_2(\theta_1)$  (resp. $\Theta_1(\theta_2)$) has two branch  points: $\theta_1^-$ and 
 $\theta_1^+$   (resp  $\theta_2^{-}$ and $\theta_2^{+}$). 
We can compute: 
$$
\Theta_2^\pm(\theta_1^-)
=\frac{\mu_1\sigma_{12}-\mu_2\sigma_{11} + \frac{\sigma_{12}}{\sigma_{22}} \sqrt{D_1}}{\det \Sigma}, \   \   
\Theta_2^\pm(\theta_1^+)
=\frac{\mu_1\sigma_{12}-\mu_2\sigma_{11} - \frac{\sigma_{12}}{\sigma_{22}} \sqrt{D_1}}{\det \Sigma},$$ 
 $$\Theta_1^\pm(\theta_2^-)
 = 
\frac{\mu_2\sigma_{12}-\mu_1\sigma_{22} + \frac{\sigma_{12}}{\sigma_{11}} \sqrt{D_2}}{\det \Sigma},\    \ 
 \Theta_1^\pm(\theta_2^+)
 = 
\frac{\mu_2\sigma_{12}-\mu_1\sigma_{22} - \frac{\sigma_{12}}{\sigma_{11}} \sqrt{D_2}}{\det \Sigma}.$$
    Furthermore,  $d(\theta_1)$  (resp. $\tilde d(\theta_2)$)
 being positive on $]\theta_1^-, \theta_1^+[$ 
(resp.   $]\theta_2^-, \theta_2^+[$)  
and negative on ${\bf R} \setminus [\theta_1^-, \theta_1^+]$ 
(resp.  ${\bf R}\setminus [\theta_2^-, \theta_2^+]$) , 
both branches $\Theta_2^{\pm}(\theta_1)$
  (resp. $\Theta_1^\pm (\theta_2)$) 
 take real values on $[\theta_1^-, \theta_1^+]$ 
(resp.   $[\theta_2^-, \theta_2^+]$)  
 and complex values on   ${\bf R} \setminus [\theta_1^-, \theta_1^+]$
(resp.  ${\bf R}\setminus [\theta_2^-, \theta_2^+]$).

\begin{figure}[hbtp]
\centering
\includegraphics[scale=0.7]{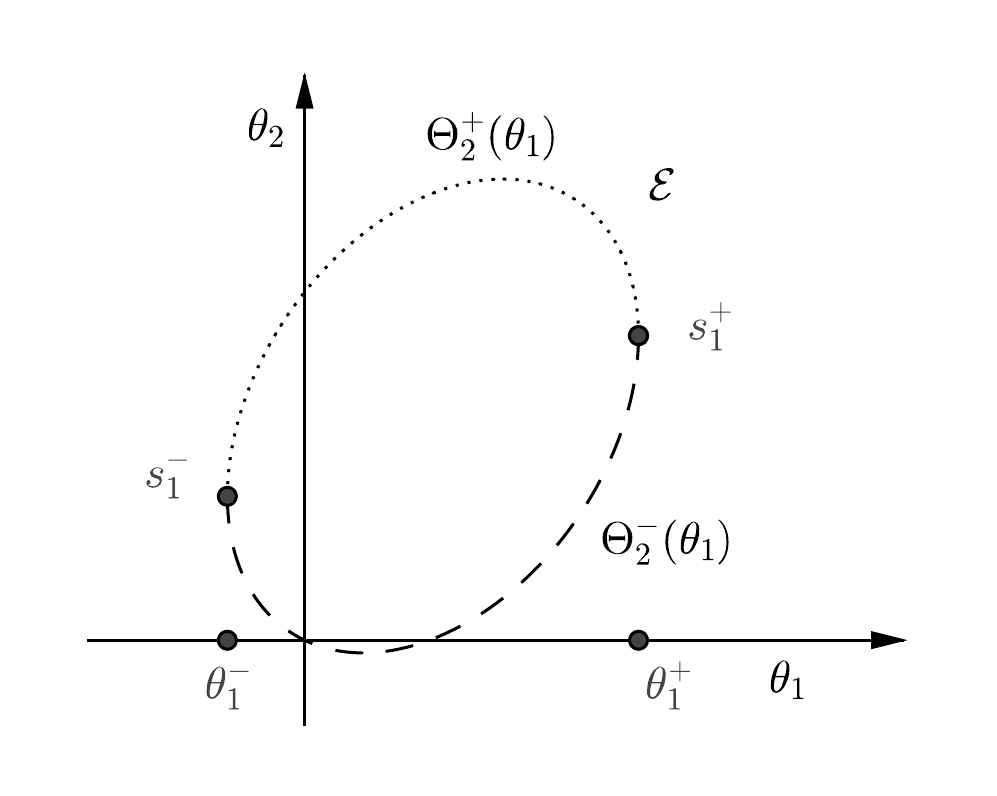}
\includegraphics[scale=0.7]{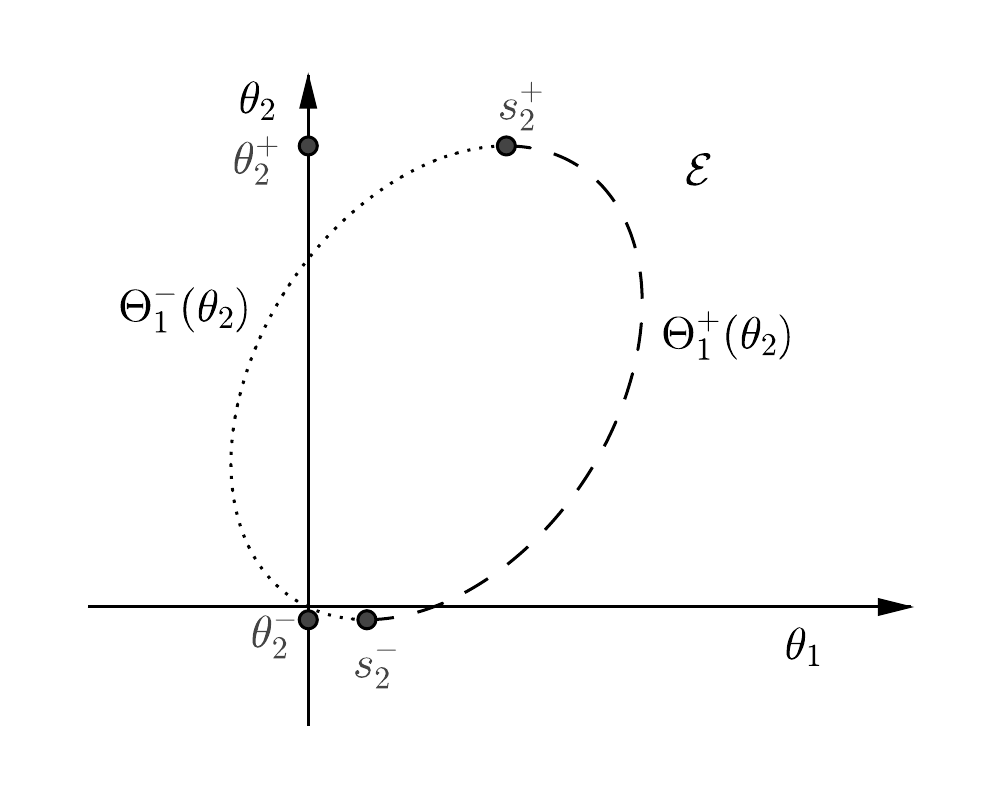}
\caption{Functions $\Theta_2^\pm (\theta_1)$ and $\Theta_1^\pm (\theta_2)$ on $[\theta_1^-,\theta_1^+]$ and $[\theta_2^-,\theta_2^+]$}
\label{fonctionellipse}
\end{figure}

\subsection{Construction of the Riemann surface \texorpdfstring{${\bf S}$}{S}}
\label{constructionS}

We now construct the Riemann surface $\mathbf{S}$ of the algebraic function $\Theta_2(\theta_1)$. For this purpose we take two Riemann spheres $\mathbb{C}_{\theta_1}^1\cup\{\infty\}$ and $\mathbb{C}_{\theta_1}^2\cup\{\infty^\prime\}$, say $\mathbf{S}_{\theta_1}^1$ and $\mathbf{S}_{\theta_1}^2$, cut along $([-\infty^{(\prime)},\theta_1^{-}] \cup [\theta_1^{+},\infty^{(\prime)}])$, and we glue them together along the borders of these cuts, joining the lower border of the cut on $\mathbf{S}_{\theta_1}^1$ to the upper border of the same cut on $\mathbf{S}_{\theta_1}^2$ and vice versa. This procedure can be viewed as gluing together two half-spheres, see Figure \ref{constrsurfa}. The resulting surface $\mathbf{S}$ is homeomorphic to a sphere (i.e., a compact Riemann surface of genus 0) and is projected on the Riemann sphere $\mathbb{C}\cup\{\infty\}$ by a canonical covering map $h_{\theta_1}: \mathbf{S} \to \mathbb{C}\cup\{\infty\}$.
In a standard way, we can lift the function $\Theta_2(\theta_1)$ to $\mathbf{S}$, by setting $\Theta_2(s)=\Theta_2^+(h_{\theta_1}(s))$ if $s\in\mathbf{S}_{\theta_1}^1 \subset \mathbf{S}$ and $\Theta_2(s)=\Theta_2^-(h_{\theta_1}(s))$ if $s\in\mathbf{S}_{\theta_1}^2 \subset \mathbf{S}$. 

In a similar way one constructs the Riemann surface of the function $\Theta_1(\theta_2)$, by gluing together two copies $\mathbf{S}_{\theta_2}^1$ and $\mathbf{S}_{\theta_2}^2$ of the Riemann sphere $\mathbf{S}$ cut along $([-\infty^{(\prime)},\theta_2^{-}] \cup [\theta_2^{+},\infty^{(\prime)}])$.
We obtain again a surface homeomorphic to a sphere where we  lift function  $\Theta_1(\theta_2)$.

\begin{figure}[hbtp]
\centering
\includegraphics[scale=1.2]{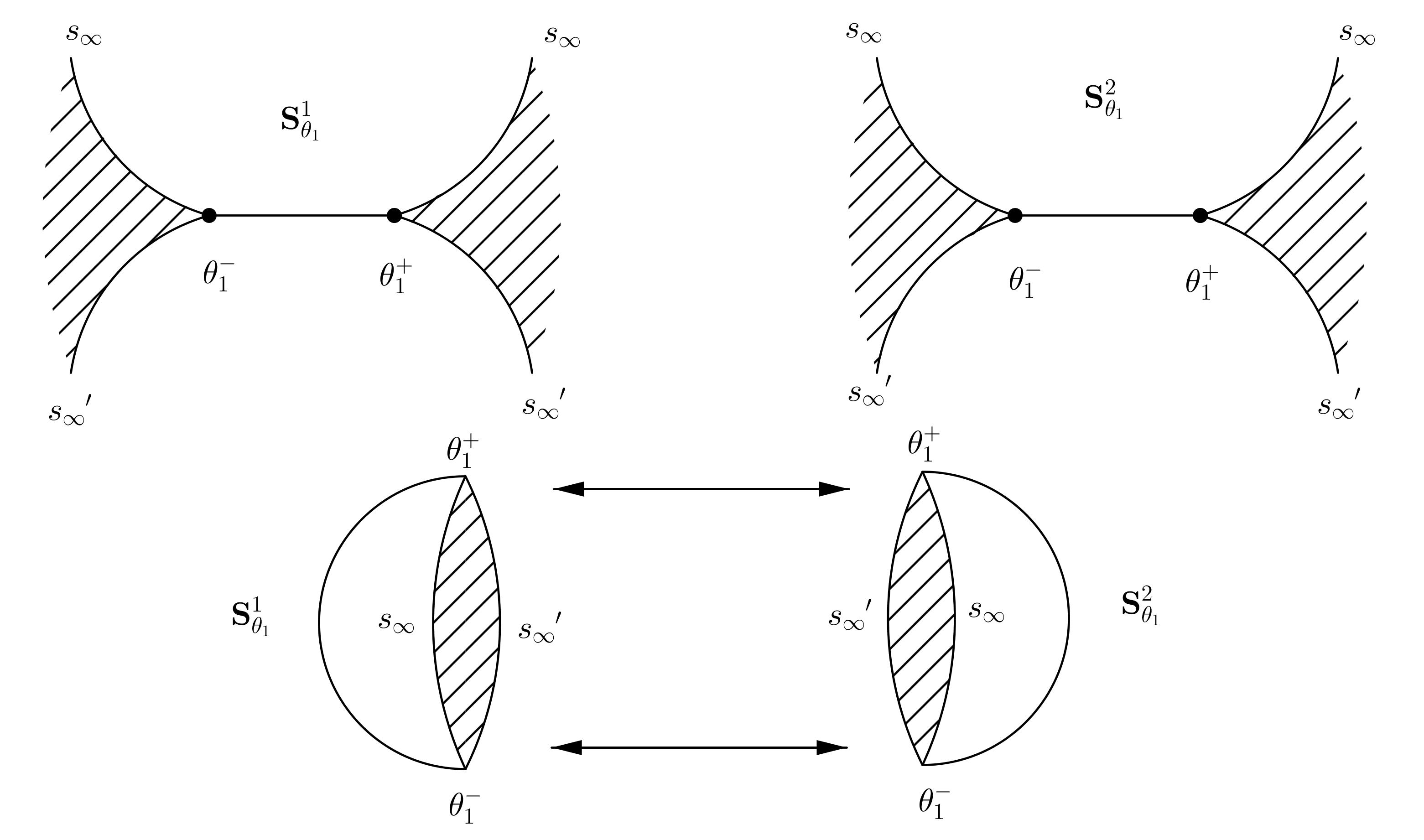}
\caption{Construction of the Riemann surface $\s$}
\label{constrsurfa}
\end{figure}

Since the Riemann surfaces of $\Theta_1(\theta_2)$ and $\Theta_2(\theta_1)$ are  equivalent, we can and will work on a {\it single} Riemann surface $\mathbf{S}$, with two different covering maps $h_{\theta_1}, h_{\theta_2} : \mathbf{S} \to \mathbb{C}\cup\{\infty\}$. Then, for $s\in\mathbf{S}$, we set $\theta_1(s)=h_{\theta_1}(s)$ and $\theta_2(s)=h_{\theta_2}(s)$. We will often represent a point $s\in\mathbf{S}$ by the pair of its \textit{coordinates} $(\theta_1(s),\theta_2(s))$. These coordinates are of course not independent, because the equation $\gamma(\theta_1(s),\theta_2(s))=0$ is valid for any $s\in\mathbf{S}$.  
    One can see ${\bf S}$  with points 
$
s_1^\pm=(\theta_1^\pm,\frac{\mu_1\sigma_{12}-\mu_2\sigma_{11} \mp \frac{\sigma_{12}}{\sigma_{22}} \sqrt{D_1}}{\det \Sigma})$,
$
s_2^\pm=(\frac{\mu_2\sigma_{12}-\mu_1\sigma_{22} \mp \frac{\sigma_{12}}{\sigma_{11}} \sqrt{D_2}}{\det \Sigma},\theta_2^\pm)$,
$
s_\infty=(\infty,\infty)$,
$
s_{\infty^\prime}=(\infty^\prime,\infty^\prime)$
on Figure \ref{realpointss}.
 It is the union of ${\bf S}_{\theta_1}^1$ and ${\bf S}_{\theta_1}^2$ glued along the contour 
${\cal R}_{\theta_1}=\{s : \theta_1(s) \in {\bf R} \setminus ]\theta_1^-, \theta_1^+[\}$ that goes from $s_\infty$ to $s_{\infty^\prime}$
  via $s_1^-$  and back to $s_\infty$ via $s_1^+$. 
It is also the union   of ${\bf S}_{\theta_2}^1$ and ${\bf S}_{\theta_2}^2$ glued along the contour 
${\cal R}_{\theta_2}=\{s : \theta_2(s) \in {\bf R} \setminus ]\theta_2^-, \theta_2^+[\}$. This contour
   goes from $s_\infty$ to $s_{\infty^\prime}$ and back as well,  but via $s_2^-$  and $s_2^+$. 
     Let ${\cal E}$ be the set of points of ${\bf S}$ where both coordinates $\theta_1(s)$ and $\theta_2(s)$ are real.
  Then 
$$
\mathcal{E}=\{ s\in\mathbf{S}:\theta_1(s)\in[\theta_1^-,\theta_1^{+}] \}
=
\{ s\in\mathbf{S}:\theta_2(s)\in[\theta_2^-,\theta_2^{+}] \}.
$$
   One can see $\mathcal{ E}$ on Figures \ref{fonctionellipse} and \ref{realpointss}, it contains all branch  points $s_1^{\pm}$ and $s_2^{\pm}$.

\begin{figure}[hbtp]
\centering
\includegraphics[scale=1]{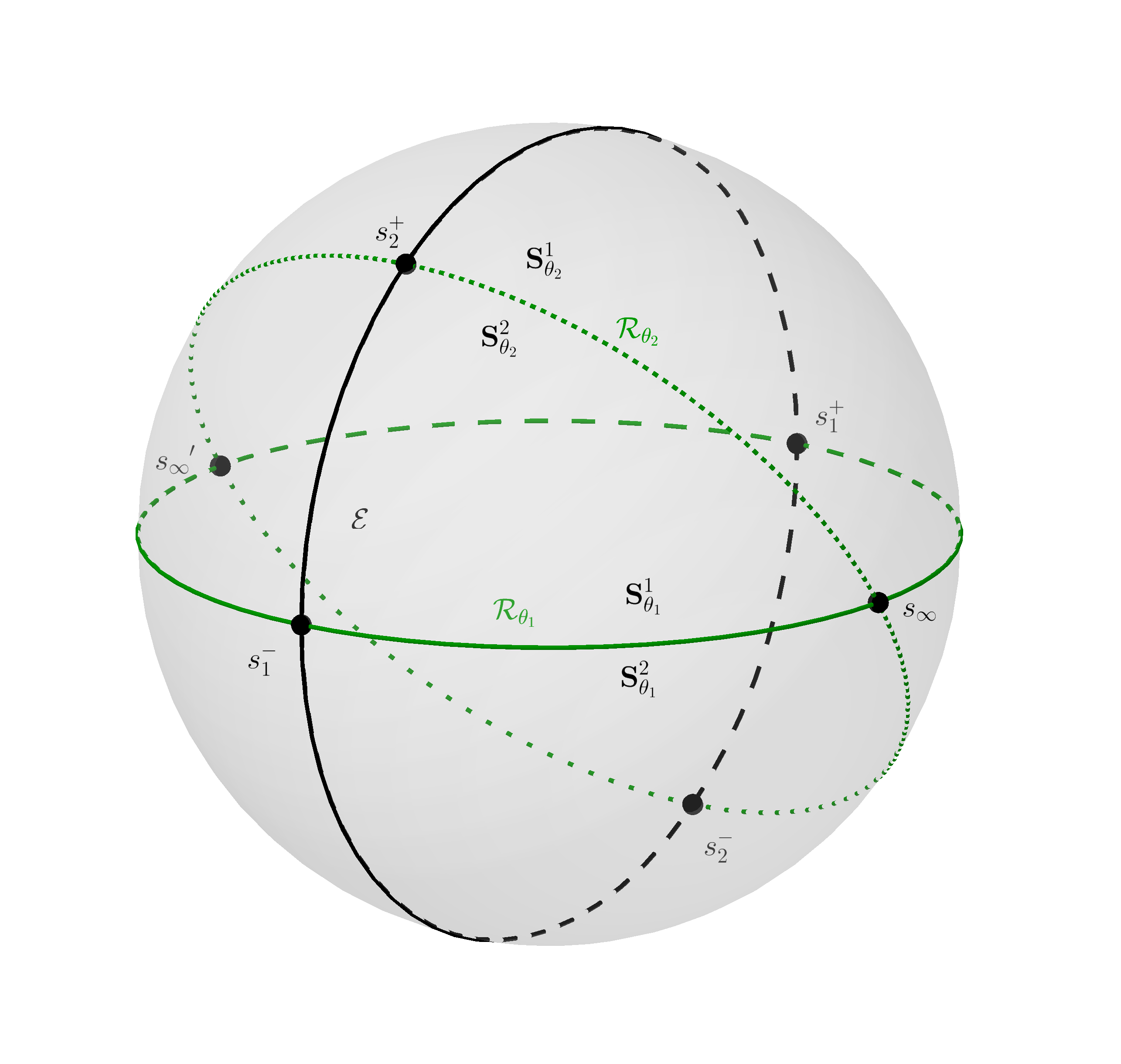}
\caption{Points of $\s$ with $\theta_1(s)$ or $\theta_2(s)$ real}
\label{realpointss}
\end{figure}

\subsection{Galois automorphisms \texorpdfstring{$\eta$}{eta} and  \texorpdfstring{$\zeta$}{zeta}}

Now we need to introduce Galois automorphisms on ${\bf S}$.
 For any $s\in \mathbf{S} \setminus s_1^{\pm}$   there is a unique $s^\prime \ne s \in \mathbf{S} \setminus s_1^{\pm}$ such that 
$\theta_1(s)=\theta_1(s^\prime)$. Furthermore, if $s \in {\mathbf{S}}_{\theta_1}^1$ then $s^\prime \in {\mathbf{S}}_{\theta_1}^2$ and vice versa. On the other hand, whenever $s=s_1^-$ or $s=s_1^+$  (i.e. $\theta_1(s)=\theta_1^\pm$  is one of branch  points of $\Theta_2(\theta_1)$) 
 we have $s=s^\prime$. Also, since $\gamma(\theta_1(s),\theta_2(s))=0$, $\theta_2(s)$ and $\theta_2(s^\prime)$ represent both values of function $\Theta_2(\theta_1)$ at $\theta_1=\theta_1(s)=\theta_1(s^\prime)$. By Vieta's theorem we obtain $\theta_2(s)\theta_2(s^\prime)=\frac{c(\theta_1(s))}{a(\theta_1(s))}$.

Similarly, for any $s\in\mathbf{S} \setminus s_2^{\pm}$, there exists a unique $s^{\prime\prime} \neq s \in\mathbf{S}
 \setminus s_2^{\pm} $ such that $\theta_2(s)=\theta_2(s^{\prime\prime})$. If $s\in {\mathbf{S}}_{\theta_2}^1$ then $s^\prime \in {\mathbf{S}}_{\theta_2}^2$ and vice versa. On the other hand, if $s=s_2^-$  or $s=s_2^+$  (i.e. $\theta_2(s)=\theta_2^\pm$ is one of branch  points of $\Theta_1(\theta_2)$) 
    we have $s=s^{\prime\prime}$. Moreover, since $\gamma(\theta_1(s),\theta_2(s))=0$, $\theta_1(s)$ and $\theta_1(s^{\prime\prime})$ give both values of function $\Theta_1(\theta_2)$ at $\theta_2=\theta_2(s)=\theta_2(s^{\prime\prime})$. Again, by Vieta's theorem $\theta_1(s)\theta_1(s^{\prime\prime})=\frac{\tilde{c}(\theta_2(s))}{\tilde{a}(\theta_2(s))}$.

With the previous notations we now define the mappings $\zeta:\mathbf{S}\to\mathbf{S}$ and $\eta:\mathbf{S}\to\mathbf{S}$ by
\begin{center}
$\left\{
\begin{tabular}{cc}
$\zeta s = s^\prime$ & if $\theta_1(s)=\theta_1(s^\prime)$,
\\ 
$\eta s = s^{\prime\prime}$ & if $\theta_2(s)=\theta_2(s^{\prime\prime})$
\\ 
\end{tabular} 
\right. $
\end{center}
Following \cite{malyshev_sluchainye_1970} we call them \textit{Galois automorphisms} of $\mathbf{S}$. Then $\zeta^2=\eta^2=\text{Id}$, and
\begin{center}
\begin{tabular}{cc}
$\theta_2(\zeta s)= \frac{c(\theta_1(s))}{a(\theta_1(s))} \frac{1}{\theta_2(s)}$,
&
$\theta_1(\eta s)=\frac{\tilde{c}(\theta_2(s))}{\tilde{a}(\theta_2(s))} \frac{1}{\theta_1(s)}$.
\\
\end{tabular}
\end{center}
Points $s_1^-$ and $s_1^+$  (resp. $s_2^-$ and $s_2^+$)  are fixed points for $\zeta$ (resp. $\eta$). 

  It is known that conformal automorphisms of a sphere (that can be identified to ${\bf C} \cup \infty$)
     are transformations of type $z\mapsto \frac{az+b}{cz+d}$ where $a,b,c,d$ are any complex numbers satisfying $ad-bc\neq 0$.
  The automorphisms $\zeta$ and $\eta$, which are conformal automorphisms of $\s$, have each two fixed points  and are involutions (because 
$\zeta^2=\eta^2=\text{Id}$).
  We can deduce from it that $\zeta$ (resp. $\eta$) is a symmetry w.r.t. the axis $A_1$ (resp. $A_2$) that passes through fixed points $s_1^-$ and $s_1^+$ (resp. $s_2^-$ and $s_2^+$). In other words $\zeta$ (resp. $\eta$)
   is a  rotation of angle $\pi$, around $D_1$  (resp. $A_2$), see Figure \ref{sphereaxes}.
  Let us draw the axis $A$  orthogonal to the plane generated by the axes $A_1$ and $A_2$ and passing through the intersection point of $A_1$ and $A_2$.
   We denote by  $\beta$ the angle between the axes $A_1$ and $A_2$. Automorphisms $\eta\zeta$ and $\zeta\eta$ are then {\it rotations of angle $ 2\beta$ and $-2\beta$ around the axis  $A$.} This axis  goes through  points $s_\infty$ and $s_{\infty\prime}$ which are fixed points for $\eta\zeta$ and $\zeta\eta$,  see Figure \ref{sphereaxes}.

In the particular case  $\Sigma=\text{Id}$,  we have $\eta\zeta=\zeta\eta$, the axes $A_1$ and $A_2$ are orthogonal. We deduce that $\beta=\frac{\pi}{2}$ and that $\eta\zeta$ and $\zeta\eta$ are symmetries w.r.t. the axis $A$.

\begin{figure}[hbtp]
\centering
\includegraphics[scale=1]{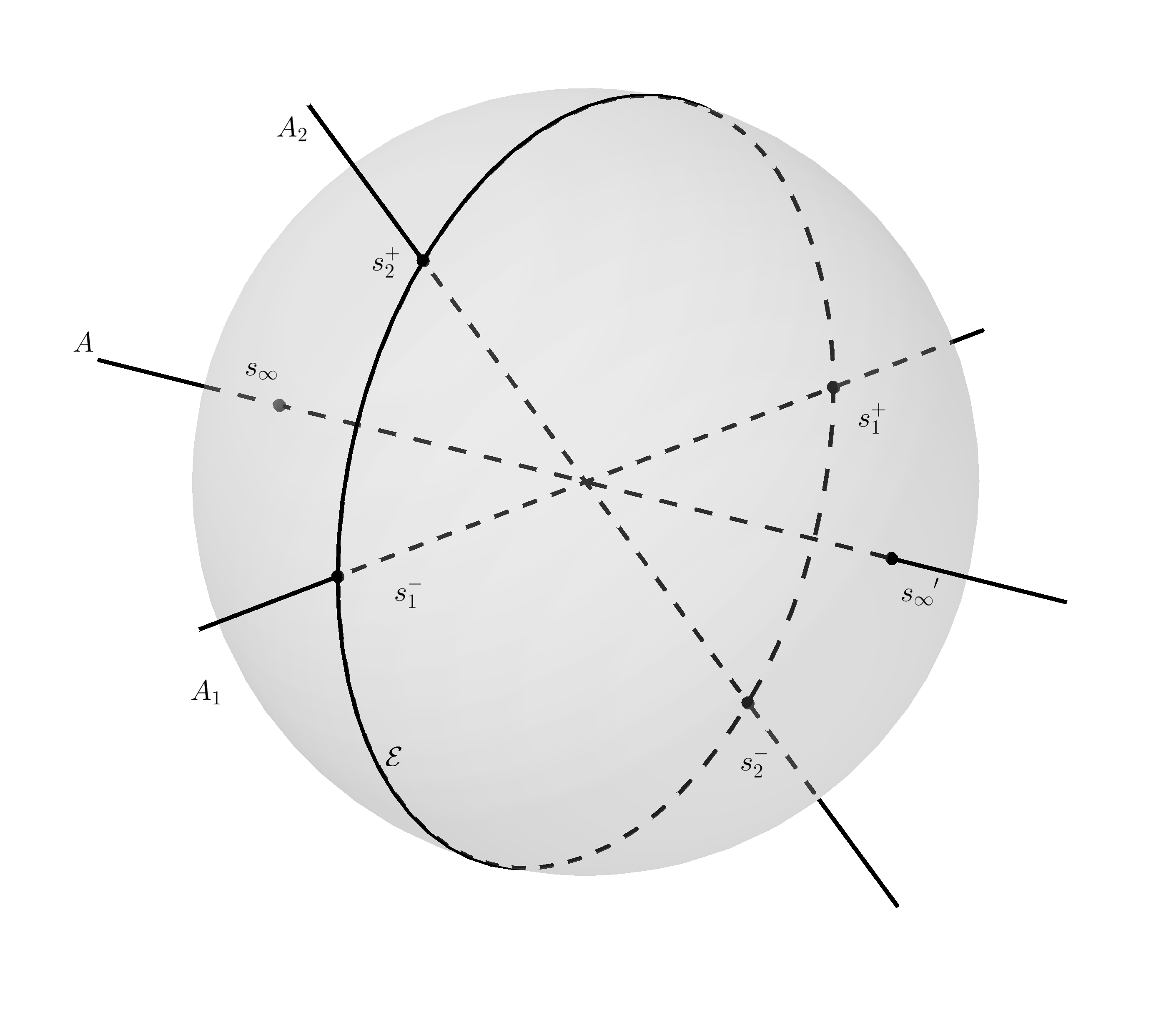}
\caption{Axes $A_1$, $A_2$ and $A$ of Galois automorphisms $\zeta$, $\eta$ and $\zeta\eta$ respectively}
\label{sphereaxes}
\end{figure}

\subsection{Domains of initial definition of \texorpdfstring{$\phi_1$}{phi1} and \texorpdfstring{$\phi_2$}{phi2} on  \texorpdfstring{${\bf S}$}{S}}  

    We would like to lift  functions  $\phi_1(\theta_2)$ and $\phi_2(\theta_1)$ on ${\bf S}$ naturally as 
$\phi_1(s)=\phi_1(\theta_2(s))$ and $\phi_2(s)=\phi_2(\theta_1(s))$. But it can not be done 
for all $s \in {\bf S}$,  $\phi_1(\theta_2)$ and $\phi_2(\theta_1)$ being not defined on the whole of ${\bf  C}$.  
 Nevertheless, we  are able to do it  for points $s$  where $\theta_2(s)$ or $\theta_1(s)$ respectively  have non-positive real parts.
  Therefore, in this section we study the domains on ${\bf S}$ where it holds true.   

 For any $\theta_1 \in {\bf C}$ with ${\R}(\theta_1)=0$, $\Theta_2(\theta_1)$ takes two values  
$\Theta_2^{\pm}(\theta_1)$. Let us observe that under assumption  that the second coordinate of the interior drift is negative, i.e\  $\mu_2<0$ we have  ${\R} \Theta_2^-(\theta_1)\leq 0$ and ${\R} \Theta_2^+(\theta_1)>0$. 
   Furthermore ${\R}\Theta_2^-(\theta_1)=0$ only at $\theta_1=0$, and then  $\Theta_2^-(\theta_1)=0$. 
  The domain 
$$ \Delta_1=\{s \in \s :   \ {\R}\theta_1(s)<0\}$$ 
     is simply connected and bounded by the contour  
$\mathcal{I}_{\theta_1}=\{s :   {\R}\theta_1(s)=0\}$.
 
   The contour  
$\mathcal{I}_{\theta_1}$ can be represented as the union of 
 $\mathcal{I}_{\theta_1}^-\cup \mathcal{I}_{\theta_1}^+$ , where 
  $\mathcal{I}_{\theta_1}^-=\{s :   {\R}\theta_1(s)=0, {\R} \theta_2(s) \leq 0\}$, 
    $\mathcal{I}_{\theta_1}^+=\{s :   {\R}\theta_1(s)=0, {\R} \theta_2(s) > 0\}$, see Figure~\ref{contIth1}.

The contour $\mathcal{I}_{\theta_1}^-$ goes from $s_\infty$ to $s_{\infty\prime}$ crossing the set of real points $\mathcal{  E}$
 at $s_0=(0,0)$, while   $\mathcal{I}_{\theta_1}^{+}$ goes from $s_\infty$ to $s_{\infty\prime}$ crossing  $\mathcal{  E}$
  at $s'_0=(0, -2 \frac{\mu_2}{\sigma_{22}})$ where the second coordinate is positive.

In the same way, under assumption that the first coordinate of the interior drift is negative, i.e. $\mu_1<0$, 
 for any $\theta_2 \in {\bf C}$ with ${\R}(\theta_2)=0$, $\Theta_1(\theta_2)$ takes two values  
$\Theta_1^{\pm}(\theta_2)$, where ${\R} \Theta_1^-(\theta_2)\leq 0$ and ${\R} \Theta_1^+(\theta_2)>0$,   
  moreover ${\R}\Theta_1^-(\theta_2)=0$ only at $\theta_2=0$, and then  $\Theta_1^-(\theta_2)=0$. 
The domain 
$$ \Delta_2=\{s \in \s :    {\R}\theta_2(s)<0\}$$ 
     is simply connected and bounded by the contour  
$\mathcal{I}_{\theta_2}=\{s :   {\R}\theta_2(s)=0\}$. 
   The contour  
$\mathcal{I}_{\theta_2}$ can be represented as the union of 
 $\mathcal{I}_{\theta_2}^-\cup \mathcal{I}_{\theta_2}^+$ , where 
  $\mathcal{I}_{\theta_2}^-=\{s :   {\R}\theta_2(s)=0, {\R} \theta_1(s) \leq 0\}$, 
    $\mathcal{I}_{\theta_2}^+=\{s :   {\R}\theta_2(s)=0, {\R} \theta_1(s) > 0\}$.
The contour $\mathcal{I}_{\theta_2}^-$ goes from $s_\infty$ to $s_{\infty\prime}$ crossing the set of real points $\mathcal{ E}$
 at $s_0=(0,0)$, while   $\mathcal{I}_{\theta_2}^+$ goes from $s_\infty$ to $s_{\infty\prime}$ crossing  $\mathcal{ E}$
  at $s''_
0=(-2 \frac{\mu_1}{\sigma_{11}}, 0)$, see Figure~\ref{contIth1}.

     Assume now that the interior drift has both coordinates negative, i.e. (\ref{mumu}).   From what said above, $\mathcal{I}_{\theta_1}^-\setminus s_0 \subset \Delta_2$ and 
  $\mathcal{I}_{\theta_2}^-\setminus s_0 \subset \Delta_1$. The intersection $\Delta_1 \cap \Delta_2$ consists of two connected 
  components, both bounded by  $\mathcal{I}_{\theta_1}^-
$ and $\mathcal{I}_{\theta_2}^-$. The union $\Delta_1 \cup \Delta_2$ is a connected domain, but not simply connected because of the point 
  $s_0$. The domain $\Delta_1 \cup \Delta_2 \cup s_0$ is  open, simply connected and bounded by  $\mathcal{I}_{\theta_1}^+
$ and $\mathcal{I}_{\theta_2}^+$, see Figure~\ref{contIth1}. We set $\Delta=\Delta_1\cup\Delta_2$.

    Note that in the cases  of stationary SRBM  with  drift $\mu$ having one of coordinates non-negative, the location of
  contours $\mathcal{I}_{\theta_1}^+$, $\mathcal{I}_{\theta_1}^-$, $\mathcal{I}_{\theta_2}^+$, $\mathcal{I}_{\theta_2}^-$
   on ${\bf S}$ is different. For example, assume that $\mu_2>0$. Then ${\R} \Theta_2^-(\theta_1)<0$ and ${\R} \Theta_2^+(\theta_1)\geq 0$,
    the contour $\mathcal{I}_{\theta_1}^-$ goes from $s_\infty$ to $s_{\infty\prime}$ crossing the set of real points $\mathcal{  E}$
 at $s'_0=(0, -2 \frac{\mu_2}{\sigma_{22}})$ where the second coordinate is negative, while   $\mathcal{I}_{\theta_1}^{+}$ goes from $s_\infty$ to $s_{\infty\prime}$ crossing  $\mathcal{  E}$
  at $s_0=(0, 0)$.  In order to shorten the number of cases and pictures, we restrict ourselves in this paper to the case (\ref{mumu}) of both coordinates of $\mu$  negative,  although all our methods work in these other cases as well.     
  
\begin{figure}[hbtp]
\centering
\includegraphics[scale=1]{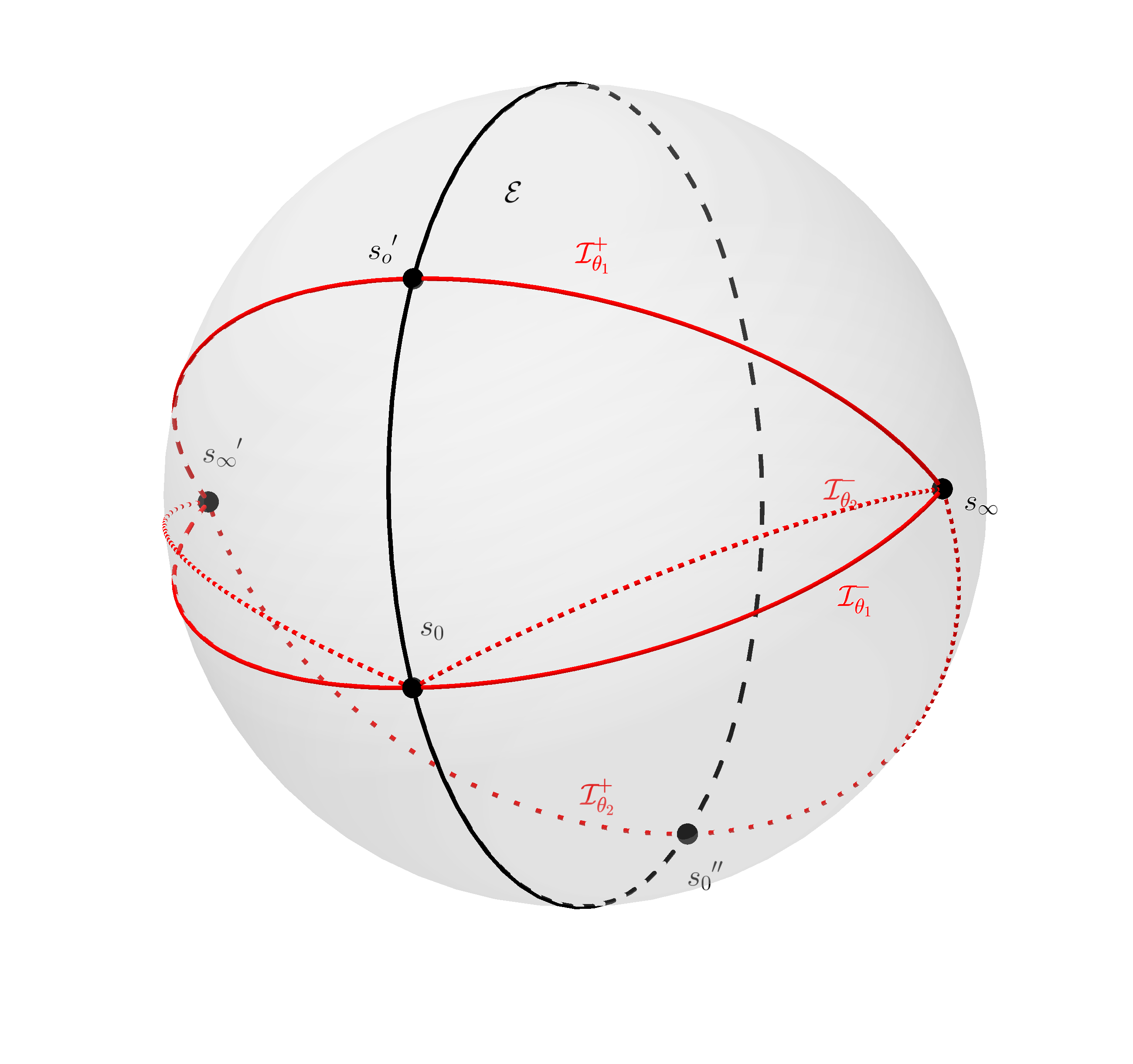}
\caption{Pure imaginary points of $\s$}
\label{contIth1}
\end{figure}

\subsection{Parametrization of \texorpdfstring{$\s$}{S}}\label{param}

      It is difficult to visualize on three-dimensional  sphere 
 different points, contours, automorphisms and domains  introduced above 
  that will be used in  future steps. For this reason  
  we propose here an explicit and practical parametrisation of $\s$.  Namely     
  we identify $\s$ to $\mathbb{C}\cup\{\infty\}$  and  in the next proposition we explicitly define $h_{\theta_1}$ and $h_{\theta_2}$ 
two recoveries introduced in Section \ref{constructionS}.
Such a parametrisation  allows to visualize better in two dimensions the sphere $\s\equiv \mathbb{C}\cup\{\infty\}$ and all sets we are interested in, as we can see in Figure \ref{recouvrS}.

\begin{prop}
We set the following covering maps
$$
\begin{array}{lrcl}
h_{\theta_1}: &  \mathbb{C}\cup\{\infty\}\equiv\s & \longrightarrow & \mathbb{C}\cup\{\infty\} 
\\
    & s & \longmapsto &  h_{\theta_1}(s)=\theta_1(s):=\frac{\theta_1^{+}+\theta_1^{-}}{2}+\frac{\theta_1^{+}-\theta_1^{-}}{4} (s+\frac{1}{s})
\end{array}
$$
and
$$
\begin{array}{lrcl}
h_{\theta_2}: &  \mathbb{C}\cup\{\infty\}\equiv\s & \longrightarrow & \mathbb{C}\cup\{\infty\} 
\\
    & s & \longmapsto &  h_{\theta_2}(s)=\theta_2(s):=\frac{\theta_2^{+}+\theta_2^{-}}{2}+\frac{\theta_2^{+}-\theta_2^{-}}{4} (\frac{s}{e^{i\beta}}+\frac{e^{i\beta}}{s}),
\end{array}
$$
where $$\beta= \arccos {-\frac{\sigma_{12}}{\sqrt{\sigma_{11}\sigma_{22}}}}.$$
The equation $\gamma(\theta_1(s),\theta_2(s))=0$ is valid for any $s\in\mathbf{S}$.
 Galois automorphisms can be written
\begin{center}
\begin{tabular}{cc}
$\zeta(s)=\frac{1}{s}$, 
& 
$\eta(s)=\frac{e^{2i\beta}}{s}$, \\ 
\end{tabular} 
\end{center}
and $\eta\zeta$ (resp. $\zeta\eta$) is a rotation around $s_\infty\equiv0$ of angle $2\beta$ (resp. $-2\beta$) according to counterclockwise direction.
\end{prop}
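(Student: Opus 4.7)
The plan rests on the fact that $\s$ is a compact Riemann surface of genus zero, hence biholomorphic to $\mathbb{C}\cup\{\infty\}$; under such an identification the two coverings $h_{\theta_1}$ and $h_{\theta_2}$ are rational functions of degree $2$. I would begin by picking a uniformizing coordinate $s$ tailored to $h_{\theta_1}$: the map $s\mapsto s+1/s$ is a standard degree-$2$ cover of the Riemann sphere branched at $s=\pm 1$ with images $\pm 2$, and the affine rescaling $\theta_1 = \tfrac{\theta_1^++\theta_1^-}{2}+\tfrac{\theta_1^+-\theta_1^-}{4}(s+1/s)$ moves the branch values to $\theta_1^{\pm}$, so realises the covering constructed in Section~\ref{constructionS}. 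The involution $s\mapsto 1/s$ is visibly the deck transformation, which identifies the Galois automorphism $\zeta$ with $s\mapsto 1/s$.

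To determine $h_{\theta_2}$ I would use that $\gamma(h_{\theta_1}(s),h_{\theta_2}(s))=0$ must hold identically, so $h_{\theta_2}(s)\in\{\Theta_2^+(h_{\theta_1}(s)),\Theta_2^-(h_{\theta_1}(s))\}$. Since $h_{\theta_2}$ is a degree-$2$ rational function whose poles lie at the preimages of $\theta_1=\infty$, namely $s=0$ and $s=\infty$, it must have the form $h_{\theta_2}(s)=As+B+C/s$. The coefficients are pinned down by Vieta's formulas for the quadratic $\gamma(h_{\theta_1}(s),\,\cdot\,)$: the sum relation
\begin{equation*}
h_{\theta_2}(s)+h_{\theta_2}(1/s)=-\tfrac{2(\sigma_{12}h_{\theta_1}(s)+\mu_2)}{\sigma_{22}}
\end{equation*}
is an identity in $s+1/s$; matching the constant term produces $B=\tfrac{\theta_2^++\theta_2^-}{2}$, while matching the coefficient of $s+1/s$ forces $A+C=-\tfrac{\sigma_{12}(\theta_1^+-\theta_1^-)}{2\sigma_{22}}$. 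The analogous product relation $h_{\theta_2}(s)h_{\theta_2}(1/s)=\tfrac{c(h_{\theta_1}(s))}{a(h_{\theta_1}(s))}$, read off its $s^{\pm 2}$ coefficient, yields $AC=\tfrac{\sigma_{11}}{\sigma_{22}}\bigl(\tfrac{\theta_1^+-\theta_1^-}{4}\bigr)^2=\bigl(\tfrac{\theta_2^+-\theta_2^-}{4}\bigr)^2$, the last equality using the algebraic identity $\sigma_{22}D_1=\sigma_{11}D_2$ (a short direct computation from the formulas for $D_1,D_2$). Writing $A=\tfrac{\theta_2^+-\theta_2^-}{4}e^{-i\beta}$ and $C=\tfrac{\theta_2^+-\theta_2^-}{4}e^{i\beta}$ then satisfies both constraints precisely when $\cos\beta=-\sigma_{12}/\sqrt{\sigma_{11}\sigma_{22}}$, which is the announced formula; the remaining Vieta identity (the constant term of the product) is then automatic and can be checked using $p\det\Sigma=\mu_2\sigma_{12}-\mu_1\sigma_{22}$.

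With $h_{\theta_1}$ and $h_{\theta_2}$ in hand, the Galois automorphisms follow. The involution $\eta$ must swap the two preimages of $\theta_2=\infty$, namely $0$ and $\infty$, so has the Möbius form $s\mapsto\lambda/s$; the condition $h_{\theta_2}(s)=h_{\theta_2}(\lambda/s)$ uniquely forces $\lambda=C/A=e^{2i\beta}$. The compositions $\eta\zeta(s)=e^{2i\beta}s$ and $\zeta\eta(s)=e^{-2i\beta}s$ are then rotations by $\pm 2\beta$ around $s=0\equiv s_{\infty}$, as claimed. The one non-bookkeeping step is the algebraic identity $\sigma_{22}D_1=\sigma_{11}D_2$: it is what makes the two Joukowski-type parametrizations compatible through a single angle $\beta$, and it ensures $|\cos\beta|\leq 1$ (consistent with the positive-definiteness of $\Sigma$).
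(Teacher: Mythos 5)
Your proposal is correct and follows essentially the same route as the paper: the same Joukowski-type parametrization for $h_{\theta_1}$ with deck transformation $\zeta(s)=1/s$, the determination of $h_{\theta_2}(s)=As+B+C/s$ from the Vieta relations for $\gamma(h_{\theta_1}(s),\cdot)$, and the same identity relating $D_1$ and $D_2$ to produce the single angle $\beta$. One slip to fix: the identity you invoke is stated with the indices swapped. What you need (and what the paper uses) is $\sigma_{11}D_1=\sigma_{22}D_2$, which is exactly what gives $\frac{\sigma_{11}}{\sigma_{22}}\bigl(\frac{\theta_1^+-\theta_1^-}{4}\bigr)^2=\bigl(\frac{\theta_2^+-\theta_2^-}{4}\bigr)^2$ via $\theta_i^+-\theta_i^-=2\sqrt{D_i}/\det\Sigma$; the relation $\sigma_{22}D_1=\sigma_{11}D_2$ as you wrote it is false in general and would not yield that equality. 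Also, the symbol $p$ in your final consistency check is undefined (you mean the midpoint $\frac{\theta_1^++\theta_1^-}{2}$); with those two corrections the argument is complete.
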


\begin{proof}
We set $ h_{\theta_1}(s)=\theta_1(s):=\frac{\theta_1^{+}+\theta_1^{-}}{2}+\frac{\theta_1^{+}-\theta_1^{-}}{4} (s+\frac{1}{s})$.
One can notice that $h_{\theta_1}(1)=\theta_1^+$, $h_{\theta_1}(-1)=\theta_1^-$, $h_{\theta_1}'(1)=0$, $h_{\theta_1}'(-1)=0$.
This parametrization is practical because it leads to a similar rational recovery $h_{\theta_2}$.
In order to make the equation $\gamma(\theta_1(s),\theta_2(s))=0$ valid for any $s\in\mathbf{S}$ we naturally set 
$$\theta_2(s)= \Theta_2^+ (\theta_1(s)):=\frac{-b(\theta_1(s))+\sqrt{d(\theta_1(s))}}{2a(\theta_1(s))}$$
and we are going to show that $\theta_2(s)=
\frac{\theta_2^{+}+\theta_2^{-}}{2}+\frac{\theta_2^{+}-\theta_2^{-}}{4} (\frac{s}{e^{i\beta}}+\frac{e^{i\beta}}{s})$ where $\beta= \arccos {-\frac{\sigma_{12}}{\sqrt{\sigma_{11}\sigma_{22}}}}$.
We note that $d(\theta_1(s))$ is the opposite of the square of a rational fraction
\begin{align*}
d(\theta_1(s))&=-\det \Sigma (\theta_1(s)-\theta_1^+)(\theta_1(s)-\theta_1^-)
=-\det \Sigma (\frac{\theta_1^{+}-\theta_1^{-}}{4})^2(-2+(s+\frac{1}{s}))(2+(s+\frac{1}{s}))
\\ &= -\det \Sigma
(\frac{\theta_1^+-\theta_1^-}{4})^2 (s-\frac{1}{s})^2 \leqslant 0.
\end{align*}
Then we have
\begin{equation}
\label{232}
\theta_2(s)= \Theta_2^+ (\theta_1(s))
:=
\frac{-\sigma_{12}\frac{\theta_1^{+}+\theta_1^{-}}{2}+\frac{\theta_1^{+}-\theta_1^{-}}{4} (s+\frac{1}{s})-\mu_2+i \sqrt{\det \Sigma}(\frac{\theta_1^+-\theta_1^-}{4})(s-\frac{1}{s})}{\sigma_{22}}.
\end{equation}
Furthermore this parametrization leads to simple expressions for Galois automorphisms $\eta$ and $\zeta$. We derive immediately  that $\theta_1(s)=\theta_1(\frac{1}{s})$ and $\theta_2 (\frac{1}{s})=\Theta_2^-(\theta_1(s))$. Then we have $$\zeta (s)=\frac{1}{s}.$$
Next we search $\eta$ as an automorphism of the form $\eta s =\frac{K}{s}$.
 Since $\theta_2(s)$ is of the form $\theta_2(s)=us+\frac{v}{s}+w$ with  constants 
   $u,v,w$  defined by (\ref{232}),  then  $\theta_2(s)=\theta_2(\frac{K}{s})$  with $K=\frac{u}{v}$. This leads to 
$$
\eta (s) =\frac{K}{s} \text{ with }
K=\frac{-\sigma_{12}-i\sqrt{\det \Sigma}}{-\sigma_{12}+i\sqrt{\det \Sigma}}.
$$
After setting
$$
K=e^{2i\beta} \text{ with }
\beta= \arccos {-\frac{\sigma_{12}}{\sqrt{\sigma_{11}\sigma_{22}}}}
$$
we have
\begin{center}
\begin{tabular}{cc}
${\zeta}(s)=\frac{1}{s}$, & ${\eta}(s)=\frac{e^{2i\beta}}{s}$ \\ 
\end{tabular}
\end{center}
and then
\begin{center}
\begin{tabular}{cc}
$\eta\zeta (s) = e^{2i\beta}s$,
&
$\zeta\eta (s) = e^{-2i\beta}s$.
\\
\end{tabular}
\end{center}
It follows that ${\eta}{\zeta}$ and ${\zeta}{\eta}$ are just rotations for angles $2\beta$ et $-2\beta$ respectively.
By symmetry considerations we can now rewrite $$\theta_2 (s)=\sqrt{uv}(\frac{s}{\sqrt{K}}+\frac{\sqrt{K}}{s})+w=\frac{\theta_1^+-\theta_1^-}{4}\sqrt{\frac{\sigma_{11}}{\sigma_{22}}}(\frac{s}{\sqrt{K}}+\frac{\sqrt{K}}{s})+\frac{-\sigma_{12}(\frac{\theta_1^++\theta_1^-}{2})-\mu_2}{\sigma_{22}}.$$ For $i=1,2$ we have $\theta_i^+-\theta_i^- = 2 \frac{\sqrt{D_i}}{\det \Sigma} $ and $\sigma_{11}D_1=\sigma_{22}D_2$. Then we obtain $\frac{\theta_1^+-\theta_1^-}{4}\sqrt{\frac{\sigma_{11}}{\sigma_{22}}}=\frac{\theta_2^+-\theta_2^-}{4}$. Moreover $\frac{-\sigma_{12}(\frac{\theta_1^++\theta_1^-}{2})-\mu_2}{\sigma_{22}}=\frac{\Theta_2^\pm(\theta_1^+)+\Theta_2^\pm(\theta_1^-)}{2}=\frac{\theta_2^++\theta_2^-}{2}$ (the last equality  follows from elementary geometric properties of an ellipse).
It implies
$$
h_{\theta_2}(s)=\theta_2(s)=
\frac{\theta_2^++\theta_2^-}{2}+
\frac{\theta_2^+-\theta_2^-}{4} (\frac{s}{\sqrt{K}}+\frac{\sqrt{K}}{s})
$$ concluding the proof.
\end{proof}

\begin{figure}[hbtp]
\begin{center}
\includegraphics[scale=1]{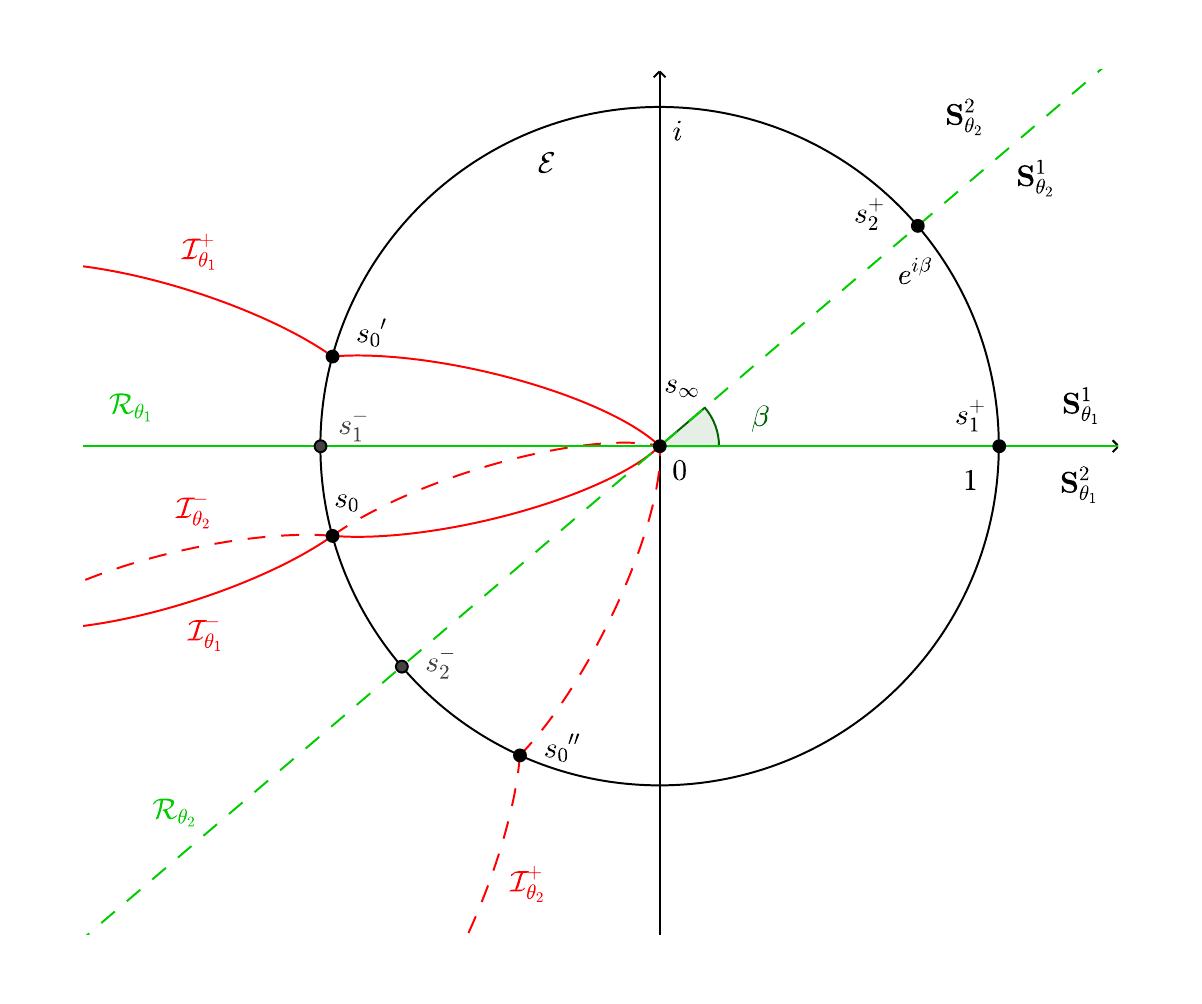} 
\end{center}
\caption{Parametrization of $\s$}
\label{recouvrS}
\end{figure}

Figure \ref{recouvrS} shows  different sets we are interested in according to the  parametrization we have just introduced.
We have $\theta_1(1)=\theta_1^+$, $\theta_1(-1)=\theta_1^-$, $\theta_2 (e^{i\beta})=\theta_2^+$ et $\theta_2 (e^{i(\pi+\beta)})=\theta_2^-$, $\theta_1(0)=\theta_2(0)=\infty$, $\theta_1(\infty)=\theta_2(\infty)=\infty$. Then we write $s_1^+=1$, $s_1^-=-1$, $s_2^+=e^{i\beta}$, $s_2^-=e^{i(\pi+\beta)}$, $s_\infty=0$, $s_{\infty'}=\infty$.
It is easy to see that $$\mathcal{E}=\{s\in \mathbb{C}| \ |s|=1\},$$ and
$$\mathcal{R}_{\theta_1}={\bf R}, \ \mathcal{R}_{\theta_2}=e^{i\beta}{\bf R}.$$
We can determine the equation of the analytic curves of pure imaginary points of $\theta_i$. We have ${\mathcal{I}}_{\theta_1}=\{ s \in \s | \theta_1(s) \in i{\bf R} \}$.
If we write $s=e^{i\omega}$ with $\omega=a+ib\in\mathbb{C}$ we find that $\Re (\theta_1(s))=\frac{\theta_1^++\theta_1^-}{2}+\frac{\theta_1^+-\theta_1^-}{2} \cos (a) \cosh (b)$. It follows that
$${\mathcal{I}}_{\theta_1}=\{s=e^{i\omega}\in\s |\omega=a+ib, 
a\in {\bf R}, b \in{\bf R}, \cos (a) \cosh (b)=\frac{\theta_1^++\theta_1^-}{\theta_1^--\theta_1^+}\}.$$
Similarly we have
$${\mathcal{I}}_{\theta_2}=\{s=e^{i\omega}\in\s |\omega=a+ib, 
a\in {\bf R}, b \in{\bf R}, \cos (a) \cosh (b)=\frac{\theta_2^++\theta_2^-}{\theta_2^--\theta_2^+}\}.$$
We can easily notice that
$$
 {\zeta} {\mathcal{I}}_{\theta_1}^-= {\mathcal{I}}_{\theta_1}^+, \
 {\zeta} {\mathcal{I}}_{\theta_1}^+= {\mathcal{I}}_{\theta_1}^-
\text{ and }
 {\eta} {\mathcal{I}}_{\theta_2}^-= {\mathcal{I}}_{\theta_2}^+, \
 {\eta} {\mathcal{I}}_{\theta_2}^+= {\mathcal{I}}_{\theta_2}^-.
$$

\section{Meromorphic continuation of \texorpdfstring{$\varphi_1$}{phi1} and \texorpdfstring{$\varphi_2$}{phi2} on \texorpdfstring{$\s$}{S}}

\subsection{Lifting of \texorpdfstring{$\phi_1$}{phi1} and \texorpdfstring{$\phi_2$}{phi2} on \texorpdfstring{$\s$}{S} and their meromomorphic continuation}
\label{meromcontinuation}

\paragraph{Lifting of $\varphi_1$ and  $\varphi_2$  on $\s$.}

Since the function $\theta_1 \to \varphi_2(\theta_1)$ is holomorphic on the set $\{\theta_1\in \mathbb{C} : \Re \theta_1< 0\}$ and continuous up to its boundary,  we can lift it to $\bar \Delta_1 = \{s\in\s:\Re \theta_1(s) \leq  0 \}$ as
$$
\varphi_2(s)=\varphi_2(\theta_1(s)), \ \forall s \in \bar \Delta_1.
$$
In the same way we can lift $\varphi_1$ to $\bar\Delta_2$ as
$$
\varphi_1(s)=\varphi_1(\theta_2(s)), \ \forall s \in \bar\Delta_2.
$$
Moreover, by definition of  Galois  automorphisms, functions $\phi_1$ and $\phi_2$ are invariant w.r.t. 
$\eta$ and $\zeta$ respectively: 
\begin{equation}
\label{dhf}
\varphi_2(\zeta s)=\varphi_2(\theta_1(\zeta s))=\varphi_2(\theta_1(s))=\varphi_2(s), \ \ \forall s\in\bar \Delta_1,$$ 
$$ 
\varphi_1(\eta s)=\varphi_1(\theta_2(\eta s))=\varphi_1(\theta_2(s))=\varphi_1(s), \ \ \forall s\in \bar \Delta_2.
\end{equation} 
 Functions $\gamma_1$ and  $\gamma_2$ can be lifted naturally on the whole of ${\bf S}$ 
  as 
$$ \gamma_1(s)=\gamma_1(\theta_1(s), \theta_2(s)), \  \  \   \   \gamma_2(s)=\gamma_2(\theta_1(s), \theta_2(s)) \ \  \ \forall s\in {\bf S}.$$
Since $\gamma(\theta_1(s), \theta_2(s))=0$,  then the right-hand side in the main functional equation (\ref{maineq}) equals zero  
for any  $\theta=(\theta_1(s), \theta_2(s))$ such that  $s \in \bar\Delta_1 \cap \bar\Delta_2$.  Thus we have 
\begin{equation}
\label{eqoo}
\gamma_1(s)\varphi(s)+\gamma_2(s)\varphi_2(s)=0, \  \  \ \forall s\in  \bar \Delta_1 \cap \bar \Delta_2.
\end{equation}

\paragraph{\bf Continuation of  $\varphi_1$ and  $\varphi_2$  on $\Delta$.}

\begin{lem} \label{lempro}
Functions $\varphi_1$ and $\varphi_2$ (defined on $\bar{\Delta}_2$ and $\bar{\Delta}_1$  respectively)
  can be meromorphically continued on $\Delta\cup\{s_0\}$ by setting
$$\varphi_1(s)= 
\begin{tabular}{cc}
$-\frac{\gamma_2(s)}{\gamma_1(s)} \varphi_2(s)$ & if $s\in\Delta_1,$
\\
\end{tabular} 
$$
and
$$\varphi_2(s)= 
\begin{tabular}{cc}
$-\frac{\gamma_1(s)}{\gamma_2(s)} \varphi_1(s)$ & if $s\in\Delta_2.$
\\
\end{tabular}
$$
Furthermore,
\begin{equation}
\label{dfg}
\gamma_1(s)\varphi_1(s)+\gamma_2(s)\varphi_2(s)=0
\ \ \forall s\in\Delta\cup\{s_0\},
\end{equation}
\begin{equation}
\label{invf}
\varphi_1(s)=\varphi(\eta s), \ \   \  \  \varphi_2(s)=\varphi(\zeta s)  \ \ \forall s\in\Delta\cup\{s_0\}.
\end{equation}
\end{lem}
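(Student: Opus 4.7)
The plan is to use the main functional equation (\ref{maineq}), read on $\s$, as the engine of the continuation. Because $\gamma(\theta_1(s),\theta_2(s))=0$ for every $s\in\s$ by construction of the Riemann surface, the left-hand side of (\ref{maineq}) vanishes identically on $\s$. On the overlap $\bar\Delta_1\cap\bar\Delta_2$, where both liftings $\varphi_1(s)=\varphi_1(\theta_2(s))$ and $\varphi_2(s)=\varphi_2(\theta_1(s))$ are well defined (the moment generating function $\varphi(\theta_1(s),\theta_2(s))$ being finite there), one reads off exactly relation (\ref{eqoo}). The two formulas stated in the lemma are then nothing but (\ref{eqoo}) solved for $\varphi_1$ on $\Delta_1$ and for $\varphi_2$ on $\Delta_2$, and I would use these resolutions as the very definition of the extensions.

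The second step is to check that these formulas genuinely produce meromorphic functions on the larger domains. This is essentially algebraic: $\gamma_1$ and $\gamma_2$ are polynomials in the coordinates $\theta_1(s),\theta_2(s)$, hence meromorphic on $\s$; the factor $\varphi_2$ is holomorphic on $\Delta_1$ by its initial definition on $\{\Re\theta_1\le 0\}$, and symmetrically for $\varphi_1$ on $\Delta_2$. Consistency with the original lift on $\bar\Delta_1\cap\bar\Delta_2$ is exactly what (\ref{eqoo}) delivers, so the patchwork is unambiguous and single-valued on $\Delta$. Equation (\ref{dfg}) is then immediate on each of $\Delta_1$ and $\Delta_2$, since it is the relation from which the formula was derived, and the two formulas coincide on the overlap.

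A short local analysis is needed to continue through the point $s_0=(0,0)$, where both $\gamma_1$ and $\gamma_2$ vanish, so the ratio $\gamma_2/\gamma_1$ is of the indeterminate form $0/0$. Using the fact that $\s$ is tangent at $s_0$ to the line $\mu_1\theta_1+\mu_2\theta_2=0$ and parametrising along that tangent, the linear parts of $\gamma_1$ and $\gamma_2$ contribute the coefficients $r_{11}\mu_2-r_{21}\mu_1$ and $r_{12}\mu_2-r_{22}\mu_1$ respectively; the stability conditions (\ref{v}) ensure that neither vanishes, so $\gamma_2/\gamma_1$ admits a finite non-zero limit at $s_0$ and the meromorphic extension passes cleanly through this point.

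For the invariance relations (\ref{invf}) I would invoke the identity principle. On the non-empty open set $\Delta_1\cap\Delta_2$ the extended $\varphi_1$ coincides with its initial lift $s\mapsto\varphi_1(\theta_2(s))$, which is tautologically $\eta$-invariant because $\theta_2\circ\eta=\theta_2$; the analogous statement holds for $\varphi_2$ with $\zeta$. Both $\varphi_1$ and $\varphi_1\circ\eta$ are then meromorphic on the connected domain $\Delta\cup\{s_0\}$, so agreement on a non-empty open subset forces agreement throughout. The step I expect to require the most care is precisely this last one, because one must first verify that $\eta$ (respectively $\zeta$) maps the relevant subdomain into the domain of definition so that the comparison makes sense; this is most conveniently done using the explicit parametrisation $\zeta(s)=1/s$, $\eta(s)=e^{2i\beta}/s$ of Section~\ref{param}.
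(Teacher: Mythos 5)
Your proposal follows essentially the same route as the paper: the continuation is defined by solving the lifted functional equation (\ref{eqoo}) on the overlap $\Delta_1\cap\Delta_2$, the point $s_0$ is handled by showing that $\gamma_2/\gamma_1$ has a removable singularity there with limit $(r_{12}\mu_2-r_{22}\mu_1)/(r_{11}\mu_2-r_{21}\mu_1)$, nonzero by (\ref{v}), and the invariance relations come from the fact that the initial lifts depend on a single coordinate, propagated by analytic continuation. Your write-up is actually more detailed than the paper's four-line proof, and you correctly flag the one point needing care, namely that $\eta s$ (resp.\ $\zeta s$) must lie in the domain already reached before (\ref{invf}) can be asserted at $s$.
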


\begin{proof}
The open set $\Delta_1 \cap \Delta_2$ is non-empty and bounded by the curve $\mathcal{I}_{\theta_1}^- \cup \mathcal{I}_{\theta_2}^-$. Functional equation 
(\ref{eqoo})  is valid for $s\in \Delta_1 \cap \Delta_2$. It allows us to continue functions $\varphi_1$ and $\varphi_2$ as meromorphic on $\Delta$ 
  as stated in this lemma. The functional equation (\ref{eqoo})  is then valid on the whole of $\Delta$, as well as the invariance formulas (\ref{invf}). 
\end{proof}

 The function $\phi_1(s)$  is defined in a neighborhood $\mathcal{O}(s_0)$ 
   of $s_0$ as  $\phi_1(\theta_2(s))$ for any $s \in \Delta_2 \cap \mathcal O(s_0)$ and
  $  -\frac{\gamma_2(s)}{\gamma_1(s)} \varphi_2(\theta_1(s))$  for any $s \in \Delta_1 \cap \mathcal O(s_0)$.  
Furthermore, 
$$\lim_{s \to s_0, s\in \Delta_2} \phi_1(s)= \mathbb{E}_{\pi}(\int_0^1 \mathrm{d}L_t^1)$$ by definition of the function $\phi_1.$
   It is easy to see that  function $\frac{\gamma_2(s)}{\gamma_1(s)}$  has a removable singularity at $s_0$ and to compute
  $\lim_{s\to s_0} \frac{\gamma_2(s)}{\gamma_1(s)}=\frac{r_{12}\mu_2-r_{22}\mu_1}{r_{11}\mu_2-r_{21}\mu_1}$.
      Hence 
$$\lim_{s \to s_0, s\in \Delta_1} \phi_1(s)= \lim_{s \to s_0, s\in \Delta_1}
   -\frac{\gamma_2(s)}{\gamma_1(s)} \phi_2(\theta_1(s))= 
   \frac{r_{22}\mu_1 -r_{12}\mu_2  }{r_{11}\mu_2-r_{21}\mu_1 } \mathbb{E}_{\pi}(\int_0^1 \mathrm{d}L_t^2).$$ 
 For any $s \in \Delta_1 \cap \Delta_2 \cap \mathcal{O}(s_0)$, by (\ref{eqoo}) 
   $\phi_1(s)=-\frac{\gamma_2(s)}{\gamma_1(s)}\phi_2(s)$, from where 
    $\lim_{s \to s_0, s\in \Delta_2} \phi_1(s)= \lim_{s \to s_0, s\in \Delta_1} \phi_1(s)$.
       Hence,  function $\phi_1(s)$  has a removable singularity at $s_0$, and so is $\phi_2(s)$ by the same arguments.
       

  Functions $\phi_1$ and $\phi_2$ can be then of course continued to $\bar \Delta$.
   Moreover we have the following lemma.
\begin{lem} \label{lemrev}  The domains
$ \bar \Delta  \cup   \eta \zeta \bar \Delta$ and $\bar \Delta  \cup  \zeta \eta\bar  \Delta$ are simply connected.  
\end{lem}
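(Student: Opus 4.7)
The plan is to work inside the explicit parametrization of Section~\ref{param}, where $\s \equiv \mathbb{C} \cup \{\infty\}$ and $\eta\zeta$ acts as the rotation $s \mapsto e^{2i\beta} s$ of the Riemann sphere around the axis joining the two fixed points $s_\infty = 0$ and $s_{\infty'} = \infty$. The goal is to show that $\bar\Delta \cup \eta\zeta \bar\Delta$ is bounded in $\s$ by a single Jordan curve; the Jordan–Schoenflies theorem on the sphere will then deliver simple connectedness directly. The case of $\bar\Delta \cup \zeta\eta \bar\Delta$ is completely analogous via the opposite rotation $s \mapsto e^{-2i\beta} s$.

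First I would record that $\bar\Delta$ is itself a closed topological disk on $\s$: Section~\ref{riemannsurfaceS} establishes that $\Delta_1 \cup \Delta_2 \cup \{s_0\}$ is open and simply connected, bounded by the Jordan curve $\mathcal{I}_{\theta_1}^+ \cup \mathcal{I}_{\theta_2}^+$ passing through $s_\infty$ and $s_{\infty'}$. Applying the homeomorphism $\eta\zeta$, the image $\eta\zeta \bar\Delta$ is likewise a closed topological disk whose boundary Jordan curve still contains the two fixed points $s_\infty$ and $s_{\infty'}$. The central step is then to prove that these two boundary Jordan curves meet exactly at $\{s_\infty, s_{\infty'}\}$ and cross transversally there. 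For this I would use the explicit equations of Section~\ref{param}: writing $s = r e^{i\omega}$, the contours $\mathcal{I}_{\theta_1}$ and $\mathcal{I}_{\theta_2}$ are cut out by $(r+1/r)\cos\omega = K_1$ and $(r+1/r)\cos(\omega-\beta)=K_2$ with $K_j = 2(\theta_j^+ + \theta_j^-)/(\theta_j^- - \theta_j^+) \in (-2,2)$, and their $\eta\zeta$-images are obtained by the substitution $\omega \mapsto \omega - 2\beta$. A direct case analysis of these four equations, restricted to the arcs $\mathcal{I}_{\theta_j}^+$ joining $s_\infty$ to $s_{\infty'}$, should show that the arcs and their rotated images meet pairwise only at the two fixed points. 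Granted this, $\partial\bar\Delta \cup \eta\zeta\,\partial\bar\Delta$ decomposes into four Jordan arcs between $s_\infty$ and $s_{\infty'}$, and the boundary of $\bar\Delta \cup \eta\zeta\bar\Delta$ is the Jordan curve formed by the two outermost arcs; Jordan–Schoenflies then yields the claim.

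The hard part is precisely this transversality and non-crossing analysis. The relative position of the arcs $\mathcal{I}_{\theta_1}^+$ and $\mathcal{I}_{\theta_2}^+$ on the sphere depends in a non-trivial way on the parameters $(\Sigma,\mu)$ and on the angle $\beta \in (0,\pi)$, so the verification must be carried out uniformly over the admissible parameter range specified by the stability conditions~\eqref{u},~\eqref{v} and by the assumption~\eqref{mumu}; this is where all the bookkeeping of the proof will be concentrated.
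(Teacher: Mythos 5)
Your plan has a genuine gap, and in fact its central claimed step is false. You propose to show that the boundary arcs $\mathcal{I}_{\theta_1}^+,\ \mathcal{I}_{\theta_2}^+$ and their images under $\eta\zeta$ meet pairwise only at the two fixed points $s_\infty, s_{\infty'}$, and then to read off the boundary of the union as the two ``outermost'' arcs. But $\eta\zeta \mathcal{I}_{\theta_1}^+$ passes through $s''_0$: indeed $\mathcal{I}_{\theta_1}^+$ crosses $\mathcal{E}$ at $s'_0=(0,-2\mu_2/\sigma_{22})$, and $\zeta s'_0=s_0$, $\eta s_0=s''_0=(-2\mu_1/\sigma_{11},0)$, which lies on $\mathcal{I}_{\theta_2}^+$ since $\mu_1<0$. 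So two of your four arcs do meet away from the fixed points — this is precisely the ``gap at $s''_0$'' that the paper points out immediately after the lemma when explaining why the \emph{open} union $\Delta\cup\eta\zeta\Delta$ fails to be simply connected. A case analysis aiming to rule out such intersections cannot succeed. Moreover, even if the non-intersection claim were true it would not give the conclusion: two closed disks on the sphere whose boundaries meet only in the two points $s_\infty,s_{\infty'}$ can have a union homotopy equivalent to a circle (a ``bowtie''). What is really needed, and what your argument never establishes, is a two-dimensional overlap of $\bar\Delta$ and $\eta\zeta\bar\Delta$ all along between the poles.

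The paper supplies exactly that missing containment, and does so without any parameter case analysis: since $\zeta\mathcal{I}_{\theta_1}^+=\mathcal{I}_{\theta_1}^-\subset\bar\Delta_2$ and $\eta$ preserves $\theta_2$, hence $\eta\bar\Delta_2=\bar\Delta_2$, one gets $\eta\zeta\mathcal{I}_{\theta_1}^+\subset\bar\Delta_2\subset\bar\Delta$; i.e.\ the whole ``trailing'' boundary arc of the rotated lune lies inside $\bar\Delta$, so the rotation by $2\beta$ only enlarges the lune on one side and the union remains simply connected. I would recommend replacing your transversality/non-crossing analysis by this one-line inclusion; it is both correct and uniform in $(\Sigma,\mu)$.
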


\begin{proof}
   Since $\eta \zeta$ and $\zeta \eta$ are just rotations for a certain angle $2\beta$ or $-2\beta$, it suffices to check that $\eta \zeta \mathcal{I}_{\theta_1}^+ \subset  \bar \Delta$ 
  and that   $\zeta \eta \mathcal{I}_{\theta_2}^+ \in \bar \Delta$.  In fact, $ \zeta \mathcal{I}_{\theta_1}^+ = \mathcal I_{\theta_1}^- \subset
   \bar \Delta_2.$
   Since $\eta \bar \Delta_2=\bar \Delta_2$, it follows that $ \eta \mathcal I_{\theta_1}^- \subset \bar \Delta_2 \subset \bar \Delta$.
  By the same arguments $\zeta \eta \mathcal{I}_{\theta_2}^+\in \bar \Delta$. One can refer to Figure \ref{etazetadelta}.
  \end{proof}
  \begin{figure}[hbtp]
  \centering
  \includegraphics[scale=0.7]{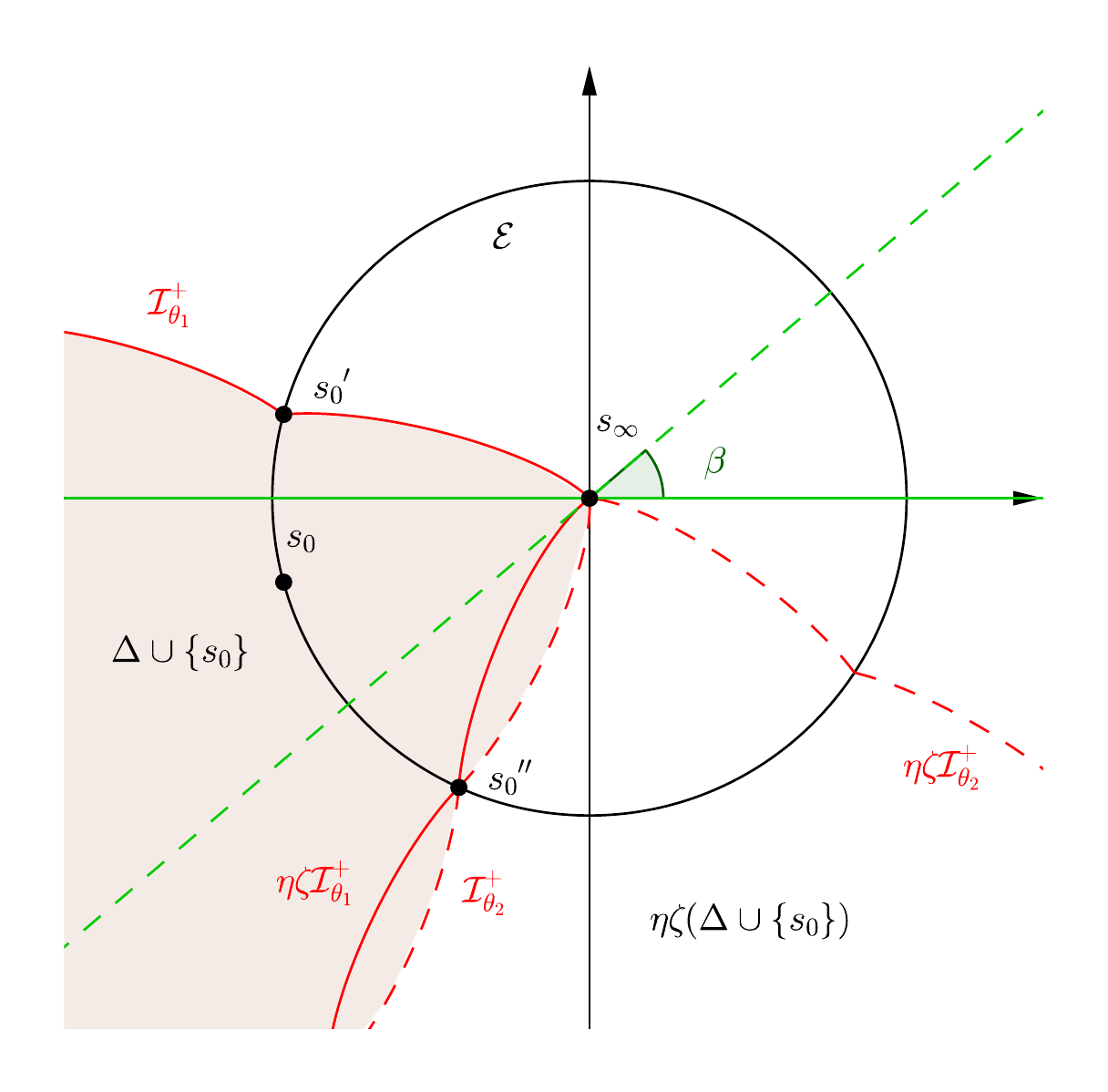}
  \caption{$\Delta$ and $\eta\zeta\Delta$}
  \label{etazetadelta}
  \end{figure}

   Now  we would like  to continue  function $\phi_1$ (resp. $\phi_2$)  on $\eta \zeta \bar \Delta$  (resp. $\zeta \eta \bar \Delta$)  as 
  $\phi_1(s)=G(s) \phi_1(\zeta \eta s)$ for all $s\in  \eta \zeta \bar \Delta$, where $G(s)$ is a known function and $\phi_1 (\zeta \eta s)$ is well defined 
   since $\zeta \eta s \in \bar \Delta$.  We could then continue this procedure for  $(\eta \zeta)^2 \bar \Delta$,   $(\eta \zeta)^3 \bar \Delta$,
   (resp.    $(\zeta \eta)^2 \bar \Delta$,   $(\zeta \eta)^3 \bar \Delta$)  etc
  and hence to define $\phi_1$  (resp. $\phi_2$) 
  on the whole of ${\bf S}$.  Unfortunately, the domain $\bar \Delta$ is closed, from where it will be difficult to establish 
  that the function is meromorphic. From the other hand,  neither $ \Delta  \cup   \eta \zeta \Delta$ nor $  (\Delta \cup s_0) \cup   \eta \zeta  
  (\Delta \cup s_0)$ are simply connected, there is a ``gap''  at $s''_0$. See figure \ref{etazetadelta}.
    To avoid this technical complication, we will first continue $\phi_1$ and $\phi_2$ on a slightly bigger open domain $\Delta^\epsilon$ defined as follows.
Let 
\begin{equation}
\label{de1}
\Delta_1^\epsilon=\{s : {\R}\theta_1(s)<\epsilon\},   \ \ \  \Delta_2^\epsilon=\{s : {\R}\theta_2(s)<\epsilon\}
\end{equation}
  and 
\begin{equation}
\label{de}
\Delta^\epsilon=\Delta_1^\epsilon\cup \Delta_2^\epsilon
\end{equation}

  Let us  fix any $\epsilon>0$ small enough.
   For any $\theta_1 \in {\bf C}$   with ${\R} \theta_1 =\epsilon$,    the function $\Theta_2(\theta_1)$ takes two values    $\Theta_2^{\pm} (\theta_1)$   where    ${\R} (\Theta_2^{-}(\theta_1)) <0$  and  ${\R}(\Theta_2(\theta_1)) >0$. 
      The domain $\Delta_1^\epsilon$ is bounded by the contour $\mathcal{I}_{\theta_1}^{\epsilon}=   \mathcal{I}_{\theta_1}^{\epsilon, -}\cup \mathcal{I}_{\theta_1}^{\epsilon,+}$  where 
     $\mathcal{I}_{\theta_1}^{\epsilon, -}$,   $\mathcal{I}_{\theta_1}^{\epsilon, +}$ both go from $s_\infty$ to $s_{\infty\prime}$,  
 $\mathcal{I}_{\theta_1}^{\epsilon, -} \subset \Delta_2$ and 
    $\mathcal{I}_{\theta_1}^{\epsilon,+ }\cap \Delta =\emptyset$. See Figure~\ref{deltaepsilon}.
    The same is true about the contour   
 $\mathcal{I}_{\theta_2}^{\epsilon}=   \mathcal{I}_{\theta_2}^{\epsilon, -}\cup \mathcal{I}_{\theta_2}^{\epsilon,+}$
    limiting $\Delta_2^\epsilon$, namely  $\mathcal{I}_{\theta_2}^{\epsilon, -} \subset \Delta_1$ and 
    $\mathcal{I}_{\theta_2}^{\epsilon,+ }\cap \Delta =\emptyset$.
  \begin{figure}[hbtp]
  \centering
  \includegraphics[scale=0.7]{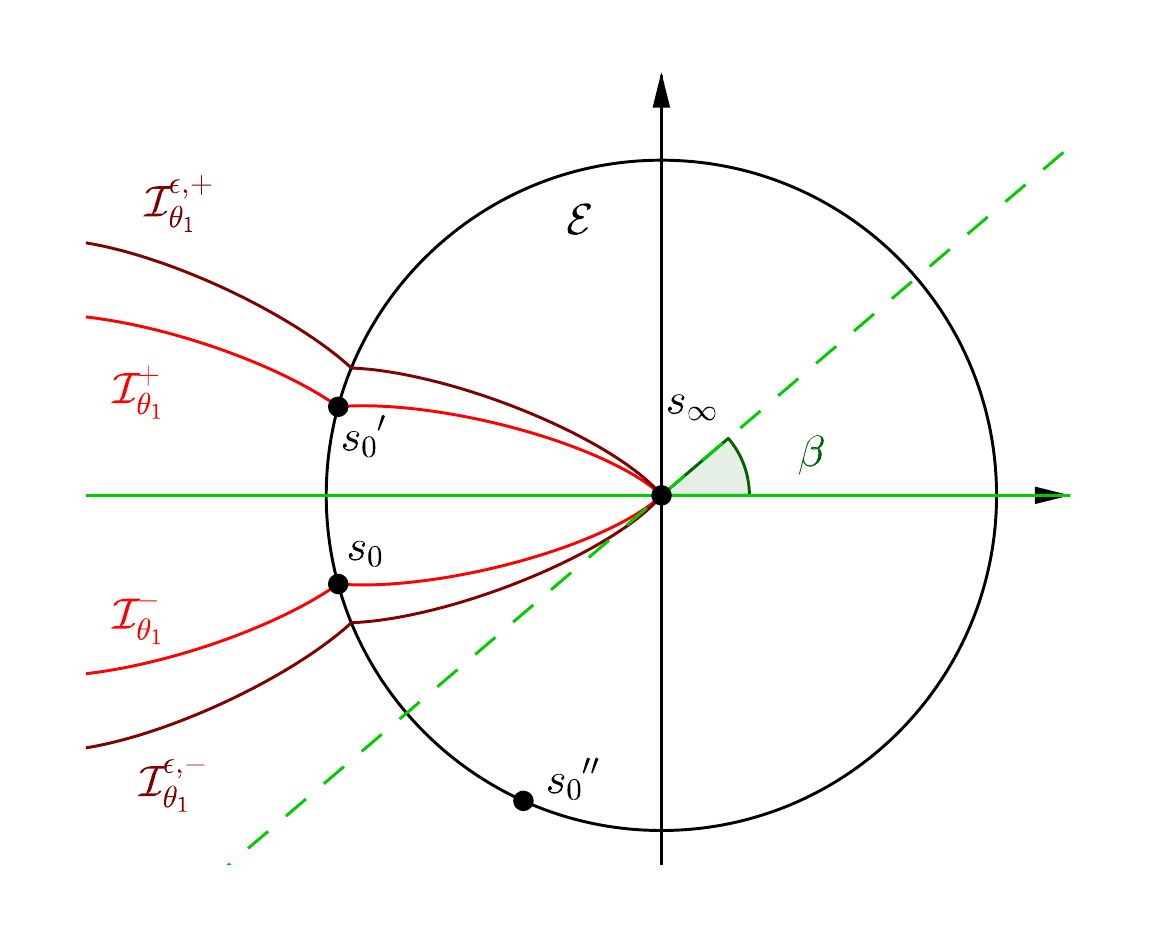}
  \caption{$\mathcal{I}_{\theta_1}^{\epsilon, -}$ and $\mathcal{I}_{\theta_1}^{\epsilon, +}$}
  \label{deltaepsilon}
  \end{figure}

\begin{lem} \label{lemproepsilon}
   Functions $\phi_1(s)$ and $\phi_2(s)$
can be continued as meromorphic functions on $\Delta^\epsilon$. Moreover  
    equation (\ref{dfg}) and the invariance formulas (\ref{invf}) remain valid.
     \end{lem}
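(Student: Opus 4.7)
The plan is to mimic the argument of Lemma \ref{lempro}, pushing each step slightly past the imaginary axes by invoking the Dai–Miyazawa extension result quoted right before Theorem~\ref{maineq} (which is stated in the excerpt, crediting \cite{dai_reflecting_2011}). Concretely, since $\varphi_2(\theta_1)$ is holomorphic on $\{\Re\theta_1\leq 0\}$ and extends holomorphically up to its first real singularity $\epsilon_1>0$, and symmetrically for $\varphi_1$, we fix $\epsilon>0$ smaller than both $\epsilon_1$ and $\epsilon_2$. Then exactly as in Section 2.4 we lift $\varphi_2(s):=\varphi_2(\theta_1(s))$ on $\bar\Delta_1^\epsilon$ and $\varphi_1(s):=\varphi_1(\theta_2(s))$ on $\bar\Delta_2^\epsilon$; both lifts are meromorphic (in fact holomorphic) on their respective closed domains, and the invariance $\varphi_2(\zeta s)=\varphi_2(s)$ for $s\in\bar\Delta_1^\epsilon$, $\varphi_1(\eta s)=\varphi_1(s)$ for $s\in\bar\Delta_2^\epsilon$ is automatic because $\zeta$ (resp.\ $\eta$) preserves the coordinate $\theta_1$ (resp.\ $\theta_2$), hence preserves $\Delta_1^\epsilon$ (resp.\ $\Delta_2^\epsilon$).

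Next I would show that the identity
\begin{equation*}
\gamma_1(s)\varphi_1(s)+\gamma_2(s)\varphi_2(s)=0
\end{equation*}
holds on $\Delta_1^\epsilon\cap\Delta_2^\epsilon$. This overlap is an open connected subset of $\mathbf S$ containing $\Delta_1\cap\Delta_2$, the functions $\gamma_1,\gamma_2,\varphi_1,\varphi_2$ are all meromorphic there, and the identity already holds on $\Delta_1\cap\Delta_2$ by Lemma \ref{lempro} (equivalently, by~\eqref{eqoo}). By the uniqueness principle for meromorphic continuation, it then holds on the whole overlap $\Delta_1^\epsilon\cap\Delta_2^\epsilon$.

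With the functional relation in hand on the overlap, I extend $\varphi_1$ to all of $\Delta_1^\epsilon$ by declaring
\begin{equation*}
\varphi_1(s):=-\frac{\gamma_2(s)}{\gamma_1(s)}\,\varphi_2(s),\qquad s\in\Delta_1^\epsilon,
\end{equation*}
and symmetrically $\varphi_2(s):=-\gamma_1(s)\varphi_1(s)/\gamma_2(s)$ for $s\in\Delta_2^\epsilon$. Since $\gamma_1$ and $\gamma_2$ are polynomials in the coordinates and therefore meromorphic on $\mathbf S$, and $\varphi_2,\varphi_1$ are meromorphic on $\Delta_1^\epsilon,\Delta_2^\epsilon$ respectively, the extensions are meromorphic on $\Delta_1^\epsilon$ and $\Delta_2^\epsilon$. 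On the overlap the two definitions of $\varphi_1$ (one by lifting, the other by the functional equation) agree by the identity just established, and likewise for $\varphi_2$; this guarantees that the extensions are well defined on $\Delta^\epsilon=\Delta_1^\epsilon\cup\Delta_2^\epsilon$. Equation~\eqref{dfg} then holds throughout $\Delta^\epsilon$ by construction, and the invariance formulas~\eqref{invf} propagate from $\bar\Delta_1^\epsilon,\bar\Delta_2^\epsilon$ to $\Delta^\epsilon$ via the functional equation and the invariance of $\gamma_1,\gamma_2$ (a direct check using $\zeta^2=\eta^2=\mathrm{Id}$ and $\gamma(\theta_1(s),\theta_2(s))=0$).

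The only delicate point is the choice of $\epsilon$: we need $\epsilon$ small enough that the boundary pieces $\mathcal I_{\theta_1}^{\epsilon,\pm}$, $\mathcal I_{\theta_2}^{\epsilon,\pm}$ retain the same topological position as the $\epsilon=0$ contours described in Section~2.4 (in particular $\mathcal I_{\theta_1}^{\epsilon,-}\subset\Delta_2$, $\mathcal I_{\theta_2}^{\epsilon,-}\subset\Delta_1$, so that the overlap $\Delta_1^\epsilon\cap\Delta_2^\epsilon$ remains connected and contains $\Delta_1\cap\Delta_2$), so that the uniqueness argument carries $\gamma_1\varphi_1+\gamma_2\varphi_2=0$ across. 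This is a continuity/topological stability statement about the level sets of the real parts of $\theta_1,\theta_2$ on $\mathbf S$, and it follows from the explicit parametrization obtained in Section~\ref{param}; I expect no real obstacle beyond spelling out this choice of $\epsilon$.
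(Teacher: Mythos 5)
Your argument is correct, but it is a genuinely different route from the paper's. You import the Dai--Miyazawa fact that the Laplace transforms $\varphi_2(\theta_1)$ and $\varphi_1(\theta_2)$ are actually holomorphic in half-planes $\Re\theta_i<\epsilon_i$ with $\epsilon_i>0$, lift them directly onto $\bar\Delta_1^\epsilon$ and $\bar\Delta_2^\epsilon$ for $\epsilon<\min(\epsilon_1,\epsilon_2)$, propagate the identity $\gamma_1\varphi_1+\gamma_2\varphi_2=0$ to the overlap $\Delta_1^\epsilon\cap\Delta_2^\epsilon$ by uniqueness of analytic continuation, and then cross-define each function on the other domain via that identity. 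The paper instead never uses any a priori analyticity beyond $\Re\theta_1\le 0$: for $s\in\Delta_1^\epsilon\setminus\Delta$ it observes that $\zeta s\in\Delta_2\subset\Delta$ (the conjugate root $\Theta_2^-$ has negative real part for $\Re\theta_1<\epsilon$ small, since $\mu_2<0$), so it can \emph{define} $\varphi_2(s):=\varphi_2(\zeta s)$ from the values already constructed in Lemma~\ref{lempro}, and then set $\varphi_1=-\gamma_2\varphi_2/\gamma_1$ there; symmetrically on $\Delta_2^\epsilon\setminus\Delta$. Your version buys a shorter argument at this step, at the cost of an external input (the positivity of $\epsilon_1,\epsilon_2$), a constraint on $\epsilon$ that is probabilistic rather than purely geometric, and the connectedness verification for $\Delta_1^\epsilon\cap\Delta_2^\epsilon$ that you correctly flag (recall $\Delta_1\cap\Delta_2$ itself has two components, joined only through a neighborhood of $s_0$ once one passes to the $\epsilon$-enlargement). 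The paper's mechanism has the additional advantage that it is exactly the one iterated in Lemmas~\ref{lemconteps} and~\ref{lemcontepse}, where no a priori analyticity is available and continuation by the automorphisms is the only option; your shortcut works here but would not extend to those later steps.
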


\begin{proof}
   For any $s\in \Delta_1^\epsilon\setminus \Delta$,  we have $\zeta s \in \Delta_2 \subset \Delta$, except for $s =s'_0$, for which $\zeta s'_0=s_0$. 
   Anyway, function $\phi_2(s)$ can be continued  as meromorphic function on   $\Delta_1^\epsilon/\Delta$ as :
   $$ \phi_2(s)=\phi_2(\zeta s), \  \  \  \forall s\in \Delta_1^\epsilon \setminus \Delta.$$
  Then $\phi_1(s)$ can be continued on the same domain by (\ref{dfg}):
$$ \phi_1(s)=-\frac{\gamma_2(s)}{\gamma_1(s)} \varphi_2(s)   \  \   \  \forall s \in \Delta_1^\epsilon\setminus\Delta.$$
   Similarly, the formulas 
   $$ \phi_1(s)=\phi_1(\eta s), \  \     \phi_2(s)=-\frac{\gamma_1(s)}{\gamma_2(s)} \varphi_1(s)   \  \   \   \forall  s \in \Delta_2^\epsilon\setminus\Delta$$
  determine the meromorphic continuation of $\phi_1(s)$  and $\phi_2(s)$ on  $\Delta_2^\epsilon\setminus\Delta$.
\end{proof}

\begin{lem}
\label{lemconteps}
     The domains
   $\Delta^\epsilon \cap \eta\zeta \Delta^\epsilon$ and $\Delta^\epsilon \cap  \zeta\eta \Delta^\epsilon$
 are open simply connected 
  domains.  Function $\phi_1(s)$ can be continued as meromorphic   on $ \Delta^\epsilon \cup \eta\zeta  \Delta^\epsilon$ by the formula :
 \begin{equation}
 \varphi_1(s)= 
\frac{\gamma_1( \zeta \eta s)\gamma_2( \eta s)}{\gamma_2( \zeta \eta s)\gamma_1( \eta s)} \varphi_1( \zeta \eta s), \ \ \forall s \in  \eta\zeta  \Delta^\epsilon\ \hbox{ continuation by rotation of } 2\beta. \label{pro+trans}
\end{equation}
 Function $\phi_2(s)$ can be continued as meromorphic on $ \Delta^\epsilon \cup \zeta\eta  \Delta^\epsilon$ by the formula :
 \begin{equation}
\varphi_2(s)= 
\frac{\gamma_2( \eta \zeta s)\gamma_1( \zeta s)}{\gamma_1( \eta \zeta s)\gamma_2( \zeta s)} \varphi_2( \eta \zeta s), \ \ \forall s\in 
\zeta\eta  \Delta^\epsilon  \hbox{ continuation by rotation of} -2\beta. \label{pro-trans1}
\end{equation}
\end{lem}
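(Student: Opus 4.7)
My plan is to establish the three assertions in turn. The statements for $\phi_1$ on $\Delta^\epsilon \cup \eta\zeta\Delta^\epsilon$ and for $\phi_2$ on $\Delta^\epsilon \cup \zeta\eta\Delta^\epsilon$ are completely symmetric under the swap $(\eta,\zeta) \leftrightarrow (\zeta,\eta)$, so I would focus on the $\phi_1$ case and just indicate the transposition at the end.

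For the simple connectedness of $\Delta^\epsilon \cap \eta\zeta\Delta^\epsilon$, I would invoke the explicit parametrization of Section~\ref{param}, in which $\mathbf{S} \cong \mathbb{C} \cup \{\infty\}$ and $\eta\zeta$ acts as the rotation $s \mapsto e^{2i\beta} s$ about the fixed points $s_\infty = 0$ and $s_{\infty'} = \infty$. The domain $\Delta^\epsilon$ is open and simply connected (it is a small thickening of the simply connected $\Delta \cup \{s_0\}$ of Section~2.4) and contains open neighborhoods of both fixed points. Its rotated image $\eta\zeta\Delta^\epsilon$ shares these properties, so the intersection of two open, simply connected sets on the sphere, related by a rotation and both containing neighborhoods of the two fixed points, is itself open, nonempty and simply connected; Figure~\ref{etazetadelta} makes this transparent. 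The argument for $\Delta^\epsilon \cap \zeta\eta\Delta^\epsilon$ is identical with the opposite rotation.

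For the meromorphic continuation of $\phi_1$, I first note that the right-hand side of~(\ref{pro+trans}) is a priori meromorphic on $\eta\zeta\Delta^\epsilon$: the functions $\gamma_1,\gamma_2$ are rational on $\mathbf{S}$, and Lemma~\ref{lemproepsilon} applied at $\zeta\eta s \in \Delta^\epsilon$ ensures that $s \mapsto \phi_1(\zeta\eta s)$ is meromorphic there. It therefore suffices to check that the proposed formula agrees with the already-defined value of $\phi_1$ on the nonempty open simply connected overlap $\Delta^\epsilon \cap \eta\zeta\Delta^\epsilon$. Setting $u = \zeta\eta s$, so that $s = \eta\zeta u$ and $\zeta u = \eta s$, and chaining the invariance $\phi_1 \circ \eta = \phi_1$ and $\phi_2 \circ \zeta = \phi_2$ from~(\ref{invf}) with the functional equation~(\ref{dfg}) applied at $\zeta u$ and then at $u$, one obtains
\begin{align*}
\phi_1(s) = \phi_1(\zeta u) = -\frac{\gamma_2(\zeta u)}{\gamma_1(\zeta u)} \phi_2(\zeta u) = -\frac{\gamma_2(\zeta u)}{\gamma_1(\zeta u)} \phi_2(u) = \frac{\gamma_1(u)\gamma_2(\zeta u)}{\gamma_2(u)\gamma_1(\zeta u)} \phi_1(u),
\end{align*}
which, after resubstituting $u = \zeta\eta s$ and $\zeta u = \eta s$, is exactly~(\ref{pro+trans}). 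The identity principle on the simply connected intersection then yields a well-defined meromorphic extension on $\Delta^\epsilon \cup \eta\zeta\Delta^\epsilon$.

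The main obstacle is the domain bookkeeping in the chain: each step requires the intermediate point ($u$, $\zeta u = \eta s$) to lie in the domain where the applicable identity of Lemma~\ref{lemproepsilon} has already been verified. These containments need not hold pointwise on the whole of $\Delta^\epsilon \cap \eta\zeta\Delta^\epsilon$, but for $\epsilon$ small they do hold on a nonempty open subset---for instance a neighborhood of the fixed points $s_\infty$, $s_{\infty'}$ of the rotation, where both $u$ and $\zeta u$ stay close to $s_\infty$ or $s_{\infty'}$ and therefore in $\Delta^\epsilon$. The identity theorem for meromorphic functions on the simply connected intersection then propagates the equality to the entire overlap, completing the argument for $\phi_1$; the analogue with $\zeta\eta$ in place of $\eta\zeta$ handles $\phi_2$.
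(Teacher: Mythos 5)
Your algebraic chain is exactly the paper's: writing $u=\zeta\eta s$, the four identities $\phi_1(s)=\phi_1(\eta s)$, the relation (\ref{dfg}) at $\eta s$, $\phi_2(\eta s)=\phi_2(\zeta\eta s)$, and (\ref{dfg}) again at $\zeta\eta s$ combine to give (\ref{pro+trans}), and the symmetry remark for $\phi_2$ is fine. The gap is in the domain bookkeeping, which is the real content of the lemma. Both of your topological claims rest on the assertion that $\Delta^\epsilon$ contains neighbourhoods of the fixed points $s_\infty,s_{\infty'}$ of the rotation $\eta\zeta$, and that for $s$ near a fixed point the images $u=\zeta\eta s$ and $\zeta u=\eta s$ are ``close to $s_\infty$ or $s_{\infty'}$ and therefore in $\Delta^\epsilon$''. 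This is false: $s_\infty$ and $s_{\infty'}$ are the common endpoints of all the contours $\mathcal{I}_{\theta_i}^{\pm}$ and $\mathcal{I}_{\theta_i}^{\epsilon,\pm}$, hence \emph{boundary} points of $\Delta^\epsilon$, and every neighbourhood of $s_\infty$ meets the complement of $\Delta^\epsilon$ (the region between $\mathcal{I}_{\theta_1}^{\epsilon,+}$ and $\mathcal{I}_{\theta_2}^{\epsilon,+}$). Consequently (a) your argument for the simple connectedness of $\Delta^\epsilon\cap\eta\zeta\Delta^\epsilon$ has a false premise --- and even granting it, the general principle invoked is wrong: the sphere minus a small arc and its rotate are both simply connected and contain both fixed points, yet intersect in an annulus; and (b) you have not exhibited \emph{any} nonempty open subset of the overlap on which the needed containment $\eta s\in\Delta^\epsilon$ holds, since proximity of $\eta s$ to $s_{\infty'}$ does not place it in $\Delta^\epsilon$.

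What is actually required, and what the paper proves, is that the containment holds on the \emph{whole} overlap: from Lemma \ref{lemrev} one has $\zeta\eta\mathcal{I}_{\theta_2}^{+}\subset\bar\Delta_1$, hence $\zeta\eta\mathcal{I}_{\theta_2}^{\epsilon,+}\subset\Delta_1$ for $\epsilon$ small, hence $\Delta^\epsilon\cap\zeta\eta\Delta^\epsilon\subset\Delta_1^\epsilon$; since $\zeta\Delta_1^\epsilon=\Delta_1^\epsilon$, this yields $\eta s=\zeta(\zeta\eta s)\in\Delta_1^\epsilon\subset\Delta^\epsilon$ for every $s\in\Delta^\epsilon\cap\eta\zeta\Delta^\epsilon$, so all four identities in your chain are licensed pointwise on the entire intersection and no identity-principle patch is needed. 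The same inclusions $\eta\zeta\mathcal{I}_{\theta_1}^{+}\subset\bar\Delta\subset\Delta^\epsilon$ and $\zeta\eta\mathcal{I}_{\theta_2}^{+}\subset\bar\Delta\subset\Delta^\epsilon$, i.e.\ that the rotated boundary arcs fall inside the domain, are what make the intersection and the union simply connected. Your fallback to the identity theorem is sound in spirit, but to use it you must still produce a correct open set on which the chain is valid, and that forces precisely this kind of inclusion argument; as written, the proof does not close.
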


\begin{proof} We have shown in the proof of Lemma \ref{lemrev}
 that $\eta\zeta \mathcal I_{\theta_1}^{+} \subset \bar \Delta \subset \Delta^\epsilon$, 
  and  that   $\zeta\eta \mathcal I_{\theta_2}^{+} \subset \bar \Delta \subset \Delta^\epsilon$. Since $\zeta \eta$ and $\eta \zeta$ are just rotations, this implies that  $\Delta^\epsilon \cap \eta\zeta \Delta^\epsilon$ and $\Delta^\epsilon \cap  \zeta\eta \Delta^\epsilon$  are non-empty open simply connected domains, and that $\Delta^\epsilon \cup \eta\zeta \Delta^\epsilon$ and $\Delta^\epsilon \cup  \zeta\eta \Delta^\epsilon$ are simply connected.

 Let us take $s \in \Delta^\epsilon \cap  \eta\zeta \Delta^\epsilon$. Then $\zeta \eta s \in \Delta^\epsilon \cap  \zeta\eta \Delta^\epsilon $  
and we can write by (\ref{dfg})
\begin{equation}
\label{efty}
 \gamma_1(\zeta \eta s   ) \phi_1(\zeta \eta s   )+ \gamma_2( \zeta \eta s  ) \phi_2( \zeta \eta s  )=0.
\end{equation}  
   Furthermore, we have shown in the proof of Lemma \ref{lemrev} that $\zeta \eta \mathcal{I}_{\theta_2}^+  \in \bar \Delta_1$.  It follows that for all $\epsilon$ small enough  $\zeta \eta \mathcal{I}_{\theta_2}^{ \epsilon, +} \in  \Delta_1$, and hence    $ \Delta^\epsilon \cap  \zeta\eta \Delta^\epsilon \subset \Delta_1^\epsilon $.  Since $\zeta \Delta_1^\epsilon=\Delta_1^\epsilon$, then 
  $ \zeta ( \Delta^\epsilon \cap  \zeta\eta \Delta^\epsilon) \subset \Delta_1^\epsilon \subset \Delta_\epsilon$. Then $\zeta(\zeta \eta s)=\eta s \in \Delta^\epsilon$ and we can write (\ref{dfg})  and (\ref{invf}) at this point as well:
\begin{equation}
\label{smh}
  \gamma_1( \eta s   ) \phi_1 (\eta s   )+ \gamma_2(  \eta s  ) \phi_2(  \eta s  )=0.
\end{equation}
\begin{equation}
\label{smhh}
  \phi_1 (\eta s   )=\phi_1(s)
\end{equation}
\begin{equation}
\label{dooo} 
\phi_2 (\eta s)=\phi_2(\zeta \eta s).
\end{equation}
     Combining  (\ref{smhh}) and  (\ref{smh}) we get $\phi_1(s)=-\gamma_2(\eta s) \phi_2(\eta s)/\gamma_1(\eta s)$
  from where  by  (\ref{dooo}) 
\begin{equation}
\label{sdnew}
 \phi_1(s)=  -\frac{\gamma_2(\eta s) }{\gamma_1(\eta s)} \phi_2(\zeta \eta s).
\end{equation} 
 Due to  (\ref{efty})  
\begin{equation}
\label{sdnew1}
\phi_2(\zeta \eta s)=-\frac{\gamma_1(\zeta\eta s)}{\gamma_2(\zeta\eta s)} \phi_1(\zeta \eta s).
\end{equation}     
   Substituting  (\ref{sdnew1}) into (\ref{sdnew}), 
 we obtain the formula (\ref{pro+trans})  
valid for any $s \in \Delta^\epsilon \cap  \eta\zeta \Delta^\epsilon$. By principle of analytic continuation 
this allows to  continue  $\phi_1$ on $\eta\zeta \Delta^\epsilon$ as meromorphic function.
   The proof is completely analogous for $\varphi_2$.  
\end{proof}
   
   We may now in the same way, using  formulas (\ref{pro+trans}) and  (\ref{pro-trans1}),  continue function $\phi_1(s)$  (resp. $\phi_2(s)$) as meromorphic  on  
$(\eta\zeta)^2  \Delta^\epsilon$, $(\eta\zeta)^3  \Delta^\epsilon$ (resp.  $(\zeta\eta)^2  \Delta^\epsilon$, $(\zeta\eta)^3  \Delta^\epsilon$)  etc proceeding each  time by rotation for the angle $2\beta$ 
[resp. $-2\beta$].  Namely we have the following lemma.

 \begin{lem}
\label{lemcontepse}
   For any $n \geq 1$
     the domains $\Delta^\epsilon \cup \eta\zeta \Delta^\epsilon \cup \cdots \cup  (\eta \zeta)^{n}\Delta^\epsilon$ and 
    $\Delta^\epsilon \cup  \zeta\eta \Delta^\epsilon \cup \cdots (\zeta \eta)^n \Delta^\epsilon $ 
 are open simply connected 
  domains.  Function $\phi_1(s)$ can be continued as meromorphic subsequently  on $\eta \zeta \Delta^\epsilon, (\eta \zeta)^2 \Delta^\epsilon, \cdots 
   (\eta \zeta) ^n \Delta^\epsilon $ by the formulas :
 \begin{equation}
 \varphi_1(s)= 
\frac{\gamma_1( \zeta \eta s)\gamma_2( \eta s)}{\gamma_2( \zeta \eta s)\gamma_1( \eta s)} \varphi_1( \zeta \eta s), \ \ \forall s \in  (\eta\zeta)^k  \Delta^\epsilon, k =1,2\ldots, n,\ \hbox{ continuation by rotation of } 2\beta. \label{pro+transe}
\end{equation}
 Function $\phi_2(s)$ can be continued as meromorphic on $\zeta \eta \Delta^\epsilon, (\zeta \eta)^2 \Delta^\epsilon, \cdots 
   (\zeta \eta) ^n \Delta^\epsilon  $ by the formulas :
 \begin{equation}
\varphi_2(s)= 
\frac{\gamma_2( \eta \zeta s)\gamma_1( \zeta s)}{\gamma_1( \eta \zeta s)\gamma_2( \zeta s)} \varphi_2( \eta \zeta s), \ \ \forall s\in 
(\zeta\eta)^k  \Delta^\epsilon, k=1,2,\ldots n,  \hbox{ continuation by rotation of} -2\beta. \label{pro-trans1e}
\end{equation}
\end{lem}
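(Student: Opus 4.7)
The plan is induction on $n$, with the base case $n=1$ supplied by Lemma~\ref{lemconteps}. Assume the conclusion at level $n-1$: the set $D^{(1)}_{n-1} := \bigcup_{k=0}^{n-1}(\eta\zeta)^k \Delta^\epsilon$ is open and simply connected, and $\phi_1$ has been continued meromorphically on it via \eqref{pro+transe}. The goal is to produce a meromorphic extension of $\phi_1$ to $D^{(1)}_n = D^{(1)}_{n-1} \cup (\eta\zeta)^n \Delta^\epsilon$, and to verify that $D^{(1)}_n$ is again simply connected; the treatment of $\phi_2$ on $(\zeta\eta)^k \Delta^\epsilon$ is identical after swapping $\eta\zeta$ for $\zeta\eta$.

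For the topological step I would argue as follows. Since $\eta\zeta$ is the conformal rotation $s \mapsto e^{2i\beta} s$ of $\mathbf{S}\equiv\mathbb{C}\cup\{\infty\}$ around $s_\infty$, the set $(\eta\zeta)^n \Delta^\epsilon$ is a rotation of $\Delta^\epsilon$ and hence open and simply connected. Rotating the conclusion of Lemma~\ref{lemconteps} by $(\eta\zeta)^{n-1}$, the intersection $(\eta\zeta)^{n-1}\Delta^\epsilon \cap (\eta\zeta)^n \Delta^\epsilon$ is open, nonempty and simply connected. This piece is contained in the intersection $D^{(1)}_{n-1} \cap (\eta\zeta)^n \Delta^\epsilon$, which is therefore nonempty and connected, and a standard Seifert--van Kampen / Mayer--Vietoris gluing then yields that $D^{(1)}_n$ is simply connected.

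For the meromorphic extension I would apply formula \eqref{pro+transe} directly: for $s \in (\eta\zeta)^n \Delta^\epsilon$ the point $\zeta\eta s$ lies in $(\eta\zeta)^{n-1}\Delta^\epsilon \subset D^{(1)}_{n-1}$, so $\phi_1(\zeta\eta s)$ is already defined by the inductive hypothesis, and multiplication by the rational factor $\gamma_1(\zeta\eta s)\gamma_2(\eta s)/[\gamma_2(\zeta\eta s)\gamma_1(\eta s)]$ produces a meromorphic function of $s$ on $(\eta\zeta)^n \Delta^\epsilon$. On the ``obvious'' overlap $(\eta\zeta)^n \Delta^\epsilon \cap (\eta\zeta)^{n-1}\Delta^\epsilon$ the same formula had been used at the previous step (with $\zeta\eta s$ lying in $(\eta\zeta)^{n-2}\Delta^\epsilon$), so the two prescriptions trivially coincide, since $\phi_1$ is already well-defined on $D^{(1)}_{n-1}$ by induction.

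The main obstacle lies in the potentially deeper overlaps that appear once $2n\beta$ exceeds $2\pi$: successive rotated sectors $(\eta\zeta)^k \Delta^\epsilon$ may wrap around the sphere and meet earlier sectors $(\eta\zeta)^j \Delta^\epsilon$ with $j \le n-2$, and consistency of \eqref{pro+transe} on such overlaps is not automatic from the construction. Here I would invoke the Monodromy Theorem, which is applicable precisely because $D^{(1)}_n$ has been shown to be simply connected: the local meromorphic extensions provided by \eqref{pro+transe} glue to a globally single-valued meromorphic function on $D^{(1)}_n$. As a sanity check one may also verify by direct computation, using $\zeta^2=\eta^2=\mathrm{Id}$ and the functional relation \eqref{dfg}, that the cumulative multiplicative factor produced by iterating \eqref{pro+transe} around any closed loop generated by $\eta\zeta$ is trivial.
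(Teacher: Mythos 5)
Your core induction is the same as the paper's: the base case is Lemma~\ref{lemconteps}, the consecutive overlap $(\eta\zeta)^{n-1}\Delta^\epsilon\cap(\eta\zeta)^{n}\Delta^\epsilon=(\eta\zeta)^{n-1}(\Delta^\epsilon\cap\eta\zeta\Delta^\epsilon)$ is a nonempty open set because $\eta\zeta$ is a rotation, the right-hand side of \eqref{pro+transe} is meromorphic on $(\eta\zeta)^{n}\Delta^\epsilon$ since $\zeta\eta s$ falls back into $(\eta\zeta)^{n-1}\Delta^\epsilon$, and agreement on that overlap plus uniqueness of analytic continuation does the rest. Up to that point your argument matches the paper's proof almost verbatim.

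The trouble is in your last paragraph. The claim that ``the cumulative multiplicative factor produced by iterating \eqref{pro+transe} around any closed loop generated by $\eta\zeta$ is trivial'' is false in general, and it contradicts the paper's own subsequent discussion: after the union of rotated sectors wraps once around $\mathbf{S}$, the continuation of $\phi_1$ produces a \emph{new branch} — the function is genuinely multivalued, and the monodromy around the puncture at $s_\infty$ is nontrivial (this is exactly why the authors say the construction is best done on the universal covering and why Theorem~\ref{thmpro} stops at one half of $\mathbf{S}$). Relatedly, your Mayer--Vietoris step only verifies connectivity of the single component $(\eta\zeta)^{n-1}\Delta^\epsilon\cap(\eta\zeta)^{n}\Delta^\epsilon$ of $D^{(1)}_{n-1}\cap(\eta\zeta)^{n}\Delta^\epsilon$; once $(\eta\zeta)^{n}\Delta^\epsilon$ also meets sectors $(\eta\zeta)^{j}\Delta^\epsilon$ with $j\le n-2$ from ``the other side'', that intersection is disconnected, the union is an annulus around $s_\infty$ and $s_{\infty'}$ (neither of which lies in any $(\eta\zeta)^k\Delta^\epsilon$), and simple connectivity fails — so the Monodromy Theorem is not available precisely where you need it. In the regime where no wrapping occurs (the only regime the paper actually uses), the deeper overlaps you worry about are already handled by connectivity of the full intersection plus uniqueness of continuation, and your extra machinery is unnecessary; in the wrapping regime, your resolution is wrong rather than merely incomplete. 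The honest statement is that \eqref{pro+transe} defines the continuation along the chain of sectors (equivalently, on the universal cover), single-valued as long as the union has not closed up around $s_\infty$.
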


\begin{proof}
   We proceed by induction on $k=1,2,\ldots n$. For $k=1$, this is the subject of the previous  lemma. 
   For any $k=2,\ldots,n$, assume the formula (\ref{pro+transe}) for any $s\in (\eta \zeta)^{k-1} \Delta$. 
 The domain  $(\eta \zeta)^{k-1}  \Delta^\epsilon \cap (\eta \zeta)^k \Delta^\epsilon=(\eta \zeta)^{k-1}(\Delta^\epsilon \cap \eta \zeta \Delta^\epsilon)$ is a non empty open domain by Lemma~\ref{lemconteps}, $(\eta \zeta)^{k-1}$ being just the rotation for the angle $2(k-1)\beta$.  
    The formula (\ref{pro+transe}) is valid for any $s \in  (\eta \zeta)^{k-1}  \Delta^\epsilon \cap (\eta \zeta)^k \Delta^\epsilon$ by induction assumption.
   Hence, by the principle of meromorphic continuation it is valid for any $s\in   (\eta \zeta)^k \Delta^\epsilon$.
   The same is true for the formula (\ref{pro-trans1e}). 
\end{proof}

 Proceeding as in Lemma \ref{lemcontepse} by rotations,
 we will continue $\phi_1$  soon on the  first half of ${\bf S}$, that is ${\bf S}_{\theta_2}^1$, then the whole of ${\bf S}$ and go further, turning around ${\bf S}$ for the second time, for the third, etc up to infinity. 
  In fact, each  time we complete this procedure on one of two halves of ${\bf S}$, we recover a new branch of the function $\phi_1$ as function of $\theta_2 \in {\bf C}$.  So, going back to
  the complex plane, we continue this function as multivalued and determine all its branches.
   The same is true for $\phi_2$ if we proceed by rotations in  the opposite direction.
 This procedure could  be presented better on the universal 
      covering of ${\bf S}$,  but  for the purpose of the present paper  it is enough to complete 
  it only on one-half of ${\bf S}$, that is to study just the first (main) branch of $\phi_1$ and $\phi_2$. 
    We summarize this result in the following theorem.
   We recall that ${\bf S} ={\bf  S}_{\theta_1}^1 \cup  {\bf  S}_{\theta_1}^2$ and we denote by   ${\bf  S}_{\theta_1}^1$  the half that contains 
   $s'_0$ (and not $s_0$, as $\zeta s_0=s'_0$). In the same way   
   ${\bf S} ={\bf  S}_{\theta_2}^1 \cup  {\bf  S}_{\theta_2}^2$ and we denote by   ${\bf  S}_{\theta_2}^1$  the half that contains 
   $s''_0$ (and not $s_0$, as $\eta s_0=s''_0$), see Figure \ref{recouvrS}.

\begin{thm} \label{thmpro} 
  For any $s\in {\bf S}_{\theta_2}^1$  there exists $n\geq 0$ such that $(\zeta \eta)^n s \in \bar \Delta$.
Let us define 
\begin{equation}
\label{qs}
\phi_1(s)= \frac{\gamma_1( (\zeta \eta)^n s)\dots \gamma_1( \zeta \eta s)}{\gamma_2( (\zeta \eta)^n s)\dots \gamma_2( \zeta \eta s)} 
\frac{\gamma_2( \eta (\zeta \eta)^{n-1} s)\dots\gamma_2( \eta s)}{\gamma_1( \eta \zeta \eta)^{n-1} s)\dots \gamma_1( \eta s)}
\phi_1((\zeta\eta)^n s)
\end{equation}
 Then the function $\phi_1(s)$ is meromorphic on  ${\bf S}_{\theta_2}^1$. 
   For any $s\in {\bf S}_{\theta_1}^1$, there exists $n\geq 0$ such that $(\eta \zeta)^n s \in \bar \Delta$.
  Let us define
\begin{equation}
\label{qse}
\phi_2(s)= 
\frac{\gamma_2( (\eta \zeta)^n s)\dots \gamma_2( \eta \zeta s)}{\gamma_1( (\eta \zeta)^n s)\dots \gamma_1( \eta \zeta s)} 
\frac{\gamma_1( \zeta (\eta \zeta)^{n-1} s)\dots\gamma_1( \eta s)}{\gamma_2( \zeta \eta \zeta)^{n-1} s)\dots \gamma_2( \zeta s)} 
\phi_1((\eta\zeta)^n s)
\end{equation}
 Then the function $\phi_2(s)$ is meromorphic on  ${\bf S}_{\theta_1}^1$. 
  \end{thm}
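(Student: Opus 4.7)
The plan is to derive Theorem \ref{thmpro} by iterating Lemma \ref{lemcontepse} enough times to cover a full half of $\mathbf{S}$, then telescope the one-step continuation formulas into the product (\ref{qs}). First I would verify the covering statement: for every $s\in \mathbf{S}_{\theta_2}^1$ there exists an integer $n\geq 0$ such that $(\zeta\eta)^n s\in \bar\Delta$. Using the parametrization $\mathbf{S}\equiv \mathbb{C}\cup\{\infty\}$ of Section~\ref{param}, $\zeta\eta$ acts as the rotation $s\mapsto e^{-2i\beta}s$ around $s_\infty\equiv 0$, so its orbits are the concentric circles $\{|s|=r\}$. The explicit equations of $\mathcal{I}_{\theta_1}$ and $\mathcal{I}_{\theta_2}$ in terms of $\cos(a)\cosh(b)$ show that these curves go from $s_\infty$ to $s_{\infty'}$ and cross every such circle; consequently $\bar\Delta$ meets every orbit, and the finite family of rotates $(\eta\zeta)^k \Delta^\epsilon$, $k=0,1,\dots,n$, has open overlaps and exhausts a neighborhood of $s$.

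Next I would unwind Lemma \ref{lemcontepse}. That lemma already provides the meromorphic continuation of $\phi_1$ on $\bigcup_{k=0}^{n}(\eta\zeta)^k\Delta^\epsilon$ through the single-step identity
\begin{equation*}
\phi_1(s)\;=\;\frac{\gamma_1(\zeta\eta s)\,\gamma_2(\eta s)}{\gamma_2(\zeta\eta s)\,\gamma_1(\eta s)}\,\phi_1(\zeta\eta s).
\end{equation*}
Iterating this identity along the sequence $s,\;\zeta\eta s,\;(\zeta\eta)^2 s,\dots,(\zeta\eta)^{n-1}s$ telescopes into the product formula (\ref{qs}). Each successive application is legitimate because $(\zeta\eta)^k s\in (\zeta\eta)^{k-n}(\eta\zeta)^n\bar\Delta \subset (\eta\zeta)^{n-k}\Delta^\epsilon$, where the function has already been continued at the previous step. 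The formula is independent of the choice of $n$ whenever $s$ lies in more than one of the rotated domains: the two candidate expressions agree on a non-empty open overlap by construction, hence on its entire connected component by the principle of meromorphic continuation.

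The main obstacle, I expect, is the bookkeeping of the angular extent of $\Delta^\epsilon$ around $s_\infty$ compared to the rotation angle $2\beta$, and the verification that the process closes up on exactly one sheet of the double cover $h_{\theta_2}$. Concretely, after finitely many rotations one reaches the boundary that separates $\mathbf{S}_{\theta_2}^1$ from $\mathbf{S}_{\theta_2}^2$; continuing by one more rotation would cross this boundary and produce a second branch of $\phi_1$ as a function of $\theta_2$ rather than returning to the first. The choice $\mathbf{S}_{\theta_2}^1$ as the half containing $s''_0=\eta s_0$ is made precisely so that the continuation along the orbit of $\zeta\eta$ starting from a point of $\bar\Delta$ remains on a single sheet; ruling out premature identifications requires checking that no $(\zeta\eta)^n$-orbit returns to $\bar\Delta$ along the ``wrong'' side of the branch cut $\mathcal{R}_{\theta_2}$.

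The argument for $\phi_2$ on $\mathbf{S}_{\theta_1}^1$ is entirely symmetric: one replaces $\zeta\eta$ (rotation by $-2\beta$) by $\eta\zeta$ (rotation by $+2\beta$), interchanges the roles of $\eta$ and $\zeta$ and of $\gamma_1$ and $\gamma_2$ in Lemma \ref{lemcontepse}, and telescopes to obtain (\ref{qse}), with $\mathbf{S}_{\theta_1}^1$ selected as the half of $\mathbf{S}$ containing $s'_0=\zeta s_0$.
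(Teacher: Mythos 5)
Your proposal is correct and follows essentially the same route as the paper, which deduces the theorem directly from Lemmas \ref{lemrev} and \ref{lemcontepse}: you make explicit the telescoping of the one-step identity \eqref{pro+trans} into the product \eqref{qs} and the fact that finitely many overlapping rotates of $\Delta^\epsilon$ cover the half-sphere, both of which the paper leaves implicit. The only slight imprecision is the phrase that the orbits of $\zeta\eta$ ``are the concentric circles'' (they are discrete subsets of those circles); but your subsequent argument via the open overlaps $(\eta\zeta)^k\Delta^\epsilon\cap(\eta\zeta)^{k+1}\Delta^\epsilon\neq\emptyset$, i.e.\ the angular gain of $2\beta$ per step, is the correct and sufficient justification.
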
 

\begin{proof} It is a direct corollary of  Lemma \ref{lemrev} and Lemma \ref{lemcontepse}.
\end{proof}  

\subsection{Poles of functions \texorpdfstring{$\phi_1$}{phi1} and  \texorpdfstring{$\phi_2$}{phi2} on \texorpdfstring{${\bf S}$}{S}}
\label{mainpole}

     It follows from meromorphic continuation procedure that all poles of $\phi_1(s)$ and $\phi_2(s)$ on
   ${\bf S}$ are located on the ellipse ${\cal E}$,  they are  images of zeros of $\gamma_1$ and 
$\gamma_2$ by automorphisms $\eta$ and $\zeta$ applied several times. 
   Then all poles of all branches of $\phi_2(s)$  (resp. $\phi_2(s)$) on ${\bf C}_{\theta_1}$ 
 (resp. ${\bf C}_{\theta_2}$)  are on the real segment $[\theta _1^-, \theta_1^+]$  
   (resp.  $ [\theta _2^-, \theta_2^+]$).  

\smallskip 

\noindent {\bf Notations of arcs on $\mathcal{E}$.} 
         Let us remind that  we denote  by $\{s_1, s_2\}$ an arc of the ellipse ${\cal E}$ with ends 
  at $s_1$ and $s_2$ \underline{not} passing through the origin, see Theorem \ref{thmmain}.
     From now on, we will  denote in square brackets $]s_1, s_2[$  or $[s_1,s_2]$  an arc  of  ${\cal E}$ 
   going in the anticlockwise direction  from $s_1$ to $s_2$.
 
 In order to compute the asymptotic expansion of stationary distribution density, we are interested in  poles  of
  $\phi_1$  on the arc $]s''_0, s_2^+[$   and  in those of  $\phi_2$ on the arc $]s_1^+, s'_0[$.  
   To determine the main asymptotic term, we are particularly interested in the pole of
 $\phi_1(\theta_2(s))$ on $]s''_0, s_2^+[$
  closest to $s''_0$ and in the one of $\phi_2(\theta_1(s))$ on $]s_1^+, s'_0[$ closest to $s'_0$.
      We identify them in this section.

   We remind that $\theta^{*}$  is a zero of $\gamma_1(s)$ on $\mathcal{E}$ different from $s_0$ and 
that  $\theta^{**}$  is a zero of $\gamma_2(s)$ on $\mathcal{E}$ different from $s_0$. Their coordinates are
\begin{eqnarray} 
\theta^*&=&
2 \frac{r_{21}\mu_1-r_{11}\mu_2}{r_{21}^2\sigma_{11}-2r_{11}r_{21}\sigma_{12}+r_{11}^2\sigma_{22}}
\Big(-r_{21} , r_{11} \Big),\nonumber  \\ 
\theta^{**}&=& 
2 \frac{r_{12}\mu_2-r_{22}\mu_1}{r_{22}^2\sigma_{11}-2r_{22}r_{12}\sigma_{12}+r_{12}^2\sigma_{22}}
\Big(r_{22} , -r_{12} \Big)
\label{zerostheta}
\end{eqnarray}
   Their images by automorphisms $\eta$ and $\zeta$ have the following coordinates:
\begin{eqnarray}
\eta \theta^*&=&\Big( -\frac{r_{11}}{\sigma_{11}r_{21}}(\sigma_{22}\theta_2^{*}+2\mu_2) , \theta_2^* \Big),\nonumber\\   
\zeta \theta^{**}&=& \Big(\theta_1^{**} ,-\frac{r_{22}}{\sigma_{22}r_{12}}(\sigma_{11}\theta_1^{**}+2\mu_1) \Big).\label{zerosthetaauto}
\end{eqnarray}

\begin{lem}
\label{lem3}

\begin{itemize}

\item[(1)]  If   $\theta^{**}\in ]s_0,s_1^+[$,
   then $\zeta \theta^{**}$ is a pole of $\phi_2(\theta_1(s))$ on $]s_1^+, s'_0[$.

\item[(2)] 
If   $\theta^{*}\in ]s_2^+, s_0[$,
 then $\eta \theta^{*}$ is a pole of $\phi_1(\theta_2(s))$ on $]s_0'',s_2^+[$.
\end{itemize}
\end{lem}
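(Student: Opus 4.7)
The key mechanism is to read the pole of $\phi_2$ directly off the basic identity
\[
\gamma_1(s)\,\phi_1(s) + \gamma_2(s)\,\phi_2(s) = 0,
\]
established by Lemma~\ref{lempro} on $\Delta \cup \{s_0\}$ and then propagated by meromorphic continuation, via Lemma~\ref{lemproepsilon} and Theorem~\ref{thmpro}, to any open region of $\mathbf{S}$ obtained through the rotations $\eta\zeta$ and $\zeta\eta$. Rewriting it as
\[
\phi_2(s) \;=\; -\,\frac{\gamma_1(s)\,\phi_1(s)}{\gamma_2(s)},
\]
a zero of $\gamma_2$ at which $\gamma_1\phi_1$ does not vanish produces a pole of $\phi_2$ of the same order.

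For assertion~(1), I would verify three things at $\theta^{**}$. \emph{(i)} $\gamma_2$ has a simple zero there, which is immediate since $\gamma_2(s) = r_{12}\theta_1(s) + r_{22}\theta_2(s)$ is affine and cuts $\mathcal{E}$ transversally at $\theta^{**}$. \emph{(ii)} $\gamma_1(\theta^{**}) \neq 0$, because the only zeros of $\gamma_1$ on $\mathcal{E}$ are $s_0$ and $\theta^{*}$, and stability rules out $\theta^{**} \in \{s_0, \theta^{*}\}$ (the second exclusion uses $\det R \neq 0$, which prevents the lines $\gamma_1 = 0$ and $\gamma_2 = 0$ from coinciding). \emph{(iii)} $\phi_1(\theta^{**}) \neq 0$. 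For \emph{(iii)} I would exploit the hypothesis $\theta^{**} \in ]s_0, s_1^+[$: according to the configuration of Figure~\ref{ellipse}, this arc is contained in $\bar{\Delta}_2 \cap \mathcal{E}$, so $\phi_1$ is still given by its original convergent Laplace representation and
\[
\phi_1(\theta^{**}) \;=\; \int_{{\bf R}_+^2} e^{\theta_2^{**} z}\, \mathrm{d}\nu_1(z) \;>\; 0
\]
since $\nu_1$ is a nonzero positive boundary measure and $\theta_2^{**}\in {\bf R}$.

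Combining \emph{(i)}--\emph{(iii)} with the displayed formula yields a simple pole of $\phi_2$ at $\theta^{**}$. To transfer this pole to $\zeta\theta^{**}$ and locate it on the prescribed arc, I would invoke the invariance $\phi_2(\zeta s) = \phi_2(s)$ from (\ref{dhf}), which forces an identical simple pole at $\zeta\theta^{**}$. As for the location, $\zeta$ is a conformal involution of $\mathcal{E}$ fixing the branch points $s_1^\pm$ and swapping $s_0$ with $s'_0$ (they share the coordinate $\theta_1 = 0$); consequently $\zeta$ maps the arc $]s_0, s_1^+[$ bijectively onto $]s_1^+, s'_0[$, so $\zeta\theta^{**}$ indeed lands in the latter. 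Assertion~(2) then follows by a verbatim symmetric argument after interchanging the roles of $(\phi_1,\gamma_1,\eta,\theta^{*},s''_0,s_2^+)$ and $(\phi_2,\gamma_2,\zeta,\theta^{**},s'_0,s_1^+)$.

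The main obstacle is step \emph{(iii)}. The hypothesis $\theta^{**} \in ]s_0, s_1^+[$ is essential precisely there, since it places $\theta^{**}$ in the natural strip of convergence of $\phi_1$, where a direct positivity argument applies. Without this localization one would have to evaluate $\phi_1$ through the meromorphic continuation of Theorem~\ref{thmpro} and carefully rule out an accidental zero cancelling the vanishing of $\gamma_2$, which would be considerably more delicate.
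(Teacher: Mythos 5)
Your steps \emph{(i)}--\emph{(ii)} are fine, and your reduction of the lemma to the non-vanishing of $\phi_1$ at $\theta^{**}$ is essentially the paper's own reduction: formula (\ref{mpo}) is exactly your identity once one substitutes $\phi_2(\eta\theta^{**})=-\gamma_1(\eta\theta^{**})\phi_1(\eta\theta^{**})/\gamma_2(\eta\theta^{**})$ and uses $\phi_1(\eta\theta^{**})=\phi_1(\theta^{**})$. But step \emph{(iii)} contains a genuine gap. The hypothesis $\theta^{**}\in\,]s_0,s_1^+[$ does \emph{not} place $\theta^{**}$ in $\bar{\Delta}_2$: the arc $]s_0,s_1^+[$ lies in $\{\theta_2\leq 0\}$ only when $\theta_2(s_1^+)\leq 0$. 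When $\theta_2(s_1^+)>0$ this arc re-enters the half-plane $\{\theta_2>0\}$ beyond $s''_0$, and $\theta^{**}$ lands there precisely when $r_{12}<0$ (by (\ref{zerostheta}) its coordinates are a positive multiple of $(r_{22},-r_{12})$, so $\theta_2^{**}>0$ iff $r_{12}<0$, which the stability conditions permit). In that case $\phi_1(\theta^{**})$ is \emph{not} given by the convergent Laplace integral of $\nu_1$; it is only defined through the meromorphic continuation, and ruling out an accidental zero there is exactly the hard part of the lemma. This is where the paper spends most of its proof: it supposes $\phi_2(\eta\theta^{**})=0$, takes the zero $\theta^0$ of $\phi_2$ closest to $s'_0$ on the relevant arc, and derives a contradiction from the continuation formula (\ref{zez}) together with the geometry of $\zeta$, $\eta$ and the stability conditions. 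As written, your argument is complete only in the easy sub-case $\theta_2^{**}\leq 0$.

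A secondary point: the value of $\phi_2$ at $\zeta\theta^{**}$ is not obtained from (\ref{dhf}) or (\ref{invf}), since neither $\theta^{**}$ nor $\zeta\theta^{**}$ lies in $\bar{\Delta}_1$ (both have $\Re\,\theta_1=\theta_1^{**}>0$), and $\zeta\theta^{**}$ in general lies outside $\Delta^\epsilon$ altogether. Its value is \emph{defined} by the continuation formula (\ref{pro-trans1}), which at $s=\zeta\theta^{**}$ yields the paper's (\ref{mpo}); that this agrees with $-\gamma_1(\theta^{**})\phi_1(\theta^{**})/\gamma_2(\theta^{**})$ follows by applying (\ref{dfg}) at $\eta\theta^{**}$. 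This is a short verification, but it must be made, and it again requires control of the point $\eta\theta^{**}$, which brings you back to the case distinction above.
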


\begin{proof}

     By meromorphic continuation procedure
\begin{equation}
\label{mpo}  
 \phi_2(\zeta\theta^{**})=\frac{\gamma_2(\eta \theta^{**})\gamma_1(\theta^{**})\phi_2(\eta\theta^{**})}{\gamma_2(\theta^{**})
  \gamma_1(\eta\theta^{**})}.
\end{equation}

  Let us check that the numerator in (\ref{mpo}) is non zero, this will prove the statement (1)  the lemma. 

It is clear that $\gamma_1(\theta^{**})\ne 0$ due to stability conditions (\ref{u}) and (\ref{v}).

  Suppose that $\gamma_2(\eta\theta^{**})=0$. This could be only if $\eta \theta^{**}=\theta^{**} \in  ]s_0, s_1^+[$, thus
      $\theta^{**}=s_2^-$ where $\theta_2(s_2^-) <0$ and consequently  $\phi_1( \eta\theta^{**})<\infty$.
   But by meromorphic continuation of $\phi_2(\theta_1(s))$
  to the arc  $\{s \in {\cal E} : \theta_2(s)<0\}$ we have:
 $\phi_2(\eta \theta^{**}_1)=-\frac{\gamma_1(\eta \theta^{**})\phi_1(\eta \theta^{**}_2)}{\gamma_2( \eta\theta^{**})}$, from where
by (\ref{mpo})
    $$\phi_2(\zeta \theta^{**})=-\frac{\gamma_1(\theta^{**})} { \gamma_2(\theta^{**})}\phi_1(\eta \theta_2^{**}).$$
  Then $\zeta \theta^{**}$  is clearly a pole of $\phi_2$, this finishes the proof of statement (1) of the lemma in this particular case.
   Otherwise $\gamma_2(\eta \theta^{**}) \ne 0$.

 Let us finally check that $\phi_2(\eta \theta^{**})\ne 0$.
 Let us first observe  that $\phi_2(\theta_1(s))\ne 0$ for any $s \in {\cal E}$ with one of two coordinates non-positive.
     In fact,  if the first coordinate $\theta_1(s)$  of $s$ is non-positive, then $\phi_2(\theta_1(s))\ne 0$ by its definition.
   If $s$ has the second coordinate $\theta_2(s)$ non-positive, then
  $\phi_2(\theta_1(s))=-\frac{\gamma_1(s)}{\gamma_2( s)}\phi_1(\theta_2(s))$  where
  $\gamma_1(s)$ can not have zeros with the second coordinate non-positive by stability conditions and
  neither $\phi_1(\theta_2(s))$ by its definition.
 Hence, $\phi_2(\theta_1(s))\ne 0$ on the  arc  $\{s \in {\cal E} : \theta_1(s)\leq 0 \hbox{ or } \theta_2(s) \leq 0\}$.

     It remains to consider the case
  where both coordinates of $\eta \theta^{**}$ are positive, i.e.
 $\theta^{**} \in ]\eta s'_0, s_1^+[$ where the parameters are such that  $s^{1,+}_2>\theta_2(\eta s'_0)>0$
  and to show that  $\phi_2(\eta \theta^{**}_1)\ne 0$.
     Suppose the contrary, that $\phi_2(\eta \theta^{**})=0$. Then there are zeros of $\phi_2$ on $] \eta s_1^+, s'_0 [$
  and among these zeros there exists $\theta^0$  the closest one to $s'_0$.
  By meromorphic continuation
\begin{equation}
\label{zez}   
 \phi_2(\theta^{0}_1)=\frac{\gamma_2( \eta \zeta \theta^{0})\gamma_1(\zeta \theta^{0})\phi_2(\eta\zeta \theta^{0})}{\gamma_2(\zeta \theta^{0}_1)
  \gamma_1(\eta\zeta \theta^{0})},
\end{equation}
  where $\eta\zeta \theta^0 \in ]\theta_0, s''_0]$.
      First of all, we note that $\phi_2(\eta\zeta \theta^{0})\ne 0$ if $\eta \zeta \theta^0 \in [ \theta_0, s'_0[$, since
$\theta^0$ is the closest zero to $s'_0$, and   $\phi_2(\eta\zeta \theta^{0})\ne 0$  if $\eta \zeta \theta^0 \in [s'_0, s''_0]$, because
  one of coordinates of $\eta \zeta \theta^0$ is non-positive within this segment.
   Hence, $\phi_2(\eta\zeta \theta^{0})\ne 0$  for any point  $\eta\zeta \theta^0 \in ]\theta_0, s''_0[$.

   Furthermore, since $\eta\zeta \theta^0 \in ]\theta_0, s''_0[$, then  $\eta \zeta \theta^0 \ne \theta^{**}$ and thus
     $\gamma_2(\eta \zeta \theta^0)\ne 0$ except for $\eta \zeta \theta^0= s_0$.
  As for this particular case $\eta \zeta \theta^0= s_0$, we would have
 $\phi_2(\theta^0)=-\gamma_1(s''_0)\phi_1(s_0)\gamma_2^{-1}(s''_0)\ne 0$, so that
   $\theta^0=\zeta \eta s_0$ can not be a zero of $\phi_2$.

     The point $\zeta \theta^0 \in \zeta [ \eta \theta^{**}, s'_0]=\zeta \eta [\eta s'_0,   \theta^{**}]$
   that is the segment $[\eta s'_0, \theta^{**}]$ rotated for the angle $-2\beta$.  Hence 
  $\zeta \theta^0$  is located on ${\cal E}$ below $\theta^{**}$.
    Then  $\gamma_1(\zeta \theta^0)=0$  combined with $\gamma_2(\theta^{**})=0$ is impossible by stability conditions (\ref{u}) and (\ref{v}).
   Thus $\gamma_1(\zeta \theta^0)\ne 0$.
It follows  from (\ref{zez}) that $\phi_2(\theta^0_1)\ne 0$.  Thus there exist no zeros of $\phi_2 $ on  
  $] \eta s^{1,+}, s'_0 [$
      and finally  $\phi_2(\eta \theta^{**}) \ne 0$.
  Therefore the numerator in (\ref{mpo}) is non zero,  hence  $\zeta \theta^{**}$ is a pole of $\phi_2$.

   The reasoning for $\theta^*$ is the same.

\end{proof}

\begin{lem}
\label{lem1}
\begin{itemize}

\item[(i)]  Assume that $\theta^p \in ]s_1^+, s'_0[$ is a pole of $\phi_2(\theta_1)$ and it is the closest pole to $s'_0$ . 

   If the parameters $(\Sigma, \mu)$ are such that $\theta_2(s_1^+)\leq 0$, 
  or the parameters  $(\Sigma, \mu, R)$ are such that $\theta_2(s_1^+) >0$  but $\eta \zeta \theta^p \not\in ]\eta s_1^+, s_0[$, then
      $\gamma_2( \zeta \theta^p)=0$ where $\zeta \theta^p \in ]s_1^+, s_0[$  and $\theta^p$ is a pole of the first oder.

   If the parameters  $(\Sigma, \mu, R)$ are such that $\theta_2(s_1^+)>0$  and $\eta \zeta \theta^p \in ]\eta s_1^+, s_0[$, then
      either $\gamma_2( \zeta \theta^p)=0$  where $\zeta \theta^p \in ]s_0, s_1^+[$  or  $\gamma_1 (\eta\zeta \theta^p)=0$.
       Furthermore,  in this case,    if  $\gamma_2( \zeta \theta^p)$   and   $\gamma_1 (\eta\zeta \theta^p)$ do not equal zero simultaneously, 
      then  $\theta^p$ is a pole of the first order.

\item[(ii)] 
Assume that $\theta^p \in ]s_0'',s_2^+[$ is a pole of $\phi_1(\theta_2)$ and it is the closest pole to $s_0''$.

    If the parameters $(\Sigma, \mu)$ are such that $\theta_1(s_2^+)\leq 0$, 
  or the parameters  $(\Sigma, \mu, R)$ are such that $\theta_1(s_2^+) >0$  but $\zeta \eta \theta^p \not\in ]s_0,\zeta s_2^+[$, then
      $\gamma_1( \eta \theta^p)=0$ where $\eta \theta^p \in ]s_0, s_2^+[$  and $\theta^p$ is a pole of the first oder.

   If the parameters  $(\Sigma, \mu, R)$ are such that $\theta_1(s_2^+)>0$  and $\zeta \eta \theta^p \in ]s_0, \eta s_1^+, [$, then
      either $\gamma_1( \eta \theta^p)=0$  where $\eta \theta^p \in ] s_2^+,s_0[$  or  $\gamma_2 (\zeta\eta \theta^p)=0$.
       Furthermore,  in this case,    if  $\gamma_1( \eta \theta^p)$   and   $\gamma_2 (\zeta\eta \theta^p)$ do not equal zero simultaneously, 
      then  $\theta^p$ is a pole of the first order. 

\end{itemize}
\end{lem}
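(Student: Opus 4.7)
Part (ii) follows from (i) by swapping the roles of the two coordinates $\theta_1\leftrightarrow\theta_2$, the automorphisms $\zeta\leftrightarrow\eta$, the functions $\varphi_1\leftrightarrow\varphi_2$, and the arcs $\{s'_0,s_1^+\}\leftrightarrow\{s''_0,s_2^+\}$; we focus on (i).

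The strategy is to follow the meromorphic continuation of $\varphi_2$ constructed in Section~\ref{meromcontinuation}, starting from $s'_0$ (where $\varphi_2$ is analytic, since $s'_0\in\bar\Delta_1$), and moving anticlockwise along $]s_1^+,s'_0[$ toward $s_1^+$. The task is to identify the first vanishing of a denominator in the continuation formulas: by minimality of $\theta^p$, no pole intervenes strictly between $s'_0$ and $\theta^p$, so $\theta^p$ must arise precisely at that first vanishing.

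First I would invoke the one-step continuation. For $s$ in the portion of $]s_1^+,s'_0[$ adjacent to $s'_0$, which lies in $\Delta_1^\epsilon\setminus\Delta$, Lemma~\ref{lemproepsilon} gives $\varphi_2(s)=\varphi_2(\zeta s)$, and since $\zeta s\in\bar\Delta_2$ Lemma~\ref{lempro} yields
\[
\varphi_2(\zeta s)\;=\;-\frac{\gamma_1(\zeta s)\,\varphi_1(\zeta s)}{\gamma_2(\zeta s)}.
\]
Under the hypothesis of the first case --- either $\theta_2(s_1^+)\leq 0$, or $\theta_2(s_1^+)>0$ together with $\eta\zeta\theta^p\notin\,]\eta s_1^+,s_0[$ --- a geometric check in the parametrization of Section~\ref{param} shows that the image arc $\zeta\bigl(\,]s_1^+,s'_0[\,\bigr)$ stays inside $\bar\Delta_2$ up to and past $\zeta\theta^p$, so this one-step formula is sufficient. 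Its only singularities on $\mathcal{E}$ come from zeros of the linear form $\gamma_2$, whose only zeros on $\mathcal{E}$ are $s_0=\zeta s'_0$ (where $\varphi_2$ is analytic by the removable singularity argument following Lemma~\ref{lempro}) and $\theta^{**}$. Therefore the first pole is $\zeta\theta^p=\theta^{**}\in\,]s_1^+,s_0[$. Linearity makes the zero simple; the numerator $\gamma_1(\theta^{**})\varphi_1(\theta^{**})$ is non-zero by the stability conditions~\eqref{u},\eqref{v} and the positivity of $\varphi_1$ on its initial domain, so the pole has order one.

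Next I would treat the second case, $\theta_2(s_1^+)>0$ and $\eta\zeta\theta^p\in\,]\eta s_1^+,s_0[$. Here $\zeta\bigl(\,]s_1^+,s'_0[\,\bigr)$ exits $\bar\Delta_2$ before reaching $\zeta\theta^p$, so one is forced to use the two-step continuation of Lemma~\ref{lemconteps}:
\[
\varphi_2(\theta^p)\;=\;\frac{\gamma_2(\eta\zeta\theta^p)\,\gamma_1(\zeta\theta^p)}{\gamma_1(\eta\zeta\theta^p)\,\gamma_2(\zeta\theta^p)}\,\varphi_2(\eta\zeta\theta^p).
\]
By hypothesis $\eta\zeta\theta^p\in\,]\eta s_1^+,s_0[\subset\bar\Delta$, so $\varphi_2(\eta\zeta\theta^p)$ has already been defined at an earlier stage of the continuation and is analytic there by minimality of $\theta^p$. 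The pole at $\theta^p$ must then arise from the vanishing of a denominator: either $\gamma_2(\zeta\theta^p)=0$, which forces $\zeta\theta^p=\theta^{**}$ on the arc $]s_0,s_1^+[$, or $\gamma_1(\eta\zeta\theta^p)=0$, which forces $\eta\zeta\theta^p=\theta^*$. Provided these do not vanish simultaneously, exactly one simple zero enters the denominator, and the linearity argument again gives a first-order pole; the non-vanishing of the relevant numerator factor is verified as in the proof of Lemma~\ref{lem3}, using \eqref{u},\eqref{v} and the strict positivity of $\varphi_i$ on its initial domain.

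The main obstacle will be the geometric bookkeeping: precisely locating the arcs $\zeta\bigl(\,]s_1^+,s'_0[\,\bigr)$ and $\eta\zeta\bigl(\,]s_1^+,s'_0[\,\bigr)$ on $\mathcal{E}$ relative to $\bar\Delta_2$ and to $]\eta s_1^+,s_0[$, since it is this relative location that distinguishes the two cases and fixes which continuation formula is in force. I expect this to become tractable by working in the parametrization of Section~\ref{param}, where $\mathcal{E}$ is the unit circle, $\zeta(s)=1/s$, $\eta(s)=e^{2i\beta}/s$, and $\eta\zeta$ is rotation by $2\beta$, so that the containment conditions reduce to explicit angular inequalities.
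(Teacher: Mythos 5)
Your strategy is the same as the paper's: read off $\varphi_2(\theta^p)$ from the continuation formulas and force a denominator to vanish, splitting according to whether the one-step relation through $\varphi_1(\zeta\theta^p)$ suffices or the two-step relation through $\varphi_2(\eta\zeta\theta^p)$ is required. The case split and the first-order conclusion coincide with the paper's, and your first case is sound once one observes that the hypothesis $\eta\zeta\theta^p\notin\,]\eta s_1^+,s_0[$ is exactly equivalent to $\theta_2(\zeta\theta^p)\leq 0$, i.e.\ to $\zeta\theta^p\in\bar\Delta_2$, which is what legitimizes the one-step formula.

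There is, however, one false intermediate claim whose role must be repaired: $]\eta s_1^+,s_0[\,\subset\bar\Delta$ does not hold. Every point of that arc has $\theta_2>0$, and those with $\theta_1>0$ as well lie in neither $\bar\Delta_1$ nor $\bar\Delta_2$ --- this is precisely the configuration that makes the second case delicate. So the finiteness of $\varphi_2(\eta\zeta\theta^p)$ cannot be read off from Lemma~\ref{lempro}, and ``minimality of $\theta^p$'' alone does not cover it either: minimality only excludes poles on $]\theta^p,s'_0[$, whereas $\eta\zeta\theta^p$ ranges over $]\theta^p,s''_0]$ and may fall on $[s'_0,s''_0]$, outside the arc on which $\theta^p$ is assumed minimal. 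The paper closes this by one further dichotomy: if $\eta\zeta\theta^p\in[s_0,s''_0]$ one writes $\varphi_2(\eta\zeta\theta^p)=-\gamma_1(\eta\zeta\theta^p)\varphi_1(\eta\zeta\theta^p)/\gamma_2(\eta\zeta\theta^p)$ with $\varphi_1$ finite by its initial definition (this collapses the two-step formula to your one-step formula and yields $\gamma_2(\zeta\theta^p)=0$); if instead $\eta\zeta\theta^p\in\,]\theta^p,s_0[$, finiteness follows from minimality on $]\theta^p,s'_0[$ combined with the initial definition of $\varphi_2$ on $[s'_0,s_0]$, where $\Re\,\theta_1\leq 0$. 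With that repair your argument is the paper's.
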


\begin{proof}
 Due to meromorphic continuation procedure we have
\begin{equation}
\label{pri}
\phi_2(\theta^p_1)=\frac{\gamma_2(\eta\zeta \theta^p) \gamma_1(\zeta\theta^p) \phi_2(\eta\zeta\theta^p_1)}{\gamma_2(\zeta\theta^p)\gamma_1(\eta\zeta\theta^p)}
\end{equation}
   where $\eta\zeta \theta^p \in ]\theta^p, s''_0]$.

 Assume that $\eta\zeta\theta^p \in ]\theta^p, s_0[$.
   In this case point  $\eta\zeta \theta^p$ has the second coordinate positive and so does $\zeta\theta^p \in [s_0, s_1^+[$.
 It follows that $\theta_2(s_1^+)>0$ and $\eta\zeta \theta^p \in \eta ]s_0, s_1^+[ \cap \{\theta : \theta_2>0\}=]\eta s_1^+, s_0[$.

  Thus, if the parameters $(\Sigma, \mu)$ are such that $\theta_2(s_1^+)\leq 0$, 
  or the parameters  $(\Sigma, \mu, R)$ are such that $\theta_2(s_1^+)>0$  but $\eta \zeta \theta^p \not\in ]\eta s_1^+, s_0[$, then
    the second coordinate of $\eta\zeta \theta^p$ is non-positive, i.e. $\eta\zeta\theta^p \in [s_0, s''_0]$. In this case
\begin{equation}
 \phi_2(\eta\zeta\theta^p_1)=-\frac{ \gamma_1(\eta\zeta\theta^p)  }{ \gamma_2(\eta\zeta\theta^p)  }\phi_1(\eta\zeta\theta^p)
\end{equation}
    from where by (\ref{pri}) 
\begin{equation}
\label{dp}
\phi_2(\theta^p_1)=-\frac{ \gamma_1(\zeta\theta^p) \phi_1(\eta\zeta\theta^p_2)}{\gamma_2(\zeta\theta^p)}.
\end{equation}
        Since $\phi_1(\eta\zeta\theta^p_2)$ is finite for  
    for any  $\eta\zeta\theta^p \in [s_0, s''_0]$  by its initial definition, the formula (\ref{dp}) implies
that $\gamma_2(\zeta\theta^p)=0$ and the pole $\theta^p$  is of the first order.

   If   parameters  $(\Sigma, \mu, R)$ are such that $\theta_2(s_1^+)>0$  and  $\eta \zeta \theta^p \in ]\eta s_1^+, s_0[$, then 
either $\eta\zeta\theta^p \in [s''_0, s_0]$ or $\eta\zeta\theta^p \in ]s_0, \theta^p[$. In the first case we have 
 (\ref{dp}) as previously from where  $\gamma_2(\zeta\theta^p)=0$ and the pole $\zeta \theta^p$ is of the first order.
   Let us turn to the second case $\eta\zeta\theta^p \in ]\theta^p, s_0[$ for which we will use the formula (\ref{pri}).  
 The pole $\theta^p$ being  the closest to $s'_0$, then 
$\eta\zeta\theta^p$  can not be a pole of $\phi_2$ on $]\theta^p, s'_0[$.
   It can neither be a pole of $\phi_2$ on $[s'_0, s_0]$, since this function is initially well defined on this segment.
    Hence in  formula (\ref{pri}) 
  $\phi_2(\eta\zeta\theta^p_1)\ne \infty$ for $\eta\zeta \theta^p \in ]\theta^p, s'_0[$.
     It follows from (\ref{pri}) that either  
 $\gamma_2( \zeta \theta^p)=0$  or  $\gamma_1 (\eta\zeta \theta^p)=0$
   and  if these two equalities do not hold simultaneously, then pole  $\theta^p$  must be of the first order.

   The proof in the case (ii) is symmetric.

\end{proof}

           Figure~\ref{fi12} gives two illustrations of Lemmas \ref{lem3} and \ref{lem1}.
  
      On the left figure the parameters are such that $\theta_1(s_2^+)>0$ and $\theta_2(s_1^+)>0$.
  Let us look at  zeros $\theta^*$ of $\gamma_1$ and $\theta^ {**}$ of $\gamma_2$ different from $s_0$.
   We see $\theta^{*} \in ]s_2^+, s_0[$, then $\eta \theta^*$ is the first candidate 
  for the closest pole of $\phi_1$ to $s''_0$  on $]s''_0 ,s_2^+[$.
   We also see $\theta^{**} \not\in [s_0, \zeta s_2^+]$, then there are no other candidates.
  Hence the closest  pole  of $\phi_1$ to $s''_0$  on $]s''_0 ,s_2^+[$  is $\eta \theta^{*}$.
   Since $\theta^{**} \in ]s_0, s_1^+[$, then $\zeta \theta^{**}$ is the first candidate 
   for the closest pole of $\phi_2$ to $s'_0$  on $]s_1^+, s'_0[$.
   Furthermore,  $\theta^*  \in ] \eta s_1^+, s_0[$, so that $\zeta \eta \theta^*$
is  the second candidate  to  be the closest pole of $\phi_2$ to $s'_0$  on $]s_1^+, s_0[$.
  We see at  the picture that $\zeta \eta \theta^*$ is closer to $s'_0$ than $\zeta \eta^{**}$.

    On the right figure  the parameters are such that $\theta_1(s_2^+)<0$ and $\theta_2(s_1^+)<0$.
 We see $\theta^{*} \in ]s_2^+, s_0[$, then $\eta \theta^*$ is immediately 
   the closest pole of $\phi_1$ to $s''_0$  on $]s''_0 ,s_2^+[$.
 Since $\theta^{**} \not \in ]s_0, s_1^+[$, then there are 
  no poles  of $\phi_2$  on $]s_1^+, s'_0[$.

\begin{figure}[hbtp]
  \centering
  \includegraphics[scale=0.9]{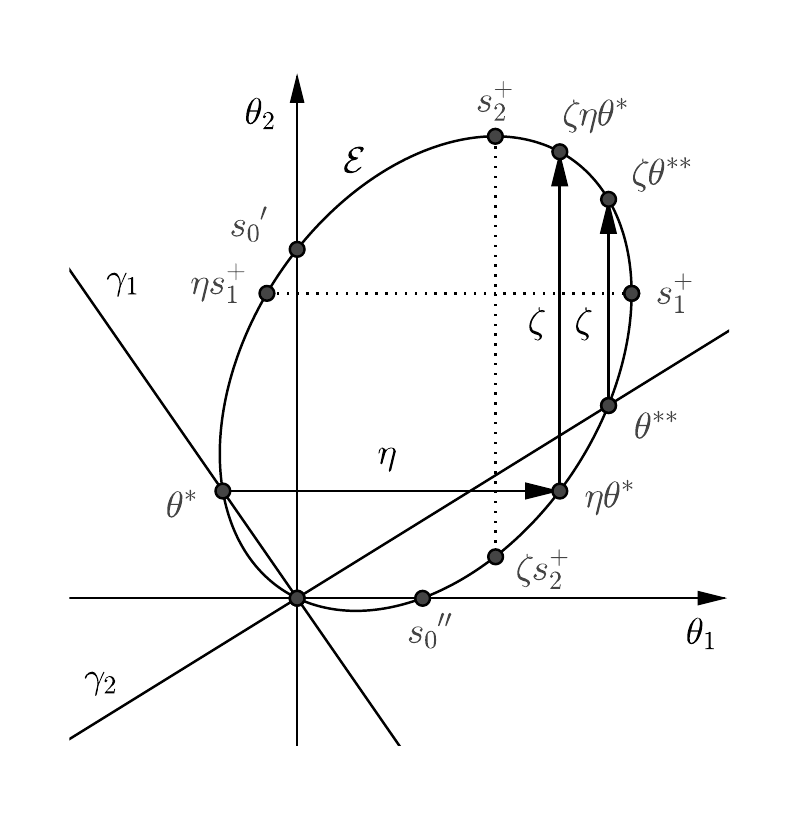}
   \includegraphics[scale=0.9]{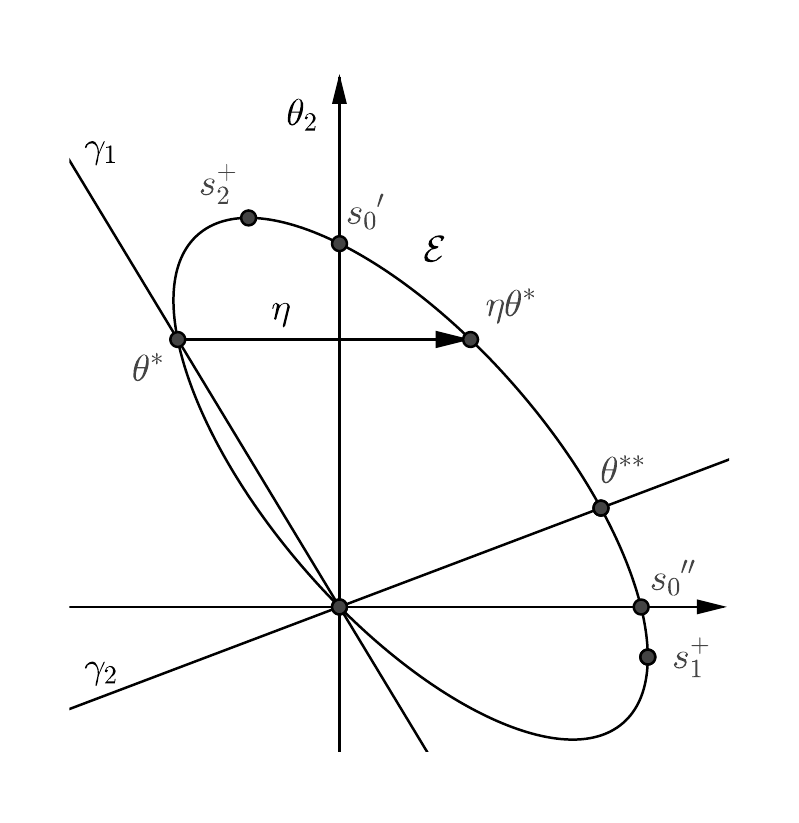}
  \caption{On the left  figure :   $\eta \theta^*$ is the  
   closest pole of $\phi_1$ to $s''_0$  on $]s''_0 ,s_2^+[$, 
  $\zeta \eta \theta^*$
is  the closest pole of $\phi_2$ to $s'_0$  on $]s_1^+, s_0[$. On the right figure:
 $\eta \theta^*$ is the  
   closest pole of $\phi_1$ to $s''_0$  on $]s''_0 ,s_2^+[$, 
  there are no poles of $\phi_2$  on $]s_1^+, s_0[$    }
  \label{fi12}
  \end{figure}

     We will also need the following two lemmas.

 \begin{lem}
\label{lem2}
  \begin{itemize}

\item[(1)]
     Assume that $\theta_2(s_1^+)>0$. Then for any $s \in ]\eta s_1^+, s_0[$ we have
  $\theta_2(\zeta \eta s)> \theta_2(\eta s)$.

\item[(2)]    Assume that $\theta_1(s_2^+)>0$. Then for any $s \in ]s_0,\zeta s_2^+ [$ we have
  $\theta_1(\eta \zeta s)> \theta_1(\zeta s)$.

\end{itemize}
 \end{lem}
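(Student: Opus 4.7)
The plan is to work in the rational parametrization $s = e^{i\omega}$ of Section~\ref{param}, in which $\zeta\eta$ becomes the rotation $\omega \mapsto \omega - 2\beta$ and $\theta_2(e^{i\omega}) = A_2 + B_2 \cos(\omega - \beta)$ with $A_2 = (\theta_2^+ + \theta_2^-)/2$, $B_2 = (\theta_2^+ - \theta_2^-)/2 > 0$, and $\beta \in (0,\pi)$. Since $\eta$ preserves $\theta_2$, the desired inequality $\theta_2(\zeta\eta s) > \theta_2(\eta s)$ is the same as $\theta_2(\zeta\eta s) > \theta_2(s)$, and a sum-to-product identity will give
\[
\theta_2(\zeta\eta s) - \theta_2(s) \;=\; 2 B_2 \sin\beta \, \sin(\omega - 2\beta).
\]
Since $B_2 > 0$ and $\sin\beta > 0$, the whole statement boils down to verifying $\sin(\omega - 2\beta) > 0$ on the arc $]\eta s_1^+, s_0[$.

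Next I would identify that arc in the parametrization. From $s_1^+ = 1$ (i.e.\ $\omega = 0$) and $\eta(e^{i\omega}) = e^{i(2\beta - \omega)}$, the point $\eta s_1^+$ corresponds to $\omega = 2\beta$. Writing $s_0 = e^{i\omega_0}$, the anticlockwise arc $]\eta s_1^+, s_0[$ corresponds to $\omega \in (2\beta, \omega_0)$. It will therefore suffice to show $\omega_0 - 2\beta \in (0, \pi)$.

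This last point is the main work. From $\theta_2(s_0) = 0$ one obtains $\omega_0 \in \{\beta - \gamma, \beta + \gamma\}$ with $\gamma = \arccos(-A_2/B_2) \in (0,\pi)$. To select the correct sign I would use the relation $s''_0 = \eta s_0$, which places $s''_0$ at angle $2\beta - \omega_0$, together with $\theta_1(s''_0) = -2\mu_1/\sigma_{11} > 0 = \theta_1(s_0)$ (this uses $\mu_1 < 0$ from \eqref{mumu}). Expanding gives
\[
\theta_1(s''_0) - \theta_1(s_0) = 2 B_1 \sin\beta \, \sin(\omega_0 - \beta) > 0,
\]
hence $\sin(\omega_0 - \beta) > 0$ and therefore $\omega_0 = \beta + \gamma$. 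The hypothesis $\theta_2(s_1^+) = A_2 + B_2 \cos\beta > 0$ reads $\cos\beta > \cos\gamma$, and since $\beta, \gamma \in (0,\pi)$ this is the same as $\beta < \gamma$. Consequently $\omega_0 - 2\beta = \gamma - \beta \in (0, \pi)$, so for every $\omega \in (2\beta, \omega_0)$ we have $\omega - 2\beta \in (0, \gamma - \beta) \subset (0, \pi)$ and $\sin(\omega - 2\beta) > 0$.

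Part~(2) follows by the fully symmetric argument, interchanging the roles of $\theta_1 \leftrightarrow \theta_2$, $\zeta \leftrightarrow \eta$, $s_1^+ \leftrightarrow s_2^+$, $s'_0 \leftrightarrow s''_0$, and using $\eta\zeta$ (rotation by $+2\beta$) in place of $\zeta\eta$; the role of $\mu_1 < 0$ is played by $\mu_2 < 0$ via $\theta_2(s'_0) > 0$. The only subtle point in both halves is the sign check that locates $\omega_0$ in $(\beta, \beta + \pi)$ rather than in $(\beta - \pi, \beta)$; once that is done, the rest reduces to the one-line trigonometric inequality above.
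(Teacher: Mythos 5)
Your proof is correct, and it reaches the conclusion by a genuinely different route than the paper's. The paper argues softly: it considers $f(s)=\theta_2(\zeta s)-\theta_2(s)$ on the arc $[s''_0,s_1^+]$, notes that on $\mathcal{E}$ this function can vanish only at the fixed points $s_1^{\pm}$ of $\zeta$, that $s_1^-$ does not lie on that arc, and that $f(s''_0)>0$ (this is where the hypothesis $\theta_2(s_1^+)>0$ enters, since it forces $s''_0$ onto the $\Theta_2^-$-branch); continuity then gives $f>0$ on the whole open arc, and applying $\eta$ transports the inequality to $]\eta s_1^+,s_0[$. You instead work entirely in the explicit parametrization of Section~\ref{param}, reducing the claim to the one-line inequality $\sin(\omega-2\beta)>0$ and then pinning down the endpoints of the arc: the sign check $\theta_1(s''_0)>\theta_1(s_0)$ (using $\mu_1<0$) locates $s_0$ at $\omega_0=\beta+\gamma$, and the hypothesis $\theta_2(s_1^+)>0$ becomes $\beta<\gamma$, so the arc has angular length $\gamma-\beta\in(0,\pi)$. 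What you gain is that the two sign determinations the paper leaves implicit (why $f(s''_0)>0$, and why the relevant arc avoids the second zero of $f$) become explicit computations; what you pay is length. Your appeal to symmetry for part~(2) is legitimate and matches the paper's own one-line dismissal of that case: the substitution $\omega\mapsto\beta-\omega$ exchanges $(\theta_1,\zeta)$ with $(\theta_2,\eta)$, sends $\zeta\eta$ to $\eta\zeta$, and reverses orientation, so it carries part~(1) verbatim onto part~(2), with $\mu_2<0$ playing the role of $\mu_1<0$ exactly as you say.
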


\begin{proof}
 Since $\theta_2(s_1^+)>0$, then $\theta_2(\zeta \eta s_0) - \theta_2(\eta s_0)>0$.
   Consider the function  $f(s)=\theta_2(\zeta s)-\theta_2(s)$  for $s \in [s''_0, s_1^+]$.
   It depends continuously on $s$ on this arc.
    We note that  $f(s''_0)= \theta_2(\zeta \eta s_0)-\theta_2(\eta s_0)>0$,
   $f(s_1^+)=0$.
     Furthermore, since $s_1^-\not\in [s''_0,  s_1^+]$,
     then $f(s)\ne 0$ for all  $s \in ]s''_0, s^{1,+}]$.   Hence $f(s)>0$ for all  $s \in ]s''_0, s_1^+]$,
  from where  $\theta_2(\zeta \eta s)- \theta_2(\eta s) =f(\eta s)>0$ for any $s \in \eta ]s''_0, s_1^+]= [\eta s_1^+, s_0[$.
   The proof in the other case is symmetric.

\end{proof} 

\begin{lem}
\label{lem4}

 Assume that $\gamma_2(s)$  has a zero $\theta^{**} \in ]s_0, s_1^+[$ and 
$\gamma_1(s)$ has a  zero $\theta^* \in]s_2^+, s_0[$.
    Then one of the following three assertions holds true :
\begin{itemize}

\item[(i)]   The closest pole of $\phi_2(\theta_1(s))$ to $s'_0$ on  $]s_1^+, s'_0[$ is $\zeta \theta^{**}$,
    the closest pole of $\phi_1(\theta_2(s))$ to $s''_0$ on  $]s''_0, s_2^+[$  is $\eta \theta^*$, both of them are of the first order.

\item[(ii)]  The closest pole of $\phi_2(\theta_1(s))$ to $s'_0$ on  $]s_1^+, s'_0[$  is $\zeta \theta^{**}$, it is of the first order.
    The closest pole of $\phi_1(\theta_2(s))$ to $s''_0$ on  $]s''_0, s_2^+[$ is $\eta \zeta \theta^{**}$ where $ \theta_1(\eta \zeta \theta^{**})>
  \theta_1(\zeta \theta^{**})$.

\item[(iii)]  The closest pole of $\phi_2(\theta_1(s))$ to $s'_0$ on  $]s_1^+, s'_0[$  is $\zeta \eta \theta^{*}$ where $
\theta_2(\zeta \eta \theta^{*})> \theta_2(\eta \theta^{*})$.
    The closest pole of $\phi_1(\theta_2(s))$ to $s''_0$ on  $]s''_0, s_2^+[$ is $\eta \theta^{*}$, it is of the first order.

\end{itemize}

\end{lem}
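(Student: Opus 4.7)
My plan is to identify the closest poles of $\phi_2$ to $s'_0$ on $]s_1^+,s'_0[$ and of $\phi_1$ to $s''_0$ on $]s''_0,s_2^+[$ by combining Lemmas~\ref{lem3}, \ref{lem1} and \ref{lem2}. First, Lemma~\ref{lem3} guarantees that $\zeta\theta^{**}$ is a pole of $\phi_2$ on $]s_1^+,s'_0[$ and $\eta\theta^*$ is a pole of $\phi_1$ on $]s''_0,s_2^+[$; these are the ``natural'' first-order candidates. Denoting by $\theta^p$ and $\theta^q$ the closest poles of $\phi_2$ and $\phi_1$ respectively, Lemma~\ref{lem1}(i) reduces $\theta^p$ to exactly two possibilities: either $\gamma_2(\zeta\theta^p)=0$, which, combined with the hypothesis $\theta^{**}\in\,]s_0,s_1^+[$, yields $\theta^p=\zeta\theta^{**}$ (first-order), or $\gamma_1(\eta\zeta\theta^p)=0$, yielding $\theta^p=\zeta\eta\theta^*$. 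Symmetrically, Lemma~\ref{lem1}(ii) gives $\theta^q\in\{\eta\theta^*,\eta\zeta\theta^{**}\}$.

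A priori this produces four combinations for the pair $(\theta^p,\theta^q)$. The three combinations $(\zeta\theta^{**},\eta\theta^*)$, $(\zeta\theta^{**},\eta\zeta\theta^{**})$ and $(\zeta\eta\theta^*,\eta\theta^*)$ correspond exactly to cases (i), (ii) and (iii) of the lemma, and in each the claimed first-order character of the ``natural'' pole is inherited from Lemma~\ref{lem1}. The remaining ``doubly deviated'' combination $(\zeta\eta\theta^*,\eta\zeta\theta^{**})$ must be excluded. For it to occur, the second clause of Lemma~\ref{lem1}(i) forces $\theta_2(s_1^+)>0$ and $\theta^*\in\,]\eta s_1^+,s_0[$, while the second clause of Lemma~\ref{lem1}(ii) forces $\theta_1(s_2^+)>0$ and $\theta^{**}\in\,]s_0,\zeta s_2^+[$. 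I would then show that under these joint constraints the orderings supplied by Lemma~\ref{lem2} position $\zeta\eta\theta^*$ and $\eta\zeta\theta^{**}$ on $\mathcal{E}$ incompatibly with both being the closest poles in their respective arcs, via a careful geometric comparison exploiting the rotational action of $\eta\zeta$ and the placement of $\theta^*$, $\theta^{**}$ relative to the branch points $s_1^\pm$, $s_2^\pm$.

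The inequality $\theta_1(\eta\zeta\theta^{**})>\theta_1(\zeta\theta^{**})$ in case (ii) then follows directly from Lemma~\ref{lem2}(2) applied at $s=\theta^{**}$, its hypothesis $\theta_1(s_2^+)>0$ being precisely what forces $\eta\zeta\theta^{**}$ to land on the arc $]s''_0,s_2^+[$ in the first place; the symmetric inequality in case (iii) follows from Lemma~\ref{lem2}(1). The main obstacle is the exclusion of the doubly deviated combination: every other ingredient is an almost mechanical application of the preceding lemmas, but this exclusion is what genuinely couples the two arcs, and is where the stability conditions (\ref{u})--(\ref{v}), via the relative placement of $\theta^*$ and $\theta^{**}$ on $\mathcal{E}$, play the decisive role.
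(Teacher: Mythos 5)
Your architecture matches the paper's: Lemma~\ref{lem3} supplies the two ``natural'' candidates, Lemma~\ref{lem1} reduces each closest pole to a two-element set, and the whole content of the lemma is the exclusion of the combination $(\zeta\eta\theta^*,\eta\zeta\theta^{**})$. But that exclusion is precisely the step you do not carry out: you write that you ``would show'' the two deviated poles are positioned ``incompatibly'' via ``a careful geometric comparison,'' and you attribute the decisive input to the stability conditions and the placement of $\theta^*$, $\theta^{**}$ relative to the branch points. That is a declaration of intent, not an argument, and the attribution is off: the stability conditions play no direct role in the exclusion.

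The actual mechanism is a short coordinate computation. Suppose $\zeta\eta\theta^*$ is the closest pole of $\phi_2$ to $s'_0$ on $]s_1^+,s'_0[$. Since $\zeta\theta^{**}$ is also a pole on that arc (Lemma~\ref{lem3}), monotonicity of both coordinates along the arc gives $\theta_1(\zeta\eta\theta^*)\leq\theta_1(\zeta\theta^{**})$ and $\theta_2(\zeta\eta\theta^*)\leq\theta_2(\zeta\theta^{**})$. Now use that $\zeta$ preserves $\theta_1$ and $\eta$ preserves $\theta_2$, together with the strict inequalities of Lemma~\ref{lem2}: $\theta_1(\eta\theta^*)=\theta_1(\zeta\eta\theta^*)$ and $\theta_2(\eta\theta^*)<\theta_2(\zeta\eta\theta^*)$, while $\theta_2(\eta\zeta\theta^{**})=\theta_2(\zeta\theta^{**})$ and $\theta_1(\eta\zeta\theta^{**})>\theta_1(\zeta\theta^{**})$. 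Chaining these yields $\theta_1(\eta\theta^*)<\theta_1(\eta\zeta\theta^{**})$ and $\theta_2(\eta\theta^*)<\theta_2(\eta\zeta\theta^{**})$, so $\eta\theta^*$ is strictly closer to $s''_0$ than $\eta\zeta\theta^{**}$ on $]s''_0,s_2^+[$, and the closest pole of $\phi_1$ cannot be $\eta\zeta\theta^{**}$. Symmetrically, assuming the deviated pole on the $\phi_1$ side forces the natural one on the $\phi_2$ side. Without this computation (or an equivalent one), your proof is incomplete at its only non-mechanical point. A secondary, minor omission: the first-order claim for $\zeta\theta^{**}$ (resp.\ $\eta\theta^*$) in cases (ii)--(iii) requires checking that the two vanishing conditions of Lemma~\ref{lem1} do not hold simultaneously, which amounts to the two candidates on the relevant arc being distinct; you inherit ``first order'' from Lemma~\ref{lem1} without noting this.
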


The case (ii) is illustrated on Figure \ref{fi12}. 

\begin{proof}
By Lemma  \ref{lem3} there exist poles of the function $\phi_1(\theta_2(s))$ on $]s''_0, s_2^+[$.  By Lemma \ref{lem1} under parameters 
  such that $\theta_1(s_2^+)\leq 0$ or   $\theta_1(s_2^+)> 0$  and $\theta^{**} \not \in ]s_0, \zeta s_2^+[$, 
   $\eta \theta^*$ is the closest pole to $s''_0$ and it is of the first order. 
   By the same lemma under parameters 
   such that  $\theta_1(s_2^+)> 0$  and $\theta^{**} \in ]s_0, \zeta s_2^+[$,
    either $\eta \theta^{*}$  or 
   $\eta \zeta \theta^{**}$ is the closest pole to $s'_0$.   By Lemma \ref{lem1},
if $\gamma_1(\zeta \theta^{*})\ne 0$,
 pole $\eta \theta^*$ is of the first order. 
  Condition  $\gamma_1(\zeta\theta^*) \ne 0$ is equivalent to $\zeta \theta^* \ne \theta^{**}$.  This  means   
  just  that  pole   $\eta \theta^{*}$ is different from  
   $\eta \zeta \theta^{**}$  which is another candidate for the closest pole to $s''_0$. 
     By Lemma \ref{lem2}   $\theta_1(\eta\zeta \theta^{**})>\theta_1(\zeta \theta^{**})$  
     To summarize, one of two following statements holds true: 

\begin{itemize}

\item[(a1)]  Point $\eta \theta^*$  is the closest pole of $\phi_1(\theta_2(s))$  to $s''_0$ on $]s''_0, s_2^+[$ and it is of the first order;   

\item[(b1)]  The parameters  are  such that  $\theta_1(s_2^+)> 0$  and $\theta^{**} \in ]s_0, \zeta s_2^+[$.
 Point $\eta \zeta \theta^{**}$  is the closest pole of  $\phi_1(\theta_2(s))$  to $s''_0$ on $]s''_0, s_2^+[$  and 
$ \theta_1(\eta\zeta \theta^{**})> \theta_1(\zeta \theta^{**})$.
 
\end{itemize}

 By Lemmas \ref{lem3},  \ref{lem1}, \ref{lem2}  and the same considerations,  
one of the following statements about  $\phi_2(\theta_1)$   holds true: 

\begin{itemize}

\item[(a2)]
Pole $\zeta \theta^{**}$ is the closest pole  of $\phi_2(\theta_1(s))$  to $s'_0$ on $]s_1^+, s'_0[$ and it is of the first order. 

\item[(b2)]  
 The parameters are such that 
   $\theta_2(s_1^+)>0$, $\theta^* \in ]\eta s_1^+, s_0[$, 
point  $\zeta \eta \theta^{*}$ is the closest pole   of $\phi_2(\theta_1(s))$  to $s'_0$  on $]s_1^+, s'_0[$    and
   $\theta_2(\zeta\eta \theta^{*})>\theta_2(\eta \theta^{*})$
\end{itemize}

     Let us finally prove that (b1) and (b2) can not hold true simultaneously.
 Assume that $\theta_2(s_1^+)>0$, $\theta^{*} \in ]\eta s_1^+, s_0[$,
   $\theta_1(s_2^+)>0$, $\theta^{**} \in ]s_0, \zeta s_2^+[$  and e.g. (b2),
   that is  $\zeta \eta \theta^{*}$ is the closest pole  to $s_0$.
   Note that in this case $\zeta \theta^{**} \in ]s_2^+, s'_0[$. Then
   $\zeta \eta \theta^{*}$ is closer to $s'_0$ than the pole  $\zeta \theta^{**}$  on this segment or coincides with it.
           Hence  $\theta_1(\zeta \eta \theta^{*}) \leq \theta_1(\zeta \theta^{**})$ and
   $\theta_2(\zeta \eta \theta^{*}) \leq \theta_2 (\zeta \theta^{**})$. 
     By Lemma~\ref{lem2}
$\theta_1(\eta \theta^*) =\theta_1 (\zeta \eta \theta^{*})$ and  $\theta_1(\zeta \theta^{**})<\theta_1(\eta \zeta \theta^{**})$,
$\theta_2(\eta \theta^*) < \theta_2(\zeta \eta \theta^{*})$ and  $\theta_2(\zeta \theta^{**})=\theta_2(\eta \zeta \theta^{**})$.
   Then
$\theta_1(\eta \theta^*)<\theta_1(\eta \zeta \theta^{**})$,
$\theta_2(\eta \theta^*) < \theta_2(\eta \zeta \theta^{**})$.
   This means that that  $\eta \theta^*$ is the closest pole of $\phi_1(\theta_2(s))$ to $s''_0$, 
   $\eta\theta^* \ne \eta \zeta \theta^{**}$ , so that  (b1) is impossible for $\phi_1(\theta_2(s))$, then we have (a1).

  In the same way assumption (b1) leads to (a2).
     Thus (b1), (b2) can not hold true simultaneously, the lemma is proved.
     \end{proof}

\section{Contribution of the saddle-point and of the poles to the asymptotic expansion} 

\subsection{Stationary distribution density as a sum of integrals on \texorpdfstring{${\bf S}$}{S} }
\label{invlaplacetransf}

By the  functional equation (\ref{maineq})  and the  inversion formula of  Laplace transform
 (we refer to \cite{doetsch_introduction_1974} and
\cite{brychkov_multidimensional_1992}), 
       the density $\pi(x_1,x_2)$ can be represented as a double integral  
\begin{align}
\pi(x_1,x_2)
&=
\frac{1}{(2\pi i)^2}
\int_{-i\infty}^{i\infty} \int_{-i\infty}^{i\infty}
e^{-x_1\theta_1-x_2\theta_2} \varphi(\theta_1,\theta_2) 
\mathrm{d} \theta_1 \mathrm{d} \theta_2\nonumber
\\
&=\frac{-1}{(2\pi i)^2} \int_{-i\infty}^{i\infty} \int_{-i\infty}^{i\infty}
e^{-x_1\theta_1-x_2\theta_2}
\frac{\gamma_1(\theta) \varphi_1(\theta_2)+\gamma_2(\theta)\varphi_2(\theta_1)}{\gamma(\theta)}
\mathrm{d} \theta_1 \mathrm{d} \theta_2.\label{vbn}
\end{align}
 We now reduce it to a sum of single integrals.

\begin{lem}
 For any $(x_1, x_2) \in {\bf R}^2_+$  
$$
\pi(x_1,x_2)= I_1(x_1,x_2)+I_2(x_1,x_2)
$$
where
\begin{align}
I_1 (x_1, x_2) &=
\frac{1}{2\pi i} \int_{-i\infty}^{i\infty}
\varphi_2(\theta_1) \gamma_2(\theta_1, \Theta_2^+(\theta_1)) e^{-x_1\theta_1-x_2\Theta_2^+(\theta_1)}
\frac{\mathrm{d} \theta_1}{\sqrt{  d(\theta_1) }} \label{I1}
\end{align}
and
\begin{align}
I_2 (x_1, x_2) &=
\frac{1}{2\pi i} \int_{-i\infty}^{i\infty}
\varphi_1(\theta_2) \gamma_1(\Theta_1^+(\theta_2), \theta_2) e^{-x_1\Theta_1^+(\theta_2)-x_2\theta_2}
\frac{\mathrm{d} \theta_2}{\sqrt{  \tilde d (\theta_2) }}. \label{I2}
\end{align} \label{propI}
\end{lem}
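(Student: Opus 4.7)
The plan is to start from the double integral representation \eqref{vbn} and reduce each of its two summands to a single contour integral by performing one of the two integrations via residues. Write
\[
\pi(x_1,x_2)=\widetilde I_1(x_1,x_2)+\widetilde I_2(x_1,x_2),
\]
where $\widetilde I_1$ contains $\gamma_2(\theta)\varphi_2(\theta_1)$ and $\widetilde I_2$ contains $\gamma_1(\theta)\varphi_1(\theta_2)$, and show $\widetilde I_1=I_1$, $\widetilde I_2=I_2$ by applying Fubini and evaluating the inner integral by the residue theorem. The two cases are symmetric, so I will describe only the first.

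For $\widetilde I_1$, fix $\theta_1$ with $\Re\theta_1=0$ and study the inner integral
\[
J(\theta_1):=\frac{1}{2\pi i}\int_{-i\infty}^{i\infty}e^{-x_2\theta_2}\,
\frac{\gamma_2(\theta_1,\theta_2)}{\gamma(\theta_1,\theta_2)}\,\dd\theta_2.
\]
As a function of $\theta_2$, $\gamma(\theta_1,\theta_2)=a(\theta_1)(\theta_2-\Theta_2^{+}(\theta_1))(\theta_2-\Theta_2^{-}(\theta_1))$ with $a(\theta_1)=\tfrac12\sigma_{22}$, so the integrand has simple poles at $\Theta_2^{\pm}(\theta_1)$. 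Since $\mu_2<0$, the analysis recalled in Section~2.4 gives $\Re\Theta_2^{+}(\theta_1)>0$ and $\Re\Theta_2^{-}(\theta_1)\leqslant 0$ on the imaginary axis (with equality only at $\theta_1=0$, where the pole at $\Theta_2^-=0$ is cancelled by the factor $\theta_2$ hidden in $\gamma_2$ if needed, but in any case the contour can be indented). Because $x_2>0$, the factor $e^{-x_2\theta_2}$ decays only in the right half-plane, so I close the contour by a large semicircle $S_R$ to the right; the enclosed pole is $\Theta_2^{+}(\theta_1)$, the traversal is clockwise, and Jordan's lemma (together with the estimate $|\gamma_2/\gamma|=O(1/|\theta_2|)$) shows that the semicircular contribution vanishes as $R\to\infty$. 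Thus
\[
J(\theta_1)=-\,\mathrm{Res}_{\theta_2=\Theta_2^{+}(\theta_1)}\Bigl[e^{-x_2\theta_2}\frac{\gamma_2(\theta_1,\theta_2)}{\gamma(\theta_1,\theta_2)}\Bigr]
=-\frac{e^{-x_2\Theta_2^{+}(\theta_1)}\gamma_2(\theta_1,\Theta_2^{+}(\theta_1))}{a(\theta_1)\bigl(\Theta_2^{+}(\theta_1)-\Theta_2^{-}(\theta_1)\bigr)}.
\]
Since $a(\theta_1)\bigl(\Theta_2^{+}(\theta_1)-\Theta_2^{-}(\theta_1)\bigr)=\sqrt{d(\theta_1)}$, substituting $J(\theta_1)$ back into the outer $\theta_1$-integral in $\widetilde I_1$ and absorbing the sign yields exactly the formula \eqref{I1} for $I_1$. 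The same argument applied to the inner $\theta_1$-integral in $\widetilde I_2$, now closing on the side where $\Re\theta_1\to+\infty$ and picking up the root $\Theta_1^{+}(\theta_2)$ (using $\mu_1<0$), produces \eqref{I2}.

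The main technical point I expect to handle carefully is the swap of integration order (Fubini) and the vanishing of the semicircle contribution. Both rely on showing that $\varphi(\theta)$ decays fast enough on the bi-imaginary skeleton $\{\Re\theta_1=\Re\theta_2=0\}$, and that $\varphi_2(\theta_1)$ (resp.\ $\varphi_1(\theta_2)$) is integrable against $1/\sqrt{d(\theta_1)}$ (resp.\ $1/\sqrt{\tilde d(\theta_2)}$) on the imaginary axis. These are standard consequences of $\pi$ being the density of a probability measure with smooth enough regularity (cf.\ \cite{dai_reflecting_2011}) combined with the polynomial decay of $\gamma_2/\gamma$ and $\gamma_1/\gamma$ along the semicircles, so the only delicate step is to make these estimates explicit; once they are, the residue computation above is routine and gives the stated identity.
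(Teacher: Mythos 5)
Your proposal is correct and follows essentially the same route as the paper: split the double inversion integral into the $\varphi_2$ and $\varphi_1$ parts, evaluate the inner integral by closing the contour to the right and taking the residue at $\Theta_2^{+}(\theta_1)$ (resp.\ $\Theta_1^{+}(\theta_2)$), using $a(\theta_1)\bigl(\Theta_2^{+}-\Theta_2^{-}\bigr)=\sqrt{d(\theta_1)}$; the paper just replaces your appeal to Jordan's lemma by an explicit dominated-convergence estimate on the semicircle and then checks absolute convergence of the resulting single integral.
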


\begin{proof}
By inversion formula  (\ref{vbn})  
   \begin{align*}
\pi(x_1,x_2)
&=
\frac{-1}{2\pi i} \int_{-i\infty}^{i\infty}
\varphi_2(\theta_1) e^{-x_1\theta_1} 
\Big(\frac{1}{2\pi i} \int_{-i\infty}^{i\infty} \frac{\gamma_2(\theta)}{\gamma(\theta)} e^{-x_2\theta_2} \mathrm{d} \theta_2\Big) \mathrm{d} \theta_1
\\ &+
\frac{-1}{2\pi i} \int_{-i\infty}^{i\infty}
\varphi_1(\theta_2) e^{-x_2\theta_2} 
\Big(\frac{1}{2\pi i} \int_{-i\infty}^{i\infty} \frac{\gamma_1(\theta)}{\gamma(\theta)} e^{-x_1\theta_1} \mathrm{d} \theta_1\Big) \mathrm{d} \theta_2.\\
\end{align*}
      Now it suffices to show the following formulas
\begin{equation}
\label{xxxp}
\frac{-1}{2\pi i} \int_{-i\infty}^{i\infty} \frac{\gamma_2(\theta)e^{-x_2\theta_2}}{\gamma(\theta)} \mathrm{d} \theta_2
=
 \frac{\gamma_2(\theta_1, \Theta_2^+(\theta_1))}{\sqrt{ d(\theta_1) }} e^{-x_2\Theta_2^+(\theta_1)}, 
\end{equation}
\begin{equation}
\label{yyyp} 
\frac{-1}{2\pi i} \int_{-i\infty}^{i\infty} \frac{\gamma_1(\theta)e^{-x_1\theta_1}}{\gamma(\theta)} \mathrm{d} \theta_1
=
 \frac{\gamma_1(\Theta_1^+(\theta_2), \theta_2)}{\sqrt{\tilde d(\theta_2) }} e^{-x_1\Theta_1^+(\theta_2)}.  
\end{equation}
   Let us prove (\ref{xxxp}).
  For any $\theta_1 \in i{\bf R} \setminus \{0\}$,  
    the function  $\gamma(\theta)=\frac{\sigma_{22}}{2}(\theta_2-\Theta_2^{+}(\theta_1))(\theta_2-\Theta_2^{-}(\theta_1))$
   has two zeros   $\Theta_2^{+}(\theta_1)$ and   $\Theta_2^{-}(\theta_1)$. Their real parts are of opposite signs:
 $\Re(\Theta_2^-(\theta_1))< 0$ and $\Re(\Theta_2^+(\theta_1))>0$.
  Thus  for any  fixed $\theta_1 \in i{\bf R}\setminus\{0\}$,   function 
$\frac{\gamma_2(\theta) e^{-x_2\theta_2}}{\gamma(\theta)}$ of the argument $\theta_2$ has two poles 
  on the complex plane ${\bf C}_{\theta_2}$, one at $\Theta^{-}_2(\theta_1)$ with  negative real part and another one 
     at $\Theta^{+}_2(\theta_1)$   with positive real part.
  Let us construct a contour $\mathcal{C}_R =[-iR,iR] \cup \{R e^{it} \mid t \in ]-\pi/2, \pi/2[\}$    composed of the purely imaginary segment 
$[-iR,iR]$ and   the half of the circle with radius 
$R$ and center $0$ on  ${\bf C}_{\theta_2}$, see Figure \ref{contourintresidu}. 
For $R$ large enough $\Theta^{+}_2(\theta_1)$ is inside the contour.  
The integral of $\frac{\gamma_2(\theta) e^{-x_2\theta_2}}{\gamma(\theta)}$ 
   over this contour taken in the counter-clockwise direction equals the residue at the unique pole of the integrand:   
\begin{equation*}
 \frac{1}{2\pi i}
\int_{\mathcal{C}_R} \frac{\gamma_2(\theta) e^{-x_2\theta_2}}{\gamma(\theta)} d\theta_2 =   {\rm res}_{\theta_2=\Theta^{+}_2(\theta_1)} 
    \frac{\gamma_2(\theta) e^{-x_2\theta_2}}{\gamma(\theta)}=   \frac{\gamma_2(\theta_1 ,\Theta^{+}_2(\theta_1))}{(\sigma_{22}/2) (  
   \Theta^{+}_2(\theta_1)-\Theta^{-}_2(\theta_1) )}e^{-x_2  \Theta^{+}_2(\theta_1)}
\end{equation*}
\begin{equation}
\label{jmmcb}
=  
 \frac{\gamma_2(\theta_1, \Theta^{+}_2(\theta_1))}{
   \sqrt{d(\theta_1)}} e^{-x_2  \Theta^{+}_2(\theta_1)} \  \   \  \hbox{ for all  large enough }R\geq 0.
\end{equation}

\begin{figure}[hbtp]
\centering
\includegraphics[scale=0.7]{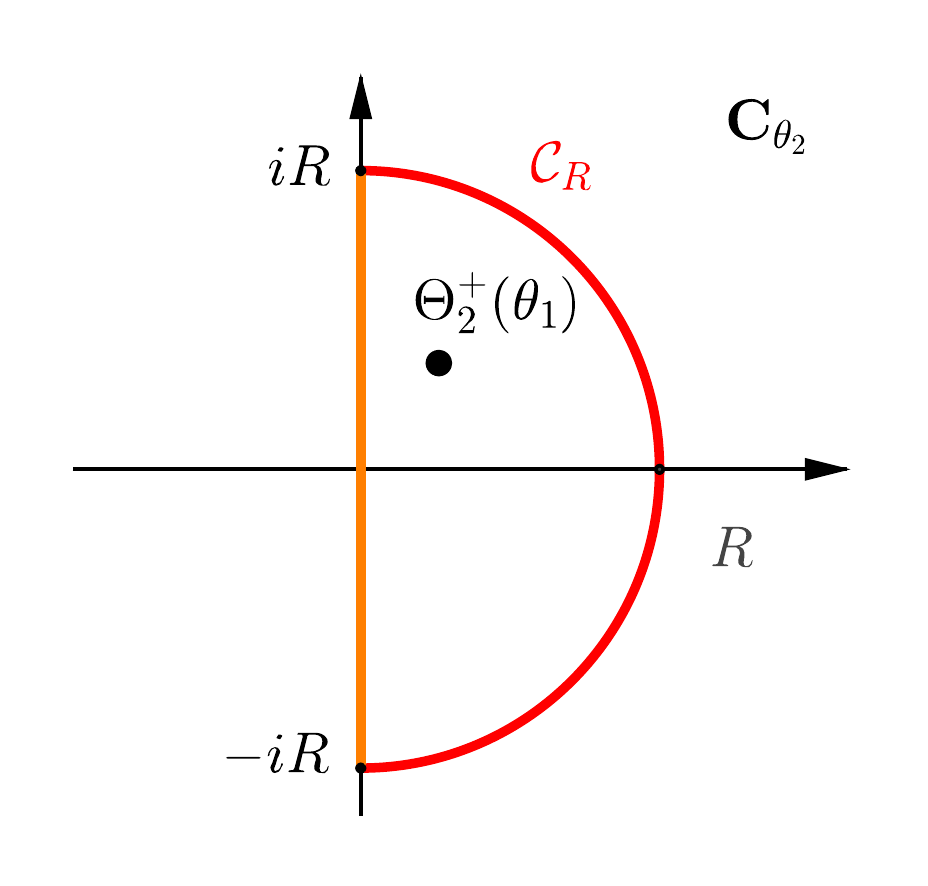} 
\caption{Contour $\mathcal{C}_R$ on  ${\bf C}_{\theta_2}$.}
\label{contourintresidu}
\end{figure}
Let us take the limit of this integral as $R \to \infty$: 
\begin{equation}
\lim_{R \to \infty} \int_{\mathcal{C}_R} \frac{\gamma_2(\theta) e^{-x_2\theta_2}}{\gamma(\theta)} d\theta_2 = 
- \lim_{R \to \infty} \int_{-i R}^{i R} \frac{\gamma_2(\theta) e^{-x_2\theta_2}}{\gamma(\theta)} d\theta_2 +
\lim_{R \to \infty} \int_{\{Re^{it}|t\in]-\frac{\pi}{2},\frac{\pi}{2}[\}} \frac{\gamma_2(\theta) e^{-x_2\theta_2}}{\gamma(\theta)} d\theta_2.
\label{eqax}
\end{equation}
The last term equals
\begin{equation}
\label{56}
\lim_{R \to \infty} \int_{\{Re^{it}|t\in]-\frac{\pi}{2},\frac{\pi}{2}[\}} \frac{\gamma_2(\theta) e^{-x_2\theta_2}}{\gamma(\theta)} d\theta_2=\lim_{ R \to  \infty } 
\int_{-\frac{\pi}{2}}^{\frac{\pi}{2}} \frac{\gamma_2(\theta_1,Re^{it}) }{\gamma(\theta_1,Re^{it})} e^{-x_2 Re^{it}} iR e^{it}d t.
\end{equation}
     We note that 
$\sup_{R>0}\sup_{t\in]-\frac{\pi}{2},\frac{\pi}{2}[} |iRe^{it} \frac{\gamma_2(\theta_1,Re^{it}) }{\gamma(\theta_1,Re^{it})}|<\infty $
  and 
$\sup_{R>0}\sup_{t\in]-\frac{\pi}{2},\frac{\pi}{2}[}|e^{-x_2 R e^{it}}|\leq 1$. Furthermore 
  $|e^{-x_2 Re^{it}}|=e^{-x_2 R\cos{t}}\to 0$  as $R\to \infty$  for all $t\in]-\frac{\pi}{2},\frac{\pi}{2}[$.
   Then by dominated convergence theorem  the limit  (\ref{56}) equals 0 as $R \to \infty$. 
Hence,  due to (\ref{jmmcb}) and (\ref{eqax}) 
$$     \frac{\gamma_2(\theta_1 ,\Theta^{+}_2(\theta_1))}{(\sigma_{22}/2) (  
   \Theta^{+}_2(\theta_1)-\Theta^{-}_2(\theta_1) )}e^{-x_2  \Theta^{+}_2(\theta_1)} =
\lim_{R \to \infty} \int_{\mathcal{C}_R} \frac{\gamma_2(\theta) e^{-x_2\theta_2}}{\gamma(\theta)} d\theta_2 = 
 \int_{-i \infty}^{i \infty} \frac{\gamma_2(\theta) e^{-x_2\theta_2}}{\gamma(\theta)} d\theta_2,$$
  that proves  (\ref{xxxp})  for any $\theta_1 \in i{\bf  R} \setminus \{0\}$. The proof of \eqref{yyyp} is analogous.

 Note also that the integral    
$$ \frac{1}{2\pi i} \int_{-i\infty}^{i\infty}
\varphi_2(\theta_1) \gamma_2(\theta_1, \Theta_2^+(\theta_1)) e^{-x_1\theta_1-x_2\Theta_2^+(\theta_1)}
\frac{\mathrm{d} \theta_1}{\sqrt{  d(\theta_1) }}  $$ 
is absolutely convergent. In fact $\sup_{\theta_1 \in i{\bf R}} |\phi_2(\theta_1)| \leq \nu_2({\bf R}^2_+)$ by definition of $\phi_2$. 
   It is elementary to see that  $\sup_{\theta_1 \in i{\bf R}} |\gamma_2(\theta_1, \Theta^+_2(\theta_1)) d^{-1/2} (\theta_1)|<\infty$.
   Furthermore,  for any $\theta_1 \in i{\bf R}$, 
    $\Re  \Theta_2^+(\theta_1) =\sigma_{22}^{-1} (-\mu_2 +\Re \sqrt{d(\theta_1)})$, thus for some constant $c>0$ we have
     $\Re  \Theta_2^+(\theta_1) > c |\Im \theta_1|$. Then the integral is absolutely convergent.  This concludes the proof of formula (\ref{I1}).    
   The proof of (\ref{I2}) is completely analogous. 
      \end{proof}

\paragraph{\bf Remark}
These integrals are equal to those on the Riemann surface ${\bf S}$  along properly oriented contours 
   ${\cal I}_{\theta_1}^+$ and ${\cal I}_{\theta_2}^+$ respectively. Thanks to the parametrization of Section \ref{param} we have
\begin{equation}
\label{sss}
\frac{\mathrm{d}\theta_1}{\sqrt{d(\theta_1)}}=\frac{\mathrm{d}\theta_2}{\sqrt{\tilde d(\theta_2)}}=\frac{i \mathrm{d}s}{s\sqrt{\det \Sigma }}.
\end{equation}
 Then we can write for $x=(x_1, x_2) \in{\bf R}_+^2$  the density $\pi(x_1, x_2)$ as a sum of  two integrals on ${\bf S}$ :
$$
I_1+I_2=
\frac{1}{2\pi \sqrt{\det \Sigma}} \int_{\mathcal{I}_{\theta_1}^+}
\frac{\varphi_2(s) \gamma_2(\theta(s))}{s}  e^{-\langle \theta(s) \mid x \rangle }
{\mathrm{d}s} +
\frac{1}{2\pi \sqrt{\det \Sigma}} \int_{\mathcal{I}_{\theta_2}^+}
\frac{\varphi_1(s) \gamma_1(\theta(s))}{s} e^{-\langle \theta(s) \mid x \rangle }
\mathrm{d}s.
$$

\subsection{Saddle-point}
\label{subsec:saddle}

Let us  fix $\alpha \in ]0, \pi/2[$ and  put $(x_1,x_2)=r e_\alpha= r(\cos (\alpha), \sin (\alpha))$ where $\alpha \in ]0, \pi/2[$, 
  Our aim now is to find the asymptotic  expansion of  $\pi(r\cos (\alpha), r\sin (\alpha))$, that is the one of  the sum
\begin{equation}
\label{iwi}
I_1+I_2=
\frac{1}{2\pi \sqrt{\det \Sigma}} \int_{\mathcal{I}_{\theta_1}^+}
\frac{\varphi_2(s) \gamma_2(\theta(s))}{s}  e^{- r \langle \theta(s) \mid e_\alpha \rangle }
{\mathrm{d}s}
+
\frac{1}{2\pi \sqrt{\det \Sigma}} \int_{\mathcal{I}_{\theta_2}^+}
\frac{\varphi_1(s) \gamma_1(\theta(s))}{s} e^{- r \langle \theta(s) \mid e_\alpha \rangle }
\mathrm{d}s 
\end{equation}  
as  $r \to \infty$ and to prove that  for any $\alpha_0 \in ]0, \pi/2[$ this asymptotic expansion is uniform in a small neighborhood 
  ${\cal O}(\alpha_0)\in ]0, \pi/2[$.   

      These integrals are typical to apply the saddle-point method, see \cite{fedoryuk_asymptotic_1989} or \cite{pemantle_analytic_2013}.    
  Let us study the function $\langle \theta(s) \mid e_\alpha \rangle $ on ${\bf S}$ and its critical points.
\begin{lem} \label{sp}

\begin{itemize}
\item[(i)]  For any $\alpha \in ]0, \pi/2[$  function  $\langle \theta(s) \mid e_\alpha \rangle $ 
  has two critical points  on ${\bf S}$ denoted by  $\theta^{+}(\alpha)$ and $\theta^{-}(\alpha)$. Both of them 
are on  ellipse $\mathcal{E}$, 
  $\theta^{+}(\alpha) \in  ]s_1^+, s_2^+[$,  $\theta^{-}(\alpha) \in ]s_1^-, s_2^-[$.
Both of them  are non-degenerate.

\item[(ii)]  The coordinates of $\theta^{+}(\alpha)=(\theta_1^+(\alpha), \theta_2^+(\alpha))$ are given by formulas : 
\begin{eqnarray}
\theta_1^\pm(\alpha)&=& \frac{\mu_2\sigma_{12}-\mu_1\sigma_{22}}{\det \Sigma}
\pm\frac{1}{\det \Sigma}
\sqrt{\frac{D_1}{1+\frac{\tan(\alpha)^2}{(\sigma_{22}-\tan(\alpha)\sigma_{12})^2} \det\Sigma}
}   \nonumber\\
\theta_2^\pm(\alpha)&=& \frac{\mu_1\sigma_{12}-\mu_2\sigma_{11}}{\det \Sigma}
\pm\frac{1}{\det \Sigma}
\sqrt{\frac{D_2}{1+\frac{\tan(\alpha)^2}{(\sigma_{11}-\tan(\alpha)\sigma_{12})^2} \det\Sigma}
} \label{cdpt}
\end{eqnarray}
  where notations
 $D_1 =(\mu_2\sigma_{12}-\mu_1\sigma_{22})^2 +\mu_2^2 \det{\Sigma}$
  and
 $D_2 =(\mu_1\sigma_{12}-\mu_2\sigma_{11})^2 +\mu_1^2 \det{\Sigma}$ are used.
With the parametrization of Section \ref{param} the corresponding points on $\bf S$ are such that: 
$$s_\pm(\alpha)^2=\frac{\cos \alpha (\theta_1^+ -\theta_1^-)+\sin \alpha (\theta_2^+ -\theta_2^-) e^{i\beta}}{\cos \alpha (\theta_1^+ -\theta_1^-)+\sin \alpha (\theta_2^+ -\theta_2^-) e^{-i\beta}}.$$

\item[(iii)]  Function $\theta^{+}(\alpha)$ is an isomorphism between $[0, \pi/2]$ and ${\cal A}=  [s^{1,+}, s^{2,+}]$, 
$\lim_{\alpha \to 0} \theta^{+}(\alpha)=s_1^+$, 
     $\lim_{\alpha \to \pi/2} \theta^{+}(\alpha)=s_2^+$.
   Function $\theta^{-}(\alpha)$   is an isomorphism between $[0, \pi/2]$ and $ [s^{1,-}, s^{2,-}]$, 
$\lim_{\alpha \to 0} \theta^{-}(\alpha)=s_1^-$, 
     $\lim_{\alpha \to \pi/2} \theta^{-}(\alpha)=s_2^-$.

\item[(iv)]  Function  $\langle  \theta(s) \mid e_\alpha \rangle$
  is  strictly {\it  increasing } on  the arc  $[\theta^{-}(\alpha), \theta^{+}(\alpha)]$ of $\mathcal{E}$  and strictly decreasing on
     the arc   $[\theta^{+}(\alpha), \theta^{-}(\alpha)]$. 
    Namely, $\theta^{+}(\alpha)$ is its maximum on $\cal{ E}$ and $\theta^{-}(\alpha)$ is its minimum:
 $$\theta^{+}(\alpha)=  {\rm argmax}_{s \in {\cal E}} \langle  \theta(s) \mid e_\alpha \rangle \   \   \  
     \theta^{-} (\alpha)=  {\rm argmin}_{s \in {\cal E}} \langle  \theta(s)  \mid e_\alpha \rangle.$$
 
\end{itemize}
\end{lem}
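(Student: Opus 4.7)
The plan is to exploit the rational parametrization of $\mathbf{S}$ introduced in Section \ref{param} and reduce the problem to finding critical points of a Laurent polynomial in one complex variable. Writing $h(s):=\langle \theta(s)\mid e_\alpha\rangle=\cos\alpha\,\theta_1(s)+\sin\alpha\,\theta_2(s)$, the parametrization gives
\[
h(s)=C+\tfrac{1}{4}\bigl[\cos\alpha(\theta_1^+-\theta_1^-)+\sin\alpha(\theta_2^+-\theta_2^-)e^{-i\beta}\bigr]s+\tfrac{1}{4s}\bigl[\cos\alpha(\theta_1^+-\theta_1^-)+\sin\alpha(\theta_2^+-\theta_2^-)e^{i\beta}\bigr],
\]
for some real constant $C$. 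Setting $h'(s)=0$ gives the equation of degree two
\[
s^2=\frac{\cos\alpha(\theta_1^+-\theta_1^-)+\sin\alpha(\theta_2^+-\theta_2^-)e^{i\beta}}{\cos\alpha(\theta_1^+-\theta_1^-)+\sin\alpha(\theta_2^+-\theta_2^-)e^{-i\beta}},
\]
announced in (ii). Since $\theta_1^+-\theta_1^-$ and $\theta_2^+-\theta_2^-$ are real and positive, the numerator and denominator are complex conjugates, so $|s^2|=1$. Consequently the two critical points $s_\pm(\alpha)$ lie on the unit circle, which by the parametrization is exactly the ellipse $\mathcal E$.

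I would then verify the explicit coordinate formulas for $\theta^\pm(\alpha)$ in (ii) by plugging $s=s_\pm(\alpha)$ back into $\theta_1(s)$ and $\theta_2(s)$; the computation reduces to expressing $s+1/s$ and $s/e^{i\beta}+e^{i\beta}/s$ in terms of $\tan\alpha$ and the quantities $D_1,D_2,\det\Sigma$, the simplification being straightforward once one notices that the ratio above can be rewritten as $(a+ib)/(a-ib)$ with real $a,b$. Non-degeneracy in (i) is then immediate: $h''(s)=-\tfrac{1}{2s^3}[\cos\alpha(\theta_1^+-\theta_1^-)+\sin\alpha(\theta_2^+-\theta_2^-)e^{i\beta}]$ which is nonzero at $s_\pm(\alpha)\in\mathbf{S}\setminus\{0,\infty\}$ provided $\alpha\in\,]0,\pi/2[$, since the bracket cannot vanish (the two summands are $\mathbb C$-linearly independent because $\beta\in\,]0,\pi[$).

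For (iii), the boundary values follow from the explicit formulas: at $\alpha=0$ the equation for $s^2$ becomes $s^2=1$, whose roots $\pm1$ parametrize $s_1^\pm$; similarly at $\alpha=\pi/2$ one gets $s^2=e^{2i\beta}$ with roots $\pm e^{i\beta}$ corresponding to $s_2^\pm$. To obtain the isomorphism it suffices to show that $\alpha\mapsto\theta_1^+(\alpha)$ is strictly monotone on $[0,\pi/2]$, which I would derive by differentiating the closed-form expression \eqref{cdpt} and using $\det\Sigma>0$; the same argument works for $\theta^-(\alpha)$ and on $[s_1^-,s_2^-]$.

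Finally, (iv) follows from a one-variable real analysis argument on the ellipse. On $\mathcal E\equiv\{s=e^{i\omega}:\omega\in\mathbb R\}$, the function $\omega\mapsto h(e^{i\omega})$ is smooth and real-valued, $2\pi$-periodic, with exactly two critical points $\theta^\pm(\alpha)$, both non-degenerate; hence one is the global maximum and the other the global minimum. To identify them one compares coordinates: by the $\pm$ signs in \eqref{cdpt} we have $\theta_i^+(\alpha)>\theta_i^-(\alpha)$ for $i=1,2$, and since $\cos\alpha,\sin\alpha>0$ this yields $\langle\theta^+(\alpha)\mid e_\alpha\rangle>\langle\theta^-(\alpha)\mid e_\alpha\rangle$, so $\theta^+(\alpha)$ is the maximum. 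The main obstacle in all of this is the algebraic verification of the closed-form coordinates in (ii); all other items are short consequences of the parametric picture, and once the formulas are secured the monotonicity and boundary statements follow by inspection.
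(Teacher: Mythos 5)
Your proof is correct, but it takes a genuinely different computational route from the paper's. The paper works in the $\theta_1$-chart: it writes the critical-point condition as $\tan(\alpha)\,\tfrac{d\theta_2}{d\theta_1}=-1$, substitutes into the implicit differentiation of $\gamma(\theta_1,\theta_2)=0$, and solves the resulting $2\times 2$ algebraic system to land directly on the coordinate formulas \eqref{cdpt}; non-degeneracy is checked via $\tfrac{d^2\theta_2}{d\theta_1^2}\neq 0$ and (iii) by monotonicity of the closed-form coordinates in $\alpha$. You instead pass to the uniformizing variable $s$ of Section \ref{param}, where $\langle\theta(s)\mid e_\alpha\rangle$ becomes $C+us+v/s$ with $\bar u=v$ up to the real positive weights, so that $h'(s)=0$ gives $s^2=v/u$ of modulus one. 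This buys you several things for free that the paper's proof obtains only implicitly or not at all: the formula for $s_\pm(\alpha)^2$ stated in (ii) (which the paper asserts without derivation), the fact that both critical points lie on $\mathcal{E}=\{|s|=1\}$ (via the conjugate-ratio observation rather than via reality of the solutions of the system), the count of exactly two critical points (degree-two equation), and a one-line non-degeneracy check $h''(s)=2v/s^3\neq 0$ (your sign in front of $1/(2s^3)$ is off, but irrelevant since only non-vanishing matters; note also that non-degeneracy is chart-independent, so checking it in the $s$-coordinate suffices). Conversely, the paper's route delivers \eqref{cdpt} directly, whereas you must recover those coordinates by back-substituting $s_\pm(\alpha)$ into $\theta_1(s),\theta_2(s)$ — the one computation you defer, just as the paper defers the resolution of its system. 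Your treatments of (iii) and (iv) coincide in substance with the paper's (monotonicity of the closed-form coordinates, plus the observation that a smooth periodic function with exactly two non-degenerate critical points is monotone between them and that the $\pm$ signs in \eqref{cdpt} force $\langle\theta^+(\alpha)\mid e_\alpha\rangle>\langle\theta^-(\alpha)\mid e_\alpha\rangle$). An alternative for (iii) available in your setup: $s_+(\alpha)=e^{i\arg(a+ib)}$ with $a+ib$ a positive combination of $1$ and $e^{i\beta}$ whose weight ratio is strictly monotone in $\alpha$, which gives the isomorphism onto the arc $[s_1^+,s_2^+]$ without differentiating \eqref{cdpt}.
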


\begin{proof}
      Let us look for critical points with coordinates $(\theta_1, \theta_2)$ 
 of $\langle \theta(s) \mid e_\alpha \rangle $  on ${\bf S}$.
   Equation $ (\theta_1 \cos (\alpha) + \theta_2(\theta_1) \sin (\alpha) )'_{\theta_1}=0$ implies
   $\tan(\alpha) \frac{d \theta_2}{d \theta_1}=-1$.
   Substituting it into equation $\gamma(\theta_1, \theta_2(\theta_1))'_{\theta_1} \equiv 0$ and writing also $\gamma(\theta_1,\theta_2) \equiv 0$ 
 we get the system of two equations 
$$
\left\{
\begin{array}{l} 
 -\sigma_{11}\theta_1 \tan(\alpha) +\sigma_{22}\theta_2  +\sigma_{12}\theta_1 -\sigma_{12}\theta_2\tan(\alpha) 
   -\mu_1\tan(\alpha) +\mu_2 =0\\
\sigma_{11}\theta_1^2+\sigma_{22} \theta_2^2 +2\sigma_{12}\theta_1 \theta_2 +\mu_1 \theta_1 +\mu_2 \theta_2=0
\end{array}
\right.
$$
   from where we  compute  $\theta^{-}(\alpha)=(\theta^{-}_1(\alpha), \theta^{-}_2(\alpha))$ and 
 $\theta^{+}(\alpha)=(\theta^{+}_1(\alpha), \theta^{+}_2(\alpha))$ explicitly  as announced in (\ref{cdpt}).
  We  check directly that $\frac{d^2 \theta_2}{d \theta_1}\ne 0$ at these points, so they are non-degenerate critical points.
        It is also easy to see from (\ref{cdpt}) 
  that $\theta^{-}_1(\alpha)$ is strictly increasing from branch  point  $\theta_1^-$ to $\theta_1(\theta_2^-)$ and 
 that  $\theta^{+}_1(\alpha)$ is strictly decreasing  from  branch point $\theta_1^+$ to $\theta_1(\theta_2^+)$  when $\alpha$ runs 
  the segment $[0, \pi/2]$. In the same way $\theta^{-}_2(\alpha)$ is strictly decreasing from $\theta_2(\theta_1^-)$ to $\theta_2^-$ and 
     $\theta^{+}_2(\alpha)$ is strictly increasing  from $\theta_2(\theta_1^+)$ to $\theta_2^+$  when $\alpha$ runs 
  the segment $[0, \pi/2]$. This proves  assertions (i)--(iii).

       Finally, since there are no critical points on
    $\cal{ E}$ except for  $\theta^+(\alpha)$ and $\theta^-(\alpha)$, function 
   $\langle  \theta(s) \mid e_\alpha \rangle$ is monotonous on the arcs $[\theta^{-}(\alpha), \theta^{+}(\alpha)]$  and 
        $[\theta^{+}(\alpha), \theta^{-}(\alpha)]$.  In view of the inequality $\langle  \theta^+(\alpha) \mid e_\alpha \rangle >
  \langle  \theta^-(\alpha) \mid e_\alpha \rangle$, assertion (iv) follows.
 \end{proof}

\noindent{\bf Notation of the saddle-point.} 
From now one we are interested in point $\theta^{+}(\alpha)$ that we denote by $\theta(\alpha)$ for shortness. 

\noindent{\bf The steepest-descent contour  $\gamma_\alpha$.}
  The level curves $\{s : \Re \langle \theta(s) \mid e_\alpha\rangle=   \langle \theta(\alpha) \mid e_\alpha\rangle  \}$
  are orthogonal at $\theta(\alpha)$ and subdivide its neighborhood into four sections. 
  The curves of steepest descent   $\{s : \Im \langle \theta(s) \mid e_\alpha\rangle=0 \}$ on ${\bf S}$  are orthogonal at $\theta(\alpha)$ as well, see Lemma 1.3, Chapter IV in \cite{fedoryuk_saddle-point_1977}.
   One of them coincides with  $\cal{E}$.  
      We denote the other one by $\gamma_\alpha$.
 The real part $\Re  \langle \theta(s) \mid \alpha\rangle$ is strictly increasing on $\gamma_\alpha$ as $s$ goes far away
from $\theta(\alpha)$, see \cite[Section 4.2]{fedoryuk_asymptotic_1989}.
      The level curves of functions  $\Re \langle \theta(s) \mid e_\alpha\rangle$
  and  
$\Im \langle \theta(s) \mid e_\alpha\rangle$ are pictured in Figure \ref{ligneniveau}.

\begin{figure}[hbtp]
\centering
\includegraphics[scale=0.37]{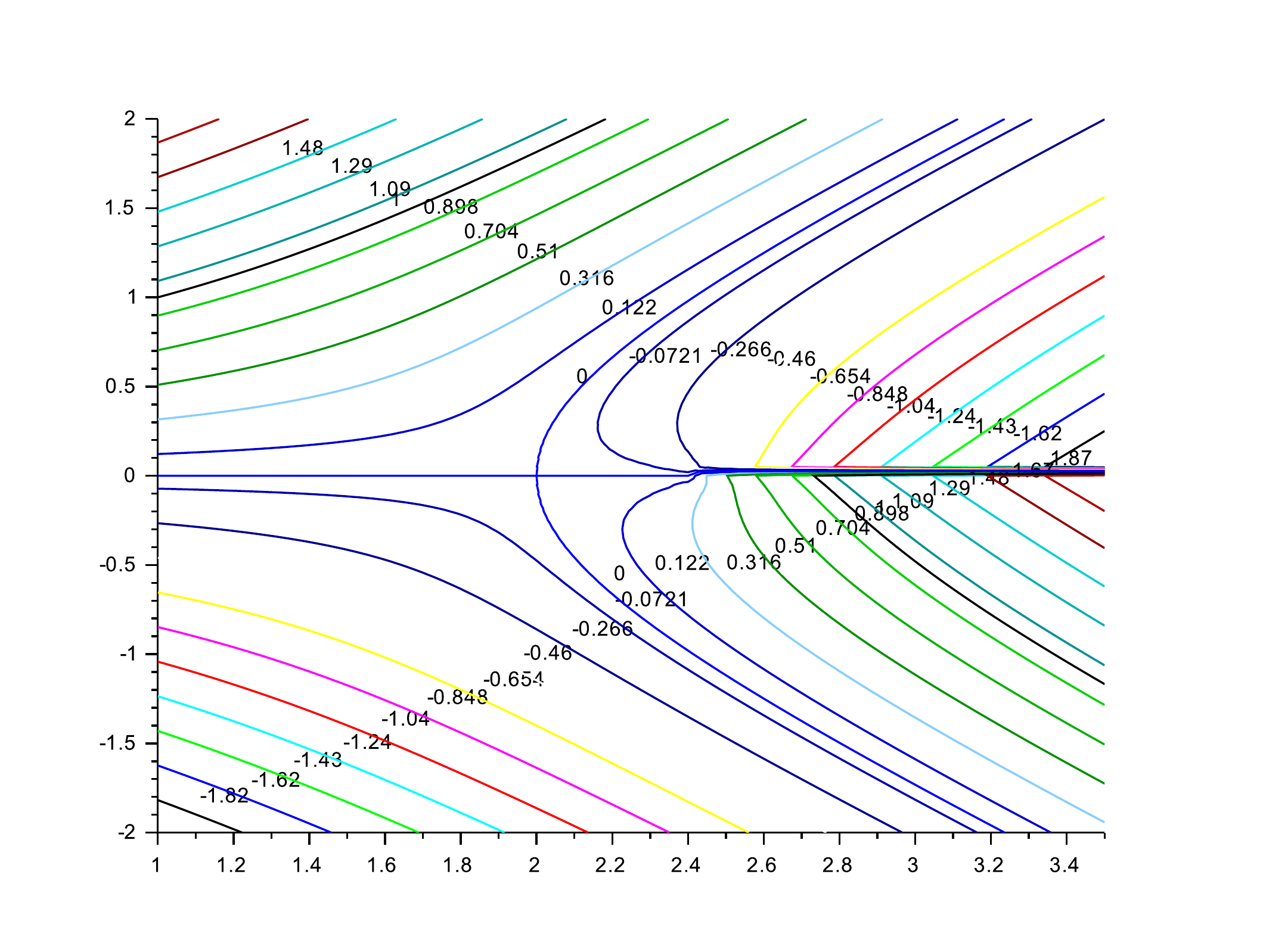}\hfill
\includegraphics[scale=0.37]{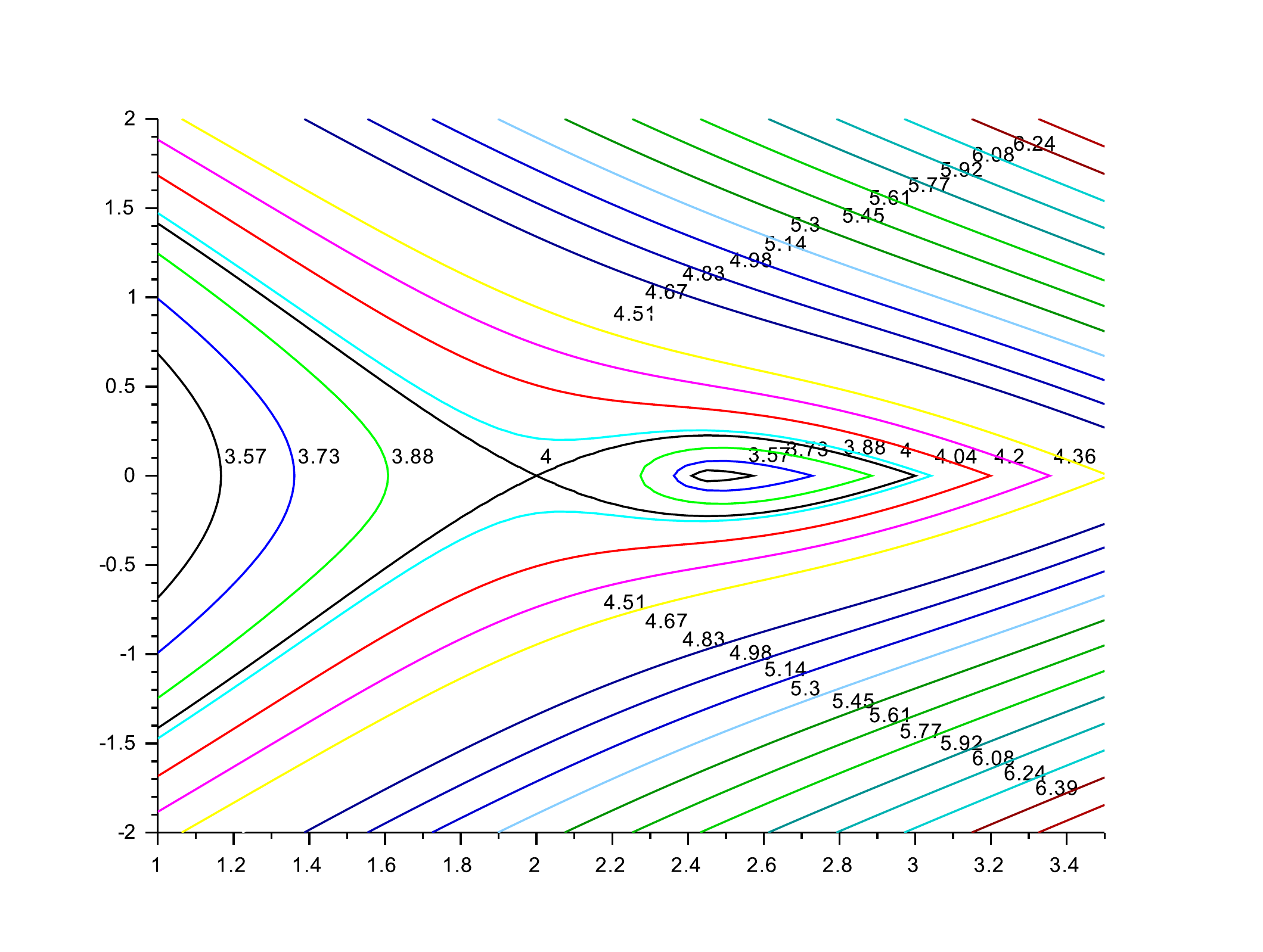}
\caption{Level sets of $\Im  \langle \theta(s) \mid e_\alpha \rangle $ and $\Re  \langle \theta(s) \mid e_\alpha \rangle$}
\label{ligneniveau}
\end{figure}

   Let $z_{\alpha,+}=  (\theta_1( z_{\alpha,+} ),  \theta_2(z_{\alpha,+}  )) $ and $z_{\alpha,-}=(\theta_1(  
  z_{\alpha,-} ), \theta_2  (z_{\alpha,-}  )) $ be the end points of $\gamma_\alpha$ where  $\Im \theta_1(  
  z_{\alpha,-} )>0$ and $\Im \theta_1(  
  z_{\alpha,-} )<0$.
      We can fix end points   
 $z_{\alpha , -}$   and $z_{\alpha , +}$ in such a way that  $\forall \alpha \in \mathcal{O}(\alpha_0)$ and  some small $\epsilon>0$ 
 $$\Re \langle z_{\alpha,\pm} \mid e_\alpha  \rangle =  \langle \theta(\alpha)  \mid e_\alpha \rangle +\epsilon.$$
   For technical reasons we  choose $\epsilon$ small enough such that $\Re \theta_1(z_{\alpha,\pm}) \in ]\theta_1^{-}, \theta_1^+[$ and 
    $\Re \theta_2(z_{\alpha,\pm}) \in ]\theta_2^{-}, \theta_2^+[$.

\subsection{Shifting the integration contours}
\label{subsec:shiftingcontours}

  Our aim now is to shift the integration contours
$\mathcal{I}_{\theta_1}^+$ and $\mathcal{I}_{\theta_2}^+$   in (\ref{iwi})  up to new contours 
$\Gamma_{\theta_1,\alpha}$ and $\Gamma_{\theta_2,\alpha}$  respectively which coincide with $\gamma_\alpha$ 
   in a neighborhood of $\theta(\alpha)$  on ${\bf S}$ and are  ``higher'' than $\theta(\alpha)$ in the sense 
  of level curves of the function $\Re \langle \theta(s) \mid e_\alpha \rangle$, that is  
  $\Re \langle \theta(s) \mid e_\alpha \rangle  > \Re \langle \theta(\alpha) \mid e_\alpha \rangle+\epsilon$
    for any $s \in \Gamma_{\theta_i,\alpha}\setminus \gamma_{\alpha}$ with $i=1,2$.
   When shifting  the contours we should of course take  into account the poles of the integrands and the residues at them.

  Let us construct $\Gamma_{\theta_1,\alpha}$ and $\Gamma_{\theta_2,\alpha}$.
    We set
$$\Gamma_{\theta_1, \alpha}^{1,+}=\{s : \Re \theta_1(s)= \Re \theta_1(z_{\alpha,+}),  \Im \theta_1(z_{\alpha,+}  )  \leq \Im \theta_1(s) \leq  V(\alpha) \}$$ 
  where $V(\alpha)>0$ will be defined later. 
       Then the end points of $\Gamma_{\theta_1, \alpha}^{1,+}$ are $z_{\alpha,+}$ and $Z_{\alpha,+}$ where 
     $\Re \theta_1(z_{\alpha,+})= \Re \theta_1(Z_{\alpha,+})$,  $\Im \theta_1(Z_{\alpha,+})= V(\alpha)$.
Next 
$$\Gamma_{\theta_1,\alpha}^{2,+}=\{s : \Im \theta_1(s)=V(\alpha), 0\leq  \Re \theta_1(s) \leq  \Re \theta_1(z_{\alpha,+})\}$$ 
  if $  \Re \theta_1(z_{\alpha,+}) >0$ and  
$$\Gamma_{\theta_1,\alpha}^{2,+}=\{s : \Im \theta_1(s)=V(\alpha), 0\geq  \Re \theta_1(s) \geq  \Re \theta_1(z_{\alpha,+})\}$$    
  if $  \Re \theta_1(z_{\alpha,+}) <0$.
   This contour
 goes from 
$Z_{\alpha,+}$ up to $Z^0_{\alpha, +}$  on ${\mathcal I}_{\theta_1}$ 
     with $\Re (\theta_1(s))= 0$,  $\Im (\theta_1(s))=V(\alpha)$.
    Finally $\Gamma_{\theta_1, \alpha}^{3,+}$ coincides with $\mathcal{I}_{\theta_1}^+$  from $Z_{\alpha, +}^0$ up to infinity :
$$\Gamma_{\theta_1, \alpha}^{3,+} =  \{s :  \Re \theta_1(s)=0,  \Im \theta_1(s)\geq  V(\alpha)\}.$$
    We define in the same way 
  $\Gamma_{\theta_1, \alpha}^{1,-}=\{s : \Re \theta_1(s)= \Re \theta_1(z_{\alpha,-}),   -V(\alpha) \leq \Im s \leq  \Im \theta_1(z_{\alpha,-} ) \}$. 
  The end points of $\Gamma_{\theta_1, \alpha}^{1,-}$ are $z_{\alpha,-}$ and $Z_{\alpha,-}$ where 
   $\Re \theta_1(z_{\alpha,-})= \Re \theta_1(Z_{\alpha,-})$,  $\Im \theta_1(Z_{\alpha,-})= -V(\alpha)$.
Next $\Gamma_{\theta_2,\alpha}^{2,-}=\{s : \Im \theta_1(s)=-V(\alpha), 0\leq  \Re \theta_1(s) \leq  \Re \theta_1(z_{\alpha,-})\}$
   or  $\Gamma_{\theta_2,\alpha}^{2-}=\{s : \Im \theta_1(s)=-V(\alpha), 0\geq  \Re \theta_1(s) \geq  \Re \theta_1(z_{\alpha,-})\}$
   according to the sign of $\Re \theta_1(z_{\alpha, -})$. It
 goes from 
$Z_{\alpha,-}$  to $Z^0_{\alpha, -}$  on ${\bf S}_{\theta_1}$
     with $\Re (\theta_1( Z^0_{\alpha, -} ))= 0$,  $\Im (\theta_1(Z^0_{\alpha, -}  ))= - V(\alpha)$.
    Finally $\Gamma_{\theta_1, \alpha}^{3,+}$ coincides with ${\mathcal I}_{\theta_1}^+$  from $Z_{\alpha, -}^0$ up to infinity.
  Then  contour $\Gamma_{\theta_1,\alpha}= \Gamma_{\theta_1,\alpha}^{3,-}\cup \Gamma_{\theta_1,\alpha}^{2,-} \cup \Gamma_{\theta_1,\alpha}^{1,-}\cup \gamma_\alpha \cup \Gamma_{\theta_1,\alpha}^{1,+} \cup \Gamma_{\theta_1,\alpha}^{2,+} \cup 
  \Gamma_{\theta_1,\alpha}^{3,+} \subset {\bf S}_{\theta_1}^1$. 
     One can  visualize this contour on  Figure \ref{contint} : in the left picture  it is drawn on parametrized ${\bf S}$, 
    in the right picture it is  projected on the complex plane ${\bf C}_{\theta_1}$ .

\begin{figure}[hbtp]
\centering
\includegraphics[scale=0.6]{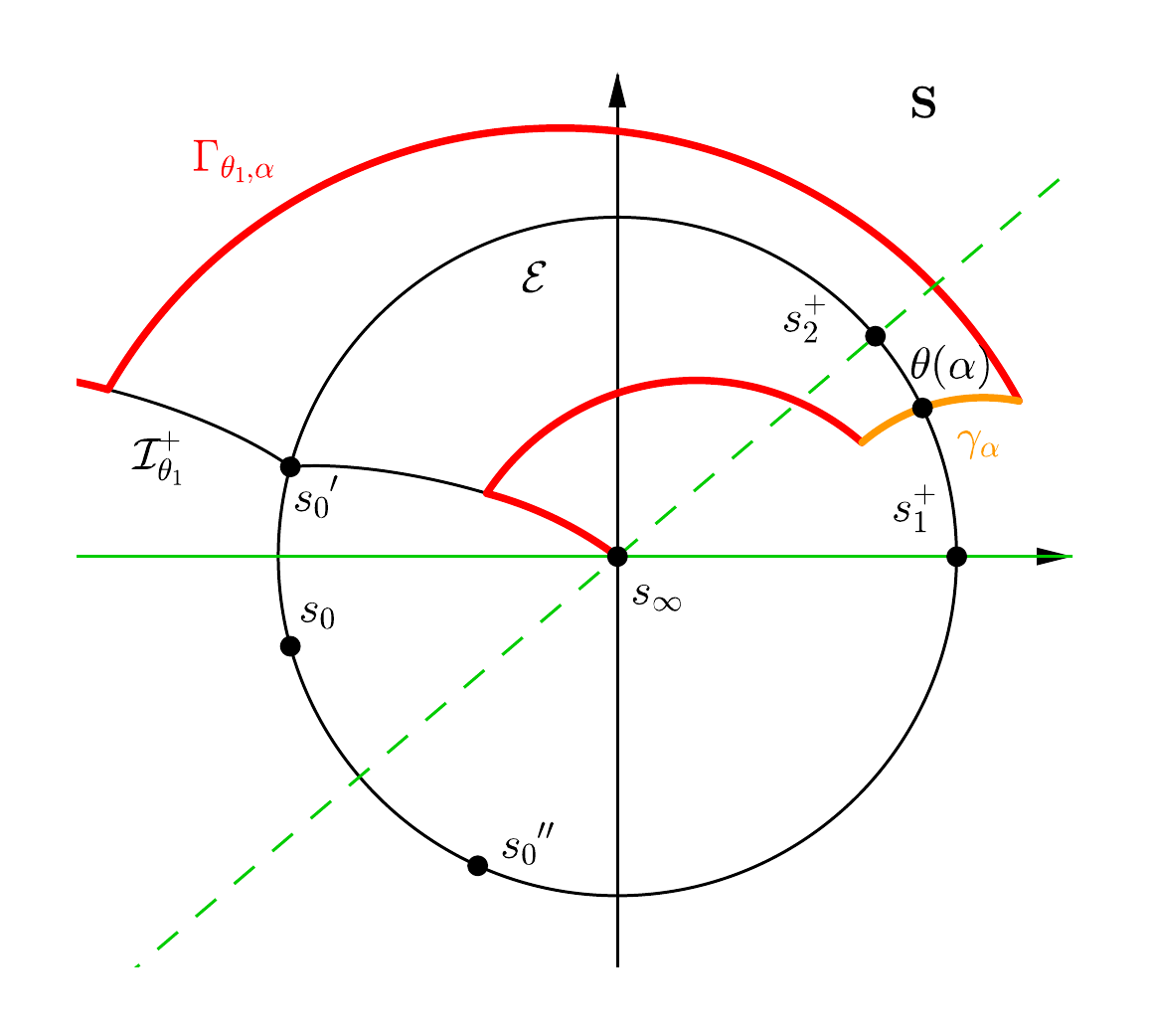}
\includegraphics[scale=0.6]{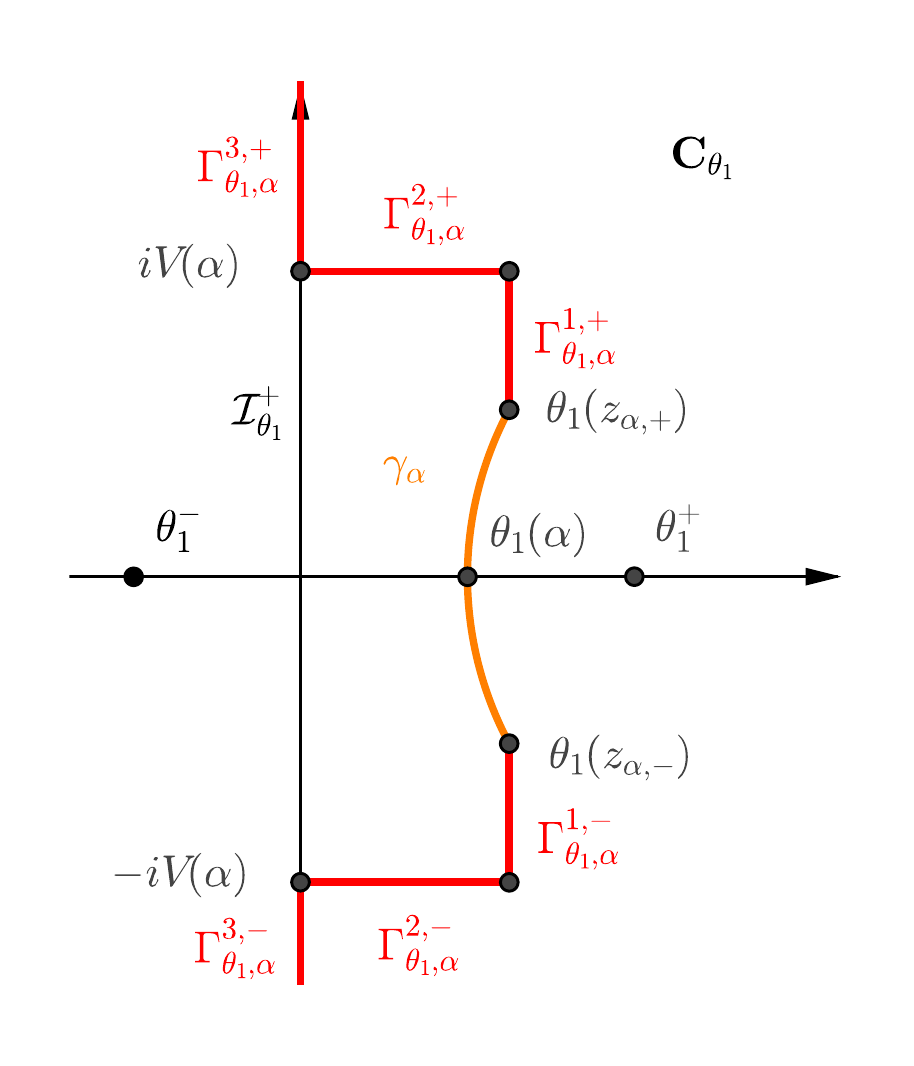} 
\caption{Contour $\Gamma_{\theta_1, \alpha}$  on parametrized ${\bf S}$  and projected on  ${\bf C}_{\theta_1}$.}
\label{contint}
\end{figure}

          The contour $\Gamma_{\theta_2, \alpha}$ is constructed analogously with respect to $\theta_2$-coordinate, 
   $\Gamma_{\theta_2,\alpha}= \Gamma_{\theta_2,\alpha}^{3,-}\cup \Gamma_{\theta_2,\alpha}^{2,-} \cup \Gamma_{\theta_2,\alpha}^{1,-}\cup \gamma_\alpha \cup \Gamma_{\theta_2,\alpha}^{1,+} \cup \Gamma_{\theta_2,\alpha}^{2,+} \cup 
  \Gamma_{\theta_2,\alpha}^{3,+} \subset {\bf S}_{\theta_2}^1$. The curve  of steepest descent $\gamma_\alpha$ is common for $\Gamma_{\theta_1, \alpha}$ 
   and $\Gamma_{\theta_2, \alpha}$. 

   Let us recall that poles of $\phi_1(s)$ and $\phi_2(s)$ on ${\bf S}$ may occur only at $\cal{E}$.
   Let us also recall the convention  that 
   an arc $\}a, b\{$ on $\cal{E}$ is the one with ends $a$ and $b$ which does not include $s_0=(0,0)$. 

\medskip 

\noindent{\bf Notation of the sets of poles $\mathcal{P}'_\alpha$ and $\mathcal{P}''_\alpha$.}
     Let ${\cal P'}_\alpha$ be the set of poles of the first order of the function $\phi_2(\theta_1(s))$ on the arc $\}\theta(\alpha), s'_0\{$.
  Let ${\cal P''}_\alpha$ be the set of poles of  the first order of the function
 $\phi_1(\theta_2(s))$ on  the arc $\}\theta(\alpha), s''_0\{$.
   
\medskip

  Then the following lemma holds true.

\begin{lem}
\label{ppmm}
Let $\alpha_0 \in ]0, \pi/2[$ be such that $\theta(\alpha_0)$  is not a pole of $\phi_1(\theta_2(s))$ neither of $\phi_2(\theta_1(s))$.
  If   ${\cal P'}_\alpha  \cup {\cal P''}_\alpha $ is not empty,    
   then for any $\alpha \in \mathcal{O}(\alpha_0)$ 
\begin{eqnarray}
\lefteqn{ \pi (r e_\alpha)  =
 \sum_{p \in {\cal P'}_\alpha} {\rm res}_p \phi_2(\theta_1(s))
\frac{\gamma_2(p)}{\sqrt{d(\theta_1(p))}} e^{-r \langle \theta(p) \mid e_\alpha \rangle } +
   \sum_{p \in {\cal P''}_\alpha} {\rm res}_p \phi_1(\theta_2(s))
\frac{\gamma_1(p)}{\sqrt{\tilde d(\theta_2(p))}} e^{-r \langle \theta(p) \mid e_\alpha \rangle } }\nonumber \\  
&&{} + \frac{1}{2\pi \sqrt{\det \Sigma}}
\big( \int_{\Gamma_{\theta_1, \alpha} }
\frac{\varphi_2(s) \gamma_2(\theta(s))}{s}  e^{- r \langle \theta(s) \mid e_\alpha \rangle }
{\mathrm{d}s}
+
\int_{\Gamma_{\theta_2, \alpha} }
\frac{\varphi_1(s) \gamma_1(\theta(s))}{s} e^{- r \langle \theta(s) \mid e_\alpha \rangle }
\big)
\mathrm{d}s.\label{prolongam}
\end{eqnarray}
  If  $\mathcal{ P'}_\alpha \cup
  \mathcal{ P''}_\alpha$  is empty, representation  (\ref{prolongam})  stays valid where the corresponding sums over $ p \in {\cal P'}_\alpha$ and 
   $p \in {\cal P''}_\alpha$
 are omitted. 
\end{lem}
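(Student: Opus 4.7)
My approach is a standard contour deformation on the Riemann surface $\mathbf{S}$. Starting from the representation $\pi(re_\alpha)=I_1+I_2$ obtained in Lemma~\ref{propI} and rewritten on $\mathbf{S}$ via identity~\eqref{sss}, I deform each of the two contours $\mathcal{I}_{\theta_i}^+$ to its counterpart $\Gamma_{\theta_i,\alpha}$ using Cauchy's residue theorem. The two integrals are treated symmetrically, so I sketch the deformation of the first one; the second is identical after swapping indices.

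By the construction of Section~\ref{subsec:shiftingcontours}, the tails $\Gamma_{\theta_1,\alpha}^{3,\pm}$ are subarcs of $\mathcal{I}_{\theta_1}^+$, so $\mathcal{I}_{\theta_1}^+$ and $\Gamma_{\theta_1,\alpha}$ (both traversed from $s_\infty$ to $s_{\infty'}$) bound a bounded, simply connected region $R_1\subset\mathbf{S}_{\theta_1}^1$ whose boundary consists of the piece of $\mathcal{I}_{\theta_1}^+$ running from $Z^0_{\alpha,-}$ to $Z^0_{\alpha,+}$ together with $\Gamma_{\theta_1,\alpha}^{2,\pm}\cup\Gamma_{\theta_1,\alpha}^{1,\pm}\cup\gamma_\alpha$. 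By Theorem~\ref{thmpro}, $\varphi_2$ is meromorphic on $\mathbf{S}_{\theta_1}^1$, and by Section~\ref{mainpole} its poles lie on $\mathcal{E}$; the other factors $\gamma_2(\theta(s))/s$ and $e^{-r\langle\theta(s)\mid e_\alpha\rangle}$ are holomorphic throughout $R_1$, since $s_\infty$ and $s_{\infty'}$ lie outside $R_1$ (in the parametrization of Section~\ref{param}, $s=0$ corresponds to $s_\infty$). Applying Cauchy's theorem to $\partial R_1$ oriented as $\mathcal{I}_{\theta_1}^+-\Gamma_{\theta_1,\alpha}$ therefore yields
$$\int_{\mathcal{I}_{\theta_1}^+}-\int_{\Gamma_{\theta_1,\alpha}}=2\pi i\sum_{p\in R_1\cap\mathcal{E}}{\rm res}_{s=p}\frac{\varphi_2(s)\gamma_2(\theta(s))}{s}e^{-r\langle\theta(s)\mid e_\alpha\rangle}.$$

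The poles of $\varphi_2$ lying in $R_1$ are precisely those on the open arc $\}\theta(\alpha),s'_0\{$, i.e.\ the set $\mathcal{P}'_\alpha$: the endpoint $\theta(\alpha)$ lies on $\partial R_1$ (via $\gamma_\alpha$) and is not a pole of $\varphi_2$ by hypothesis on $\alpha_0$ together with continuity in $\alpha\in\mathcal{O}(\alpha_0)$, while $s'_0\in\partial R_1$ has $\theta_1(s'_0)=0$, a regular point of $\varphi_2$ since $\varphi_2(0)<\infty$. At a first-order pole $p\in\mathcal{P}'_\alpha$ the residue factorises as $\gamma_2(p)p^{-1}e^{-r\langle\theta(p)\mid e_\alpha\rangle}\,{\rm res}_{s=p}\varphi_2(s)$. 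Differentiating \eqref{sss} gives $\theta_1'(s)=i\sqrt{d(\theta_1(s))}/(s\sqrt{\det\Sigma})$, hence
$${\rm res}_{s=p}\varphi_2(s)=\frac{p\sqrt{\det\Sigma}}{i\sqrt{d(\theta_1(p))}}\,{\rm res}_{\theta_1=\theta_1(p)}\varphi_2(\theta_1).$$
Multiplying by the prefactor $\frac{2\pi i}{2\pi\sqrt{\det\Sigma}}$, the factors $i$, $p$ and $\sqrt{\det\Sigma}$ cancel and the contribution of $p$ becomes exactly $\frac{\gamma_2(p)}{\sqrt{d(\theta_1(p))}}\,{\rm res}_p\varphi_2(\theta_1(s))\,e^{-r\langle\theta(p)\mid e_\alpha\rangle}$. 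The same reasoning for $I_2$ yields the second residue sum together with the $\Gamma_{\theta_2,\alpha}$-integral. If $\mathcal{P}'_\alpha\cup\mathcal{P}''_\alpha=\emptyset$ the residue sums are simply absent and~\eqref{prolongam} reduces to the two shifted-contour integrals.

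The main technical delicacy is to verify that $R_1$ meets $\mathcal{E}$ only along the arc $\}\theta(\alpha),s'_0\{$ and contains no poles of $\varphi_2$ other than those of $\mathcal{P}'_\alpha$: this uses the location of all poles on $\mathcal{E}$ established in Section~\ref{mainpole} together with the rotation-by-$2\beta$ structure of Lemma~\ref{lemcontepse}, which keeps the further images of the zeros of $\gamma_1,\gamma_2$ outside $R_1$. Uniformity in $\alpha\in\mathcal{O}(\alpha_0)$ is then automatic: for $\mathcal{O}(\alpha_0)$ small enough, the boundary $\partial R_1$ varies continuously with $\alpha$ without crossing any pole of $\varphi_1$ or $\varphi_2$, so the list of residues captured is constant on the neighbourhood.
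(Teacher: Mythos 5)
Your proposal is correct and follows essentially the same route as the paper, whose proof of this lemma is simply "use the representation \eqref{iwi} and apply Cauchy's theorem shifting the contours to $\Gamma_{\theta_1,\alpha}$ and $\Gamma_{\theta_2,\alpha}$". You supply the details the paper omits (the bounded region swept by the deformation, the Jacobian factor from \eqref{sss} converting the residue in $s$ into the residue in $\theta_1$, and the cancellation producing the $\sqrt{d(\theta_1(p))}$ denominator), and these check out.
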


\begin{proof}
   It follows from the assumption of the lemma that $\theta(\alpha)$ is 
not a pole  of $\phi_1(\theta_2(s))$ neither of $\phi_2(\theta_1(s))$ for any 
   $\alpha$ in  a small enough neighborhood  $\mathcal{O}(\alpha_0)$. 
  Then  we use  the representation of the density  (\ref{iwi}) and apply Cauchy theorem shifting the contours 
  to $\Gamma_{\theta_1, \alpha}$ and $\Gamma_{\theta_2, \alpha}$.
\end{proof}

In order to find  the asymptotic expansion of the density $\pi(r\cos(\alpha), r \sin(\alpha)) $,
    we have to evaluate now the contribution of the residues at poles in (\ref{prolongam}) 
  and the one of integrals along shifted contours $\Gamma_{\theta_1,\alpha}$ and $\Gamma_{\theta_2, \alpha}$.
  This is the subject of the next two sections. 
 
\subsection{Asymptotics of integrals along shifted contours  \texorpdfstring{$\Gamma_{\theta_1,\alpha}$}{Gtheta1} and \texorpdfstring{$\Gamma_{\theta_2, \alpha}$}{Gtheta2}}
\label{subsec:asymptintalongshift}
 
  To finish the construction of  $\Gamma_{\theta_1,\alpha}$ and $\Gamma_{\theta_2, \alpha}$, it remains to specify $V(\alpha)$.
   For that purpose we consider closer the function 
$$f_\alpha(s)=\langle \theta(s) \mid e_\alpha \rangle = \theta_1(s) \cos \alpha + \theta_2(s) \sin \alpha.$$
   Let us define the projection of this function on ${\bf C}_{\theta_1}$ :  
$$f_\alpha(\theta_1)= \theta_1 \cos \alpha +\Theta_2^{+}(\theta_1) \sin \alpha ,   \  \   \theta_1 \in {\bf C}_{\theta_1}.$$
   Clearly $f_\alpha (s)= f_\alpha (\theta_1(s))= \langle \theta(s) \mid e_\alpha \rangle $ on ${\bf S}_{\theta_1}^1$.

\begin{lem}
\label{uv}
\begin{itemize}
\item[(i)]
For any fixed $u \in  [\theta_1^-,\theta_1^+]$  the function $ v \to \Re (f_\alpha (u+iv))$ is increasing 
  on $[0, \infty[$ and decreasing on  $]-\infty, 0]$.
\item[(ii)]  There exist  constants $d_1\leq 0$, $d_2>0$ and $V>0$ such that:
\begin{equation} \label{tra}
\inf_{u\in [\theta_1^-,\theta_1^+]} \Re (f_\alpha (u+iv)) \geq d_1+d_2 \sin (\alpha) |v|   \  \  \forall v\geq V  \ \hbox{and } \forall v\leq -V, \  \  \forall \alpha \in ]0, \pi/2[.
\end{equation} 
\end{itemize}
\end{lem}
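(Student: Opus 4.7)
The plan is to project $f_\alpha$ onto the sheet $\mathbf{S}_{\theta_1}^1$ and reduce both statements to corresponding properties of the single function $H(u,v):=\Re\sqrt{d(u+iv)}$. Using $b(\theta_1)=\sigma_{12}\theta_1+\mu_2$ and $\Theta_2^+=(-b+\sqrt{d})/\sigma_{22}$ from Section~\ref{riemannsurfaceS}, I write, for $u\in[\theta_1^-,\theta_1^+]$ and $v\in\mathbb{R}$,
\[
\Re f_\alpha(u+iv)\;=\;u\cos\alpha\;+\;\frac{-\sigma_{12}u-\mu_2}{\sigma_{22}}\sin\alpha\;+\;\frac{\sin\alpha}{\sigma_{22}}\,H(u,v),
\]
the square root being the principal branch. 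This is consistent with $\Theta_2^+$ because the cuts of $\sqrt{d}$ lie on $]-\infty,\theta_1^-]\cup[\theta_1^+,+\infty[$, and no vertical line $\{u+iv:v\in\mathbb{R}\}$ with $u\in[\theta_1^-,\theta_1^+]$ meets them. Since $\sin\alpha>0$ on $]0,\pi/2[$, assertion (i) reduces to the monotonicity of $v\mapsto H(u,v)$ on $[0,\infty[$ and on $]-\infty,0]$, and (ii) reduces to a uniform linear lower bound $H(u,v)\geq c|v|$.

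Expanding the discriminant yields $d(u+iv)=A(u,v)+iB(u,v)$, where, setting $m=(\theta_1^-+\theta_1^+)/2=(\mu_2\sigma_{12}-\mu_1\sigma_{22})/\det\Sigma$,
\[
A(u,v)=d(u)+v^2\det\Sigma,\qquad B(u,v)=2v\det\Sigma\,(m-u).
\]
For $u\in[\theta_1^-,\theta_1^+]$ one has $d(u)\geq 0$, hence $A\geq 0$ (and $A>0$ for $v\neq 0$), so the principal-root identity gives $H(u,v)^2=\tfrac{1}{2}\bigl(\sqrt{A^2+B^2}+A\bigr)$.

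For (i), I compute
\[
\partial_v(A^2+B^2)=4v\det\Sigma\bigl[A+2\det\Sigma\,(m-u)^2\bigr],\qquad \partial_v A=2v\det\Sigma,
\]
so both $\partial_v\sqrt{A^2+B^2}$ and $\partial_v A$ have the sign of $v$, and therefore so does $2H\,\partial_v H=\tfrac{1}{2}\partial_v(\sqrt{A^2+B^2}+A)$. Hence $v\mapsto H(u,v)$ is strictly increasing on $[0,\infty[$ and strictly decreasing on $]-\infty,0]$ for every fixed $u$, proving (i).

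For (ii), the key observation is the elementary bound $\sqrt{A^2+B^2}\geq A$, valid because $A\geq 0$. It yields
\[
H(u,v)^2\;\geq\; A(u,v)\;=\;d(u)+v^2\det\Sigma\;\geq\;v^2\det\Sigma,
\]
so $H(u,v)\geq\sqrt{\det\Sigma}\,|v|$ uniformly for $(u,v)\in[\theta_1^-,\theta_1^+]\times\mathbb{R}$. The affine-in-$u$ remainder $u\cos\alpha+(-\sigma_{12}u-\mu_2)\sigma_{22}^{-1}\sin\alpha$ is continuous on the compact set $[\theta_1^-,\theta_1^+]\times[0,\pi/2]$, hence bounded below; taking $d_1$ to be the minimum of that infimum and $0$, $d_2=\sqrt{\det\Sigma}/\sigma_{22}$, and any $V>0$ gives the required inequality, uniformly in $\alpha\in]0,\pi/2[$. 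The only delicate point in the argument is the branch selection of $\sqrt{d}$, but the location of the cuts rules out any ambiguity; everything else is direct computation.
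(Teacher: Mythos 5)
Your proof is correct and takes essentially the same route as the paper: both reduce the lemma to the explicit representation $\Re f_\alpha(u+iv)=u\cos\alpha+\frac{\sin\alpha}{\sigma_{22}}\bigl(-\sigma_{12}u-\mu_2+H(u,v)\bigr)$ with $H(u,v)^2=\tfrac{1}{2}\bigl(\sqrt{A^2+B^2}+A\bigr)$, which is exactly the paper's half-angle formula for $\Re\sqrt{d(u+iv)}$ written algebraically. You simply spell out the two deductions the paper leaves as ``follow directly'' (the sign of $\partial_v(A^2+B^2)$ for (i), and $\sqrt{A^2+B^2}\geq A$ for (ii)), and your branch discussion and constants are consistent with the paper's.
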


\begin{proof}  We compute :
 $$\Re (f_\alpha (u+iv))=\cos(\alpha) u+\frac{\sin(\alpha)}{\sigma_{22}} (-\sigma_{12} u-\mu_2 +\Re \sqrt{d(u+iv)})$$
      with the discriminant $d(u+iv)=(\det \Sigma) (u+iv-\theta_1^-)(\theta_1^+ -u-iv)$. Then  
$$
\Re \sqrt{d(u+iv)}=\sqrt{\det\Sigma}\sqrt{|(u+iv-\theta_1^-)(\theta_1^+ -u-iv)|} \cos (\frac{\omega_-(u+iv) +\omega_+(u+iv)}{2})
$$
where $\omega_-(u+iv)$ et $\omega_+(u+iv)$  are defined as
 $\omega_-(u+iv)=\arg (\theta_1^+ -u-iv)$  and $\omega_+(u+iv)=\arg (u+iv-\theta_1^-)$, see Figure \ref{angleomega}. 
    We have 
$$
 \cos (\frac{\omega_-(u+iv) +\omega_+(u+iv)}{2})=
\sqrt{  \frac{1}{2} \cos   ( \omega_-(u+iv) +\omega_+(u+iv) ) +\frac{1}{2}  }$$
\begin{align*}
&=\sqrt{   \frac{1}{2} \cos   ( \omega_-(u+iv) ) \cos( \omega_+(u+iv) )  -     \frac{1}{2} \sin   ( \omega_-(u+iv) ) \sin ( \omega_+(u+iv) ) +
   \frac{1}{2}  }
\\   
  &=\sqrt{ \frac{(u-\theta_1^-)(\theta_1^+-u)- v (-v)}{ 2    |(u+iv-\theta_1^-)(\theta_1^+ -u-iv)|}  +\frac{1}{2} }.
\end{align*}
  Thus  
\begin{multline*} \Re (f_\alpha (u+iv))=\cos(\alpha) u
\\ +\frac{\sin(\alpha)}{\sigma_{22}} \Big(-\sigma_{12} u-\mu_2
   +\sqrt{ \frac{1}{2} } \sqrt{ (u-\theta_1^-)(\theta_1^+-u)+v^2 +   |(u+iv-\theta_1^-)(\theta_1^+ -u-iv)|}\Big)
 \end{multline*}
   Both statements of the lemma follow directly from this representation.  \end{proof}

 
  
\begin{figure}[hbtp]
\centering
\includegraphics[scale=1.6]{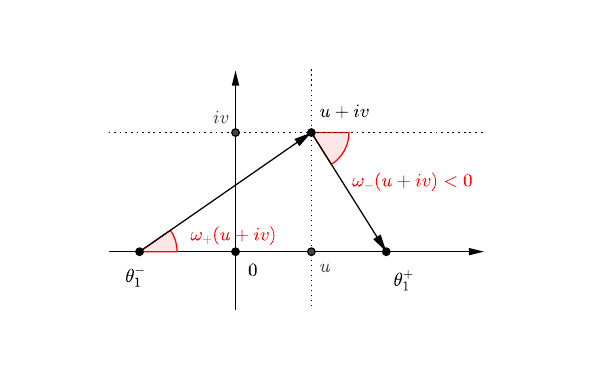}
\caption{$\omega_- (u+iv)$ et $\omega_+ (u+iv)$}
\label{angleomega}
\end{figure}

   We may now choose $V(\alpha)$ and such that 
\begin{equation}
\label{ddddd}
V(\alpha) = \max \Big( V, \ \frac{\langle \theta(\alpha) \mid e_\alpha \rangle +\epsilon-d_1}{d_2 \sin (\alpha)}\Big)
\end{equation}
 in accordance with notations of Lemma \ref{uv}.  This concludes the construction of $\Gamma_{\theta_1,\alpha}$ and $\Gamma_{\theta_2, \alpha}$.
    
The asymptotic expansion of integrals along  these contours is given in the following lemma. The main contribution comes from 
  the integrals along $\gamma_\alpha$, while all other parts of integrals are proved to be exponentially negligible by construction.

\begin{lem}
\label{saddleas}
   Let $\alpha_0 \in ]0, \pi/2[$ and $\mathcal{O}(\alpha_0)$ a small enough neighborhood of $\alpha_0$.  Then
   when
   $r \to \infty$ uniformly  for $\alpha \in \mathcal{O}(\alpha_0)$ we have
\begin{equation}
\label{assh1}
\frac{1}{2\pi \sqrt{\det \Sigma}} \int_{\Gamma_{\theta_1, \alpha} }
\frac{\varphi_2(s) \gamma_2(\theta(s))}{s}  e^{- r \langle \theta(s) \mid e_\alpha \rangle }
{\mathrm{d}s}   \sim \sum_{l=0}^{k} \frac{c^l_{\theta_1} (\alpha)}{r^l\sqrt{ r}}  
  e^{-r \langle \theta(\alpha) \mid e_\alpha \rangle },  
\end{equation}
\begin{equation}
\label{assh2}
\frac{1}{2\pi \sqrt{\det \Sigma}} \int_{\Gamma_{\theta_2, \alpha} }
\frac{\varphi_1(s) \gamma_1(\theta(s))}{s} e^{- r \langle \theta(s) \mid e_\alpha \rangle }
\mathrm{d}s \sim
 \sum_{l=0}^{k} \frac{c^l_{\theta_2} (\alpha)}{r^l\sqrt{ r}}  
  e^{-r \langle \theta(\alpha) \mid e_\alpha \rangle }.
\end{equation} 
     The constants $c^l_{\theta_1}(\alpha)$, $c^l_{\theta_2}(\alpha)$, $l=0,1,2, \ldots$  depend continuously of $\alpha$ and can be made 
  explicit in terms of functions $\phi_1$ and $\phi_2$ and their derivatives at $\theta(\alpha)$. 
   Namely
\begin{eqnarray}
c^0_{\theta_1}(\alpha) &=&  \frac{1}{\sqrt{2\pi \det\Sigma}} \frac{\varphi_2(s(\alpha))\gamma_2(\theta(\alpha))}{s(\alpha)\sqrt{ f_\alpha ''(s(\alpha))}}, \nonumber\\
c^0_{\theta_2}(\alpha)&=& 
 \frac{1}{\sqrt{2\pi \det\Sigma}} \frac{\varphi_1(s(\alpha))\gamma_1(\theta(\alpha))}{s(\alpha)\sqrt{ f_\alpha ''(s(\alpha))}}. \nonumber 
\end{eqnarray}
\end{lem}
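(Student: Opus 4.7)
The plan is a textbook application of the saddle-point method to each of the two integrals, the only non-routine point being the verification of uniformity in $\alpha$ on the neighborhood $\mathcal{O}(\alpha_0)$. I focus on the first integral; the second is symmetric.

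First I would split the contour $\Gamma_{\theta_1,\alpha}$ into the central piece $\gamma_\alpha$ (the steepest-descent arc through $\theta(\alpha)$ with endpoints $z_{\alpha,\pm}$) and the tail pieces $\Gamma_{\theta_1,\alpha}^{j,\pm}$, $j=1,2,3$. On each tail piece I would use the construction carried out in Section~\ref{subsec:shiftingcontours}, together with Lemma~\ref{uv} and the choice \eqref{ddddd} of $V(\alpha)$, to establish the pointwise lower bound
\begin{equation*}
\Re\langle \theta(s)\mid e_\alpha\rangle \;\geq\; \langle \theta(\alpha)\mid e_\alpha\rangle+\epsilon \qquad \forall\, s\in \Gamma_{\theta_1,\alpha}\setminus \gamma_\alpha,
\end{equation*}
with a uniform $\epsilon>0$ for $\alpha\in \mathcal{O}(\alpha_0)$. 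Combining this with the boundedness of $\varphi_2(s)\gamma_2(\theta(s))/s$ on the imaginary-axis tails (which follows from $\sup_{\Re\theta_1=0}|\varphi_2(\theta_1)|\leq\nu_2(\mathbf{R}_+^2)<\infty$) and the quasi-linear decay $\Re f_\alpha(u+iv)\gtrsim |v|\sin\alpha$ provided by Lemma~\ref{uv}(ii), I would conclude that the integrals over each $\Gamma_{\theta_1,\alpha}^{j,\pm}$ are bounded by $C\,e^{-r(\langle \theta(\alpha)\mid e_\alpha\rangle+\epsilon)}$, hence exponentially negligible compared with the claimed expansion.

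Next I would evaluate the contribution of $\gamma_\alpha$ by the classical saddle-point expansion (as in, e.g., \cite{fedoryuk_asymptotic_1989}, Ch.~IV). Writing $f_\alpha(s)=\langle\theta(s)\mid e_\alpha\rangle$ and using the parametrisation of Section~\ref{param}, $\gamma_\alpha$ is a smooth curve of steepest descent through the non-degenerate critical point $s(\alpha)$ (Lemma~\ref{sp}(i)), so the change of variable $f_\alpha(s)-f_\alpha(s(\alpha))=\tau^2$ transforms the integral into a Laplace integral $\int_{-\delta}^{\delta} g(\tau,\alpha)\,e^{-r\tau^2}\,\mathrm{d}\tau$, for which the Watson-type expansion yields
\begin{equation*}
e^{-r\langle\theta(\alpha)\mid e_\alpha\rangle}\sum_{l=0}^{k}\frac{c_{\theta_1}^{l}(\alpha)}{r^{l}\sqrt{r}} + O\!\Big(\frac{e^{-r\langle\theta(\alpha)\mid e_\alpha\rangle}}{r^{k+1}\sqrt{r}}\Big).
\end{equation*}
The leading coefficient comes out as
\begin{equation*}
c_{\theta_1}^{0}(\alpha)=\frac{1}{\sqrt{2\pi\det\Sigma}}\,\frac{\varphi_2(s(\alpha))\,\gamma_2(\theta(\alpha))}{s(\alpha)\,\sqrt{f_\alpha''(s(\alpha))}},
\end{equation*}
upon inserting the factor $\mathrm{d}s/(s\sqrt{\det\Sigma})$ from the parametrisation \eqref{sss}; the higher $c_{\theta_1}^{l}(\alpha)$ are the standard polynomial expressions in the derivatives of $g(\tau,\alpha)$ at $\tau=0$. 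Because $s(\alpha)$, $\theta(\alpha)$ and $f_\alpha''(s(\alpha))$ depend analytically on $\alpha$ by the explicit formulas \eqref{cdpt}, and because $\varphi_2$ (meromorphically continued by Theorem~\ref{thmpro}) is holomorphic at $s(\alpha_0)$ whenever the hypothesis of Lemma~\ref{ppmm} holds, all the $c_{\theta_1}^{l}$ are continuous in $\alpha$ on $\mathcal{O}(\alpha_0)$.

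The main technical issue, and the only one that requires care, is the uniformity of the remainder for $\alpha\in\mathcal{O}(\alpha_0)$. The point is that both the half-width $\delta$ of the neighborhood of $s(\alpha)$ on which the substitution $\tau^2=f_\alpha(s)-f_\alpha(s(\alpha))$ is a diffeomorphism, and the constants controlling $g(\tau,\alpha)$, can be chosen independent of $\alpha$ by compactness of $\overline{\mathcal{O}(\alpha_0)}$ and continuity of the data. The exponential bound on the tails above is itself uniform in $\alpha$ since $\epsilon$, $d_1$, $d_2$, $V$ of Lemma~\ref{uv} do not depend on $\alpha$ and $V(\alpha)$ in \eqref{ddddd} is continuous and bounded on $\mathcal{O}(\alpha_0)$. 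Combining the two yields the claimed uniform asymptotic expansions \eqref{assh1}, \eqref{assh2}.
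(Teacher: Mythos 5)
Your proposal is correct and follows essentially the same route as the paper: exponential negligibility of the tail pieces $\Gamma_{\theta_1,\alpha}^{j,\pm}$ via Lemma~\ref{uv} and the choice \eqref{ddddd} of $V(\alpha)$ (with the linear growth in $|v|$ handling the infinite piece $\Gamma_{\theta_1,\alpha}^{3,\pm}$), followed by the standard saddle-point expansion on $\gamma_\alpha$ and uniformity in $\alpha$ by continuity/compactness. The only cosmetic difference is that the paper invokes Fedoryuk's theorem directly for the central piece rather than spelling out the change of variable $f_\alpha(s)-f_\alpha(s(\alpha))=\tau^2$.
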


\begin{proof}
    By Lemma~\ref{uv}  (i)  and  by (\ref{sss})  for any $r>0$.
\begin{equation}
\label{z}
 \Big|\int_{\Gamma_{\theta_1,\alpha}^{1,\pm} }
         \frac{ \phi_2(\theta_1(s)) \gamma_2(s)}{ s\sqrt{\det \Sigma}} \exp^{-r\langle \theta(s) \mid e_\alpha  \rangle }{\rm d}\,s \Big| \leq 2 V(\alpha)  \sup_{s \in  \Gamma_{\theta_1,\alpha}^{1,\pm}    } \Big|\frac{    \phi_2(\theta_1(s)) \gamma_2(s)  }{\sqrt{d(\theta_1(s))} }\Big| e^{-r \langle \theta(\alpha) \mid e_\alpha \rangle -r\epsilon  }.
\end{equation}

    The length of $\Gamma_{\theta_2,\alpha}^{\pm}$ being smaller than $(\theta_1^+-\theta_1^-)$, 
        by Lemma~\ref{uv} (ii)   and  by (\ref{sss})  for any $r>0$ 
\begin{equation}
\label{zz}
\Big|\int_{\Gamma_{\theta_1,\alpha}^{2,\pm} }
         \frac{ \phi_2(\theta_1(s)) \gamma_2(s) }{ s  \sqrt{\det\Sigma}  } \exp^{-r\langle \theta(s) \mid e_\alpha  \rangle }{\rm d}\, s \Big|  \leq (\theta_1^+ -\theta_1^-) \sup_{s \in  \Gamma_{\theta_2,\alpha}^{2,\pm}    } \Big|\frac{    \phi_2(\theta_1(s)) \gamma_2(s)  }{\sqrt{d(\theta_1(s))} } \Big| e^{-r(d_1 +d_2 \sin (\alpha) V(\alpha))
 }
\end{equation}
  where due to  the choice \eqref{ddddd} of $V(\alpha)$
\begin{equation}
\label{zzz}
 e^{-r (d_1+d_2 \sin(\alpha) V(\alpha)) }  \leq  e^{-r \langle \theta(\alpha) \mid e_\alpha \rangle -r\epsilon  }. 
\end{equation}
    Finally note that for any $s \in \Gamma_{\theta_1,\alpha}^{3,\pm}$ 
  $$\frac{\gamma_2(s)}{\sqrt{d(\theta_1(s) )} } =
 r_{1,2} \frac{ \theta_1(s)}{ \sqrt{d(\theta_1(s) )}   }  + r_{22} \frac{-b(\theta_1(s)) +  \sqrt{d(\theta_1(s))}  }{  2 a(\theta_1(s))  \sqrt{d(\theta_1(s) )}    }
     $$ where $\Re \theta_1(s)=0$,  $\Im \theta_1(s) \geq V$. 
   Then there exists a constant $D>0$ such that $| \gamma_2(s) d^{-1/2}(\theta_1(s))|\leq D$ for any $s \in  \Gamma_{\theta_1,\alpha}^{3,\pm}$ 
  and any $\alpha \in ]0, \pi/2[$. Moreover $|\phi_2(\theta_1(s))|\leq \nu_1({\bf R_+}) $ for any $s \in {\mathcal I}_{\theta_1}$.
   Thus by Lemma~\ref{uv} (ii)  and by (\ref{sss})
  \begin{equation}
\label{zzzz}
\Big|\int_{\Gamma_{\theta_1,\alpha}^{3,\pm} }
        \frac{  \phi_2(\theta_1(s)) \gamma_2(s) }{s \sqrt{ \det \Sigma } }  \exp^{-r\langle \theta(s) \mid e_\alpha  \rangle } {\rm d}\,s \Big| \leq
  2  D \nu_1({\bf R_+})  \int\limits_{V(\alpha)}^\infty e^{- r (d_1+d_2 \sin (\alpha) v) }dv $$
$$    \leq  2 D \nu_1({\bf R_+})  \frac{1}{ c \sin (\alpha) V(\alpha)} 
  e^{-r(d_1+d_2  \sin (\alpha) V(\alpha) ) }
   \leq  2   D \nu_1({\bf R_+})  \frac{1}{ c \sin (\alpha) V(\alpha)}   e^{-r \langle \theta(\alpha) \mid e_\alpha \rangle -r\epsilon  }.
\end{equation}

   The contours $\Gamma_{\theta_1, \alpha}^{i,\pm}$
   for $i=1,2$ 
   being far away from poles of $\phi_2$ and  zeros of $d(\theta_1(s))$ for all  $\alpha \in {\cal O}(\alpha_0)$,
   $\sup_{\alpha \in {\cal O}(\alpha_0)  } \sup_{s \in  \Gamma_{\theta_1,\alpha}^{i,\pm}    } \Big|\frac{    \phi_2(\theta_1(s)) \gamma_2(s)  }{\sqrt{d(\theta_1(s))} }\Big|<\infty$   for $i=1,2$, and   of course $ \sup_{\alpha \in {\cal O}(\alpha_0) } (\sin(\alpha) V (\alpha))^{-1}$ and  
    $\sup _{\alpha \in {\cal O}(\alpha_0)} V(\alpha)$ are finite as well.
   It follows that for some constant $C>0$ , any $r>0$ and any $\alpha \in {\cal O}(\alpha_0)$ 
\begin{equation}
\label{bbbbb}
 \Big|\int_{\Gamma_{\theta_1,\alpha}^{1,\pm} \cup  \Gamma_{\theta_1,\alpha}^{2,\pm} \cup \Gamma_{\theta_1,\alpha}^{3,\pm}    }
         \frac{ \phi_2(\theta_1(s)) \gamma_2(s)}{s \sqrt{\det \Sigma } } e^{-r\langle \theta(s) \mid e_\alpha  \rangle }{\rm d}\, s \Big|  
     \leq C  e^{-r \langle \theta(\alpha) \mid e_\alpha \rangle -r\epsilon  }.
\end{equation}
    As for the contour  $\gamma_\alpha$ of the steepest descent of the function $\langle \theta (s) \mid e_\alpha \rangle$, 
  we apply the standard saddle-point method, see  e.g. Theorem 1.7,  Chapter IV in \cite{fedoryuk_saddle-point_1977}:  for any $k>0$ when $r \to \infty$, uniformly  $\forall \alpha \in {\mathcal O}(\alpha_0)$,
\begin{equation}
\label{sspp}
\frac{1}{2\pi\sqrt{\det \Sigma}}\int_{\gamma_\alpha}  \frac{\phi_2(s) \gamma_2(\theta(s))}{s} e^{-r\langle \theta(s) \mid e_\alpha  \rangle }{\rm d}\,s
   \sim \sum_{l=0}^{k} \frac{c^l_{\theta_1}(\alpha) }{r^l\sqrt{ r}}      e^{-r \langle \theta(\alpha) \mid e_\alpha \rangle }, 
\end{equation}
   where 
$c^0_{\theta_1}(\alpha)$ is given explicitly in the statement  of the  lemma 
  and all other constants $c^l_{\theta_1}(\alpha)$  can be written in terms of the same functions and their derivatives at $\theta(\alpha)$.
    Thus (\ref{assh1}) is proved  and the proof of (\ref{assh2})  for the integral over $\Gamma_{\theta_2, \alpha}$ is absolutely analogous.
\end{proof}

\subsection{Contribution of poles into the  asymptotics of \texorpdfstring{$\pi(r \cos(\alpha), r \sin(\alpha))$}{pi r alpha}}
\label{subsec:contributionpole}

     Once Lemma \ref{saddleas} established the asymptotics of integrals along shifted contours $\Gamma_{\theta_1, \alpha}$ 
  and $\Gamma_{\theta_2, \alpha}$, let us come back to Lemma \ref{ppmm} and evaluate the contribution to the density of residues at  poles 
  over $\mathcal{P}'_\alpha \cup \mathcal{P}''_\alpha$.
   There are two possibilities:
\begin{itemize}
 
\item [(i)]  $\mathcal{P}'_\alpha \cup \mathcal{P}''_\alpha$
 is empty, then the asymptotics of the density is determined by the saddle-point via  Lemma~\ref{saddleas}.  

\item[(ii)]    $\mathcal{P}'_\alpha \cup \mathcal{P}''_\alpha$
 is not empty. Then due to monotonicity of the function $\langle \theta(s) \mid e_\alpha \rangle $ on $\mathcal{E}$, 
  see Lemma \ref{sp} (iv), for any $p \in \mathcal{P}'_\alpha \cup \mathcal{P}''_\alpha$  we have 
  $\langle \theta(p) \mid e_\alpha \rangle < \langle \theta(\alpha) \mid e_\alpha \rangle$.  Hence  {\it all} residues 
     at poles   $p \in \mathcal{P}'_\alpha \cup \mathcal{P}''_\alpha$  bring more important  contribution  to the asymptotic expansion  
as $r \to \infty$ than integrals over  $\Gamma_{\theta_1, \alpha}$ 
  and $\Gamma_{\theta_2, \alpha}$. 

\end{itemize}
      First of all, we would like to distinguish the set of parameters $(\Sigma, \mu , R)$ under  which
  (i) or (ii)  hold true.  Secondly, under (ii), we would like to find the most important pole from the asymptotic point of view. 
               Let us  look  closer at the arc $\{s'_0, \theta(\alpha) \}$.
        Under parameters such that $\theta_1(s_2^+)<0$     
 we have   $s'_0 \in  ]s_1^+, s_2^+[$, see Figure \ref{fipoles}, the left picture.
     Then for some $\alpha' \in ]0, \pi/2[$     $\theta(\alpha')=s'_0$.
     This arc written in square brackets  in the anticlockwise direction is $]s'_0, \theta(\alpha)[$ for any $\alpha \in ]\alpha', \pi/2[$ 
     and the function $\langle  \theta(s) \mid e_\alpha \rangle$ is increasing when $s$ runs from $s'_0$ to  $\theta(\alpha)$.
     For any $\alpha \in [0, \alpha'[$  this arc is written 
    $]\theta(\alpha), s'_0[$   
   and the function $\langle  \theta(s) \mid e_\alpha \rangle$  is decreasing when $s$ runs from $\theta(\alpha)$ so $s'_0$.
      Under parameters such that $\theta_1(s_2^+)\geq 0$,        
 we have   $s'_0 \not\in  ]s_1^+, s_2^+[$,  see Figure \ref{fipoles} the right picture,  from where this arc is   written    
    $]\theta(\alpha), s'_0[ $   for any $\alpha \in ]0, \pi/2[$. The function 
     $\langle  \theta(s) \mid e_\alpha \rangle$  is decreasing when $s$ runs from $\theta(\alpha)$ to $s'_0$.

    \begin{figure}[hbtp]
  
    \centering
    \includegraphics[scale=0.5]{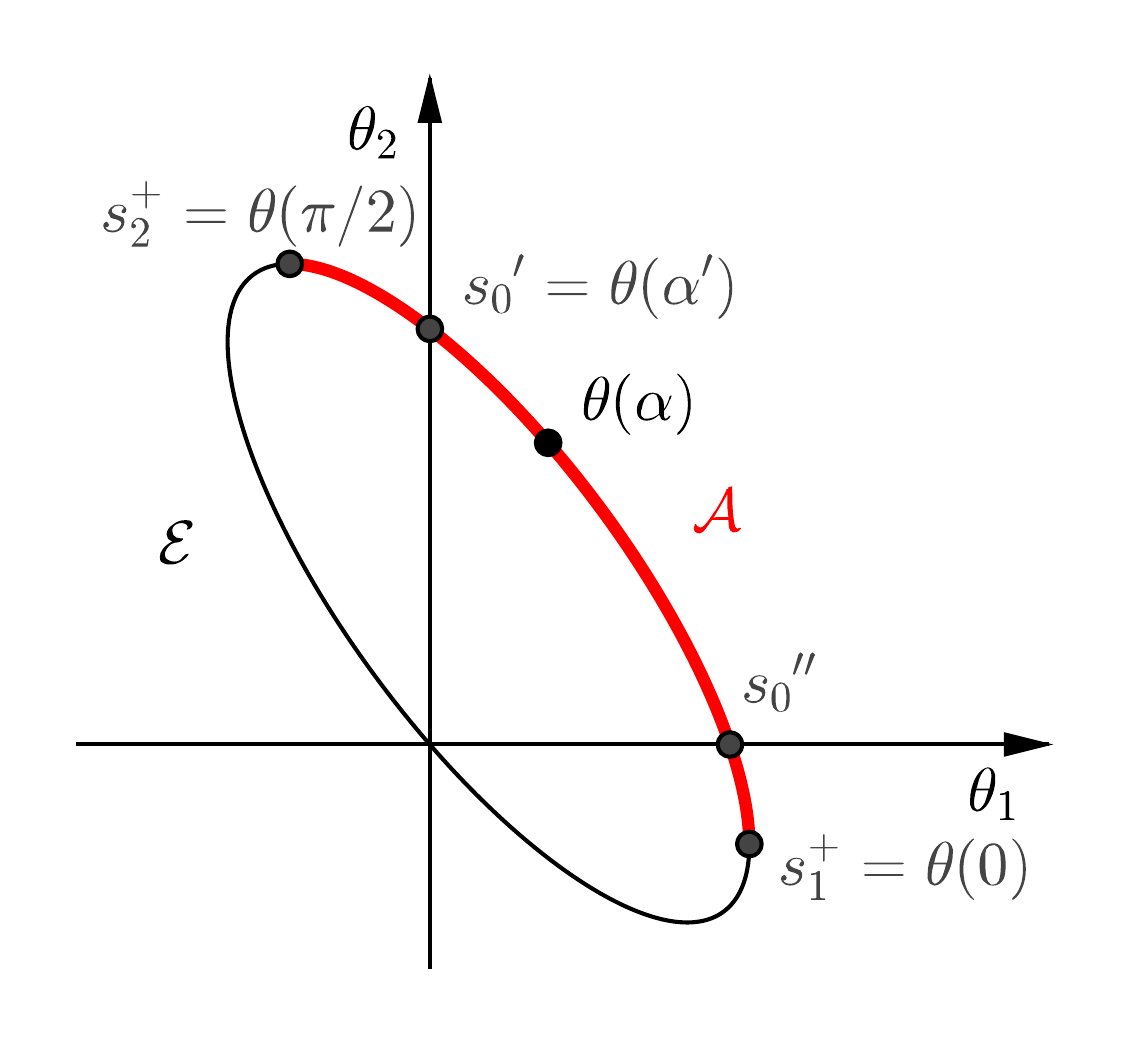}
     \includegraphics[scale=0.5]{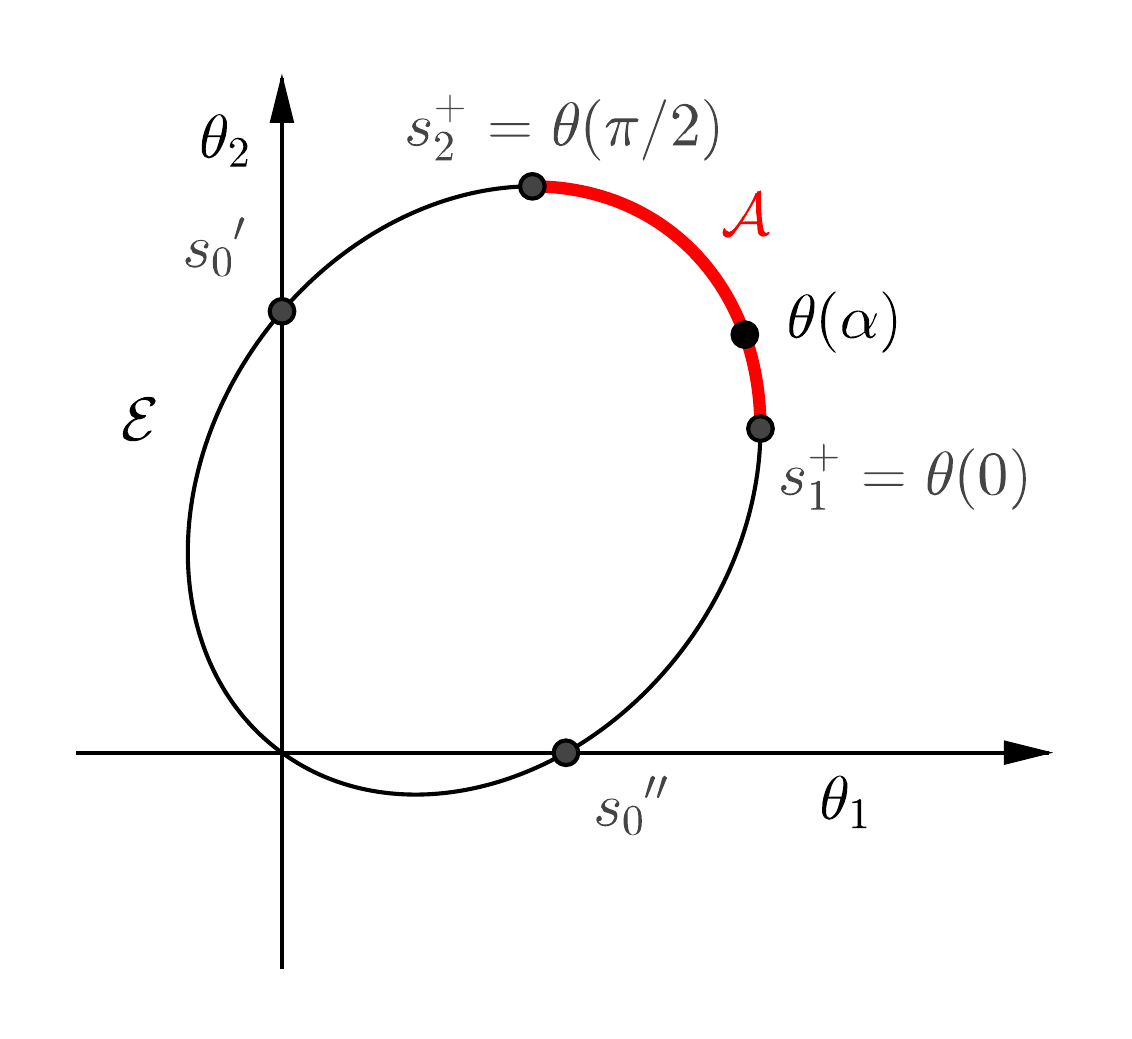}
    \caption{The arc $\mathcal{A}= [s_1^+,s_2+]$ if $\theta_1(s_2^+)<0$, $\theta_2(s_1^+)<0$ on the left picture, if $\theta_1(s_2^+)>0$, $\theta_2(s_1^+)>0$ on the right picture}
     \label{fipoles}
    \end{figure}
    
  The important conclusion  is that  in all cases, the pole $p$  of $\phi_2$  on the arc 
$\{ s'_0, \theta(\alpha)\}$   with the smallest  $\langle  \theta(p) \mid e_\alpha \rangle$
  is the  \underline{closest} to  $s'_0$. 
        In the same way we can consider the arc  $\{s''_0, \theta(\alpha)\}$  and find out, due to monotonicity of the function 
 $\langle  \theta(s) \mid e_\alpha \rangle$, 
 that the pole of $\phi_1$ 
   with the smallest  $\langle  \theta(p) \mid e_\alpha \rangle$
  is the  \underline{closest} to  $s''_0$.    
          We know from Lemmas \ref{lem3}--\ref{lem4} the way that these poles are related to zeros of $\gamma_1$ and $\gamma_2$.
    Now we summarize this information in the following theorem.
  
\begin{thm} 
\label{thmpoles}

\begin{itemize} 

\item[(a)]   Let $\zeta \theta^{**} \not \in \{\theta(\alpha), s'_0\}$,  $\eta \theta^* \not\in \{\theta(\alpha), s''_0)\}$. 
   Then ${\cal P'}_\alpha$ and ${\cal P''}_\alpha$ are both empty, 
       $\theta(\alpha)$ is not a pole of $\phi_1$ and neither of $\phi_2$.

 \item[(b)] Let $\zeta \theta^{**}  \in \}\theta(\alpha), s'_0\}$  and $\eta \theta^* \not\in \} \theta(\alpha), s''_0\}$.
   Then 
\begin{equation}
\label{ki}
 \min_{p \in {\cal P'}_\alpha \cup {\cal P''}_\alpha }  \langle \theta(p) \mid e_\alpha \rangle  = \langle \zeta \theta^{**} \mid e_\alpha \rangle 
\end{equation}
   and this minimum
  over  $ {\cal P'}_\alpha \cup {\cal P''}_\alpha   $  is achieved  at  the unique element $p=\zeta \theta^{**}$ which is a pole of the first order of $\phi_2$.

\item[(c)]  Let $\zeta \theta^{**}  \not\in \}\theta(\alpha), s'_0 \}$  and $\eta \theta^* \in \{s''_ 0, \theta(\alpha) \{$.
   Then 
\begin{equation}
\label{kii}
 \min_{ p \in {\cal P'}_\alpha \cup {\cal P''}_\alpha}  \langle \theta(p) \mid e_\alpha \rangle  = \langle \eta \theta^{*} \mid e_\alpha \rangle 
\end{equation}
     and this minimum 
  over  $  {\cal P'}_\alpha \cup {\cal P''}_\alpha    $ is achieved at the unique element $p=\eta \theta^{*}$ which is a pole of the first order of $\phi_1$.

\item[(d)]    Let $\zeta \theta^{**}  \in  \}\theta(\alpha), s'_0 \}$  and $\eta \theta^* \in  \{s''_0, \theta(\alpha) \{$.
       
 If  $\langle \zeta \theta^{**} \mid e_\alpha \rangle < \langle \eta \theta^{*} \mid e_\alpha \rangle$, then 
(\ref{ki}) is valid.   If  $\langle \zeta \theta^{*} \mid e_\alpha \rangle > \langle \eta \theta^{**} \mid e_\alpha \rangle$, then 
(\ref{kii}) is valid.  In both cases the minimum over  ${\cal P'}_\alpha \cup {\cal P''}_\alpha$ is achieved at the unique element 
  which is the pole of the first order  $p=\zeta \theta^{**}$  of $\phi_2$  or the  pole of the first order $p=\eta \theta^*$ of $\phi_1$  respectively. 

   If      $\langle \zeta \theta^{**} \mid e_\alpha \rangle = \langle \eta \theta^{*} \mid e_\alpha \rangle$, then 
 \begin{equation}
\label{kit}
 \min_{p \in {\cal P'}_\alpha \cup {\cal P''}_\alpha }  \langle \theta(p) \mid e_\alpha \rangle  = \langle \zeta \theta^{**} \mid e_\alpha \rangle=
    \langle \eta \theta^{*} \mid e_\alpha \rangle.
\end{equation}   
     This minimum over  ${\cal P'}_\alpha \cup {\cal P''}_\alpha $ is achieved 
   at exactly  two elements $p=\zeta \theta^{**}$
  and $p=\eta \theta^*$ which are poles of the first order of $\phi_1$ and $\phi_2$ respectively.

\end{itemize}

\end{thm}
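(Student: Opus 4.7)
The strategy is to combine three ingredients already at our disposal: the identification of poles via Galois images of zeros of $\gamma_1,\gamma_2$ (Lemmas \ref{lem3}, \ref{lem1}, \ref{lem4}); the monotonicity of $s\mapsto \langle \theta(s)\mid e_\alpha\rangle$ on the two arcs $\{s'_0,\theta(\alpha)\}$ and $\{s''_0,\theta(\alpha)\}$ of $\mathcal E$ (Lemma \ref{sp}(iv), as recalled in the discussion preceding the theorem); and the fact that the saddle point $\theta(\alpha)$ is never a pole of $\phi_1$ or $\phi_2$ in cases (a)--(d) (since $\theta(\alpha)$ is distinct from $\zeta\theta^{**}$ and $\eta\theta^{*}$ by hypothesis). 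The first step of monotonicity gives the \emph{reduction}: the pole $p\in\mathcal P'_\alpha\cup\mathcal P''_\alpha$ that minimizes $\langle\theta(p)\mid e_\alpha\rangle$ is precisely the one on $\{s'_0,\theta(\alpha)\}$ closest to $s'_0$, or the one on $\{s''_0,\theta(\alpha)\}$ closest to $s''_0$, whichever gives the smaller value.

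I would then treat the four cases in turn. For case (a), I would argue that the absence of $\zeta\theta^{**}$ from $\{\theta(\alpha),s'_0\}$ combined with Lemma \ref{lem3}(1), run contrapositively together with Lemma \ref{lem1}(i) which forces any closest pole of $\phi_2$ on that arc to be a Galois image of a zero of $\gamma_2$ (that is, forces $\gamma_2(\zeta\theta^p)=0$ or $\gamma_1(\eta\zeta\theta^p)=0$), rules out the existence of any pole there; symmetrically for $\phi_1$ on $\{\theta(\alpha),s''_0\}$ using Lemma \ref{lem1}(ii). For cases (b) and (c), Lemma \ref{lem3} certifies that $\zeta\theta^{**}$ (resp.\ $\eta\theta^{*}$) is a pole of the first order of $\phi_2$ (resp.\ $\phi_1$) on the relevant arc, and Lemma \ref{lem4} identifies it as the closest pole to $s'_0$ (resp.\ $s''_0$); monotonicity then yields \eqref{ki} (resp.\ \eqref{kii}). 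For case (d), both points are simultaneously poles; the trichotomy is settled by direct comparison of the two values, invoking Lemma \ref{lem4} to guarantee that the respective closest-pole-to-endpoint structure persists on each arc.

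The main obstacle will be the delicate situation in case (d) when the parameters $(\Sigma,\mu)$ satisfy $\theta_2(s_1^+)>0$ and $\theta_1(s_2^+)>0$ simultaneously, where Lemma \ref{lem4}'s alternatives (ii) and (iii) compete. The key point proven there — that the closest pole to $s'_0$ (resp.\ $s''_0$) cannot simultaneously be $\eta\zeta\theta^{**}$ (resp.\ $\zeta\eta\theta^{*}$) — rules out the degenerate configuration where neither $\zeta\theta^{**}$ nor $\eta\theta^{*}$ itself would be the closest pole, ensuring that the minimum in \eqref{kit} is attained at exactly the two claimed points. Once this is in hand, the \emph{uniqueness} assertions in (b), (c), and the strict subcases of (d) follow from the strict monotonicity in Lemma \ref{sp}(iv), while the equality case in (d) is immediate from the hypothesis $\langle\zeta\theta^{**}\mid e_\alpha\rangle=\langle\eta\theta^{*}\mid e_\alpha\rangle$ together with the monotonicity which forbids any other pole on either arc from achieving the same value.
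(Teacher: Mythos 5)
Your plan follows the paper's own route: Lemma \ref{sp}(iv) reduces the problem to locating, on each of the two arcs $\}\theta(\alpha),s'_0\{$ and $\}\theta(\alpha),s''_0\{$, the pole closest to $s'_0$ resp.\ $s''_0$, and Lemmas \ref{lem3}, \ref{lem1} and \ref{lem4} identify those poles; your treatment of case (a) and of the competition between alternatives (ii) and (iii) of Lemma \ref{lem4} in case (d) is exactly the paper's.

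There is, however, one step that fails as written, in cases (b) and (c). The minimum in \eqref{ki} runs over $\mathcal{P}'_\alpha\cup\mathcal{P}''_\alpha$, and the hypothesis $\eta\theta^*\notin\}\theta(\alpha),s''_0\}$ does \emph{not} make $\mathcal{P}''_\alpha$ empty: $\phi_1$ may still have the pole $\eta\zeta\theta^{**}$ on $\}\theta(\alpha),s''_0\{$ (this is alternative (ii) of Lemma \ref{lem4}, pictured on the left of Figure \ref{fi12}). Monotonicity of $\langle\theta(s)\mid e_\alpha\rangle$ only orders poles \emph{within} each arc — the two arcs sit on opposite sides of the maximum $\theta(\alpha)$ — so ``monotonicity then yields \eqref{ki}'' cannot compare a pole of $\phi_1$ on one arc with $\zeta\theta^{**}$ on the other. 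What is needed, and what the paper's proof of (b) devotes its second half to, is the coordinate comparison of Lemma \ref{lem2}: $\theta_1(\eta\zeta\theta^{**})>\theta_1(\zeta\theta^{**})$ while $\theta_2(\eta\zeta\theta^{**})=\theta_2(\zeta\theta^{**})$, whence $\langle\eta\zeta\theta^{**}\mid e_\alpha\rangle>\langle\zeta\theta^{**}\mid e_\alpha\rangle$ and every element of $\mathcal{P}''_\alpha$ contributes a strictly larger value. Two smaller points: Lemma \ref{lem4} cannot be invoked wholesale in (b)/(c), since its hypotheses require both $\theta^{**}\in]s_0,s_1^+[$ and $\theta^*\in]s_2^+,s_0[$ and the latter may fail there, so the closest-pole identification must come from Lemmas \ref{lem3} and \ref{lem1} directly; and your preliminary claim that $\theta(\alpha)$ is never a pole in cases (b)--(d) is not implied by the hypotheses (in (b), $\eta\theta^*=\theta(\alpha)$ is not excluded, the arc $\}\theta(\alpha),s''_0\}$ omitting its left endpoint) — the theorem asserts this only in case (a), where your argument does establish it.
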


\begin{proof} (a)
   Let $\theta_1(s_2^+)<0$ and let $\alpha >\alpha'$ defined above. Then $\theta_1(\alpha)<0$ and
       all points of the arc $\{\theta(\alpha), s'_0\{$ 
  have the first coordinate negative, so that function $\phi_2(\theta_1(s))$ is initially well defined at them and holomorphic.
     Let now $\theta_1(s_2^+)<0$ and $\alpha \in ]0, \alpha'[$ or $\theta_1(s_2^+)\geq 0$. Then $\theta_1(\alpha)>0$ and the arc 
  $\{\theta(\alpha), s'_0\}$  written in the anticlockwise direction is $[\theta(\alpha), s'_0]$. 
   Assume that  $\phi_2(\theta_1(s))$ has poles on   
$[\theta(\alpha), s'_0[$ and $\theta^p$ is the closest to $s'_0$.  Then by Lemma \ref{lem1} 
    either $\gamma_2(\zeta \theta^p)=0$  or parameters are such that $\theta_2(s_1^+)>0$, $\eta \zeta \theta^p \in ]\eta s_1^+, s_0[$ and 
   $\gamma_1(\eta\zeta \theta^p)=0$.   In the first case $\zeta \theta^p=\theta^{**}$  is a  zero of $\gamma_2$  different from $s_0$.
   This implies $\theta^p =\zeta \theta^{**} \in  [\theta(\alpha) s'_0[$ which is impossible by assumptions.
   In the second case $\eta \zeta \theta^p=\theta^*$  is a zero of $\gamma_1$ different from $s_0$. This implies 
    $\zeta \theta^p=\eta \theta^* \in \eta ]\eta s_1^+, s_0[=]s''_0, s_1^+[ \subset ]s''_0, \theta(\alpha)[ =\}\theta(\alpha), s''_0\{$ 
   that  contradicts the assumptions as well.
    Hence $\phi_2(\theta_1(s))$ has no poles on the open arc $\}\theta(\alpha), s'_0\{$ and neither at $\theta(\alpha)$, 
        ${\cal P'}_\alpha$ is empty, 
     The reasoning for ${\cal P''}_\alpha$ is the same.

(b) By stability conditions (\ref{u}) and (\ref{v})  $\theta^{**}_1>0$, then $\zeta\theta^{**}_1>0$. Thus  $\theta_1(\alpha)>0$, 
     in the case $\theta_1(s_2^+)<0$ the angle $\alpha$ 
  must be smaller than $\alpha'$ and the arc $\}\theta(\alpha), s'_0\{$ should be  written $]\theta(\alpha), s'_0[$. 
    By Lemma~\ref{lem3} there exist poles of  function $\phi_2(\theta_1(s))$ on this arc and $\zeta \theta^{**}$ is one among them.
   By Lemma~\ref{lem1} $\zeta \theta^{**}$ can not be the closest pole to $s'_0$  only if  the parameters are such that $\theta_2(s_1^+)>0$
   and for some $\theta^p \in ]\theta(\alpha), s'_0[$  such that  $\eta \zeta \theta^p \in ]\eta s_1^+, s_0[$  
   $\gamma_1(\eta\zeta \theta^p)=0$. But then $\eta \zeta \theta^p=\theta^*$  is  a zero of $\gamma_1$ different from $s_0$. It follows
    $\zeta \theta^p=\eta \theta^* \in \eta ]\eta s_1^+, s_0[=]s''_0, s_1^+[ \subset ]s''_0, \theta(\alpha)[= \}\theta(\alpha), s''_0\{$ 
    that is impossible by  assumptions.
   Hence by Lemma \ref{lem1} $\zeta \theta^{**}$ is the closest pole to $s'_0$ of $\phi_2(\theta_1(s))$ and it is of the first order.
    The function $\langle \theta(s) \mid  e_\alpha \rangle $ being decreasing on  $]\theta(\alpha), s'_0[$ when $s$ runs 
  the arc in the anticlockwise direction, 
 thus
\begin{equation}
\label{pp}
       \min_{p  \in {\cal P'}_\alpha  }  \langle \theta(p) \mid e_\alpha \rangle  = \langle \zeta \theta^{**} \mid e_\alpha \rangle,
\end{equation}
   and the minimum is achieved on the unique element $\zeta \theta^{**}$.

   If ${\cal P}''_\alpha$ is empty then the statement (b)  is proved. 

Assume that ${\cal P}''_\alpha$ is not empty.
     Then there exist poles of $\phi_1(\theta_2(s))$ on the arc  $\}\theta(\alpha), s''_0\{$. Since function $\phi_1(\theta_2(s))$ is initially well defined 
  and holomorphic at all points  with the second coordinate negative, then $\theta_2(\alpha)>0$ and the arc is 
     $]s''_0, \theta(\alpha)[$ when written in the anticlockwise direction. Let $\theta^p$ be a  pole of $\phi_1(\theta_2(s))$ which is the closest to 
      $s''_0$.     
Then by Lemma \ref{lem1} 
    either $\gamma_1(\eta \theta^p)=0$  or parameters are such that $\theta_1(s_2^+)>0$, $\zeta \eta \theta^p \in ]s_0, \zeta s_2^+[$ and 
   $\gamma_2(\zeta\eta \theta^p)=0$.   In the first case $\eta \theta^p=\theta^{*}$  is a  zero of $\gamma_1$  different from $s_0$.
   This implies $\theta^p =\eta \theta^{*} \in  ]s''_0, \theta(\alpha)[$ which is impossible by assumptions.
   In the second case $\zeta \eta \theta^p=\theta^{**}$ where 
    $\eta \theta^p=\zeta \theta^{**} \in \zeta ]s_0, \zeta s_2^+[=]s_2^+, s'_0[ \subset ]\theta(\alpha), s'_0[$.
   Thus   $\theta^p=\eta \zeta \theta^{**}$ is the closest pole to $s''_0$.
   Hence, the closest pole of the first order coincides with it or is further away from $s''_0$. 
         Since the function $<\theta(s) \mid e_\alpha>$ is increasing
   on $]s''_0, \theta(\alpha)[$ when $s$ is running from $s''_0$ to $\theta(\alpha)$, 
     we derive
       $$\min_{p \in {\cal P''}_\alpha  } \langle \theta(p) \mid e_\alpha \rangle \geq  \langle \eta \zeta \theta^{**} \mid e_\alpha \rangle .$$
  But by Lemma \ref{lem2}
  $$ \theta_1(\eta \zeta \theta^{**})> \theta_1(\zeta \theta^{**}),  \  \  \theta_2(\eta \zeta \theta^{**})= \theta_2(\zeta \theta^{**})$$
  from where 
   $$ \langle \eta \zeta \theta^{**} \mid e_\alpha \rangle   >   \langle \zeta \theta^{**} \mid e_\alpha \rangle .$$
        Thus, whenever ${\cal P''}_\alpha$ is non empty, 
 $$\min_{p \in {\cal P''}_\alpha  }  \langle \theta(p) \mid e_\alpha \rangle  >    \langle  \zeta \theta^{**} \mid e_\alpha \rangle .$$
    This inequality combined with  (\ref{pp}) finishes the proof of (b).

  The proof of (c) is symmetric.

 (d)  Since $\theta^{*}_2=\eta \theta^{*}_2>0$ and $\theta^{**}_1=\zeta \theta^{**}_1>0$ by stability conditions (\ref{u}) and  (\ref{v}), 
    then $\theta(\alpha)$ has both coordinates positive.
    The corresponding arcs written in the anticlockwise direction are $]\theta(\alpha), s'_0[\subset ]s_1^+ , s'_0[$ and 
   $]s''_0, \theta(\alpha)[ \subset ]s''_0, s_2^+[$. By Lemma \ref{lem1} 
   $\zeta \theta^{**}$ is a pole of $\phi_2(\theta_1(s))$  on the first of these arcs  while $\eta \theta^*$ is a pole of $\phi_1(\theta_2(s))$ on the second one.
     Then one of the statements of  Lemma  \ref{lem4}  (i), (ii) or (iii) holds true.

   Under the statement (i), taking into account the monotonicity of the function $\langle \theta(s) \mid e_\alpha \rangle$ on the arcs, 
    we derive immediately that   
      $ \min_{p \in {\cal P'}_\alpha  }  \langle \theta(p) \mid e_\alpha \rangle  = \langle \zeta \theta^{**} \mid e_\alpha \rangle$, 
  and this minimum is achieved on the unique element $p=\zeta \theta^{**}$ . We derive also  that 
    $\min_{p \in {\cal P''}_\alpha  }  \langle \theta(p) \mid e_\alpha \rangle  = \langle \eta \theta^{*} \mid e_\alpha \rangle $
   and  this minimum is achieved on the unique element $p=\eta \theta^*$.  
     Thus, under the statement (i) of Lemma \ref{lem4}, the theorem is immediate.    

   Assume now (ii) of Lemma \ref{lem4}.   Again by monotonicity of  $\langle \theta(s) \mid e_\alpha \rangle$   we deduce 
        $ \min_{p \in {\cal P'}_\alpha  }  \langle \theta(p) \mid e_\alpha \rangle  = \langle \zeta \theta^{**} \mid e_\alpha \rangle$
  where the minimum is achieved at the unique element $\zeta \theta^{**}$.
       Under (ii)  all poles of $\phi_1(\theta_2(s))$ on  $]s''_0, \theta(\alpha)[$
    are not closer to $s''_0$ than $ \eta\zeta \theta^{**}$ , so that 
   either ${\cal P''}_\alpha$ is empty or       
  $$\min_{p \in {\cal P''}_\alpha  }  \langle \theta(p) \mid e_\alpha \rangle  \geq  \langle \eta\zeta  \theta^{**} \mid e_\alpha \rangle.$$
    By Lemma \ref{lem2}
  $\theta_1( \eta\zeta  \theta^{**})>\theta_1(\zeta \theta^{**}),$  $  \theta_2 ( \eta\zeta  \theta^{**})= \theta_2(\zeta \theta^{**})$ 
   from where   $\langle \eta\zeta  \theta^{**} \mid e_\alpha \rangle  >   \langle \zeta  \theta^{**} \mid e_\alpha \rangle $.
    Hence 
    $$\min_{p \in {\cal P''}_\alpha  }  \langle \theta(p) \mid e_\alpha \rangle  >  \langle  \zeta  \theta^{**} \mid e_\alpha \rangle,$$
  and finally
\begin{equation}
\label{ll} 
    \min_{p \in {\cal P'}_\alpha \cup {\cal P''}_\alpha  }  \langle   \theta(p) \mid e_\alpha \rangle  =
 \langle  \zeta  \theta^{**} \mid e_\alpha \rangle
\end{equation}
   where the minimum is achieved on the unique element $\zeta \theta^{**}$.
         From the other hand, the pole $\eta \theta^*  \in ]s''_0, \theta(\alpha)[$ of $\phi_1(\theta_2(s))$ in this case is not closer  to $s''_0$ 
    than  $ \eta\zeta \theta^{**}$.  Then the inequality 
\begin{equation}
\label{mm} 
   \langle \eta \theta^* \mid e_\alpha \rangle \geq \langle \eta \zeta \theta^{**} \mid e_\alpha \rangle > 
    \langle \zeta \theta^{**}  \mid e_\alpha \rangle 
\end{equation}
 is valid. 
     
Under the statement (iii) of  Lemma \ref{lem4}, by symmetric arguments, 
$\min_{p \in {\cal P'}_\alpha \cup {\cal P''}_\alpha  }  \langle   \theta(p) \mid e_\alpha \rangle  =
 \langle  \eta  \theta^{*} \mid e_\alpha \rangle$ where the minimum is achieved on the unique element $\eta \theta^{*}$, while 
   $\langle \eta \theta^* \mid e_\alpha \rangle<  \langle \zeta \theta^{**}  \mid e_\alpha \rangle$.
   The concludes the proof of the lemma. 

\end{proof}

\section{Asymptotic expansion of the density \texorpdfstring{$\pi(r\cos (\alpha), r \sin (\alpha))$}{pi r alpha}, \texorpdfstring{$r \to \infty$, $\alpha \in \mathcal{O}(\alpha_0)$}{}}
\label{sec:asymptexpansion}

\subsection{Given angle \texorpdfstring{$\alpha_0$}{alpha0}, asymptotic expansion  of the density  as a function of  parameters \texorpdfstring{$(\Sigma, \mu, R)$}{Sigma mu R}}
\label{subsec:givenangle}

We are now ready to formulate and prove the results. In this section we fix an angle $\alpha_0 \in ]0, \pi/2[$ and give the asymptotic 
  expansion of the density of stationary distribution depending on parameters  $(\Sigma, \mu, R)$, and more precisely on the position of zeros 
  of $\gamma_1$ and $\gamma_2$ on ellipse $\mathcal{E}$.
 
   In the first theorem parameters  $(\Sigma, \mu, R)$ are such that  the asymptotic expansion is determined by the saddle-point.

\begin{thm}
\label{thmresults1}
     Let $\alpha_0 \in ]0, \pi/2[$,  $\mathcal{O}(\alpha_0)$ is a small enough neighborhood of $\alpha_0$.
 Assume that  $\zeta \theta^{**} \not \in \{\theta(\alpha_0), s'_0\}$,  $\eta \theta^* \not\in \{\theta(\alpha_0), s''_0)\}$. 
   Then there exist constants $c^l(\alpha)$,  $l=0,1,2,\ldots $, such that for any $k>0$:
\begin{equation}
\label{as1}
\pi(r \cos (\alpha), r \sin (\alpha) ) \sim \sum_{l=0}^k \frac{c^l(\alpha)}{ r^l \sqrt{r}} e^{-r 
 \langle \theta (\alpha) \mid e_\alpha \rangle },\  \hbox{ as }r \to \infty, \  \hbox{ uniformly for }\alpha \in \mathcal{O}(\alpha_0). 
\end{equation}
   Constants  $c^l(\alpha)$  $l=0,1,2,\ldots $ depend continuously on $\alpha$ and can be expressed
 in terms of functions $\phi_1$ and $\phi_2$ and their derivatives at $\theta(\alpha)$. 
  Namely 
\begin{equation}
c^0(\alpha)= c_{\theta_1}^0(\alpha) + c_{\theta_2}^0(\alpha)
\end{equation}
where $c_{\theta_1}^0(\alpha)$ and $c_{\theta_2}^0(\alpha)$
are defined in Lemma~\ref{saddleas}.
\end{thm}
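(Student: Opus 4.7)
The plan is to combine Lemma \ref{ppmm}, Theorem \ref{thmpoles}(a), and Lemma \ref{saddleas}: under the standing hypothesis, the contour representation (\ref{prolongam}) of the density has no residue contribution, and the two remaining integrals are of saddle-point type with saddle $\theta(\alpha)$, whose full expansion is already supplied by Lemma \ref{saddleas}. The result (\ref{as1}) then follows by adding the two expansions term by term.

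More precisely, I would proceed as follows. First, I would fix $\alpha_0 \in \,]0,\pi/2[$ satisfying the hypotheses and use the continuity of the map $\alpha \mapsto \theta(\alpha)$ on $[0,\pi/2]$ (Lemma \ref{sp}(iii)) together with the fact that $\zeta\theta^{**}$ and $\eta\theta^{*}$ are fixed points on $\mathcal{E}$ independent of $\alpha$, to choose a neighborhood $\mathcal{O}(\alpha_0) \subset \,]0,\pi/2[$ so small that, for every $\alpha \in \mathcal{O}(\alpha_0)$, we still have $\zeta\theta^{**}\notin \{\theta(\alpha), s'_0\}$ and $\eta\theta^{*}\notin \{\theta(\alpha), s''_0\}$. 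By part (a) of Theorem \ref{thmpoles}, this implies $\mathcal{P}'_\alpha = \mathcal{P}''_\alpha = \emptyset$ and that $\theta(\alpha)$ is not a pole of $\varphi_1$ or $\varphi_2$ for $\alpha \in \mathcal{O}(\alpha_0)$, so the hypothesis of Lemma \ref{ppmm} is met.

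Second, applying Lemma \ref{ppmm} with the empty sets of poles, one obtains for every $\alpha \in \mathcal{O}(\alpha_0)$
\begin{equation*}
\pi(re_\alpha) = \frac{1}{2\pi\sqrt{\det\Sigma}}\Bigl(\int_{\Gamma_{\theta_1,\alpha}}\!\frac{\varphi_2(s)\gamma_2(\theta(s))}{s}e^{-r\langle\theta(s)\mid e_\alpha\rangle}\,\mathrm{d}s+\int_{\Gamma_{\theta_2,\alpha}}\!\frac{\varphi_1(s)\gamma_1(\theta(s))}{s}e^{-r\langle\theta(s)\mid e_\alpha\rangle}\,\mathrm{d}s\Bigr).
\end{equation*}
Lemma \ref{saddleas} then provides the asymptotic expansions (\ref{assh1}) and (\ref{assh2}) for these two integrals, uniformly in $\alpha \in \mathcal{O}(\alpha_0)$, each with the same exponential weight $e^{-r\langle\theta(\alpha)\mid e_\alpha\rangle}$ and with coefficients $c^l_{\theta_1}(\alpha), c^l_{\theta_2}(\alpha)$ that depend continuously on $\alpha$. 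Setting $c^l(\alpha) := c^l_{\theta_1}(\alpha) + c^l_{\theta_2}(\alpha)$ and adding the two asymptotic series term by term yields (\ref{as1}), together with the claimed formula for $c^0(\alpha)$.

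I do not expect a real obstacle here: the hard analytic work has been done earlier. The only small point to be careful about is that the constant $C$ bounding the non-saddle pieces of $\Gamma_{\theta_1,\alpha}, \Gamma_{\theta_2,\alpha}$ in the proof of Lemma \ref{saddleas}, as well as $V(\alpha)$ from (\ref{ddddd}), are uniformly controlled on $\mathcal{O}(\alpha_0)$ away from the boundary values $\alpha=0,\pi/2$; this is immediate because $\sin\alpha$ is bounded below on $\mathcal{O}(\alpha_0)$ and because $\varphi_1, \varphi_2$ have no poles on the (compact) portions of $\Gamma_{\theta_i,\alpha}$ used in the estimate, by the hypothesis and the continuity argument of the first step. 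This guarantees that the exponentially small remainders are uniform in $\alpha$, which is exactly what the statement of Theorem \ref{thmresults1} asserts.
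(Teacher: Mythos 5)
Your proof is correct and follows exactly the paper's own argument: continuity of $\alpha\mapsto\theta(\alpha)$ to propagate the hypotheses to a neighborhood of $\alpha_0$, Theorem \ref{thmpoles}(a) to empty the pole sets, Lemma \ref{ppmm} for the contour representation without residues, and Lemma \ref{saddleas} for the uniform saddle-point expansions, summed termwise with $c^l(\alpha)=c^l_{\theta_1}(\alpha)+c^l_{\theta_2}(\alpha)$. No gaps.
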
 

\begin{proof}
      By Lemma \ref{sp} (iii) $\theta(\alpha)$ depends continuously on $\alpha$, then 
    $\zeta \theta^{**} \not \in \{\theta(\alpha), s'_0\}$,  $\eta \theta^* \not\in \{\theta(\alpha), s''_0\}$
  for all  $\alpha \in \mathcal{O}(\alpha_0)$.  By Theorem \ref{thmpoles} (a) 
  the sets  ${\cal P'}_\alpha$ and ${\cal P''}_\alpha$ are both empty, furthermore,
       $\theta(\alpha)$ is not a pole of $\phi_1$ and neither of $\phi_2$. 
  Then by Lemma \ref{ppmm} the density equals the sum of integrals along shifted 
  contours $\Gamma_{\theta_1, \alpha}$ and $\Gamma_{\theta_2, \alpha}$  the asymptotics of which is found in Lemma \ref{saddleas}, 
  $c^l(\alpha)=c^l_{\theta_1}(\alpha)+ c^l_{\theta_2}(\alpha)$, $l=0,1,2,\ldots$.
\end{proof}

In the second theorem parameters   $(\Sigma, \mu, R)$ are such that
 the  most important terms of the asymptotic expansion come from the poles of $\phi_1$ or $\phi_2$ and the smaller ones
  come  from the saddle-point.

\begin{thm}
\label{thmresults2}
   Let $\alpha_0 \in ]0, \pi/2[$,  $\mathcal{O}(\alpha_0)$ is a small enough neighborhood of $\alpha_0$. 
Assume that  $\zeta \theta^{**}  \in \}\theta(\alpha_0), s'_0\}$ or  $\eta \theta^* \in \}\theta(\alpha_0), s''_0\}$.
Assume also that  $\theta(\alpha_0)$ is \underline{not}  a pole of $\phi_1(\theta_2(s))$ neither of $\phi_2(\theta_1(s))$.
 Then for any $k>0$ when $r \to \infty$, uniformly for $\alpha \in \mathcal{O}(\alpha_0)$ we have
\begin{eqnarray}
\pi(r\cos(\alpha),  r \sin(\alpha)) & \sim &
 \sum_{p \in {\cal P'}_{\alpha_0}} {\rm res}_p \phi_2(\theta_1(s))
\frac{\gamma_2(p)}{\sqrt{d(\theta_1(p))}} e^{-r \langle \theta(p) \mid e_\alpha \rangle } \nonumber \\
&& { }+
   \sum_{p \in {\cal P''}_{\alpha_0} } {\rm res}_p \phi_1(\theta_2(s))
\frac{\gamma_1(p)}{\sqrt{\tilde d(\theta_2(p))}} e^{-r \langle \theta(p) \mid e_\alpha \rangle }  \nonumber  \\
&& {} +
\sum_{l=0}^k \frac{c^l(\alpha)}{ r^l \sqrt{r}} e^{-r 
 \langle \theta (\alpha) \mid e_\alpha \rangle }. \label{th23} 
\end{eqnarray}
     Constants  $c^l(\alpha)$  $l=0,1,2,\ldots$ are the same as  in Theorem \ref{thmresults1}.
  Furthermore
\begin{itemize}
\item[(i)]  If  $\zeta \theta^{**}  \in \}\theta(\alpha_0), s'_0\}$ and  $\eta \theta^* \not\in \}\theta(\alpha_0), s''_0\}$, 
   then the main  term in the expansion  (\ref{th23}) is at $p=\zeta \theta^{**}$.
\item[(ii)]  If   If  $\zeta \theta^{**}  \not\in \}\theta(\alpha_0), s'_0\}$ and  $\eta \theta^* \in \}\theta(\alpha_0), s''_0\}$, 
   then the main term in (\ref{th23}) is at $p=\eta \theta^{*}$.
\item[(iii)]    Let  $\zeta \theta^{**}  \in \}\theta(\alpha_0), s'_0\}$ and  $\eta \theta^* \in \}\theta(\alpha_0), s''_0\}$.
   If   $\langle \zeta \theta^{**} \mid e_{\alpha_0 }\rangle  <   \langle \eta \theta^{*} \mid e_{\alpha_0 }\rangle $,       
   then the main term in (\ref{th23}) is at $p=\zeta \theta^{**}$.
 
   If  $\langle \zeta \theta^{**} \mid e_{\alpha_0 }\rangle  > \langle \eta \theta^{*} \mid e_{\alpha_0 }\rangle $,  
    then main term in (\ref{th23}) is  at $p=\eta \theta^{*}$.
               
     If  $\langle \zeta \theta^{**} \mid e_{\alpha_0 }\rangle  = \langle \eta \theta^{*} \mid e_{\alpha_0 }\rangle $, then 
  two the most important terms in  the expansion (\ref{th23}) are at $p=\zeta \theta^{**}$ and at $p=\eta \theta^*$.

\end{itemize}

\end{thm}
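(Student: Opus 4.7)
The plan is to assemble the three main ingredients that have already been proved: Lemma \ref{ppmm}, which represents $\pi(re_\alpha)$ as a sum of residues plus shifted-contour integrals; Lemma \ref{saddleas}, which provides the uniform saddle-point expansion of those integrals on a neighborhood of $\alpha_0$; and Theorem \ref{thmpoles}, which locates the dominant pole in each of the parameter configurations (i)--(iii).

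First, I would verify that Lemma \ref{ppmm} applies throughout a sufficiently small neighborhood $\mathcal{O}(\alpha_0)$. Since $\theta(\alpha)$ depends continuously on $\alpha$ by Lemma \ref{sp}(iii), and since $\theta(\alpha_0)$ is by hypothesis not a pole of either $\phi_1(\theta_2(s))$ or $\phi_2(\theta_1(s))$, after possibly shrinking $\mathcal{O}(\alpha_0)$ the same remains true for every $\alpha \in \mathcal{O}(\alpha_0)$. Moreover, the arcs $\}\theta(\alpha), s'_0\}$ and $\{s''_0, \theta(\alpha)\{$ deform continuously with $\alpha$ without crossing any pole of $\phi_1$ or $\phi_2$ (poles form a discrete subset of $\mathcal{E}$, and the endpoints $s'_0, s''_0$ are fixed). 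Consequently $\mathcal{P}'_\alpha = \mathcal{P}'_{\alpha_0}$ and $\mathcal{P}''_\alpha = \mathcal{P}''_{\alpha_0}$ for every such $\alpha$, so the identity (\ref{prolongam}) of Lemma \ref{ppmm} holds with a fixed, $\alpha_0$-indexed set of residues.

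Second, I would insert the saddle-point expansions (\ref{assh1})--(\ref{assh2}) of Lemma \ref{saddleas} into (\ref{prolongam}). This produces (\ref{th23}) directly, with $c^l(\alpha) = c^l_{\theta_1}(\alpha) + c^l_{\theta_2}(\alpha)$, and the uniformity in $\alpha$ inherits from that of Lemma \ref{saddleas}. To confirm that (\ref{th23}) is a genuine asymptotic expansion (and not just an equality), one must verify that each residue exponent is strictly smaller than the saddle-point exponent: since $\theta(\alpha) = \operatorname{argmax}_{s \in \mathcal{E}} \langle \theta(s) \mid e_\alpha \rangle$ by Lemma \ref{sp}(iv) and every $p \in \mathcal{P}'_\alpha \cup \mathcal{P}''_\alpha$ lies on $\mathcal{E}\setminus\{\theta(\alpha)\}$, one has $\langle \theta(p) \mid e_\alpha \rangle < \langle \theta(\alpha) \mid e_\alpha \rangle$; hence each pole contribution dominates every $r^{-l-1/2} e^{-r\langle \theta(\alpha) \mid e_\alpha \rangle}$ term, and the sums in (\ref{th23}) are correctly ordered.

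Third, the identification of the main term in each of (i)--(iii) reduces to parts (b), (c), (d) of Theorem \ref{thmpoles}. In case (i), part (b) gives that $\zeta\theta^{**}$ is the unique minimizer of $p \mapsto \langle \theta(p) \mid e_{\alpha_0}\rangle$ over $\mathcal{P}'_{\alpha_0} \cup \mathcal{P}''_{\alpha_0}$ and is a simple pole of $\phi_2$, yielding the main term claimed. Cases (ii) and (iii) are handled by parts (c) and (d) in exactly the same way, including the tie-breaking in the subcase $\langle \zeta\theta^{**} \mid e_{\alpha_0}\rangle = \langle \eta\theta^{*} \mid e_{\alpha_0}\rangle$, where both residues contribute simultaneously. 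The strict ranking of residue exponents is continuous in $\alpha$, so a further shrinking of $\mathcal{O}(\alpha_0)$ preserves the main-term assignment uniformly.

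The only real obstacle is enforcing uniformity in $\alpha$: one must simultaneously ensure that the pole sets remain constant, that $\theta(\alpha)$ stays away from every pole, and that the strict inequality $\langle \theta(p) \mid e_\alpha \rangle < \langle \theta(\alpha) \mid e_\alpha \rangle$ persists with a rate independent of $\alpha$. All three are quantitative consequences of the continuity of $\theta(\alpha)$ and the discreteness of the pole set, and are secured by a single shrinking of $\mathcal{O}(\alpha_0)$.
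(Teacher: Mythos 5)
Your proposal is correct and follows essentially the same route as the paper: locally constant pole sets plus the absence of a pole at $\theta(\alpha)$ justify Lemma \ref{ppmm} on all of $\mathcal{O}(\alpha_0)$, Lemma \ref{saddleas} supplies the saddle-point tail with $c^l(\alpha)=c^l_{\theta_1}(\alpha)+c^l_{\theta_2}(\alpha)$, and Theorem \ref{thmpoles} (b), (c), (d) together with continuity of $\alpha\mapsto\langle\theta(p)\mid e_\alpha\rangle$ identifies the dominant pole in cases (i)--(iii). Your explicit check via Lemma \ref{sp}(iv) that every residue exponent beats the saddle-point exponent is a welcome elaboration of a step the paper leaves implicit.
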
 

\begin{proof}
   Point $\theta(\alpha_0)$ being not a pole of  $\phi_1$ neither of $\phi_2$, one can choose  $\mathcal{O}(\alpha_0)$ small enough 
  such that $\theta(\alpha)$ is not a pole of no one of these functions
 and    $\mathcal{P}'_{\alpha} \cup \mathcal{P}''_\alpha    =  \mathcal{P}'_{\alpha_0} \cup \mathcal{P}''_{\alpha_0} $   
for all $ \alpha \in \mathcal{O}(\alpha_0)$.  By assumptions 
  $\zeta \theta^{**}  \in \}\theta(\alpha_0), s'_0\}$ or  $\eta \theta^* \in \}\theta(\alpha_0), s''_0\}$, then 
  by Theorem \ref{thmpoles} (b), (c) or (d) 
   $\mathcal{P}'_{\alpha_0} \cup \mathcal{P}''_{\alpha_0} $ is not empty.
      Finally  by virtue of Lemma \ref{ppmm}  and Lemma \ref{saddleas} the representation (\ref{th23}) holds true. 
      
 Let us study the main asymptotic term. 
     Statements (i), (ii) and (iii) for $\alpha=\alpha_0$ follow directly from 
  Theorem \ref{thmpoles} (b), (c) and  (d). They remain valid for any $\alpha \in \mathcal{O}(\alpha_0)$ 
due to  the continuity of the functions $\alpha \to \langle \theta(p) \mid e_\alpha \rangle $  for any $p \in  \mathcal{P}'_{\alpha_0} 
\cup \mathcal{P}''_{\alpha_0} $. 
 \end{proof}

\noindent{\bf Remark.}
     Under parameters such that 
 $\zeta \theta^{**}  \in \}\theta(\alpha_0), s'_0\}$,  $\eta \theta^* \in \}\theta(\alpha_0), s''_0\}$ and 
$\langle \zeta \theta^{**} \mid e_{\alpha_0 }\rangle  = \langle \eta \theta^{*} \mid e_{\alpha_0 }\rangle $  (case (iii)), 
  for any fixed angle $\alpha<\alpha_0$, the main asymptotic term is at $\eta \theta^{*}$ and the second one is at $\zeta \theta^{**}$; 
  for any fixed  angle $\alpha>\alpha_0$  the pole $\zeta\theta^{**}$ provides the main asymptotic term and $\eta \theta^*$ gives the second one.
   If $r\to \infty$ and $\alpha \to \alpha_0$, both of these terms should be taken into account.   

\medskip 

In Theorem \ref{thmresults2} $\theta(\alpha_0)$ is assumed not to be a pole of $\phi_1$ and neither of $\phi_2$, that is why Lemma \ref{ppmm} applies.
   Nevertheless, it may happen (for a very few angles and under some sets of parameters)
   that $\theta(\alpha_0)$ is a pole of one of these functions. In this case the following theorem holds true.

\begin{thm}
\label{thmresults3}
  Let $\alpha_0 \in ]0, \pi/2[$. 
Assume that  $\zeta \theta^{**}  \in \}\theta(\alpha_0), s'_0\}$ or  $\eta \theta^* \in \}\theta(\alpha_0), s''_0)\}$.

Assume also that  $\theta(\alpha_0)$ is  a pole of $\phi_1(\theta_2(s))$ or of $\phi_2(\theta_1(s))$.

 Then for any $\delta>0$ there exists a small enough neighborhood $\mathcal{O}(\alpha_0)$ such that 
\begin{eqnarray}
\pi(r\cos(\alpha),  r \sin(\alpha)) & \sim &
 \sum_{p \in {\cal P'}_{\alpha_0}} {\rm res}_p \phi_2(\theta_1(s))
\frac{\gamma_2(p)}{\sqrt{d(\theta_1(p))}} e^{-r \langle \theta(p) \mid e_\alpha \rangle } \nonumber \\
&& { }+
   \sum_{p \in {\cal P''}_{\alpha_0} } {\rm res}_p \phi_1(\theta_2(s))
\frac{\gamma_1(p)}{\sqrt{\tilde d(\theta_2(p))}} e^{-r \langle \theta(p) \mid e_\alpha \rangle }  \nonumber  \\
&&{}  + o(e^{-r (\langle \theta(\alpha) \mid e_\alpha \rangle -\delta ) } ) \  \  \  r \to \infty, \hbox{ uniformly } \forall \alpha \in \mathcal{O}(\alpha_0) \label{th233}   
\end{eqnarray}

   Furthermore, the main term in this expansion is the same as in Theorem \ref{thmresults2}, cases (i), (ii) and (iii). 
\end{thm}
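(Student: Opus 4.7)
The plan is to adapt the proof of Theorem~\ref{thmresults2}, modifying the contour near the saddle-point so as to bypass the pole at $\theta(\alpha_0)$; the cost is that the saddle-point contribution can no longer be extracted cleanly, which is why the error term is weakened to $o\big(e^{-r(\langle\theta(\alpha)|e_\alpha\rangle-\delta)}\big)$. The starting point is again the representation of $\pi(re_\alpha)$ as a sum of two integrals on $\mathcal{I}_{\theta_1}^+$ and $\mathcal{I}_{\theta_2}^+$ from the remark after Lemma~\ref{propI}. The obstruction to applying Lemma~\ref{ppmm} verbatim is that for $\alpha$ close to $\alpha_0$ the saddle-point $\theta(\alpha)$ lies close to the pole $\theta(\alpha_0)$: the contour $\Gamma_{\theta_i,\alpha}$ of Section~\ref{subsec:shiftingcontours} traverses a neighborhood of the pole along its steepest-descent piece $\gamma_\alpha$, the constants of Lemma~\ref{saddleas} blow up as $\alpha\to\alpha_0$, and at $\alpha=\alpha_0$ the contour is not even admissible.

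Given $\delta>0$, I would fix $\eta\in{}]0,\delta/3[$ and shrink $\mathcal{O}(\alpha_0)$ so that $\langle\theta(\alpha_0)|e_{\alpha_0}\rangle+\eta/2>\langle\theta(\alpha)|e_\alpha\rangle-\delta$ for all $\alpha\in\mathcal{O}(\alpha_0)$, using joint continuity of $\theta(\alpha)$ and of $\langle\theta(s)|e_\alpha\rangle$. Pick a small open neighborhood $U$ of $\theta(\alpha_0)$ on $\mathbf{S}$ containing no pole of $\phi_1$ or $\phi_2$ and no zero of $\gamma_1$ or $\gamma_2$ other than $\theta(\alpha_0)$ itself. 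Since $\theta(\alpha_0)$ is a non-degenerate critical point of $\langle\theta(s)|e_{\alpha_0}\rangle$ by Lemma~\ref{sp}(i), the level curves near $\theta(\alpha_0)$ have the saddle structure of Figure~\ref{ligneniveau}, so one can draw a short arc inside $U$ that bypasses $\theta(\alpha_0)$ while staying inside $\{s\in U:\ \Re\langle\theta(s)|e_{\alpha_0}\rangle\geq\langle\theta(\alpha_0)|e_{\alpha_0}\rangle+\eta\}$; by continuity this arc also satisfies $\Re\langle\theta(s)|e_\alpha\rangle\geq\langle\theta(\alpha_0)|e_{\alpha_0}\rangle+\eta/2$ uniformly in $\alpha\in\mathcal{O}(\alpha_0)$. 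I would then define $\tilde\Gamma_{\theta_1,\alpha}$ by keeping $\Gamma_{\theta_1,\alpha_0}$ outside $U$ and closing it inside $U$ by this bypass arc, choosing the side of the saddle so that the deformation from $\mathcal{I}_{\theta_1}^+$ to $\tilde\Gamma_{\theta_1,\alpha}$ crosses exactly the poles in $\mathcal{P}'_{\alpha_0}$ and \emph{not} $\theta(\alpha_0)$; $\tilde\Gamma_{\theta_2,\alpha}$ is defined symmetrically.

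Applying Cauchy's theorem then yields \eqref{th233} with the residue sums over $\mathcal{P}'_{\alpha_0}\cup\mathcal{P}''_{\alpha_0}$ and a remainder $R_\alpha(r)$ equal to the sum of the integrals over the two new contours. Outside $U$ the bounds of Lemma~\ref{saddleas} apply (there $\Re\langle\theta(s)|e_\alpha\rangle$ exceeds $\langle\theta(\alpha_0)|e_{\alpha_0}\rangle+\epsilon$ for some fixed $\epsilon>0$ uniformly in $\alpha\in\mathcal{O}(\alpha_0)$), while inside $U$ the bypass arc contributes at most its length times $\sup|\phi_2\gamma_2/s|$ times $e^{-r(\langle\theta(\alpha_0)|e_{\alpha_0}\rangle+\eta/2)}$. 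Combining these estimates with the choice of $\mathcal{O}(\alpha_0)$ gives $R_\alpha(r)=o\big(e^{-r(\langle\theta(\alpha)|e_\alpha\rangle-\delta)}\big)$ uniformly in $\alpha$. The identification of the main term in cases (i), (ii), (iii) then follows from Theorem~\ref{thmpoles} applied to $\mathcal{P}'_{\alpha_0}\cup\mathcal{P}''_{\alpha_0}$, exactly as in the proof of Theorem~\ref{thmresults2}.

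The main obstacle is the geometric construction of the bypass arc: one must check that it can be drawn on the correct side of $\theta(\alpha_0)$ so that the topological deformation from $\mathcal{I}_{\theta_1}^+$ picks up precisely $\mathcal{P}'_{\alpha_0}$ and excludes $\theta(\alpha_0)$, while remaining in the required super-level set uniformly in $\alpha\in\mathcal{O}(\alpha_0)$. This reduces to a local analysis at a non-degenerate saddle-point, standard in principle but delicate because the relevant ``side'' depends on the topology of $\mathcal{I}_{\theta_1}^+$ relative to the saddle near $\theta(\alpha_0)$.
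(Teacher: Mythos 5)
Your overall strategy (deform the contours so as to pick up the residues over $\mathcal{P}'_{\alpha_0}\cup\mathcal{P}''_{\alpha_0}$ while avoiding the pole at $\theta(\alpha_0)$, then bound the remaining integrals) is the right one, but the central geometric step fails: the bypass arc you describe does not exist. Near the non-degenerate saddle $\theta(\alpha_0)$ the super-level set $\{s:\Re\langle\theta(s)\mid e_{\alpha_0}\rangle\geq\langle\theta(\alpha_0)\mid e_{\alpha_0}\rangle+\eta\}$ consists of two components lying in the two ``high'' sectors, separated by the two ``low'' sectors that contain the arcs of $\mathcal{E}$ adjacent to $\theta(\alpha_0)$. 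The two endpoints $z_{\alpha,\pm}$ of the steepest-descent piece you want to replace lie one in each component, so no arc inside $U$ can join them while staying in that super-level set. More globally, any contour from $s_\infty$ to $s_{\infty'}$ homotopic to $\mathcal{I}_{\theta_1}^+$ in the complement of the poles must cross the closed curve $\mathcal{E}$, and on $\mathcal{E}$ the function $\Re\langle\theta(s)\mid e_\alpha\rangle=\langle\theta(s)\mid e_\alpha\rangle$ attains its maximum only at $\theta(\alpha)$ --- which here is precisely the pole to be avoided --- so the crossing point has value strictly below the saddle value and the contour necessarily dips under it. This also explains why your estimate is internally inconsistent: if the remainder really were $O\big(e^{-r(\langle\theta(\alpha_0)\mid e_{\alpha_0}\rangle+\eta/2)}\big)$, you would have proved something strictly stronger than the stated error term $o\big(e^{-r(\langle\theta(\alpha)\mid e_\alpha\rangle-\delta)}\big)$, and the $\delta$ in the theorem would be superfluous.

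The paper's proof accepts the dip instead of trying to avoid it: it chooses $\tau'\in\}s'_0,\theta(\alpha_0)\{$ and $\tau''\in\}s''_0,\theta(\alpha_0)\{$ on $\mathcal{E}$ separating $\mathcal{P}'_{\alpha_0}$ (resp.\ $\mathcal{P}''_{\alpha_0}$) from $\theta(\alpha_0)$ and satisfying $\langle\theta(\alpha_0)\mid e_{\alpha_0}\rangle-\langle\tau'\mid e_{\alpha_0}\rangle<\delta/4$, and replaces the steepest-descent piece by the segment $\{\Re\theta_1(s)=\Re\theta_1(\tau')\}$ through $\tau'$, completed by horizontal and imaginary-axis pieces as in Section~\ref{subsec:shiftingcontours}. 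Lemma~\ref{uv}(i) shows that the minimum of $\Re\langle\theta(s)\mid e_\alpha\rangle$ on that segment is attained at $\tau'$ itself, which yields the bound $Ce^{-r\langle\tau'\mid e_\alpha\rangle}=o\big(e^{-r(\langle\theta(\alpha)\mid e_\alpha\rangle-\delta)}\big)$ after shrinking $\mathcal{O}(\alpha_0)$. Repairing your argument along these lines turns it into essentially the paper's proof; the identification of the main term via Theorem~\ref{thmpoles} is fine as you state it.
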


\begin{proof}
   For any $\delta>0$ one can choose $\tau' \in \}s'_0, \theta(\alpha_0)\{$ and $\tau'' \in \}s''_0, \theta(\alpha_0)\{$
 close enough to $\theta(\alpha_0)$ so that $\mathcal{P}'_{\alpha_0} \subset \}s'_0, \tau'\{$ and 
   $\mathcal{P}''_{\alpha_0} \subset \}s''_0, \tau"\{$. 
   Furthermore $\tau'$  and $\tau''$ can be chosen close enough to $\alpha_0$ 
  so that $ \langle \theta(\alpha_0) \mid  e_{\alpha_0} \rangle -  \langle \tau' \mid e_{\alpha_0} \rangle <\delta/4$ 
 and    $ \langle \theta(\alpha_0) \mid  e_{\alpha_0} \rangle -  \langle \tau'' \mid e_{\alpha_0} \rangle <\delta/4$. Then by continuity 
  of the functions $\alpha \to \langle \theta(\alpha) \mid e_\alpha \rangle $, $\alpha \to  \langle \tau '\mid e_\alpha \rangle $, 
  $\alpha \to  \langle \tau'' \mid e_\alpha \rangle $ one can fix a small enough neighborhood  $\mathcal{O}(\alpha_0)$ 
such that 
\begin{equation}
\label{at}
\langle \theta(\alpha) \mid  e_{\alpha} \rangle -  \langle \tau' \mid e_{\alpha} \rangle <\delta/2,\  \  \  
  \langle \theta(\alpha) \mid  e_{\alpha} \rangle -  \langle \tau'' \mid e_{\alpha} \rangle <\delta/2 ,\  \  \forall \alpha \in \mathcal{O}(\alpha_0). 
\end{equation}
   Next, we shift the integration contours in (\ref{iwi})   $\mathcal{I}_{\theta_1}^+$ 
  and   $\mathcal{I}_{\theta_2}^+$ to the new  ones 
  $\Gamma'_{\theta_1, \alpha}$ and $\Gamma''_{\theta_2, \alpha}$ going through $\tau'$ and $\tau''$ respectively that 
  we construct as follows:
 $\Gamma'_{\theta_1, \alpha} =\Gamma'^{1}_{\theta_1, \alpha} \cup \Gamma'^{2, \pm }_{\theta_1, \alpha}\cup \Gamma'^{3, \pm}_{\theta_1, \alpha}$
  where 
  $\Gamma'^{1}_{\theta_1}=\{ s :  \Re \theta_1(s)= \Re \theta_1(\tau'),  -V(\alpha)\leq  \Im \theta_1(s)  \leq V(\alpha)  \}$, 
   $\Gamma'^{2,\pm }_{\theta_1, \alpha}= \{s :  \Im \theta_1(s)= \pm V(\alpha),        0\leq  \Re \theta_1(s) \leq   \Re \theta_1(\tau')\}
   $  if  $\Re \theta_1(\tau')>0$ and 
   $\Gamma'^{2, \pm }_{\theta_1, \alpha}= \{s :  \Im \theta_1(s)= \pm V(\alpha),        0\geq  \Re \theta_1(s) \geq  \Re \theta_1(\tau')\}$  if $\Re \theta_1(\tau')<0$ , 
   finally $\Gamma'^{3, +}_{\theta_1, \alpha}= \{s : \Re \theta_1(s)=0, \Im \theta_1(s) \geq V(\alpha)\}$, 
      $\Gamma'^{3, -}_{\theta_1, \alpha}= \{s : \Re \theta_1(s)=0, \Im \theta_1(s) \leq -V(\alpha)\}$.
   The construction of $\Gamma''_{\theta_2, \alpha}$ is analogous.
   The value $V(\alpha)$ is fixed as: 
\begin{equation}
\label{va}
V(\alpha)=\max\Big(V, \frac{\langle \tau' \mid e_\alpha \rangle -d_1}{d_2 \sin (\alpha)},   \frac{\langle \tau'' \mid e_\alpha \rangle -d_1}{d_2 \sin (\alpha)}
\Big)
\end{equation}
  with notations from Lemma \ref{uv}.
   Thanks to the  representation (\ref{iwi})   and  Cauchy theorem 
\begin{equation}
\label{lre}
\pi(re_\alpha) =
 \sum_{p \in {\cal P'}_{\alpha_0}} {\rm res}_p \phi_2(\theta_1(s))
\frac{\gamma_2(p)}{\sqrt{d(\theta_1(p))}} e^{-r \langle \theta(p) \mid e_\alpha \rangle } +
   \sum_{p \in {\cal P''}_{\alpha_0}} {\rm res}_p \phi_1(\theta_2(s))
\frac{\gamma_1(p)}{\sqrt{\tilde d(\theta_2(p))}} e^{-r \langle \theta(p) \mid e_\alpha \rangle }$$  
$${}+ \frac{1}{2\pi \sqrt{\det \Sigma}} \int_{\Gamma'_{\theta_1, \alpha} }
\frac{\varphi_2(s) \gamma_2(\theta(s))}{s}  e^{- r \langle \theta(s) \mid e_\alpha \rangle }
{\mathrm{d}s}
+
\frac{1}{2\pi \sqrt{\det \Sigma}} \int_{\Gamma''_{\theta_2, \alpha} }
\frac{\varphi_1(s) \gamma_1(\theta(s))}{s} e^{- r \langle \theta(s) \mid e_\alpha \rangle }
\mathrm{d}s. 
\end{equation}
     Applying Lemma \ref{uv} (i) for the estimation of integrals along $\Gamma'^{1}_{\theta_1,\alpha}$ and 
$\Gamma''^{1}_{\theta_1,\alpha}$, and the same lemma (ii) for the estimation of  those along
  $\Gamma'^{\pm 2 }_{\theta_1,\alpha}$  
$\Gamma''^{\pm 2}_{\theta_2,\alpha}$,  $\Gamma'^{\pm 3}_{\theta_1,\alpha}$ and 
$\Gamma''^{\pm 3}_{\theta_2,\alpha}$  exactly as in Lemma \ref{saddleas} and  in view of (\ref{va})  
   we can show that  with some constant $C>0$  
 \begin{eqnarray}
\label{in11}
\Big|  \int_{\Gamma'_{\theta_1, \alpha} }
\frac{\varphi_2(s) \gamma_2(\theta(s))}{s}  e^{- r \langle \theta(s) \mid e_\alpha \rangle }
{\mathrm{d}s}\Big| & \leq  & C e ^{- r \langle \tau' \mid e_\alpha \rangle},\nonumber \\
\Big|  \int_{\Gamma''_{\theta_1, \alpha} }
\frac{\varphi_2(s) \gamma_2(\theta(s))}{s}  e^{- r \langle \theta(s) \mid e_\alpha \rangle }
{\mathrm{d}s}\Big|  &\leq & C e ^{- r \langle \tau'' \mid e_\alpha \rangle} \  \  \forall r>0,  \forall \alpha \in \mathcal{O}(\alpha_0).
\nonumber 
\end{eqnarray}
   Hence,  by  (\ref{at}) 
\begin{equation}
 \int_{\Gamma'_{\theta_1, \alpha} }
\frac{\varphi_2(s) \gamma_2(\theta(s))}{s}  e^{- r \langle \theta(s) \mid e_\alpha \rangle }
{\mathrm{d}s}
+
 \int_{\Gamma''_{\theta_2, \alpha} }
\frac{\varphi_1(s) \gamma_1(\theta(s))}{s} e^{- r \langle \theta(s) \mid e_\alpha \rangle }
\mathrm{d}s = o(e^{- r(\langle \theta(\alpha) \mid e_\alpha \rangle -\delta ) })    
\end{equation} 
as $r \to \infty$  uniformly  $\forall \alpha \in \mathcal{O}(\alpha_0)$. 
    This finishes the proof of the representation (\ref{th233}).
  The analysis of the main term is the same as in Theorem \ref{thmresults2}. 
\end{proof}

       It remains to study the cases of parameters such that 
\begin{itemize}
\item[(O1)]   $\zeta \theta^{**}=\theta(\alpha_0) $ and $\eta \theta^{*} \not\in \{s''_0, \theta(\alpha_0)\{$ 

\item[(O2)]   $\eta \theta^*=\theta(\alpha_0)$ and $\zeta \theta^{**} \not\in \{s''_0, \theta(\alpha_0)\{$. 
\end{itemize}

By Lemma \ref{lem3}  this means that $\theta(\alpha_0)$ is a pole of one of  functions $\phi_1$ or $\phi_2$.
  Since in both cases $ \eta \theta^{*} \not\in \{s''_0, \theta(\alpha)\{$,  $\zeta \theta^{**} \not\in \{s''_0, \theta(\alpha)\{$, 
  we derive by the same reasoning as in Theorem \ref{thmpoles} (a) that $\mathcal{P}'_{\alpha_0} \cup \mathcal{P}''_{\alpha_0}$ 
   is empty.   The following theorem is valid.
 
\begin{thm}
\label{thmresults4}
  Assume that $\alpha_0$ is such that  the assumptions on parameters (O1) or (O2) are valid. 
 Then for any $\delta>0$ there exists a small enough neighborhood $\mathcal{O}(\alpha_0)$ such that 
\begin{equation}
\label{th2334}
\pi(r\cos(\alpha), r \sin(\alpha)) = o(e^{-r (\langle \theta(\alpha) \mid e_\alpha \rangle -\delta ) } ) \  \  \  r \to \infty, \hbox{ uniformly } \forall \alpha \in \mathcal{O}(\alpha_0).   
\end{equation}
\end{thm}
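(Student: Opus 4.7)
The plan is to adapt the contour-shifting argument of Theorem \ref{thmresults3}, with the crucial difference that here the saddle-point $\theta(\alpha_0)$ itself is a pole of $\varphi_1$ or $\varphi_2$, so the standard saddle-point asymptotics along $\gamma_\alpha$ cannot be invoked. First I record that under (O1) or (O2), the sets $\mathcal{P}'_{\alpha_0}$ and $\mathcal{P}''_{\alpha_0}$ of poles lying in the \emph{open} arcs $\}\theta(\alpha_0),s'_0\{$ and $\}\theta(\alpha_0),s''_0\{$ are empty, exactly as stated in the paragraph preceding the theorem: the only possible candidates for closest poles to $s'_0$ or $s''_0$ are (by Lemmas \ref{lem3}--\ref{lem4}) the points $\zeta\theta^{**}$, $\eta\theta^{*}$ and their further images $\eta\zeta\theta^{**}$, $\zeta\eta\theta^{*}$, and the assumption of (O1) or (O2) forces each of them either to sit at the closed endpoint $\theta(\alpha_0)$ or to be excluded from the arcs altogether (the argument is the same as in Theorem \ref{thmpoles}(a)).

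Given $\delta>0$, I choose auxiliary points $\tau'\in\}\theta(\alpha_0),s'_0\{$ and $\tau''\in\}\theta(\alpha_0),s''_0\{$ close enough to $\theta(\alpha_0)$ that
\begin{equation*}
\langle \theta(\alpha_0)\mid e_{\alpha_0}\rangle-\langle \tau'\mid e_{\alpha_0}\rangle<\delta/4,\qquad \langle \theta(\alpha_0)\mid e_{\alpha_0}\rangle-\langle \tau''\mid e_{\alpha_0}\rangle<\delta/4,
\end{equation*}
and then, using the continuous dependence of $\theta(\alpha)$ on $\alpha$ (Lemma \ref{sp}(iii)) together with that of $\alpha\mapsto \langle\tau'\mid e_\alpha\rangle$ and $\alpha\mapsto\langle\tau''\mid e_\alpha\rangle$, I shrink $\mathcal{O}(\alpha_0)$ so that
\begin{equation*}
\langle\theta(\alpha)\mid e_\alpha\rangle-\langle\tau'\mid e_\alpha\rangle<\delta/2, \qquad \langle\theta(\alpha)\mid e_\alpha\rangle-\langle\tau''\mid e_\alpha\rangle<\delta/2, \quad\forall\alpha\in\mathcal{O}(\alpha_0).
\end{equation*}
Next I construct contours $\Gamma'_{\theta_1,\alpha}$ and $\Gamma''_{\theta_2,\alpha}$ passing through $\tau'$ and $\tau''$ respectively, using the same three-piece construction (vertical segment at $\Re\theta_i(\tau^{(\cdot)})$, horizontal segment at $\Im\theta_i=\pm V(\alpha)$, and the tail of $\mathcal{I}_{\theta_i}^+$) as in the proof of Theorem \ref{thmresults3}, with $V(\alpha)$ given by \eqref{va}.

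The key geometric point is that these shifts do not cross any pole. Indeed, $\tau'$ lies strictly between $\theta(\alpha_0)$ and $s'_0=(0,-2\mu_2/\sigma_{22})$ on $\mathcal{E}$, so $0<\theta_1(\tau')<\theta_1(\theta(\alpha_0))$; if $\zeta\theta^{**}=\theta(\alpha_0)$ is a pole of $\varphi_2$, it sits at $\Re\theta_1=\theta_1(\theta(\alpha_0))>\theta_1(\tau')$, so it lies strictly to the right of the shifted vertical segment and is therefore not enclosed when one deforms $\mathcal{I}_{\theta_1}^+$ (at $\Re\theta_1=0$) into $\Gamma'_{\theta_1,\alpha}$. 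The analogous statement holds for $\tau''$ and the pole $\eta\theta^{*}$ of $\varphi_1$. Combined with the emptiness of $\mathcal{P}'_{\alpha_0}\cup\mathcal{P}''_{\alpha_0}$, this means that Cauchy's theorem applied to \eqref{iwi} contributes \emph{no} residues and yields the pure contour representation
\begin{equation*}
\pi(re_\alpha)=\frac{1}{2\pi\sqrt{\det\Sigma}}\int_{\Gamma'_{\theta_1,\alpha}}\frac{\varphi_2(s)\gamma_2(\theta(s))}{s}e^{-r\langle\theta(s)\mid e_\alpha\rangle}\mathrm{d}s+\frac{1}{2\pi\sqrt{\det\Sigma}}\int_{\Gamma''_{\theta_2,\alpha}}\frac{\varphi_1(s)\gamma_1(\theta(s))}{s}e^{-r\langle\theta(s)\mid e_\alpha\rangle}\mathrm{d}s,
\end{equation*}
valid for every $\alpha\in\mathcal{O}(\alpha_0)$.

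Finally I bound each integral as in Lemma \ref{saddleas} (estimates \eqref{z}--\eqref{zzzz}), using Lemma \ref{uv}(i) for the vertical pieces through $\tau'$, $\tau''$ and Lemma \ref{uv}(ii) together with the choice \eqref{va} of $V(\alpha)$ for the remaining pieces; the boundedness of $\varphi_1,\varphi_2$ on $\mathcal{I}_{\theta_i}$ by $\nu_i(\mathbf{R}_+^2)$ provides the necessary uniform majorants, and the distance of the contour pieces from the poles of $\varphi_1,\varphi_2$ for $\alpha\in\mathcal{O}(\alpha_0)$ (granted by $\tau',\tau''$ being at positive distance from the singular locus) yields uniform constants. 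The outcome is
\begin{equation*}
|\text{first integral}|\leq C e^{-r\langle\tau'\mid e_\alpha\rangle},\qquad |\text{second integral}|\leq C e^{-r\langle\tau''\mid e_\alpha\rangle},
\end{equation*}
uniformly in $\alpha\in\mathcal{O}(\alpha_0)$, and the choice of $\tau',\tau''$ gives $e^{-r\langle\tau^{(\cdot)}\mid e_\alpha\rangle}\leq e^{-r(\langle\theta(\alpha)\mid e_\alpha\rangle-\delta/2)}=o(e^{-r(\langle\theta(\alpha)\mid e_\alpha\rangle-\delta)})$, proving \eqref{th2334}. The main obstacle is purely geometric: verifying that on the arc leading from the saddle to the axial point $s'_0$ (respectively $s''_0$) the relevant coordinate strictly decreases away from the saddle, so that the shift through $\tau'$ (respectively $\tau''$) genuinely lies between the imaginary axis and the offending pole; this monotonicity follows from the characterization of $\theta(\alpha)$ as the maximizer of $\langle\,\cdot\mid e_\alpha\rangle$ on $\mathcal{E}$ (Lemma \ref{sp}(iv)) combined with the explicit endpoints of the arc.
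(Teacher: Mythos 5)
Your proposal is correct and follows essentially the same route as the paper: the paper's own proof is a one-line reduction ("choose $\tau'$ and $\tau''$ according to (\ref{at}) and proceed exactly as in the proof of Theorem~\ref{thmresults3}"), combined with the observation, stated just before the theorem, that $\mathcal{P}'_{\alpha_0}\cup\mathcal{P}''_{\alpha_0}$ is empty so the residue sums in (\ref{lre}) vanish. Your additional verification that the pole sitting at the saddle-point itself is not swept when deforming $\mathcal{I}_{\theta_1}^+$ through $\tau'$ (since $0<\theta_1(\tau')<\theta_1(\theta(\alpha_0))$) is a correct and useful elaboration of a point the paper leaves implicit.
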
  

\begin{proof}
  We choose $\tau'$ and $\tau''$ according to (\ref{at}) and proceed exactly as in the proof of Theorem~\ref{thmresults3}. 
\end{proof}

\noindent{\bf Remark.} In Theorems \ref{thmresults3} and \ref{thmresults4} 
$\theta(\alpha_0)$ is a pole of one of the functions $\phi_1$ or $\phi_2$, hence 
at least  one  of the integrals (\ref{iwi}) can not be shifted to $\Gamma_{\theta_1, \alpha_0}$ 
or  $\Gamma_{\theta_2, \alpha_0}$ going through $\theta(\alpha_0)$.
        Furthermore, although for any $\alpha \in \mathcal{O}(\alpha_0)$, $\alpha\ne \alpha_0$, 
 this shift is possible, the uniform asymptotic expansion by the saddle-point method as
 in  Lemma \ref{saddleas}  does not stay valid, that is why we are not able to  specify  small 
 asymptotic terms in Theorem \ref{thmresults3} neither to obtain a more precise result in Theorem \ref{thmresults4}.
   This should be possible if we consider 
  the double asymptotics $r\to \infty$ and $\alpha \to \alpha_0$  and apply the (more advanced)  saddle-point method 
  in the special case when the saddle-point is approaching a pole of the integrand. We do not do it in the present paper.

\medskip 

\noindent{\bf Remark.} Assumptions  of theorems \ref{thmresults1}  ---  \ref{thmresults4} are expressed in terms of positions on ellipse $\mathcal{E}$
 of points $\zeta \theta^{**}$ and $\eta \theta^*$ 
     that are images of zeros of $\gamma_1$ and $\gamma_2$ on $\mathcal{E}$
  by Galois automorphisms. 
     They can be also expressed in terms of the following simple inequalities.

Under parameters such that $\theta_1(\alpha_0)>0$,  we have 
 $\zeta\theta^{**} \ne \{s'_0, \theta(\alpha_0)\}$ iff $\theta^{**} \ne \{s_0, \zeta \theta(\alpha_0)\}$ that is equivalent to  
$\gamma_2(\zeta \theta(\alpha))<0$.
 Under parameters such that  $\theta_1(\alpha_0)\leq 0$, we have always $\zeta \theta^{**} \ne \{s'_0, \theta(\alpha_0)\}$  because $\theta_1(\zeta \theta^{**})>0$ by stability conditions,
    in this case we have also  $\gamma_2(\zeta \theta(\alpha_0))\geq 0$.
 We come to the following conclusions.  

\begin{itemize}

 \item[(i) ]  Assumption  $\zeta\theta^{**} \ne \{s'_0, \theta(\alpha_0)\}$   is equivalent to 
  the one that $\gamma_2(\zeta \theta(\alpha_0))<0$ or  $\theta_1(\alpha_0)\leq 0$.

    Assumption  $\zeta\theta^{**} \in \{s'_0, \theta(\alpha_0)\{$
  is equivalent to the one  that  $\gamma_2(\zeta \theta(\alpha_0))>0$ and  $\theta_1(\alpha_0)> 0$.

\item[(ii)]  Assumption  $\eta\theta^{*} \ne \{s''_0, \theta(\alpha_0)\}$   is equivalent to  the one that $\gamma_1(\eta \theta(\alpha_0))<0$ or  $\theta_2(\alpha_0)\leq 0$.

    Assumption  $\eta\theta^{**} \in \{s''_0, \theta(\alpha_0)\{$
  is equivalent to the one that  $\gamma_1(\eta \theta(\alpha_0))>0$ and  $\theta_2(\alpha_0)> 0$.

\end{itemize} 

\subsection{Given parameters \texorpdfstring{$(\Sigma, \mu, R)$}{Sigma mu R},  density asymptotics for all angles \texorpdfstring{$\alpha_0 \in ]0, \pi/2[$}{alpha0} } 
\label{subsec:givenparameters}

       In this section we state the asymptotics of the density for all angles $\alpha_0 \in ]0, \pi/2[$ once  
  parameters $(\Sigma, \mu, R)$ are fixed.
   Theorems \ref{thmresultsnew1} -- \ref{thmresultsnew3} below are direct corollaries of 
  Theorems \ref{thmresults1} -- \ref{thmresults4} and elementary geometric properties of ellipse $\mathcal{E}$
  and straight lines $\gamma_1(\theta)=0$ and $\gamma_2(\theta)=0$, therefore we do not give their proofs.  
  To shorten the presentation, 
 we restrict ourselves to the main term in the formulations of the results, although of course further terms 
 of the expansions could be written. The different cases of Theorem \ref{thmresultsnew1} are illustrated by Figures \ref{firstfig}--\ref{lastfig}.

\begin{thm}  Let $\theta_1(s_2^+)>0$, $\theta_2(s_1^+)>0$.
\label{thmresultsnew1}

\begin{itemize}

\item[(i)] Let $\gamma_2(s_1^+)\leq 0$ and $\gamma_1(s_2^+)\leq 0$
   Then for any $\alpha_0 \in ]0, \pi/2[$ we have :

\begin{equation}
\label{assadlep}
\pi(r \cos \alpha, r \sin \alpha) \sim \frac{c(\alpha_0)}{\sqrt{r}} \exp(-r\langle \theta(\alpha) \mid e_\alpha \rangle),  \  \ r \to \infty, 
\alpha \to \alpha_0,
\end{equation} 
  where the constant $c(\alpha_0)$ depends continuously on $\alpha_0 \in ]0, \pi/2[$ and 
$\lim_{\alpha_0 \to 0} c(\alpha_0)=\lim_{\alpha_0 \to \pi/2} c(\alpha_0)=0$. 

\item[(ii)] Let $\gamma_2(s_1^+)>0$ and $\gamma_1(s_2^+)\leq 0$.

\begin{itemize}

\item[(iia)]  Let $\gamma_2(\zeta s_2^+)\geq 0$ or equivalently $\frac{ d \Theta_2^{+}(\theta_1) }{ d \theta_1 } \Bigm|_{\theta_1^{**} }\geq 0$.
   Then for any $\alpha_0 \in ]0, \pi/2[$ we have
\begin{equation}
\label{p**}
\pi(r \cos \alpha, r \sin \alpha) \sim d_1 \exp(-r\langle \zeta \theta^{**} \mid e_\alpha \rangle),  \  \ r \to \infty, 
\alpha \to \alpha_0,
\end{equation} 
  with some constant $d_1>0$.

\item[(iib)]  Let  $\gamma_2(\zeta s_2^+)<0$ or equivalently $A^{**}\equiv  \frac{ d \Theta_2^{+}(\theta_1) }{ d \theta_1 } \Bigm|_{\theta_1^{**} }
 < 0$.
   Define $\alpha_1= \arctan ( -1/ A^{**}) \in ]0, \pi/2[$.
   Then for any $\alpha_0 \in ]0, \alpha_1[$ we have (\ref{p**}) and for any $\alpha \in ]\alpha_1, \pi/2[$ we have 
   (\ref{assadlep}).

\end{itemize}

\item[(iii)]  Let $\gamma_2(s_1^+)<0$ and $\gamma_1(s_2^+)\geq 0$.

\begin{itemize}

\item[(iiia)]  Let $\gamma_1(\eta s_1^+)\geq 0$ or equivalently $\frac{ d \Theta_1^{+}(\theta_2) }{ d \theta_2 } \Bigm|_{\theta_2^{*} }  \geq 0$.
   Then for any $\alpha_0 \in ]0, \pi/2[$ we have 
\begin{equation}
\label{p*}
\pi(r \cos \alpha, r \sin \alpha) \sim d_2 \exp(- r\langle \eta \theta^{*} \mid e_\alpha \rangle),  \  \ r \to \infty, 
\alpha \to \alpha_0,
\end{equation} 
  with some constant $d_2>0$.
 
\item[(iiib)]  Let  $\gamma_1(\eta s_1^+)<0$ or equivalently $ A^*\equiv \frac{ d \Theta_1^{+}(\theta_2) }{ d \theta_2 } \Bigm|_{\theta_2^{*} }  < 0$.
   Define $\alpha_2= \arctan (- A^*) \in ]0, \pi/2[$.
   Then for any $\alpha_0 \in ]0, \alpha_2[$ we have (\ref{assadlep}) and for any $\alpha \in ]\alpha_2, \pi/2[$ we have 
   (\ref{p*}).

\end{itemize}

\item[(iv)]   Let  $\gamma_2(s_1^+)>0$ and $\gamma_1(s_2^+)>0$.

\begin{itemize}

\item[(iva)]  Let $\theta_1(\zeta \theta^{**})\leq \theta_1(\eta \theta^*)$  and  $\theta_2(\zeta \theta^{**})\leq \theta_2(\eta \theta^*)$
  where at least one of inequalities is strict.
  Then for any  $\alpha_0 \in ]0, \pi/2[$ we have (\ref{p**}). 

\item[(ivb)]    Let $\theta_1(\zeta \theta^{**})\geq \theta_1(\eta \theta^*)$  and  $\theta_2(\zeta \theta^{**})\geq \theta_2(\eta \theta^*)$
  where at least one of inequalities is strict.
  Then for any  $\alpha \in ]0, \pi/2[$ we have (\ref{p*}). 

\item[(ivc)]   Let $\theta_1(\zeta \theta^{**})\leq  \theta_1(\eta \theta^*)$  and  $\theta_2(\zeta \theta^{**}) \geq \theta_2(\eta \theta^*)$
  where at least one of the inequalities is strict.
  Let us define $\beta_0 =\arctan \frac{ \theta_1(\zeta\theta^{**})-\theta_1(\eta \theta^*)  }{ \theta_2(\eta\theta^{*}) -\theta_2(\zeta \theta^{**})     }$
 Then for any  $\alpha_0 \in ]0, \beta_0[$ we have (\ref{p**}), 
  for any $\alpha_0 \in ]\beta_0, \pi/2[$ we have (\ref{p*}) and for $\alpha_0=\beta_0$ we have
\begin{equation}
\label{p***}
\pi(r \cos \alpha, r \sin \alpha) \sim
d_1 \exp(-r\langle \zeta \theta^{**} \mid e_\alpha \rangle)+ d_2 \exp(-r\langle \eta \theta^{*} \mid e_\alpha \rangle),  \  \ r \to \infty, 
\alpha \to \alpha_0.
\end{equation} 
  
\item[(ivd)]    Let $\theta_1(\zeta \theta^{**})\geq \theta_1(\eta \theta^*)$  and  $\theta_2(\zeta \theta^{**}) \leq \theta_2(\eta \theta^*)$.
   Let us define angles $\alpha_1$ and $\alpha_2$  as in (ii) and (iii). Then $0<\alpha_1 \leq \alpha_2 <\pi/2$;
            for any  $\alpha_0 \in ]0, \alpha_1[$ we have (\ref{p**}) , for any $\alpha_0 \in ]\alpha_1, \alpha_2[$ we have 
   (\ref{assadlep}) and for any $\alpha_0 \in ]\alpha_2, \pi/2[$ we have (\ref{p*}).
  
\end{itemize}

\end{itemize}

\end{thm}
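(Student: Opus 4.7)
\textbf{Proof plan for Theorem \ref{thmresultsnew1}.}

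The plan is to translate, case by case, the abstract hypotheses on the positions of $\zeta \theta^{**}$ and $\eta \theta^{*}$ on the ellipse $\mathcal{E}$ that appear in Theorems \ref{thmresults1}--\ref{thmresults4} into the concrete sign conditions stated here, and then invoke the appropriate theorem. The main translation tool is the remark at the end of Section \ref{subsec:givenangle}: under $\theta_1(\alpha_0) > 0$ (resp.\ $\theta_2(\alpha_0) > 0$), the condition $\zeta \theta^{**} \in \{s'_0, \theta(\alpha_0)\{$ (resp.\ $\eta \theta^{*} \in \{s''_0, \theta(\alpha_0)\{$) is equivalent to $\gamma_2(\zeta \theta(\alpha_0)) > 0$ (resp.\ $\gamma_1(\eta \theta(\alpha_0)) > 0$). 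Under the standing hypothesis $\theta_1(s_2^+) > 0$ and $\theta_2(s_1^+) > 0$, both coordinates of $\theta(\alpha_0)$ stay strictly positive throughout $\mathcal{A} = [s_1^+, s_2^+]$, so the equivalence is available for every $\alpha_0 \in ]0, \pi/2[$.

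Next, I would study the two scalar functions $F_2(\alpha) = \gamma_2(\zeta \theta(\alpha))$ and $F_1(\alpha) = \gamma_1(\eta \theta(\alpha))$. Direct substitution gives $F_2(0) = \gamma_2(s_1^+)$, $F_2(\pi/2) = \gamma_2(\zeta s_2^+)$, $F_1(0) = \gamma_1(\eta s_1^+)$ and $F_1(\pi/2) = \gamma_1(s_2^+)$. As $\alpha$ runs over $[0,\pi/2]$, Lemma~\ref{sp}(iii) shows that $\theta(\alpha)$ traverses $\mathcal{A}$ monotonically; hence $\zeta \theta(\alpha)$ and $\eta \theta(\alpha)$ each traverse an arc of $\mathcal{E}$ monotonically, and since the straight lines $\gamma_1 = 0$ and $\gamma_2 = 0$ each meet $\mathcal{E}$ in exactly two points, each of $F_1$, $F_2$ has at most one zero on $]0, \pi/2[$. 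These zeros are the critical angles $\alpha_2$ and $\alpha_1$. The explicit identification $\alpha_1 = \arctan(-1/A^{**})$ follows from the geometric characterization of $\theta(\alpha)$ as the point of $\mathcal{A}$ where the tangent to $\mathcal{E}$ is orthogonal to $e_{\alpha}$: at $\alpha = \alpha_1$ we must have $\theta(\alpha_1) = \zeta \theta^{**}$, whose tangent slope on the upper branch is $A^{**} = (d\Theta_2^+/d\theta_1)(\theta_1^{**})$, and orthogonality to $e_{\alpha_1}$ yields $\tan \alpha_1 = -1/A^{**}$. The analogous computation, with roles of $\theta_1$ and $\theta_2$ swapped, gives $\alpha_2 = \arctan(-A^{*})$.

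Each of cases (i)--(iv) then reduces to reading off where $F_1$ and $F_2$ are positive. In (i), $F_1 \leq 0$ and $F_2 \leq 0$ throughout, so $\zeta \theta^{**}$ and $\eta \theta^{*}$ each lie outside their respective arcs and Theorem \ref{thmresults1} applies uniformly. In (ii) and (iii), exactly one of $F_1, F_2$ changes sign on $]0,\pi/2[$, producing one threshold ($\alpha_1$ or $\alpha_2$) across which the regime switches between Theorem \ref{thmresults1} (saddle-point dominates) and Theorem \ref{thmresults2}(i)--(ii) (pole dominates); in the subcases (iia) and (iiia) the threshold disappears because $F_2$ (resp.\ $F_1$) is positive throughout.

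The main obstacle lies in case (iv), and particularly in subcases (ivc)--(ivd), where $\zeta \theta^{**}$ and $\eta \theta^{*}$ may lie on their respective arcs \emph{simultaneously}. There Theorem \ref{thmresults2}(iii) forces a comparison of $\langle \zeta \theta^{**} \mid e_{\alpha_0}\rangle$ and $\langle \eta \theta^{*} \mid e_{\alpha_0}\rangle$; setting them equal and solving yields
\[
\tan \beta_0 = \frac{\theta_1(\zeta \theta^{**}) - \theta_1(\eta \theta^{*})}{\theta_2(\eta \theta^{*}) - \theta_2(\zeta \theta^{**})},
\]
and the combined asymptotic \eqref{p***} appears precisely at $\alpha_0 = \beta_0$. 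The delicate points are the well-posedness $\beta_0 \in ]0,\pi/2[$ in (ivc) and the ordering $\alpha_1 \leq \alpha_2$ in (ivd); both follow from the coordinate inequalities between $\zeta \theta^{**}$ and $\eta \theta^{*}$ assumed in each subcase, combined with the monotonicity of $\theta(\alpha)$ along $\mathcal{A}$ and Lemma \ref{sp}(iv).
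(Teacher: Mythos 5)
Your proposal is correct and is precisely the argument the paper intends: the authors explicitly omit the proof, stating that Theorem \ref{thmresultsnew1} is a direct corollary of Theorems \ref{thmresults1}--\ref{thmresults4} together with elementary geometric properties of the ellipse $\mathcal{E}$ and the lines $\gamma_1(\theta)=0$, $\gamma_2(\theta)=0$, and your translation of the arc conditions into the sign conditions via the Remark of Section \ref{subsec:givenangle}, the endpoint evaluations $F_2(0)=\gamma_2(s_1^+)$, $F_2(\pi/2)=\gamma_2(\zeta s_2^+)$, $F_1(0)=\gamma_1(\eta s_1^+)$, $F_1(\pi/2)=\gamma_1(s_2^+)$, and the tangency characterization of $\alpha_1,\alpha_2,\beta_0$ is exactly that reduction. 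The only place where a reader would want slightly more detail is cases (iva)--(ivb), where one should note (using Lemma \ref{sp}(iv)) that the coordinate inequalities force the dominating point to lie outside the interior of $\mathcal{A}$, so it stays on the relevant arc $\}\theta(\alpha_0),s_0'\}$ (resp. $\{s_0'',\theta(\alpha_0)\{$) for every $\alpha_0$ — but this is of the same elementary nature as the checks you already flag for (ivc)--(ivd).
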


\begin{thm}  Let $\theta_1(s_2^+)\leqslant 0$, $\theta_2(s_1^+) \leqslant 0$.
\label{thmresultsnew2}

\begin{itemize}

\item[(i)] Let $\gamma_2(s_1^+)\leq 0$ and $\gamma_1(s_2^+)\leq 0$
   Then for any $\alpha_0 \in ]0, \pi/2[$ the asymptotics (\ref{assadlep}) is valid.

\item[(ii)] Let $\gamma_2(s_1^+)>0$ and $\gamma_1(s_2^+)\leq 0$.
 Let  $A^{**}\equiv  \frac{ d \Theta_2^{+}(\theta_1) }{ d \theta_1 } \Bigm|_{\theta_1^{**} }$.
  Then $\alpha_1= \arctan ( -1/ A^{**}) \in ]0, \pi/2[$.
   For any $\alpha_0 \in ]0, \alpha_1[$ the asymptotics  (\ref{p**}) is valid and and for any $\alpha \in ]\alpha_1, \pi/2[$ the asymptotics 
   (\ref{assadlep}) holds true.

\item[(iii)]  Let $\gamma_2(s_1^+)<0$ and $\gamma_1(s_2^+)\geq 0$.
  Let  $ A^*\equiv \frac{ d \Theta_1^{+}(\theta_2) }{ d \theta_2 } \Bigm|_{\theta_2^{*} } $.
  Then $\alpha_2= \arctan (- A^*) \in ]0, \pi/2[$.
   For any $\alpha_0 \in ]0, \alpha_2[$ the asymptotics (\ref{assadlep}) is valid and for any $\alpha \in ]\alpha_2, \pi/2[$ the asymptotics 
   (\ref{p*}) holds true.

\item[(iv)]   Let  $\gamma_2(s_1^+)>0$ and $\gamma_1(s_2^+)>0$.
    Then either
  $\theta_1(\zeta \theta^{**})< \theta_1(\eta \theta^*)$  and  $\theta_2(\zeta \theta^{**}) > \theta_2(\eta \theta^*)$, or 
 $\theta_1(\zeta \theta^{**})> \theta_1(\eta \theta^*)$  and  $\theta_2(\zeta \theta^{**}) < \theta_2(\eta \theta^*)$, or 
 finally  $\theta_1(\zeta \theta^{**})=\theta_1(\eta \theta^*)$  and  $\theta_2(\zeta \theta^{**}) = \theta_2(\eta \theta^*)$.

\begin{itemize}

\item[(iva)]   Let $\theta_1(\zeta \theta^{**})< \theta_1(\eta \theta^*)$  and  $\theta_2(\zeta \theta^{**}) > \theta_2(\eta \theta^*)$.
  Let us define $\beta_0 =\arctan \frac{ \theta_1(\zeta\theta^{**})-\theta_1(\eta \theta^*)  }{ \theta_2(\eta\theta^{*}) -\theta_2(\zeta \theta^{**})     }$
 Then for any  $\alpha_0 \in ]0, \beta_0[$ we have (\ref{p**}), 
  for any $\alpha_0 \in ]\beta_0, \pi/2[$ we have (\ref{p*}) and for $\alpha_0=\beta_0$ we have
(\ref{p***}).
  
\item[(ivb)]    Let $\theta_1(\zeta \theta^{**})> \theta_1(\eta \theta^*)$  and  $\theta_2(\zeta \theta^{**}) < \theta_2(\eta \theta^*)$ or 
    $\theta_1(\zeta \theta^{**})= \theta_1(\eta \theta^*)$  and  $\theta_2(\zeta \theta^{**}) = \theta_2(\eta \theta^*)$ 
   Let us define angles $\alpha_1$ and $\alpha_2$  as in (ii) and (iii). Then $0<\alpha_1 \leq \alpha_2 <\pi/2$;
            for any  $\alpha_0 \in ]0, \alpha_1[$ we have (\ref{p**}) , for any $\alpha_0 \in ]\alpha_1, \alpha_2[$ we have 
   (\ref{assadlep}) and for any $\alpha_0 \in ]\alpha_2, \pi/2[$ we have (\ref{p*}).
  
\end{itemize}

\end{itemize}

\end{thm}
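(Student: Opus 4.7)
The plan is to derive Theorem~\ref{thmresultsnew2} as a direct corollary of Theorems~\ref{thmresults1}--\ref{thmresults4} by translating each set of algebraic conditions on $\gamma_1, \gamma_2$ at the extremal points $s_1^+, s_2^+$ into geometric statements about the positions of the images $\zeta\theta^{**}$ and $\eta\theta^*$ on $\mathcal{E}$, and then reading off the correct case of the general theorems for the saddle-point $\theta(\alpha_0)$.

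First I would establish the geometric dictionary. Under the hypothesis $\theta_1(s_2^+)\leq 0$, $\theta_2(s_1^+)\leq 0$, the arc $\mathcal{A}=[s_1^+,s_2^+]$ meets the first and second quadrants at $s_0''$ and $s_0'$ respectively; moreover $s_0'\in ]s_1^+,\theta(\alpha_0)[$ or $s_0'\in ]\theta(\alpha_0),s_2^+[$ according as $\alpha_0$ lies below or above a certain threshold, and analogously for $s_0''$. Consequently the arcs $\{\theta(\alpha_0), s_0'\}$ and $\{s_0'',\theta(\alpha_0)\}$ admit a uniform description. The sign of $\gamma_2(s_1^+)$ locates $\theta^{**}$ on one of the two arcs of $\mathcal{E}$ adjacent to $s_1^+$ (since the line $\gamma_2=0$ meets $\mathcal{E}$ only at $s_0$ and $\theta^{**}$, and the direction of crossing at $s_1^+$ is governed by the sign of $\gamma_2(s_1^+)$); applying $\zeta$ then pins down the position of $\zeta\theta^{**}$ on $\mathcal{E}$. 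Similarly, the sign of $\gamma_1(s_2^+)$ dictates the position of $\eta\theta^*$.

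Next I would identify the transition angles $\alpha_1,\alpha_2$. Because $\theta(\alpha)$ parametrizes $\mathcal{A}$ monotonically by Lemma~\ref{sp}, the critical angle $\alpha_1$ at which $\zeta\theta^{**}$ enters the arc $\{\theta(\alpha),s_0'\}$ is determined by $\theta(\alpha_1)=\zeta\theta^{**}$; differentiating $\theta_1\cos\alpha+\Theta_2^+(\theta_1)\sin\alpha$ at $\theta_1=\theta_1^{**}$ yields $\tan\alpha_1=-1/A^{**}$, and the parallel computation for the second coordinate gives $\tan\alpha_2=-A^*$. With these thresholds, each combination of signs of $\gamma_2(s_1^+),\gamma_1(s_2^+)$ translates into an interval of angles where Theorem~\ref{thmresults1} (no pole) or Theorem~\ref{thmresults2} (one dominant pole $\zeta\theta^{**}$ or $\eta\theta^*$) applies; the stated asymptotic formula is just the specialization of those theorems together with the observation that the hypothesis $\theta_i(s_j^+)\leq 0$ rules out the degenerate sub-cases (iia), (iiia) of Theorem~\ref{thmresultsnew1} because there $\theta(\alpha_0)$ would coincide with the pole only outside $]0,\pi/2[$.

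The main obstacle, and the place where the argument genuinely uses the constraint $\theta_1(s_2^+)\leq 0, \theta_2(s_1^+)\leq 0$, lies in case (iv). Here both $\theta^{**}$ and $\theta^*$ lie in the positive region, and one must verify that the remaining geometric configurations are exhausted by (iva) and (ivb) — in particular, that the mixed case $\theta_1(\zeta\theta^{**})\leq\theta_1(\eta\theta^*)$ \emph{and} $\theta_2(\zeta\theta^{**})\leq\theta_2(\eta\theta^*)$ (which would make one point strictly dominate the other on the entire arc $\mathcal{A}$) cannot occur under the present hypothesis. The argument is that such a configuration would force either $\zeta\theta^{**}$ or $\eta\theta^*$ to lie outside $\mathcal{A}$, contradicting $\gamma_2(s_1^+)>0, \gamma_1(s_2^+)>0$ combined with the ellipse being in the upper-left/lower-right positioning imposed by $\theta_1(s_2^+)\leq 0, \theta_2(s_1^+)\leq 0$. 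Once this trichotomy is established, case (iva) follows from Theorem~\ref{thmresults2}(iii) with the transition at $\beta_0$ solving $\langle\zeta\theta^{**}\mid e_{\beta_0}\rangle=\langle\eta\theta^*\mid e_{\beta_0}\rangle$, while case (ivb) combines the two regimes of (ii) and (iii) with an intermediate saddle-point interval $]\alpha_1,\alpha_2[$; the inequality $\alpha_1\leq\alpha_2$ is a direct consequence of the monotonicity of $\theta(\alpha)$ along $\mathcal{A}$ and the assumed ordering of the coordinates.
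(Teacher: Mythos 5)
Your proposal is correct and follows exactly the route the paper itself indicates (the authors omit the proof, stating that Theorems~\ref{thmresultsnew1}--\ref{thmresultsnew3} are direct corollaries of Theorems~\ref{thmresults1}--\ref{thmresults4} together with elementary geometry of $\mathcal{E}$ and the lines $\gamma_i(\theta)=0$): you correctly translate the sign conditions into positions of $\zeta\theta^{**}$ and $\eta\theta^*$, identify the transition angles via $\theta(\alpha_1)=\zeta\theta^{**}$, $\theta(\alpha_2)=\eta\theta^*$ (giving $\tan\alpha_1=-1/A^{**}$, $\tan\alpha_2=-A^*$), and rightly observe that the hypothesis $\theta_1(s_2^+)\leq 0$, $\theta_2(s_1^+)\leq 0$ forces both $s_0'$ and $s_0''$ (hence both candidate poles) onto $\mathcal{A}$, which yields the trichotomy in case (iv) by the opposite monotonicity of the two coordinates along $\mathcal{A}$.
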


\begin{thm}  
\label{thmresultsnew3}
 Let $\theta_1(s_2^+)> 0$, $\theta_2(s_1^+) \leqslant 0$.

\begin{itemize}

\item[(i)] Let $\gamma_2(s_1^+)\leq 0$ and $\gamma_1(s_2^+)\leq 0$
   Then for any $\alpha_0 \in ]0, \pi/2[$ the asymptotics (\ref{assadlep}) is valid.

\item[(ii)] Let $\gamma_2(s_1^+)>0$ and $\gamma_1(s_2^+)\leq 0$.
\begin{itemize}

\item[(iia)]  Let $\gamma_2(\zeta s_2^+)\geq 0$ or equivalently $\frac{ d \Theta_2^{+}(\theta_1) }{ d \theta_1 } \Bigm|_{\theta_1^{**} }\geq 0$.
   Then for any $\alpha_0 \in ]0, \pi/2[$ the asymptotics \eqref{p**} is valid.
\item[(iib)]  Let  $\gamma_2(\zeta s_2^+)<0$ or equivalently $A^{**}\equiv  \frac{ d \Theta_2^{+}(\theta_1) }{ d \theta_1 } \Bigm|_{\theta_1^{**} }
 < 0$.
   Define $\alpha_1= \arctan ( -1/ A^{**}) \in ]0, \pi/2[$.
   Then for any $\alpha_0 \in ]0, \alpha_1[$ we have (\ref{p**}) and for any $\alpha \in ]\alpha_1, \pi/2[$ we have 
   (\ref{assadlep}).

\end{itemize}

\item[(iii)]  Let $\gamma_2(s_1^+)<0$ and $\gamma_1(s_2^+)\geq 0$.
  Let  $ A^*\equiv \frac{ d \Theta_1^{+}(\theta_2) }{ d \theta_2 } \Bigm|_{\theta_2^{*} } $.
  Then $\alpha_2= \arctan (- A^*) \in ]0, \pi/2[$.
   For any $\alpha_0 \in ]0, \alpha_2[$ the asymptotics (\ref{assadlep}) is valid and for any $\alpha \in ]\alpha_2, \pi/2[$ the asymptotics 
   (\ref{p*}) holds true.

\item[(iv)]   Let  $\gamma_2(s_1^+)>0$ and $\gamma_1(s_2^+)>0$.
    Then either $\theta_1(\zeta \theta^{**})\leq \theta_1(\eta \theta^*)$  and  $\theta_2(\zeta \theta^{**})\leq \theta_2(\eta \theta^*)$, or
  $\theta_1(\zeta \theta^{**})< \theta_1(\eta \theta^*)$  and  $\theta_2(\zeta \theta^{**}) > \theta_2(\eta \theta^*)$, or 
 $\theta_1(\zeta \theta^{**})> \theta_1(\eta \theta^*)$  and  $\theta_2(\zeta \theta^{**}) < \theta_2(\eta \theta^*)$, or 
 finally  $\theta_1(\zeta \theta^{**})=\theta_1(\eta \theta^*)$  and  $\theta_2(\zeta \theta^{**}) = \theta_2(\eta \theta^*)$.

\begin{itemize}
\item[(iva)]  Let $\theta_1(\zeta \theta^{**})\leq \theta_1(\eta \theta^*)$  and  $\theta_2(\zeta \theta^{**})\leq \theta_2(\eta \theta^*)$
  where at least one of inequalities is strict.
  Then for any  $\alpha_0 \in ]0, \pi/2[$ we have (\ref{p**}).

\item[(ivb)]   Let $\theta_1(\zeta \theta^{**})< \theta_1(\eta \theta^*)$  and  $\theta_2(\zeta \theta^{**}) > \theta_2(\eta \theta^*)$.
  Let us define $\beta_0 =\arctan \frac{ \theta_1(\zeta\theta^{**})-\theta_1(\eta \theta^*)  }{ \theta_2(\eta\theta^{*}) -\theta_2(\zeta \theta^{**})     }$
 Then for any  $\alpha_0 \in ]0, \beta_0[$ we have (\ref{p**}), 
  for any $\alpha_0 \in ]\beta_0, \pi/2[$ we have (\ref{p*}) and for $\alpha_0=\beta_0$ we have
(\ref{p***}).
  
\item[(ivc)]    Let $\theta_1(\zeta \theta^{**})> \theta_1(\eta \theta^*)$  and  $\theta_2(\zeta \theta^{**}) < \theta_2(\eta \theta^*)$ or 
    $\theta_1(\zeta \theta^{**})= \theta_1(\eta \theta^*)$  and  $\theta_2(\zeta \theta^{**}) = \theta_2(\eta \theta^*)$ 
   Let us define angles $\alpha_1$ and $\alpha_2$  as in (ii) and (iii). Then $0<\alpha_1 \leq \alpha_2 <\pi/2$;
            for any  $\alpha_0 \in ]0, \alpha_1[$ we have (\ref{p**}) , for any $\alpha_0 \in ]\alpha_1, \alpha_2[$ we have 
   (\ref{assadlep}) and for any $\alpha_0 \in ]\alpha_2, \pi/2[$ we have (\ref{p*}).
  
\end{itemize}

\end{itemize}

\end{thm}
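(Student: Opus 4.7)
The plan is to reduce this theorem to a direct application of Theorems \ref{thmresults1}--\ref{thmresults4} once we translate the hypotheses on $(\Sigma,\mu,R)$ into the geometric conditions on the positions of $\zeta\theta^{**}$ and $\eta\theta^*$ relative to the arcs $\{\theta(\alpha_0),s'_0\}$ and $\{s''_0,\theta(\alpha_0)\}$ that appear there. The whole difficulty is bookkeeping: one has to identify, for each fixed parameter regime, the window of angles $\alpha_0\in\,]0,\pi/2[$ for which each of the four combinatorial cases of Theorem~\ref{thmmain} applies.

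First I would record the two geometric dictionary entries that do almost all the work. As $\alpha_0$ runs over $[0,\pi/2]$, the saddle point $\theta(\alpha_0)$ traverses the arc $\mathcal{A}=[s_1^+,s_2^+]$ monotonically by Lemma~\ref{sp}(iii), and is characterised by the tangency property that the tangent to $\mathcal{E}$ at $\theta(\alpha_0)$ is orthogonal to $e_{\alpha_0}$. Consequently, $\theta(\alpha_0)$ coincides with a given point $p\in\mathcal{A}$ if and only if the slope of $\mathcal{E}$ at $p$ equals $-\cot(\alpha_0)$. Applying this at $p=\zeta\theta^{**}$ (slope $A^{**}=\frac{d\Theta_2^+}{d\theta_1}|_{\theta_1^{**}}$) gives $\alpha_1=\arctan(-1/A^{**})$, and at $p=\eta\theta^{*}$ (inverse slope $A^{*}=\frac{d\Theta_1^+}{d\theta_2}|_{\theta_2^{*}}$) gives $\alpha_2=\arctan(-A^*)$; these are precisely the values where the saddle point meets the candidate dominating pole. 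The second dictionary entry is the translation noted in the remark after Theorem~\ref{thmresults4}: the inclusion $\zeta\theta^{**}\in\{\theta(\alpha_0),s'_0\}$ is equivalent to $\theta_1(\alpha_0)>0$ together with $\gamma_2(\zeta\theta(\alpha_0))>0$, and similarly for $\eta\theta^*$.

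Next I would run through cases (i)--(iv) using these dictionaries together with the geometric assumption $\theta_1(s_2^+)>0$, $\theta_2(s_1^+)\le 0$. In case (i), $\theta^{**}$ lies outside the arc $]s_0,s_1^+[$ and $\theta^*$ lies outside $]s_2^+,s_0[$, hence neither $\zeta\theta^{**}$ nor $\eta\theta^*$ can enter the relevant arcs for any $\alpha_0$; Theorem~\ref{thmresults1} then gives the saddle-point asymptotics \eqref{assadlep}. In case (ii), only $\zeta\theta^{**}\in\,]s_1^+,s'_0[$ is available as a pole; the subcase (iia) shows it contributes for every $\alpha_0$, while (iib) uses the tangent-slope condition $A^{**}<0$ to identify $\alpha_1$ as the unique angle at which $\theta(\alpha_0)$ reaches $\zeta\theta^{**}$, separating the pole-dominated regime $\alpha_0<\alpha_1$ from the saddle-point regime $\alpha_0>\alpha_1$. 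Case (iii) is perfectly symmetric, using $A^*$ and $\alpha_2$. In case (iv), both $\zeta\theta^{**}$ and $\eta\theta^*$ are poles on the relevant arcs; the coordinatewise orderings listed in (iva)--(ivc) correspond exactly to the three scenarios of Lemma~\ref{lem4}. Subcase (iva) is (ii) of that lemma (the $**$-pole dominates uniformly); subcase (ivc) with strict inequalities is (i), and the competition between the two exponents is resolved by the linear equation $\langle\zeta\theta^{**}-\eta\theta^{*}\mid e_{\alpha_0}\rangle=0$, which yields exactly $\beta_0=\arctan\frac{\theta_1(\zeta\theta^{**})-\theta_1(\eta\theta^*)}{\theta_2(\eta\theta^*)-\theta_2(\zeta\theta^{**})}$; for $\alpha_0\ne\beta_0$ one exponent strictly wins and Theorem~\ref{thmresults2}(iii) delivers the single-pole asymptotics, while at $\alpha_0=\beta_0$ both must be retained, giving \eqref{p***}.

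The main obstacle will be the last branch of case (iv), namely (ivc), where both orderings of coordinates agree in the same direction. Here I have to check that the angles $\alpha_1<\alpha_2$ computed from the tangency dictionary genuinely partition $]0,\pi/2[$ into the three windows announced, and that in the middle window the saddle point sits strictly beyond both $\zeta\theta^{**}$ and $\eta\theta^{*}$ on the arc $\mathcal{A}$ so that neither pole is picked up. This requires a careful monotonicity argument for the function $\alpha_0\mapsto\theta(\alpha_0)$ along $\mathcal{A}$ combined with the two tangency identifications, after which Theorems~\ref{thmresults1}--\ref{thmresults2} close the proof without further calculation.
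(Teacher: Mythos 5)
Your proposal is correct and takes exactly the route the paper intends: the paper gives no proof of Theorem~\ref{thmresultsnew3}, declaring it a direct corollary of Theorems~\ref{thmresults1}--\ref{thmresults4} plus elementary geometry of the ellipse, and that is precisely the reduction you carry out (the tangency identification of $\alpha_1,\alpha_2$ via the slope condition $-\cot\alpha_0$, the translation of the hypotheses into positions of $\zeta\theta^{**}$ and $\eta\theta^*$ on the arcs $\}\theta(\alpha_0),s'_0\}$ and $\}s''_0,\theta(\alpha_0)\}$, and the resolution of the competition between the two exponents). One caveat on your case (iv) bookkeeping: you have swapped (ivb) and (ivc) --- the $\beta_0$ threshold obtained from $\langle\zeta\theta^{**}-\eta\theta^{*}\mid e_{\alpha_0}\rangle=0$ belongs to (ivb), where $\eta\theta^*$ precedes $\zeta\theta^{**}$ on $\mathcal{A}$ so that both poles can lie in their respective arcs simultaneously, whereas (ivc) is the opposite ordering (not the case where the coordinatewise orderings ``agree'', which is (iva)) and produces the three-window partition by $\alpha_1\leq\alpha_2$ with a saddle-point regime in the middle, which your final paragraph does treat correctly.
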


The symmetric theorem for the case $\theta_1(s_2^+)\leqslant 0$, $\theta_2(s_1^+) > 0$ holds.

\subsection{Concluding remarks}
\label{subsec:concludingremarks}
     Let us remark that the approach of this article applies to the SRBM  in \underline{any cone of ${\bf R}^2$}.
Thanks to a  linear transformation $T\in {\bf R}^{2\times 2}$, it is easy to transform $Z(t)$, a reflected Brownian motion
  of parameters $(\Sigma,\mu,R)$ in a cone  into $T Z(t)$  a reflected Brownian motion 
   of parameters $(T\Sigma T^t,T\mu,TR)$   in the quarter plane.
For example if the initial cone is the set
$\{(x,y) | x\geqslant0 \text{ and } y\leqslant ax \}$ for some $a>0$, we may just take $T=
\begin{pmatrix}
1 & -\frac{1}{a} \\ 
0 & 1
\end{pmatrix}$.
The process $T Z(t)$ lives in a quarter plane. Then the approach of this article applies  and its results can be converted  to the initial cone
 by the inverse linear transformation.  
 The analytic approach  for discrete random walks is essentially  restricted to those with jumps to the nearest neighbors in the interior of the quarter plane.
 Since a linear transformation can not generally keep the length of jumps, this procedure  does not work in the discrete case. That is why the analytic
 approach in ${\bf R}^2$ has a more general scope of applications.  

\medskip 
To conclude this article, we sketch the way of recovering the asymptotic results of Dai and Miyazawa  \cite{dai_reflecting_2011}
   via the approach of this article.
  Given a directional vector $c=(c_1,c_2) \in {\bf R}_+^2$, thanks to the representation of Lemma \ref{propI} we obtain
 \begin{eqnarray}
\lefteqn{
\mathbf{P}( \langle c  \mid Z(\infty) \rangle \geq R) =\int_{x_1 \geqslant 0, \ x_2 \geqslant 0 \atop c_1x_1+c_2x_2 \geqslant R} \pi(x_1,x_2) \mathrm{d}x_1 \mathrm{d}x_2}\nonumber\\
&=&
\int_{x_1 \geqslant 0, \ x_2 \geqslant 0 \atop c_1x_1+c_2x_2 \geqslant R}
 I_1(x_1, x_2) \mathrm{d}x_1 \mathrm{d}x_2
 +
 \int_{x_1 \geqslant 0, \ x_2 \geqslant 0 \atop c_1x_1+c_2x_2 \geqslant R}
 I_2(x_1,x_2) \mathrm{d}x_1 \mathrm{d}x_2\nonumber\\
&=&\int\limits_{\mathcal{I}_{\theta_1}^{\epsilon,+}} g_1(\theta_1) \frac{1}{\theta_1}\frac{1}{\Theta_2^+(\theta_1)-\theta_1 \frac{c_2}{c_1}} e^{-\frac{ R}{c_1}\theta_1} \mathrm{d}\theta_1+
\int\limits_{\mathcal{I}_{\theta_1}^{\epsilon,+}} g_1(\theta_1) \frac{-c_2/c_1}{\Theta_2^+(\theta_1)(\Theta_2^+(\theta_1)-\theta_1 \frac{c_2}{c_1})} e^{-\frac{ R}{c_2}\Theta_2^+(\theta_1)} \mathrm{d}\theta_1\label{gg}\\
&&{}+\int\limits_{\mathcal{I}_{\theta_2}^{\epsilon,+}} g_2(\theta_2) \frac{1}{\theta_2}\frac{1}{\Theta_1^+(\theta_2)-\theta_2 \frac{c_1}{c_2}} e^{-\frac{ R}{c_2}\theta_2} \mathrm{d}\theta_2+
\int\limits_{\mathcal{I}_{\theta_2}^{\epsilon,+}} g_2(\theta_2) \frac{-c_1/c_2}{\Theta_1^+(\theta_2)(\Theta_1^+(\theta_2)-\theta_2 \frac{c_1}{c_2})} e^{-\frac{ R}{c_1}\Theta_1^+(\theta_2)} \mathrm{d}\theta_2,\nonumber\\
\label{ggg}
\end{eqnarray}
   where
$$g_1(\theta_1)= \frac{\varphi_2(\theta_1) \gamma_2(\theta_1, \Theta_2^+(\theta_1)) }{\sqrt{  d(\theta_1) }}, \  \  \  \
g_2(\theta_2)= \frac{\varphi_1(\theta_2) \gamma_1(\Theta_1^+(\theta_2), \theta_2) }{\sqrt{  d(\theta_2) }}.$$
    The first term in (\ref {gg}) is just the Laplace transform of  the function
$h_1(\theta_1)=g_1(\theta_1) \frac{1}{\theta_1}\frac{1}{\Theta_2^+(\theta_1)-\theta_1 \frac{c_2}{c_1}}$, its asymptotics is determined by
  the smallest real singularity of $h_1(\theta_1)$, see e.g. \cite{doetsch_introduction_1974}.
    This may be either the branch  point $\theta_1^+$ of $\phi_2(\theta_1)$, or the smallest
  pole of $h_1(\theta_1)$ on $]0, \theta_1^+[$ whenever it exists, the natural candidates are $\zeta \theta^{**}$,  $\zeta \eta  \theta^*$
  due to Lemmas \ref{lem3}-- \ref{lem4}
   or a point $\theta^c=(\theta_1^c, \theta_2^c)$ such that $\theta_2^c=\Theta_2^+(\theta_1^c)=
  \theta_1^c \frac{c_2}{c_1}$.  To determine the asymptotics of the second integral in (\ref{gg}), we shift the integration contour to the new one
  passing through the saddle-point
$\Theta_1(\theta_2^+)$ and take  into account the poles of the integrand we encounter, the most important of these poles are those listed above.
  The asymptotics of two terms in (\ref{ggg}) is determined in the same way. Combining all these results together we derive the main asymptotic term depending on the parameters that can be either
  $e^{-\frac{R}{c_1}\theta_1^{+}}$, $e^{-\frac{R}{c_2}\theta_2^+}$ preceding by $R^{-1/2}$ or $R^{-3/2}$ with some constant,
  or $e^{-\frac{R}{c_1}\theta_1^c}=e^{\frac{R}{c_2}\theta_2^c}$, $e^{-\frac{R}{c_i} (\zeta\theta^{**})_i}$, 
   $e^{-\frac{R}{c_i} (\eta\theta^{*})_i}$, $i=1,2$ 
       preceding by some constant and  
         the factor $R$ in some critical cases.
This analysis leads to the results of
\cite{dai_reflecting_2011}.

\bigskip 

\noindent{\bf Acknowledgement}  We are grateful to Kilian Raschel and Sandrine Péch\'e for helpful discussions and suggestions.

\newpage

\begin{figure}[hbtp]
\centering
\includegraphics[scale=0.75]{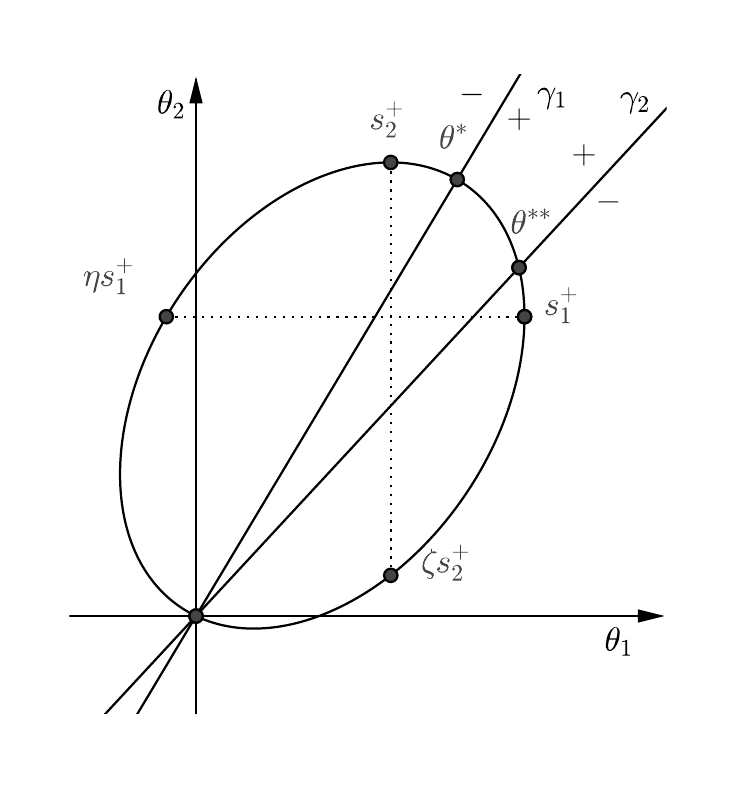}
\hfill
\includegraphics[scale=0.75]{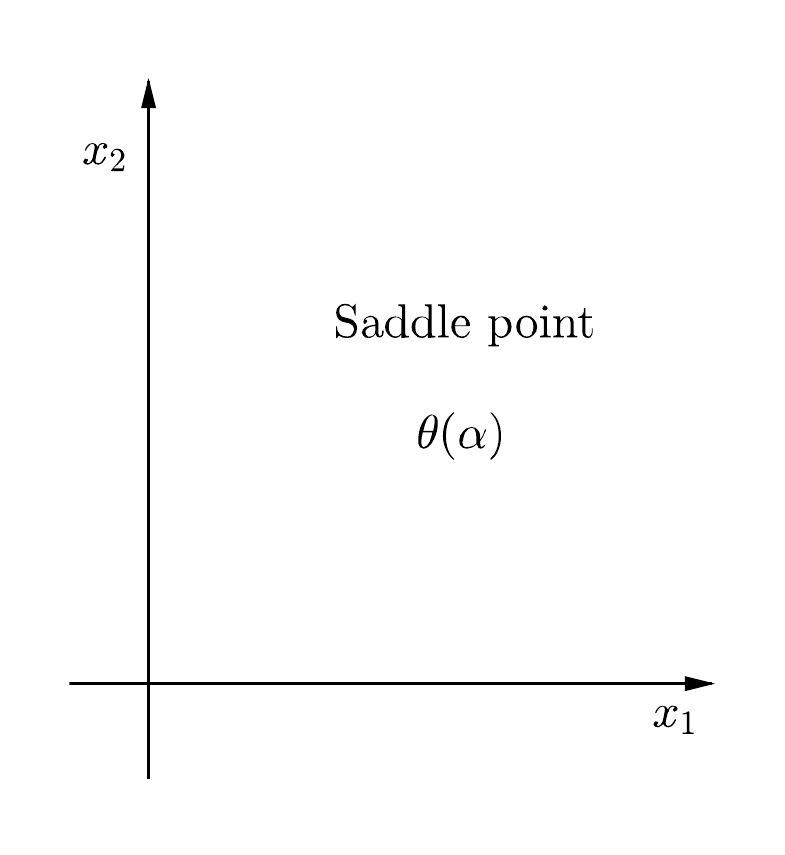}
\caption{Theorem \ref{thmresultsnew1} case (i)}
\label{firstfig}
\end{figure}

\begin{figure}[hbtp]
\centering
\includegraphics[scale=0.75]{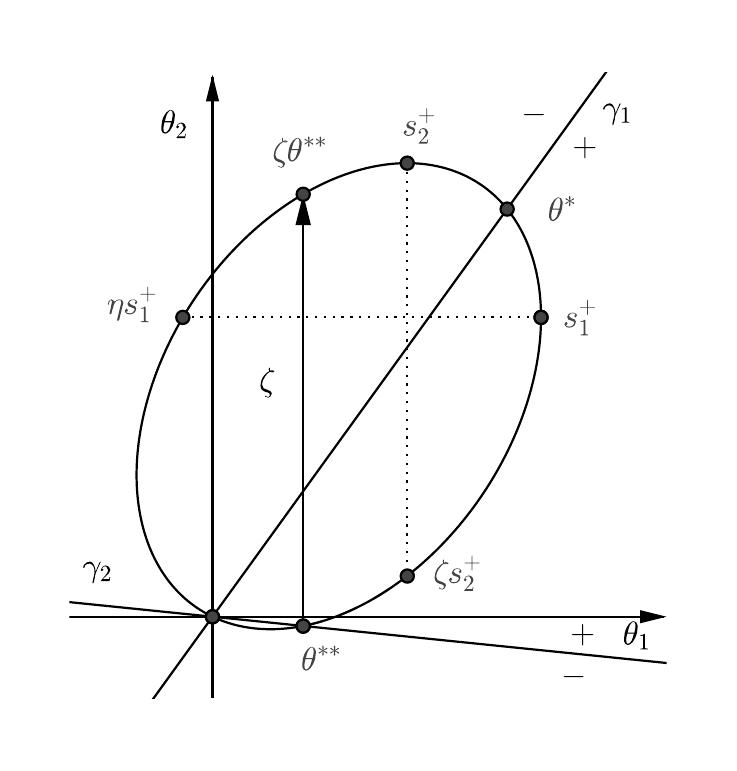}
\hfill
\includegraphics[scale=0.75]{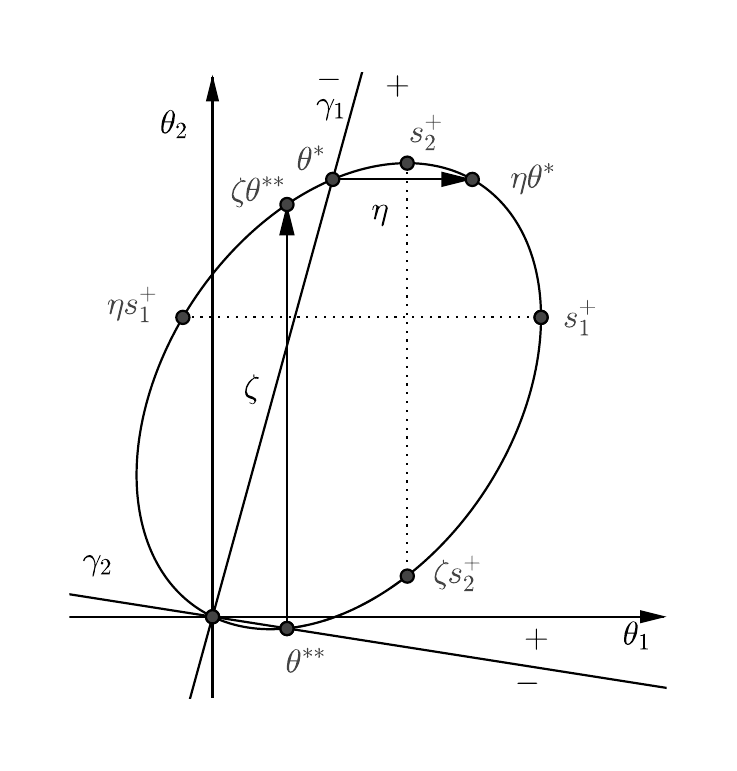}
\hfill
\includegraphics[scale=0.75]{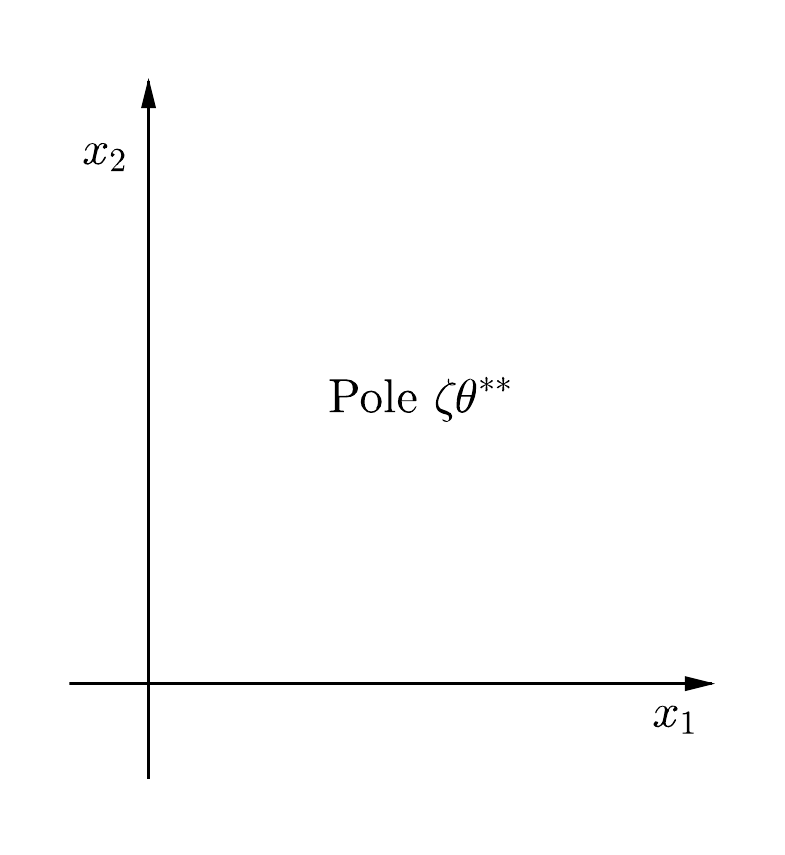}
\caption{Theorem \ref{thmresultsnew1} case (iia) and (iva)}
\end{figure}

\begin{figure}[hbtp]
\centering
\includegraphics[scale=0.75]{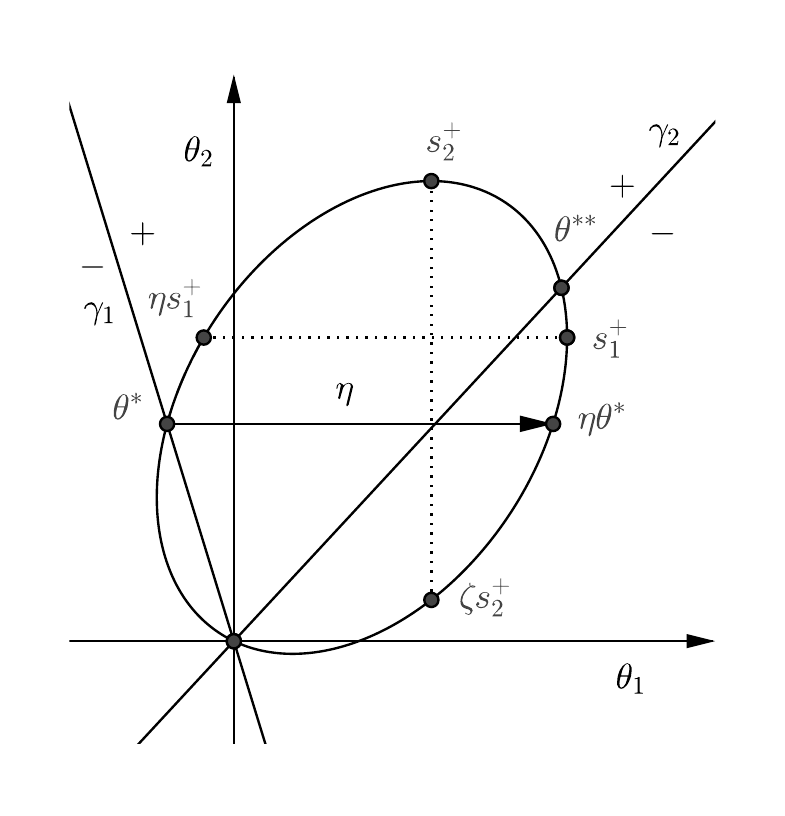}
\hfill
\includegraphics[scale=0.75]{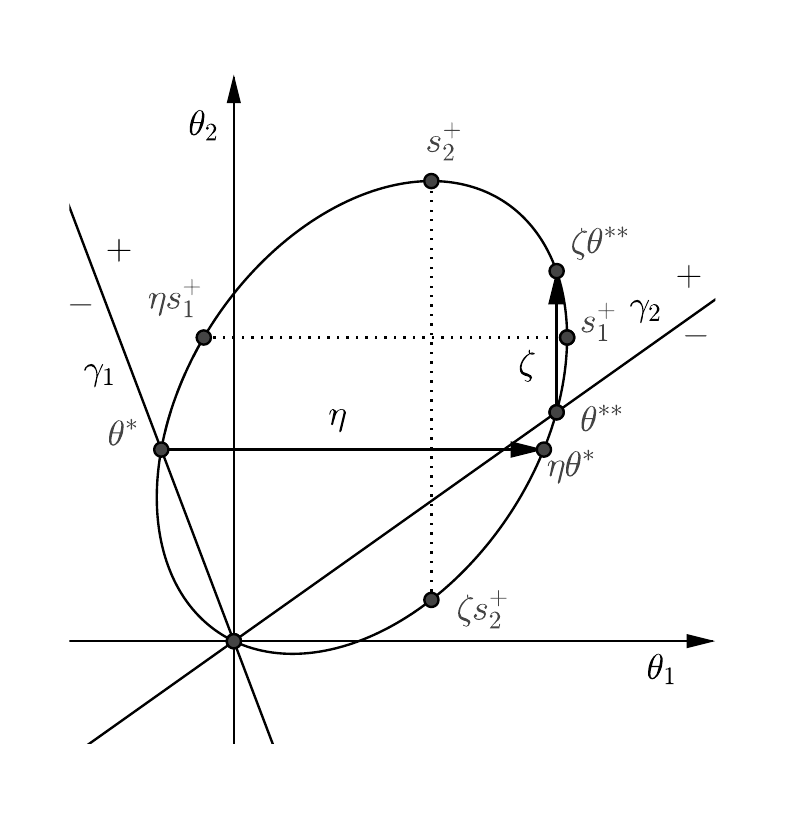}
\hfill
\includegraphics[scale=0.75]{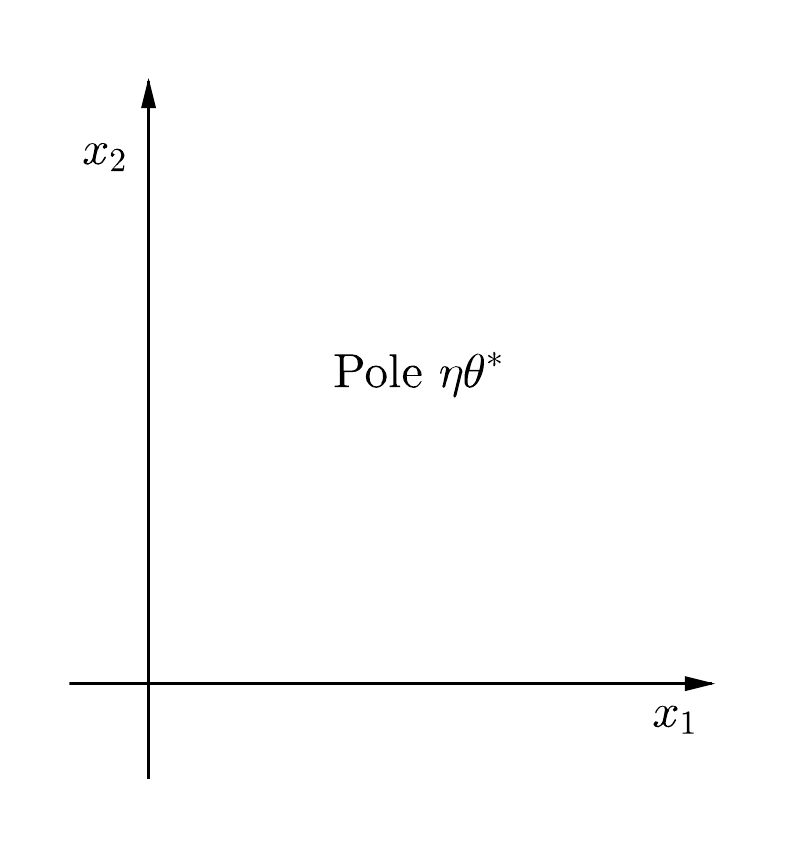}
\caption{Theorem \ref{thmresultsnew1} case (iiia) and (ivb)}
\end{figure}

\begin{figure}[hbtp]
\centering
\includegraphics[scale=0.75]{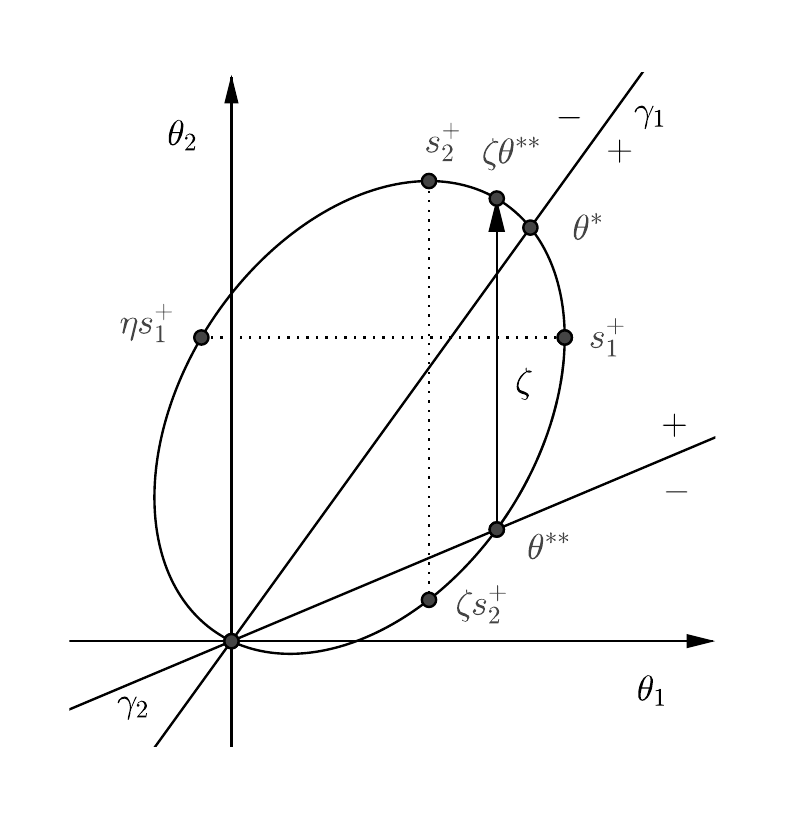}
\hfill
\includegraphics[scale=0.75]{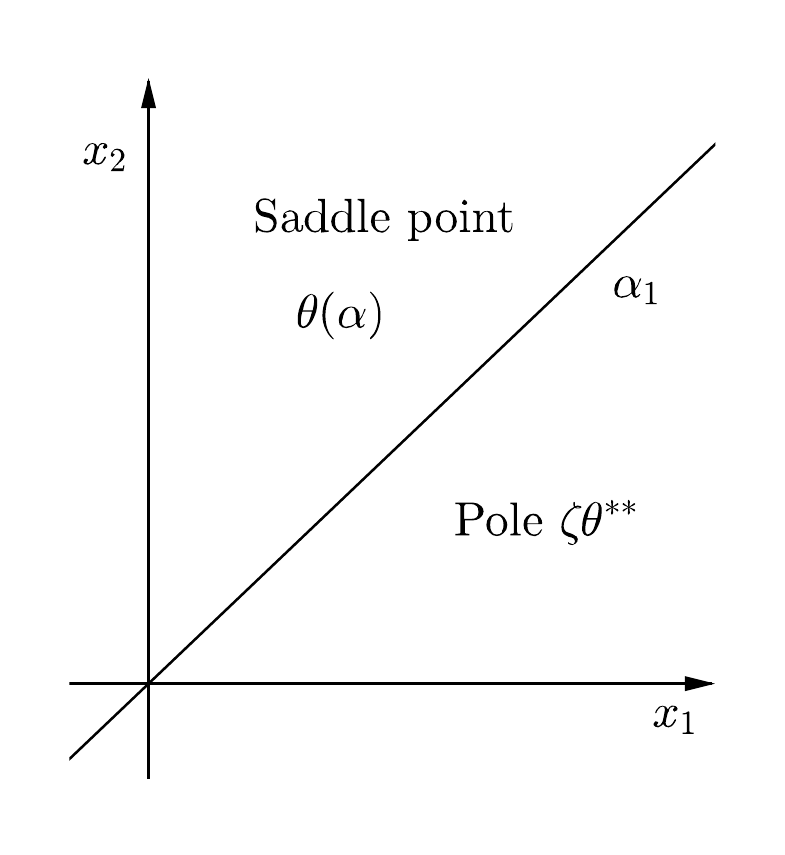}
\caption{Theorem \ref{thmresultsnew1} case (iib)}
\end{figure}

\begin{figure}[hbtp]
\centering
\includegraphics[scale=0.75]{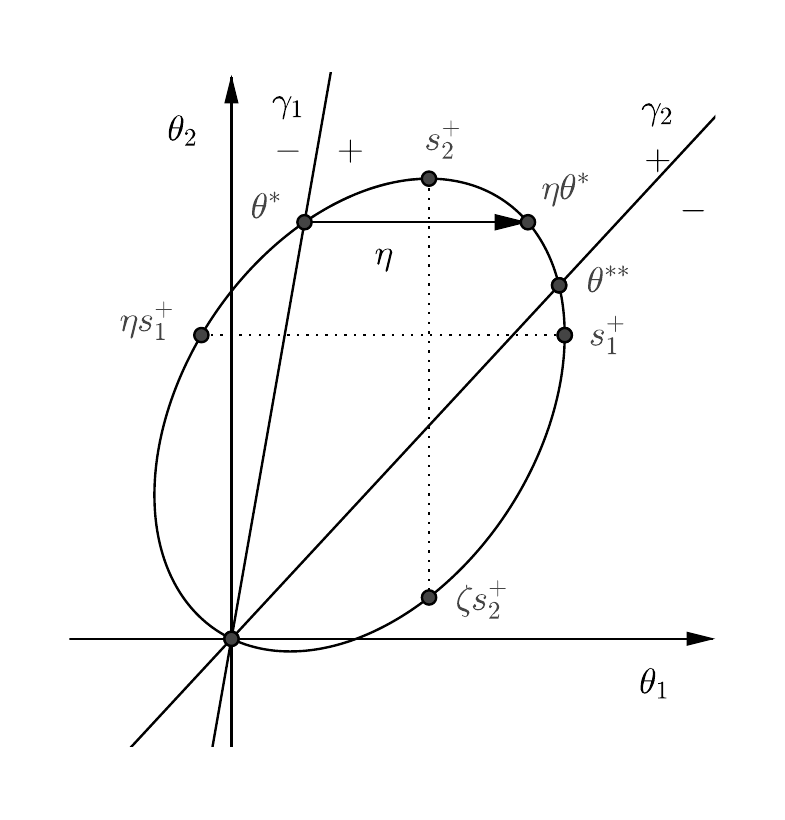}
\hfill
\includegraphics[scale=0.75]{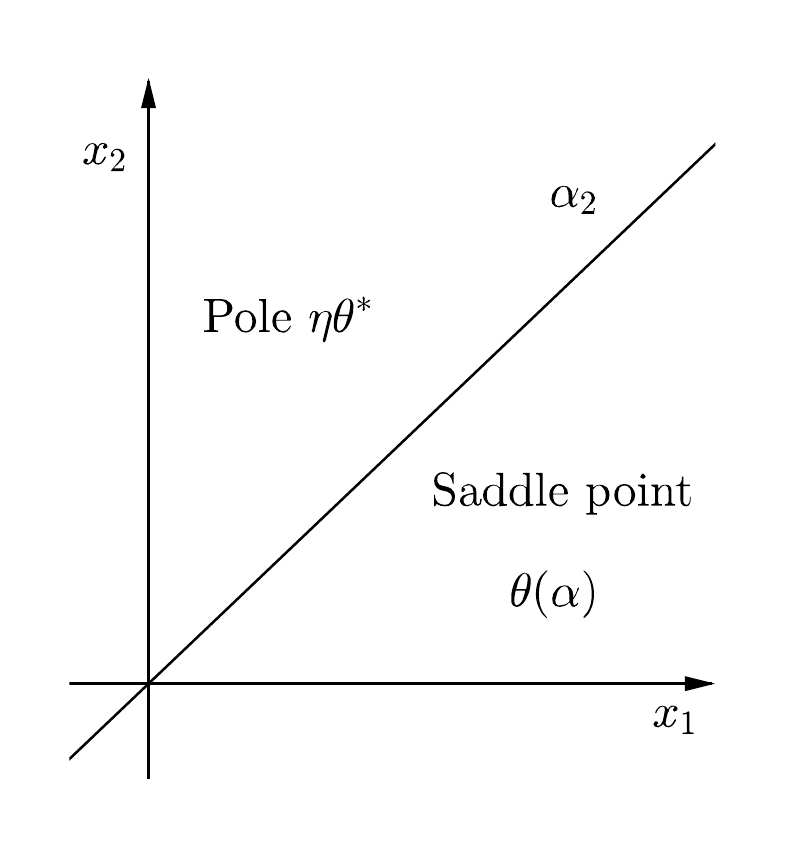}
\caption{Theorem \ref{thmresultsnew1} case (iiib)}
\end{figure}

\begin{figure}[hbtp]
\centering
\includegraphics[scale=0.75]{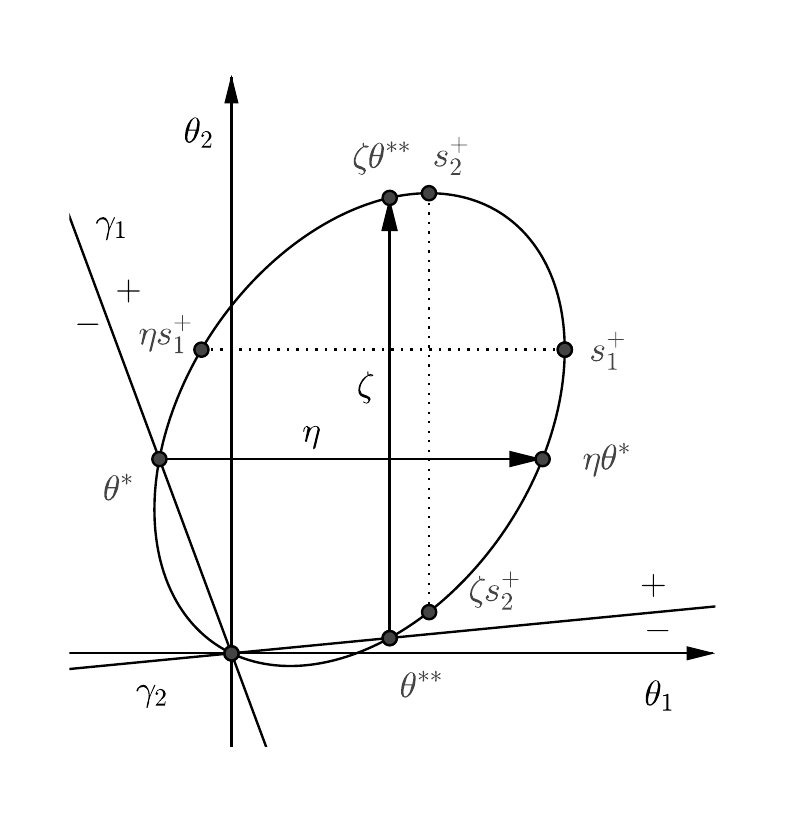}
\hfill
\includegraphics[scale=0.75]{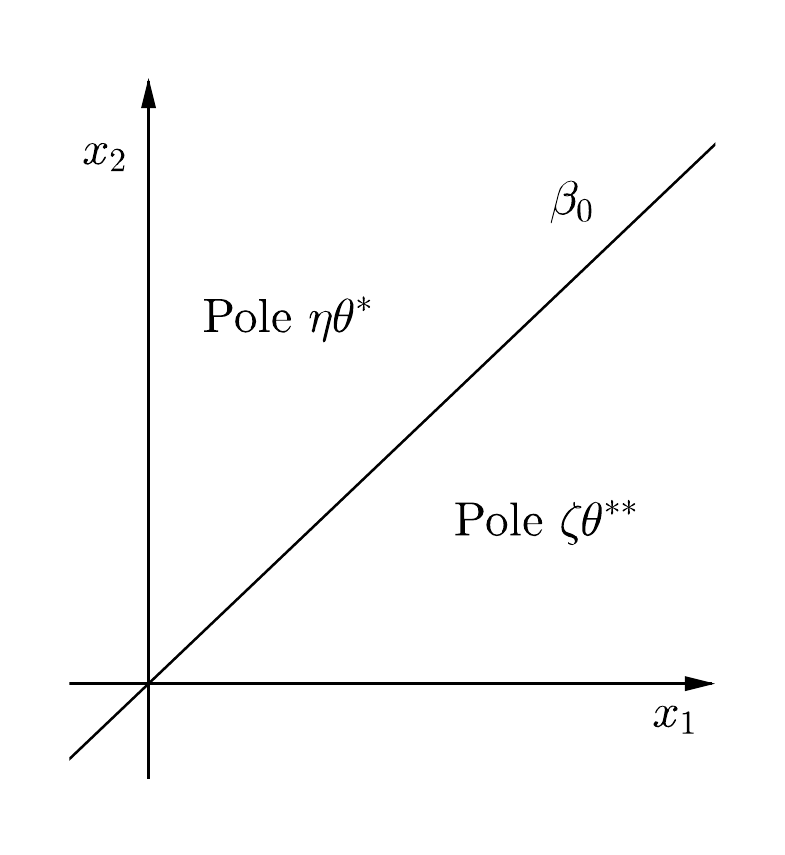}
\caption{Theorem \ref{thmresultsnew1} case (ivc)}
\end{figure}

\begin{figure}[hbtp]
\centering
\includegraphics[scale=0.75]{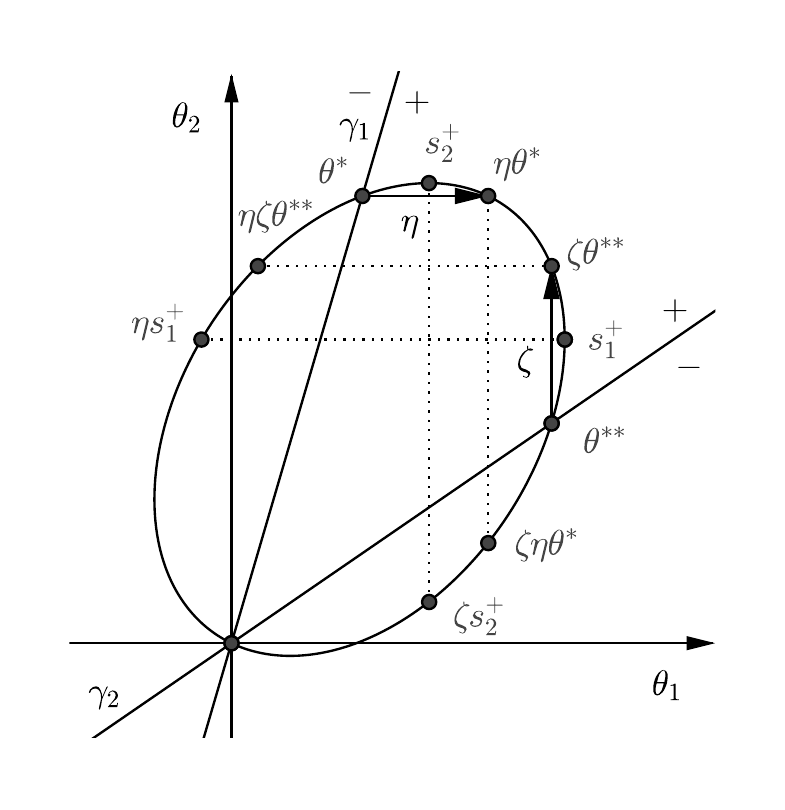}
\hfill
\includegraphics[scale=0.75]{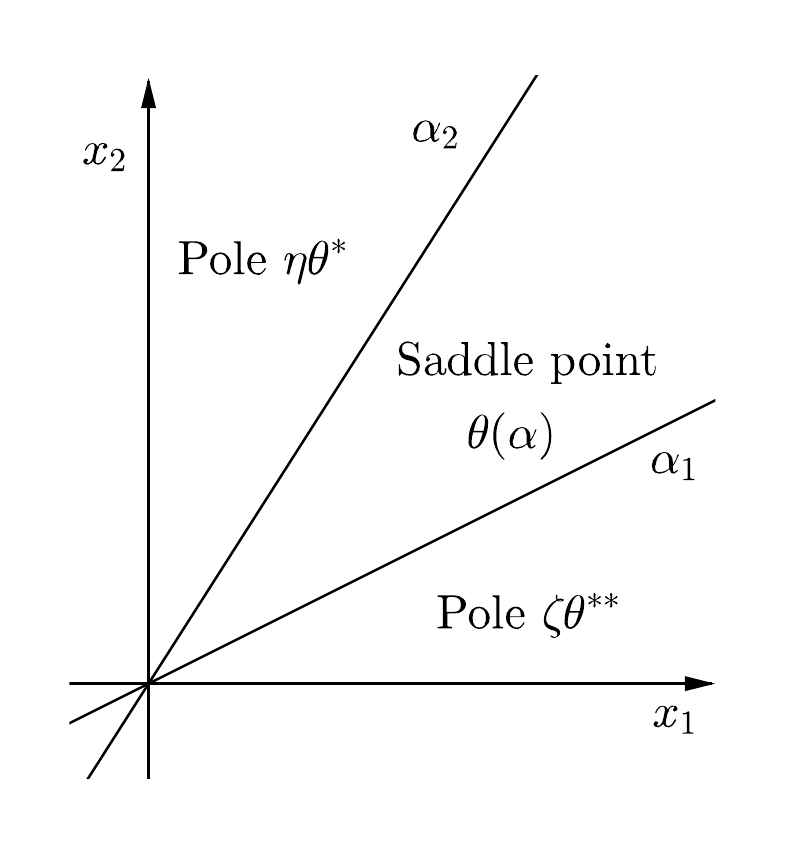}
\caption{Theorem \ref{thmresultsnew1} case (ivd)}
\label{lastfig}
\end{figure}

\clearpage

\bibliographystyle{apalike} 
\bibliography{biblio2} 

\end{document}